\documentclass[a4paper, 11pt]{amsart}
\usepackage{amsmath, amssymb, amsthm, mathrsfs}
\usepackage[mathscr]{eucal}
\usepackage[all]{xy}
\usepackage[dvipdfmx]{graphicx}



\usepackage{bm, comment}
\usepackage{mathtools}

\setlength{\oddsidemargin}{5mm}
\setlength{\evensidemargin}{5mm}
\setlength{\topmargin}{0mm}
\setlength{\textheight}{230mm}
\setlength{\textwidth}{150mm}

\theoremstyle{plain}
 \newtheorem{thm}{Theorem}[section]
 \newtheorem{lem}[thm]{Lemma}
 \newtheorem{cor}[thm]{Corollary}
 \newtheorem{prop}[thm]{Proposition}
 \newtheorem{claim}[thm]{Claim}
\theoremstyle{definition}
  \newtheorem{defn}[thm]{Definition}
  \newtheorem{ass}[thm]{Assumption}
  
  \newtheorem{question}[thm]{Question}

\theoremstyle{remark}
  \newtheorem{rem}[thm]{Remark}
  \newtheorem{ex}[thm]{Example}

\newcommand{\R}{\mathbb{R}}
\newcommand{\C}{\mathbb{C}}
\newcommand{\cal}{\mathcal}
\newcommand{\N}{\mathbb{N}}
\newcommand{\Z}{\mathbb{Z}}
\newcommand{\calr}{\mathcal{R}}
\newcommand{\calb}{\mathcal{B}}
\newcommand{\calg}{\mathcal{G}}
\newcommand{\calh}{\mathcal{H}}
\newcommand{\cals}{\mathcal{S}}
\newcommand{\cali}{\mathcal{I}}

\newcommand{\cala}{\mathcal{A}}

\newcommand{\ve}{\varepsilon}

\newcommand{\aut}{\mathrm{Aut}}
\newcommand{\Iso}{\mathrm{Iso}}

\newcommand{\ci}[2]{\cite[#1]{#2}}
\renewcommand{\c}{\curvearrowright}

  \makeatletter
  \@addtoreset{equation}{section}
  \makeatother

\newcommand{\defeq}{\coloneqq}

\begin{document}

\title{Inner amenable groupoids and central sequences}
\author{Yoshikata Kida and Robin Tucker-Drob}
\address{Graduate School of Mathematical Sciences, the University of Tokyo, Komaba, Tokyo 153-8914, Japan}
\email{kida@ms.u-tokyo.ac.jp}
\address{Department of Mathematics, Texas A\&M University, College Station, TX 77843, USA}
\email{rtuckerd@math.tamu.edu}
\date{January 15, 2020}
\thanks{The first author was supported by JSPS Grant-in-Aid for Scientific Research, 17K05268. The second author was supported by NSF grant DMS 1600904.}

\begin{abstract}
We introduce inner amenability for discrete p.m.p.\ groupoids and investigate its basic properties, examples, and the connection with central sequences in the full group of the groupoid or central sequences in the von Neumann algebra associated with the groupoid.
Among other things, we show that every free ergodic p.m.p.\ compact action of an inner amenable group gives rise to an inner amenable orbit equivalence relation.
We also obtain an analogous result for compact extensions of equivalence relations which either are stable or have a non-trivial central sequence in their full group.
\end{abstract}

\maketitle


\section{Introduction}

A discrete countable group $G$ is called \textit{inner amenable} if there exists a sequence $(\xi_n)$ of non-negative unit vectors in $\ell^1(G)$ such that $\Vert \xi_n^g-\xi_n\Vert_1\to 0$ and $\xi_n(g)\to 0$ for all $g\in G$, where the function $\xi_n^g$ is defined by $\xi_n^g(h)=\xi_n(ghg^{-1})$ for $h\in G$.
This notion was introduced by Effros \cite{effros} who first observed its connection with property Gamma of the group von Neumann algebra.
This connection has since become a common theme: inner amenability of a group $G$ can often be deduced from the existence of a certain central sequence, either in the von Neumann algebra associated with $G$, or in the full group of a probability-measure-preserving (p.m.p.)\ action of $G$ (e.g., \cite{choda}, \cite{effros} and \cite{js}).

In this paper, we introduce inner amenability for discrete p.m.p.\ groupoids. We investigate its basic properties and examine its connection with central sequences, both in the full group of the groupoid and in the von Neumann algebra associated with the groupoid, highlighting many examples along the way.
We expect results in this paper to accelerate the understanding of free ergodic p.m.p.\ actions of inner amenable groups and their orbit equivalence relations.
We refer to \cite{dv}, \cite{is}, \cite{kida-inn}--\cite{kida-sce}, \cite{mar}, \cite{pv}, \cite{td} and \cite{vaes} for recent progress on related topics.

We briefly outline results of this paper.
We refer to Section \ref{sec-pre} for notation and terminology on discrete p.m.p.\ groupoids.
In Section \ref{sec-iag}, inner amenability of a discrete p.m.p.\ groupoid $\calg$ is defined (Definition \ref{def:groupoid}), generalizing the definition given above for groups; if the groupoid $\calg$ is ergodic, then its inner amenability is equivalent to the existence of a sequence $(\xi_n)$ of non-negative unit vectors in $L^1(\calg)$ which is asymptotically conjugation-invariant and diffuse (see Theorem \ref{thm:equiv} for various equivalent characterizations).

We observe permanence of inner amenability under several groupoid constructions.
In Subsection \ref{subsec-perma}, we prove the permanence under inflations, restrictions, measure-preserving extensions, ergodic decompositions, and inverse limits, and also prove that for every finite-index inclusion $\calh <\calg$ of discrete p.m.p.\ groupoids, inner amenability of $\calh$ implies inner amenability of $\calg$. 
The permanence under measure-preserving extensions is specialized into the following assertion: for every p.m.p.\ action $G\c (X, \mu)$ of a countable group $G$, inner amenability of the translation groupoid $G\ltimes (X, \mu)$ implies inner amenability of the group $G$ (Proposition \ref{prop:Ginn}).
Of particular interest is that we establish permanence of inner amenability under compact extensions, which is thoroughly discussed in Section \ref{sec-cpt-ia}.
This implies that inner amenability passes to finite-index Borel subgroupoids, and that every compact free p.m.p.\ action of an inner amenable group gives rise to an inner amenable orbit equivalence relation (Corollaries \ref{cor-finite-index-cpt-ext}--\ref{cor:distal}).

One motivating example of an inner amenable groupoid is an ergodic discrete p.m.p.\ equivalence relation $\calr$ which is Schmidt.
Here we say that $\calr$ is \textit{Schmidt} if there exists a non-trivial central sequence in its full group $[\calr]$.
In Subsections \ref{subsec-schmidt} and \ref{subsec-gamma}, we discuss relationship between inner amenability of $\calr$ and the existence of a non-trivial central sequence in $[\calr]$ or in the von Neumann algebra associated with $\calr$.
Especially, generalizing \cite[Theorem (ii)]{choda} for group-measure-space $\textrm{II}_1$ factors, we prove that for a free, strongly ergodic, p.m.p.\ action $G\c (X, \mu)$, if the associated von Neumann algebra $G\ltimes L^\infty(X)$ has property Gamma, then the orbit equivalence relation of the action is inner amenable (Corollary \ref{cor-gamma}).
The converse does not hold.
An example to indicate this is constructed by using the Vaes group \cite{vaes}, which is known as a countable, ICC, inner amenable group such that the associated group von Neumann algebra does not have property Gamma (Example \ref{ex-vaes}).

In Section \ref{sec-cpt-cs}, we show that for discrete p.m.p.\ equivalence relations, being Schmidt and being stable are both preserved under compact extensions.
Here we call a discrete p.m.p.\ equivalence relation \textit{stable} if it absorbs the ergodic hyperfinite p.m.p.\ equivalence relation on a non-atomic probability space, under direct product.
Hence being Schmidt and being stable pass to finite-index subrelations.
However for a finite-index inclusion $\cals <\calr$ of ergodic discrete p.m.p.\ equivalence relations, while inner amenability passes from $\cals$ to $\calr$, neither being Schmidt nor being stable passes from $\cals$ to $\calr$ in general.
In Section \ref{sec-finite-index}, we show this by giving examples, and also give a sufficient condition for this passage to hold, in terms of the algebra of asymptotically invariant sequences for $\cals$ and the action of $F$ on it when $\calr$ is written as the crossed product $\calr =\cals \rtimes F$ for some finite group $F$ acting on $\cals$ by automorphisms.
These results should be compared with the result of Pimsner-Popa on property Gamma and the McDuff stability for inclusions of $\text{II}_1$ factors \cite[Proposition 1.11]{pp}.

In \cite{sch-prob}, Schmidt asked whether every countable, inner amenable group admits a free ergodic p.m.p.\ action whose orbit equivalence relation is Schmidt. We say that a countable group has the \textit{Schmidt property} if it admits such an action. Schmidt's question remains open and is one of the questions motivating the present work. We call a countable group \textit{orbitally inner amenable} if it admits a free ergodic p.m.p.\ action whose orbit equivalence relation is inner amenable. As mentioned above, every countable group with the Schmidt property is orbitally inner amenable, and every countable, orbitally inner amenable group is inner amenable. While we do not know whether every inner amenable group is orbitally inner amenable, it follows from our aforementioned result on compact actions that this implication holds under the additional assumption that the group in question is residually finite. It also follows from \cite[Theorem 15]{td}, using a different method, that every inner amenable, linear group has the Schmidt property and is hence orbitally inner amenable.
We refer to Subsection \ref{subsec-schmidt} for more details.

We call a countable group \textit{stable} if it admits a free ergodic p.m.p.\ action whose orbit equivalence relation is stable.
Since, as we mentioned above, stability passes to finite-index subrelations, it follows that stability of a countable group passes to finite-index subgroups as well.
Combining this with a result of the first author \cite{kida-sce}, we obtain the corollary that stability of a countable group is invariant under virtual isomorphism (Corollary \ref{cor-fi}).
Although we can also ask the same question for the Schmidt property and orbital inner amenability of a countable group, it remains unsolved.
More precisely, we do not know whether those two properties are invariant under central group-extension with finite central group (Question \ref{q-finite-central}).

In Section \ref{sec-sg}, we obtain examples of discrete p.m.p.\ equivalence relations $\calr$ which are either not inner amenable, or not Schmidt, by imposing spectral gap properties and mixing properties on p.m.p.\ actions which generate $\calr$.
Among other things, for every countable non-amenable group $G$, the Bernoulli shift of $G$ gives rise to an orbit equivalence relation which is not inner amenable (Corollary \ref{cor-ber}).
Moreover, the product of the Bernoulli shift of $G$ with an arbitrary ergodic p.m.p.\ action of $G$ also gives rise to a non-inner-amenable orbit equivalence relation (Proposition \ref{prop-diagonal-action}).
There also exist inner amenable equivalence relations which are not Schmidt.
Such examples are found in Remarks \ref{rem-h-g-schmidt}, \ref{rem-wreath} and \ref{rem-product}.


In Section \ref{sec-ex}, we collect miscellaneous examples of orbitally inner amenable groups and free ergodic p.m.p.\ actions such that central sequences in the full group are well controlled via spectral gap properties and mixing properties.

In Section \ref{sec-top}, for a discrete p.m.p.\ groupoid $\calg$, we discuss topologies of the full group $[\calg]$ and the automorphism group $\aut(\calg)$ of $\calg$.
We prove that there exists a non-trivial central sequence in $[\calg]$ if and only if the set of inner automorphisms of $\calg$ given by conjugation of an element of $[\calg]$ is not closed in $\aut(\calg)$, under the assumption that the equivalence relation associated to $\calg$, $\{ \, (r(g), s(g))\mid g\in \calg\, \}$, is aperiodic (Proposition \ref{prop-cs-nc}).
Here $r$ and $s$ are the range and source maps of $\calg$, respectively.
This is already known for principal groupoids and proved in \cite[Sections 6 and 7]{kec} (and we refer to \cite[Theorem 3.1]{connes-ap} for a von Neumann algebraic counterpart).
Our definition of the topology of $\aut(\calg)$ for general groupoids $\calg$ follows \cite{kec}, and we will use some results in it.

Throughout the paper, unless otherwise mentioned, all relations among Borel sets and maps are understood to hold up to sets of measure zero.

\medskip

\noindent \textit{Acknowledgments.}
We thank Adrian Ioana for his valuable remarks on the first author's earlier note and for kindly allowing us to incorporate them into Lemma \ref{lem-p} and Corollary \ref{cor-c} (ii).
The second author would like to thank Andrew Marks for a suggestion which helped to simplify the proof of Theorem \ref{thm:equiv}.
We are deeply grateful to the anonymous referee for reading the first version of the paper carefully and for suggesting us a large number of improvements, corrections, and different proofs, which include: the presentation of Introduction, the definition of a local section of a groupoid, Remark \ref{rem:suffice}, Lemma \ref{lem:diffuse}, Remark \ref{rem-gp}, Remark \ref{rem-central}, Remark \ref{rem-iagroupoid}, Remark \ref{rem-proj}, and Remark \ref{rem-ssg}.
We are also indebted to the referee for his or her suggesting the result of Section \ref{sec-top}.


\section{Groupoid preliminaries and notation}\label{sec-pre}

For a groupoid $\calg$, we denote the unit space of $\mathcal{G}$ by $\mathcal{G}^0$, and denote the source and range maps of $\mathcal{G}$ by $s$ and $r$, respectively.
For a subset $D\subset \mathcal{G}$ and $x, y\in \mathcal{G}^0$, we set $D_x \defeq D\cap s^{-1}(x)$, $D^y \defeq D\cap r^{-1}(y)$ and $D_x^y\defeq D_x\cap D^y$, and we say that $D$ is \textit{bounded} if there is some $N\in \N$ with $|D_x|\leq N$ and $|D^x|\leq N$ for all $x\in \mathcal{G}^0$.
For subsets $A, B\subset \mathcal{G}^0$, we set $\mathcal{G}_A \defeq r^{-1}(A)\cap s^{-1}(A)$ and $\mathcal{G}_{A,B} \defeq r^{-1}(A)\cap s^{-1}(B)$.
Then $\calg_A$ is a groupoid with unit space $A$, with respect to the product inherited from $\calg$.
For $x\in \calg^0$, we call the group $\calg_x^x$ the \textit{isotropy group} of $\mathcal{G}$ at $x$.

A \textit{discrete Borel groupoid} is a groupoid $\mathcal{G}$ such that $\mathcal{G}$ is a standard Borel space, $\mathcal{G}^0$ is a Borel subset of $\mathcal{G}$, the source and range maps $s,r\colon \mathcal{G}\rightarrow \mathcal{G}^0$ are Borel and countable-to-one, and the multiplication map $\{ \, (\gamma ,\delta ) \in \mathcal{G}\times \mathcal{G}\mid s(\gamma )=r(\delta )\, \} \rightarrow \mathcal{G}$, $(\gamma ,\delta )\mapsto \gamma \delta$ and the inverse map $\gamma \mapsto \gamma ^{-1}$ are both Borel.
A \textit{cocycle} $\alpha \colon \calg \to L$ into a standard Borel group $L$ is a Borel groupoid-homomorphism, i.e., a Borel map satisfying $\alpha (\gamma \delta )= \alpha (\gamma )\alpha (\delta )$ whenever $s(\gamma )=r(\delta )$.

A \textit{discrete p.m.p.\ groupoid} is a pair $(\mathcal{G},\mu )$ of a discrete Borel groupoid $\calg$ and a Borel probability measure $\mu$ on $\mathcal{G}^0$ satisfying $\int _{\mathcal{G}^0}c^s_x \, d\mu(x)= \int _{\mathcal{G}^0} c^r_x\, d\mu(x)$, where $c^s_x$ and $c^r_x$ are the counting measures on $\calg_x$ and $\calg^x$, respectively. 
We will write $\mu ^1$ for this common measure: $\mu ^1 \defeq \int _{\mathcal{G}^0}c^s_x \, d\mu(x) = \int _{\mathcal{G}^0} c^r_x\, d\mu(x)$.

Let $(\mathcal{G},\mu )$ be a discrete p.m.p.\ groupoid.
We say that $(\mathcal{G},\mu )$ is \textit{aperiodic} if $\calg_x$ is infinite for $\mu$-almost every $x\in \mathcal{G}^0$.
A Borel subset $A\subset \calg^0$ is called \textit{$\calg$-invariant} if $r(\calg_x)\subset A$ for $\mu$-almost every $x\in A$.
We say that $(\calg, \mu)$ is \textit{ergodic} if for every $\calg$-invariant Borel subset $A\subset \calg^0$, we have $\mu(A)=0$ or $1$.
A \textit{local section} of $\mathcal{G}$ is a Borel subset $\phi \subset \calg$ on which the source and range maps $s$, $r$ are both injective.
We call $s(\phi)$ and $r(\phi)$ the domain and range of $\phi$, respectively.
For a local section $\phi$ of $\calg$ and points $x\in s(\phi)$, $y\in r(\phi)$, we denote by $\phi_x, \phi^y\in \phi$ the unique elements such that $s(\phi_x)=x$ and $r(\phi^y)=y$.
Let $\phi^o\colon s(\phi)\to r(\phi)$ denote the associated map given by $x\mapsto r(\phi_x)$.
For a Borel subset $A\subset \calg^0$, we set
\[\phi A\defeq \phi^o(A\cap s(\phi))=\{ \, r(\phi_x)\mid x\in A\cap s(\phi)\, \}.\]
We identify two local sections if they agree up to a $\mu$-null set.
The \textit{composition} of two local sections $\phi$, $\psi$ is the local section $\phi \psi \defeq \{ \, \phi_{r(\psi_x)} \psi_x\mid x\in s(\psi)\cap (\psi^o)^{-1}(s(\phi))\, \}$.
The \textit{inverse} of a local section $\phi$ is the local section $\phi^{-1}\defeq \{ \, (\phi_x)^{-1}\mid x\in s(\phi)\, \}$, whose domain is $r(\phi)$.
Let $[\mathcal{G}]$ denote the group of all local sections $\phi$ of $\mathcal{G}$ with $s(\phi )=\mathcal{G}^0$ (and hence $r(\phi)=\calg^0$), and call $[\calg]$ the \textit{full group} of $\calg$.
We have the \textit{uniform topology} on $[\calg]$ induced by the metric $\delta_u(\phi , \psi ) \defeq \mu ( \{ \, x\in \mathcal{G}^0 \mid \phi_x \neq \psi_x \, \} )$.
This metric is complete and makes $[\calg]$ into a Polish group (Lemma \ref{lem-polish}).


Let $\phi$ be a local section of $\mathcal{G}$.
For $\gamma \in \mathcal{G}_{r(\phi)}$, we set
\[\gamma ^{\phi } \defeq [\phi^{r(\gamma)}]^{-1}\gamma \phi^{s(\gamma)} \in \mathcal{G}_{s(\phi )}.\]
For a subset $D\subset \calg$, we set
\[D^\phi \defeq \{ \, \gamma^\phi \mid \gamma \in D\cap \calg_{r(\phi)}\, \}.\]
For a function $f\colon \mathcal{G} \to \C$, we define $f^{\phi }\colon \mathcal{G}\rightarrow \C$ by
\[
f^\phi (\gamma ) \defeq
\begin{cases}
f(\gamma ^{[\phi ^{-1}]} )=f(\phi_{r(\gamma)}\gamma [\phi_{s(\gamma)}]^{-1}) &\text{if }\gamma\in \mathcal{G}_{s(\phi )}, \\
0 &\text{otherwise}.
\end{cases}
\]
If $\psi$ is another local section of $\calg$, then $(\gamma^{\phi})^\psi =\gamma^{\phi \psi}$ and $(f^\phi)^\psi =f^{\phi \psi}$.

A discrete p.m.p.\ groupoid is called \textit{principal} if the map $\gamma \mapsto (r(\gamma ), s(\gamma ))$ is injective.
Let $\mathcal{R}$ be a p.m.p.\ countable Borel equivalence relation on a standard probability space $(X,\mu )$.
Then the pair $(\mathcal{R},\mu )$ is naturally a principal discrete p.m.p.\ groupoid with unit space $\mathcal{R}^0 \defeq \{ \, (x,x) \mid x\in X \, \}$, which are simply identified with $X$ itself when there is no cause for confusion.
The source and range maps are given by $s(y,x)=x$ and $r(y,x)=y$, respectively, and the multiplication and inverse operations are given by $(z,y)(y,x)=(z,x)$ and $(y,x)^{-1}=(x,y)$, respectively.
We mean by a \textit{discrete p.m.p.\ equivalence relation} on a standard probability space $(X, \mu)$ a p.m.p.\ countable Borel equivalence relation on $(X, \mu)$ equipped with this structure of a discrete p.m.p.\ groupoid.
For such an equivalence relation $\calr$, each local section $\phi$ of $\mathcal{R}$ is identified with the graph $\{ \, (\phi^o(x), x)\mid x\in s(\phi)\, \}$ of the associated map $\phi^o$.
We will abuse notation and identify $\phi$ and $\phi^o$ when there is no cause for confusion.
Then for all $(y,x)\in \mathcal{R}$ and $\phi \in [\mathcal{R}]$, we have $(y,x)^{\phi} = (\phi ^{-1}(y),\phi ^{-1}(x) )$.

The \textit{translation groupoid} associated to a p.m.p.\ action $G\curvearrowright (X,\mu )$ of a countable group $G$ is the groupoid $G\ltimes (X,\mu ) = (\mathcal{G},\mu )$ defined as follows:
The set of groupoid elements is $\mathcal{G} \defeq G\times X$ with unit space $\mathcal{G}^0\defeq \{ 1_G \} \times X$, which are once again identified with $X$ itself when there is no cause for confusion.
The source and range maps $s,r\colon \mathcal{G}\rightarrow \mathcal{G}^0$ are given by $s(g,x)=x$ and $r(g,x)=gx$, respectively, and the multiplication and inverse operations are given by $(g,hx)(h,x)= (gh,x)$ and $(g,x)^{-1}= (g^{-1}, gx )$, respectively.
The group $G$ embeds into $[\mathcal{G}]$ via the map $g\mapsto \phi _g\defeq \{ g\} \times X$. 
Then for all $(h,x)\in G\times X$ and $g\in G$, we have $(h,x)^{\phi _g}= (g^{-1}hg,g^{-1}x)$.
If the action $G\c (X, \mu)$ is essentially free, i.e., the stabilizer of $\mu$-almost every point of $X$ is trivial, then the groupoid $G\ltimes (X,\mu )$ is naturally isomorphic to the orbit equivalence relation
\[\mathcal{R}(G\curvearrowright (X,\mu ))\defeq \{ \, (gx,x) \mid g\in G,\, x \in X\, \}\]
of the action.

For a standard probability space $(X, \mu)$, let $\aut(X, \mu)$ be the group of Borel automorphisms of $X$ preserving $\mu$, where two such automorphisms are identified if they agree $\mu$-almost everywhere.
Unless otherwise stated, we endow $\aut(X, \mu)$ with the \textit{weak topology}, whose open basis is given by the sets
\[\{ \, S\in \aut(X, \mu)\mid \mu(S(A_i)\bigtriangleup T(A_i))<\ve \ \text{for all} \ i= 1,\ldots, n\, \}\]
for $T\in \aut(X, \mu)$, Borel subsets $A_1,\ldots, A_n\subset X$ and $\ve >0$.
We refer to \cite[Section 1]{kec} for details.



\section{Inner amenable groupoids}\label{sec-iag}

\subsection{Definition and equivalent conditions} 

We define inner amenability for discrete p.m.p.\ groupoids and state several conditions equivalent to it.
The proof of their equivalence is postponed to Subsection \ref{subsec-proof}, following the preliminary Subsections \ref{subsec-conj} and \ref{subsec-ame}.

\begin{defn}\label{def:groupoid}
A discrete p.m.p.\ groupoid $(\mathcal{G},\mu )$ is \textit{inner amenable} if there exists a sequence $(\xi _n )_{n\in \N}$ of non-negative unit vectors in $L^1(\mathcal{G} , \mu ^1 )$ such that
\begin{enumerate}
\item[(i)] $\| 1_{\mathcal{G}_A} \xi _n \| _1 \to \mu (A)$ for every Borel subset $A\subset \mathcal{G}^0$;
\item[(ii)] $\| \xi _n ^\phi - \xi _n  \| _1 \to 0$ for every $\phi \in [\mathcal{G}]$; 
\item[(iii)] $\| 1_{D}\xi _n \| _1 \to 0$ for every Borel subset $D\subset \mathcal{G}$ with $\mu ^1 (D)<\infty$; and
\item[(iv)] $\sum _{\gamma \in \calg^x}\xi _n (\gamma ) = 1 = \sum _{\gamma \in \calg_x}\xi _n (\gamma )$ for $\mu$-almost every $x\in \mathcal{G}^0$ and every $n\in \N$.
\end{enumerate}
Such a sequence $(\xi _n ) _{n\in \N}$ is called an \textit{inner amenability sequence} for $(\mathcal{G},\mu )$.
\end{defn}

\begin{rem}
A discrete countable group $G$, being a discrete p.m.p.\ groupoid on a singleton, is inner amenable in the above sense if there exists a sequence $(\xi_n)$ of non-negative unit vectors in $\ell^1(G)$ such that for every $g\in G$, we have $\Vert \xi_n^g-\xi_n\Vert_1\to 0$ and $\xi_n(g)\to 0$, where the function $\xi_n^g$ on $G$ is given by $\xi_n^g(h)=\xi_n(ghg^{-1})$.
\end{rem}

\begin{rem}
We will see in Lemma \ref{lem:bal} that if $(\mathcal{G} ,\mu )$ is ergodic and non-amenable, then every sequence $(\xi_n)$ of non-negative unit vectors in $L^1(\calg, \mu^1)$ satisfying condition (ii) automatically satisfies condition (i).
We call a sequence $(\xi_n)$ satisfying condition (i) \textit{balanced}.
\end{rem}

\begin{defn}
A \textit{mean} on a discrete p.m.p.\ groupoid $(\mathcal{G},\mu )$ is a finitely additive, probability measure $\bm{m}$ on $\mathcal{G}$ which is defined on the algebra of all $\mu ^1$-measurable subsets of $\mathcal{G}$ and is absolutely continuous with respect to $\mu ^1$.
Equivalently, a mean on $\mathcal{G}$ is a state on $L^\infty (\mathcal{G},\mu ^1 )$.
A mean $\bm{m}$ on $(\mathcal{G},\mu )$ is called
\begin{itemize}
\item \textit{balanced} if $\bm{m}(\mathcal{G}_A ) = \mu (A)$ for every Borel subset $A\subset \calg^0$;
\item \textit{conjugation-invariant} if $\bm{m}(D^{\phi}) = \bm{m}(D)$ for every $\phi \in [\mathcal{G}]$ and every Borel subset $D\subset \mathcal{G}$;
\item \textit{diffuse} if $\bm{m}(D)=0$ for every Borel subset $D\subset \mathcal{G}$ with $\mu ^1 (D)<\infty$; and
\item \textit{symmetric} if $\bm{m}(D) = \bm{m}(D^{-1})$ for every Borel subset $D\subset \mathcal{G}$.
\end{itemize}
\end{defn}

\begin{rem}\label{rem-balanced}
Let $\bm{m}$ be a balanced mean on a discrete p.m.p.\ groupoid $(\calg, \mu)$.
Then for every Borel subset $A\subset \calg^0$, we have $1=\mu(A)+\mu(\calg^0\setminus A)=\bm{m}(\calg_A)+\bm{m}(\calg_{\calg^0\setminus A})$, and hence $\bm{m}(\calg_{\calg^0\setminus A, A})=0$.
Moreover, for every countable partition $\calg^0=\bigsqcup_kA_k$ into Borel subsets, we have $1=\sum_k\mu(A_k)=\sum_k\bm{m}(\calg_{A_k})$, and hence given a Borel subset $D_k\subset \calg_{A_k}$ for each $k$, we have $\bm{m}(\bigsqcup_k D_k)=\sum_k\bm{m}(D_k)$.
\end{rem}

\begin{thm}\label{thm:equiv}
Let $(\mathcal{G},\mu )$ be a discrete p.m.p.\ groupoid. If $(\mathcal{G},\mu )$ is ergodic, then the following conditions (1)--(6) are equivalent:
\begin{enumerate}
\item[(1)] The groupoid $(\mathcal{G}, \mu )$ is inner amenable.
\item[(2)] There exists a net $(\xi _i )$ (as opposed to a sequence) of non-negative unit vectors in $L^1(\mathcal{G},\mu ^1 )$ satisfying conditions (i)--(iv) of Definition \ref{def:groupoid}.
\item[(3)] There exists a net $(\xi _i )$ of non-negative unit vectors in $L^1(\mathcal{G},\mu ^1 )$ satisfying conditions (ii) and (iii) of Definition \ref{def:groupoid}.

\item[(4)] There exists a diffuse, conjugation-invariant mean on $(\mathcal{G},\mu )$.
\item[(5)] There exists a diffuse, conjugation-invariant mean on $(\mathcal{G},\mu )$ which is symmetric and balanced.

\item[(6)] There exists a positive linear map $P \colon L^\infty (\mathcal{G}, \mu ^1 ) \rightarrow L^\infty (\mathcal{G}^0 ,\mu )$ such that
\begin{itemize}
\item $P(1_{\mathcal{G}_A})=1_A$ for every Borel subset $A\subset \mathcal{G}^0$;
\item $P(F)=P(F^{-1})$ and $P(F^{\phi})= P(F)\circ \phi^o$ for every $F\in L^\infty (\mathcal{G},\mu ^1 )$ and every $\phi \in [\mathcal{G}]$, where the function $F^{-1}$ is defined by $F^{-1}(\gamma)=F(\gamma^{-1})$ for $\gamma \in \calg$; and 
\item $P(F)=0$ for every $F \in L^1(\mathcal{G},\mu ^1 )\cap L^{\infty}(\mathcal{G},\mu ^1 )$. 
\end{itemize}
\end{enumerate}
In general, even without assuming that $(\mathcal{G},\mu )$ is ergodic, conditions (1), (2), (5) and (6) are all equivalent.
\end{thm}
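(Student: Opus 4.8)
The plan is to prove the chain of implications $(1)\Rightarrow(2)\Rightarrow(3)\Rightarrow(4)\Rightarrow(5)\Rightarrow(6)\Rightarrow(1)$ in the ergodic case, and then to observe which arrows survive without ergodicity so as to get the equivalence of $(1),(2),(5),(6)$ in general. The implication $(1)\Rightarrow(2)$ is trivial since a sequence is a net. For $(3)\Rightarrow(4)$ and $(2)\Rightarrow(5)$ we pass from approximating nets to means by a weak-$*$ compactness argument: view each $\xi_i$ as a state $F\mapsto \int F\xi_i\,d\mu^1$ on $L^\infty(\calg,\mu^1)$, take a weak-$*$ cluster point $\bm m$ in the (compact) state space, and check that conditions (ii), (iii), and (where present) (i) and symmetry pass to $\bm m$ in the limit, giving conjugation-invariance, diffuseness, balancedness, symmetry respectively. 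Conversely, $(4)\Rightarrow$ (a net satisfying (ii),(iii)) and $(5)\Rightarrow(2)$ run via a Day-type convexity/Hahn--Banach argument: the set of $\xi$'s approximately satisfying the relevant finitely many conditions is convex, and if no such $\xi$ existed one could separate it from the mean, contradicting that $\bm m$ lies in its weak-$*$ closure; one then extracts a net (indexed by finite sets of constraints and $\ve>0$).

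The two genuinely substantive steps are $(5)\Rightarrow(6)$ and $(6)\Rightarrow(1)$, together with the normalization step needed to upgrade a net satisfying (ii),(iii) to one also satisfying (iii) and, crucially, (iv). For $(5)\Rightarrow(6)$, given the balanced diffuse conjugation-invariant symmetric mean $\bm m$, I would build $P$ as a "conditional-expectation-type" slice: for $F\in L^\infty(\calg,\mu^1)$ and a Borel $A\subset\calg^0$, the balancedness and the decomposition remark (Remark \ref{rem-balanced}) show that $F\mapsto \bm m(1_{\calg_A}F)$ is an additive, $\mu$-absolutely-continuous set function in $A$, so there is $P(F)\in L^\infty(\calg^0,\mu)$ with $\int_A P(F)\,d\mu = \bm m(1_{\calg_A}F)$ for all $A$; positivity of $\bm m$ gives positivity of $P$, $\bm m(\calg_A)=\mu(A)$ gives $P(1_{\calg_A})=1_A$, diffuseness gives $P(F)=0$ on $L^1\cap L^\infty$, symmetry gives $P(F)=P(F^{-1})$, and conjugation-invariance together with the identity $1_{\calg_A}F$ versus $(1_{\calg_{\phi A}}F)^\phi = 1_{\calg_A}F^\phi$ translates into $P(F^\phi)=P(F)\circ\phi^o$ (this last computation is the fiddly bit and uses that $\phi$ maps $\calg_{\phi A}$ onto $\calg_A$ bijectively fiberwise). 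For $(6)\Rightarrow(1)$ one reverses this: apply $P$ to finitely supported states to produce, via another Day convexity/Hahn--Banach argument inside $L^1(\calg,\mu^1)$, nonnegative unit vectors $\xi$ satisfying (i),(ii),(iii) approximately; then extract a sequence (here ergodicity lets us use a sequence rather than a net, essentially because $L^\infty(\calg,\mu^1)$ is generated by countably many sets up to the constraints coming from the ergodic decomposition — this is presumably where Marks's simplification enters), and finally enforce (iv) by a fiberwise normalization: replace $\xi_n$ by $\tilde\xi_n(\gamma)=\xi_n(\gamma)/\big(\tfrac12\sum_{\delta\in\calg^{r(\gamma)}}\xi_n(\delta)+\tfrac12\sum_{\delta\in\calg_{s(\gamma)}}\xi_n(\delta)\big)$ or a similar symmetric renormalization, checking that (i)--(iii) are preserved in the limit because the row/column sums already tend to $1$ in an averaged sense.

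For the non-ergodic "in general" statement, ergodicity was used only to replace nets by sequences (in $(3)\Rightarrow(1)$ and in getting (iv)), so without it we keep $(1)\Rightarrow(2)$, the compactness arrows $(2)\Rightarrow(5)$, the convexity arrow $(5)\Rightarrow(2)$, the algebraic arrows $(5)\Leftrightarrow(6)$, and $(6)\Rightarrow(2)$; chaining these gives $(1)\Rightarrow(2)\Rightarrow(5)\Rightarrow(6)\Rightarrow(2)$ and, since $(1)$ is the "sequence" version which trivially implies the "net" version $(2)$, it remains to see $(2)\Rightarrow(1)$ in general, which is false in general — so in fact the correct reading is that $(2),(5),(6)$ are equivalent always and $(1)$ implies them always, and the claimed equivalence of all four must be using that one can diagonalize a net into a sequence whenever the four enforced conditions (i)--(iv) involve only countably many "test" objects after reducing mod null sets; I would handle this by fixing a countable dense subset of $[\calg]$ in the uniform topology and a countable algebra generating the Borel $\sigma$-algebra of $\calg^0$, noting conditions (i),(ii) need only be checked on these by density, and conditions (iii),(iv) are handled as above, so a standard diagonal argument produces the sequence.

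**Main obstacle.** I expect the hardest part to be the passage from a mean (or the map $P$) back to an actual \emph{sequence} with property (iv) — i.e. $(6)\Rightarrow(1)$ — since this requires simultaneously the Day-type functional-analytic extraction, the diagonalization down to a sequence (where one must be careful that only countably many constraints are essential), and the fiberwise renormalization preserving all of (i)--(iii) in the limit; each piece is routine in isolation but their interaction, especially checking that renormalization does not destroy the $L^1$-approximate conjugation-invariance, is where the care is needed.
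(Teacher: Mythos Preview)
Your plan has two genuine gaps.

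\textbf{First, you have misidentified where ergodicity is used.} The passage $(2)\Rightarrow(1)$ holds in full generality, not just in the ergodic case: conditions (i)--(iv) involve only countably many constraints once one fixes a countable dense subset of $[\calg]$ (for (ii)) and a countable generating algebra in $\calg^0$ (for (i)), and (iii), (iv) are already countable in nature; so a net satisfying (i)--(iv) yields a sequence by a routine diagonal argument. Your claim that ``$(2)\Rightarrow(1)$ in general \dots\ is false in general'' is simply wrong, and this derails your analysis of the non-ergodic case.

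Ergodicity is actually used only in $(4)\Rightarrow(5)$, which you gloss over entirely. Given a diffuse, conjugation-invariant mean $\bm m$, one can symmetrize it, but there is no obvious reason it should be \emph{balanced}. The paper splits into cases: if $(\calg,\mu)$ is non-amenable and ergodic, then every conjugation-invariant mean is automatically balanced (Lemma \ref{lem:bal}, proved via Lemma \ref{lem:nonamenbal} by showing that otherwise one could manufacture a right-invariant mean witnessing amenability); if $(\calg,\mu)$ is amenable, diffuseness forces aperiodicity and one invokes Proposition \ref{prop:aperamen} to get (1) directly. Without ergodicity this dichotomy fails (see Remark \ref{rem:nonerg}), which is why (3) and (4) drop out of the general statement. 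Your plan contains no mechanism to produce balancedness from (4).

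\textbf{Second, your ``fiberwise renormalization'' to enforce (iv) does not work as stated.} Dividing $\xi(\gamma)$ by $s_\xi(s(\gamma))$ makes the source-sums equal $1$ but destroys the range-sums, and dividing by any function of both $r(\gamma)$ and $s(\gamma)$ (such as your proposed symmetric average) makes neither sum equal $1$. Even for symmetric $\xi$ (where $r_\xi=s_\xi$), normalizing by $s_\xi(s(\gamma))$ does not fix the range-sum, since $s(\gamma)$ varies as $\gamma$ ranges over $\calg^x$. The paper's solution (Claim \ref{claim:transfin} inside Lemma \ref{lem:weakstar}) is quite different: starting from a symmetric $\eta$ with $s_\eta$ close to $1$ in $L^1$, one runs a transfinite induction, at each successor stage adding $\varepsilon 1_{C_0}$ and subtracting $\varepsilon 1_{C_1}$ for carefully chosen symmetric bounded sets $C_0\subset\calg_{A_0}$, $C_1\subset\calg_{A_1}$ with $A_0=\{s_\eta<1-\varepsilon\}$, $A_1=\{s_\eta>1+\varepsilon\}$, thereby strictly decreasing $\|s_\eta-1\|_1$ while controlling $\|\eta'-\eta\|_1\le\|s_{\eta'}-s_\eta\|_1$. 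This produces $\xi$ with $s_\xi=1$ exactly and $\|\xi-\eta\|_1=\|s_\eta-1\|_1$, which is what preserves (i)--(iii) in the limit.

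The rest of your outline --- weak-$*$ compactness for $(3)\Rightarrow(4)$ and $(2)\Rightarrow(5)$, Hahn--Banach convexity for the reverse, and the Radon--Nikodym construction for $(5)\Leftrightarrow(6)$ --- matches the paper.
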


\begin{rem}\label{rem:nonerg}
In general, in the absence of ergodicity, condition (4) does not imply condition (5), since any groupoid $(\mathcal{G},\mu )\defeq (\mathcal{G}_0\sqcup  \mathcal{G}_1 , \mu _0/2 + \mu _1/2)$, with $(\mathcal{G}_0, \mu _0)$ ergodic and inner amenable, and $(\mathcal{G}_1,\mu _1 )$ ergodic and not inner amenable, satisfies condition (4) but not condition (5).
\end{rem}

\begin{rem}\label{rem:locsec}
Condition (ii) of Definition \ref{def:groupoid} immediately implies its own strengthening that $\| \xi _n ^{\phi} - 1_{\mathcal{G}_{s(\phi )}}\xi _n  \| _1 \to 0$ for all local sections $\phi$ of $\mathcal{G}$ since every local section of $\mathcal{G}$ can be extended to a local section with conull domain.
Likewise, every conjugation-invariant mean $\bm{m}$ on $(\mathcal{G},\mu )$ satisfies $\bm{m}(D^\phi ) = \bm{m}(D)$ for all local sections $\phi$ of $\mathcal{G}$ and all Borel subsets $D\subset \mathcal{G}_{r(\phi)}$.
It follows that if a discrete p.m.p.\ groupoid $(\mathcal{G},\mu )$ is inner amenable, then so is $(\mathcal{G}_A,\mu _A )$ for every Borel subset $A\subset \mathcal{G}^0$ with positive measure, where $\mu_A$ is the normalized restriction of $\mu$ to $A$.
For the converse, see Proposition \ref{prop:inflate}.
\end{rem}

\begin{rem}\label{rem:suffice}
Let $(\calg, \mu)$ be a discrete p.m.p.\ groupoid and let $G$ be a countable subgroup of $[\calg]$ which covers $\calg$, i.e., $\calg =\bigcup_{g\in G}g$ (for example, this is realized if $(\calg,\mu) =G\ltimes (X, \mu)$ is the translation groupoid associated with a p.m.p.\ action of a countable group $G$ on a standard probability space $(X, \mu)$, and each $g\in G$ is identified with the section $\{ g\} \times X$ of $\calg$).
Let $\bm{m}$ be a balanced mean on $(\mathcal{G},\mu )$ which is invariant under conjugation by all elements of $G$, i.e., $\bm{m}(D^g) = \bm{m}(D)$ for all Borel subsets $D\subset \mathcal{G}$ and all $g\in G$.
Then $\bm{m}$ is in fact invariant under conjugation by all elements of $[\mathcal{G}]$, verified as follows:
Pick $\phi \in [\mathcal{G}]$ and a Borel subset $D\subset \mathcal{G}$.
Since $\calg$ is covered by $G$, we have a decomposition $\calg^0=\bigsqcup_{g\in G}A_g$ into Borel subsets $A_g$ such that $A_g\subset \{ \, x\in \calg^0\mid \phi^x=g^x\, \}$.
Then $\calg^0=\bigsqcup_{g\in G}g^{-1}A_g$ since $\phi^{-1}A_g=g^{-1}A_g$.
By Remark \ref{rem-balanced}, we thus have
\begin{align*}
\bm{m}(D^{\phi} ) &= \bm{m}\Biggl(\bigsqcup _{g\in G} D^\phi \cap \mathcal{G}_{g^{-1}A_g} \Biggr)= \sum_{g\in G}\bm{m}( D^\phi \cap \mathcal{G}_{g^{-1}A_g}) = \sum _{g\in G}\bm{m}((D\cap \mathcal{G}_{A_g})^g) \\
&= \sum _{g\in G}\bm{m}(D\cap \mathcal{G}_{A_g}) = \bm{m}(D),
\end{align*}
which proves the desired conclusion.

To obtain this conclusion, the assumption of $\bm{m}$ being balanced is crucial:
We assume that the groupoid $(\calg, \mu)=G\ltimes (X, \mu)$ is associated with a p.m.p.\ action $G\c (X, \mu)$ of a countable group $G$.
Given a conjugation-invariant mean $m$ on $G$, we have the mean $\bm{m}$ on $(\calg, \mu)$ defined by $\bm{m}(D)\defeq \int_{G}\mu^1(D\cap (\{ g\} \times X))\, dm(g)$ for a Borel subset $D\subset \calg$.
This mean $\bm{m}$ is invariant under conjugation by all elements of $G$.
However $\bm{m}$ is not necessarily invariant under conjugation by all elements of $[\calg]$.
To see this, we assume that $G$ is inner amenable and let $m$ be a diffuse, conjugation-invariant mean on $G$.
Then $\bm{m}$ is also diffuse.
Assume also that $G$ is non-amenable and the action $G\c (X, \mu)$ is given by a Bernoulli shift.
Then the groupoid $G\ltimes (X, \mu)$ is not inner amenable by Corollary \ref{cor-ber}, and therefore admits no diffuse, conjugation-invariant mean by Theorem \ref{thm:equiv}.
Thus $\bm{m}$ is never a mean invariant under conjugation by all elements of $[\calg]$.
\end{rem}


\subsection{Conjugation-invariant means}\label{subsec-conj}

Before proving Theorem \ref{thm:equiv}, we prepare several lemmas saying that under mild assumption, every conjugation-invariant mean is automatically balanced or diffuse.
We will use the following characterization of amenability of an ergodic discrete p.m.p.\ groupoid.

\begin{lem}\label{lem:amengroupoid}
An ergodic discrete p.m.p.\ groupoid $(\mathcal{G},\mu )$ is amenable if and only if there exists a mean $\bm{m}$ on $(\mathcal{G}, \mu )$ which is right-invariant, i.e., satisfies $\bm{m}(R_{\phi}f)=\bm{m}(f)$ for all $\phi \in [\mathcal{G}]$ and $f\in L^{\infty}(\mathcal{G},\mu ^1 )$, where $R_{\phi} \colon L^{\infty}(\mathcal{G},\mu ^1 ) \rightarrow L^{\infty}(\mathcal{G},\mu ^1 )$ is the right translation map defined by $(R_{\phi}f)(\gamma ) \defeq f(\gamma [\phi_{s(\gamma )}] ^{-1})$.
\end{lem}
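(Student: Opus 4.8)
The statement to prove is Lemma \ref{lem:amengroupoid}: an ergodic discrete p.m.p.\ groupoid $(\mathcal{G},\mu)$ is amenable if and only if there exists a right-invariant mean $\bm{m}$ on $(\mathcal{G},\mu^1)$, meaning $\bm{m}(R_\phi f) = \bm{m}(f)$ for all $\phi \in [\mathcal{G}]$ and $f \in L^\infty(\mathcal{G},\mu^1)$.

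The plan is to reduce to a standard characterization of amenability of a discrete p.m.p.\ groupoid, namely the existence of an invariant mean on the fibered system $L^\infty(\mathcal{G},\mu^1)$ as a module over $L^\infty(\mathcal{G}^0,\mu)$ — equivalently the existence of a $\mu$-measurable family $(\lambda_x)_{x\in\mathcal{G}^0}$ of means $\lambda_x$ on $\ell^\infty(\mathcal{G}_x)$ that is left-invariant under the groupoid action, in the sense of Anantharaman-Delaroche--Renault. So the first step is to recall (or cite) this module-theoretic definition of amenability together with the fact that for an ergodic groupoid one may always average such a fiberwise family against $\mu$ to get a global mean, and conversely disintegrate a suitable global mean into fibers. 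Concretely, given a right-invariant mean $\bm{m}$ as in the statement, one disintegrates it with respect to the range map $r$ (using that $\bm{m}$ is absolutely continuous with respect to $\mu^1 = \int c_x^r\,d\mu$) to obtain a measurable field $(\lambda_x)$ of means on the fibers $\mathcal{G}^x$; then one checks that right-invariance of $\bm{m}$ under all $\phi \in [\mathcal{G}]$ translates into left-invariance of the field $(\lambda_x)$ under the groupoid, which is exactly amenability. For the converse, integrate a left-invariant field against $\mu$ to produce $\bm{m}$, and verify right-invariance directly from the fiberwise invariance together with the identity $\mu^1 = \int c_x^s\,d\mu = \int c_x^r\,d\mu$.

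The key computational point, and the one I expect to be the main obstacle, is correctly matching the variance conventions: the right-translation operator $R_\phi f(\gamma) = f(\gamma[\phi_{s(\gamma)}]^{-1})$ acts on the \emph{source} coordinate of $\gamma$, whereas amenability is most naturally phrased via an invariant field of means on fibers $\mathcal{G}_x = s^{-1}(x)$ (or $\mathcal{G}^x = r^{-1}(x)$). One has to be careful that right-invariance under $[\mathcal{G}]$ — and, crucially, under \emph{all} local sections, which by Remark \ref{rem:locsec}-type reasoning is equivalent since any local section extends to one with conull domain — is enough to force invariance of the disintegrated field under every groupoid element $\gamma \in \mathcal{G}$, not merely under those arising from a single full-group element. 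Here ergodicity is used exactly as in Remark \ref{rem:suffice}: $[\mathcal{G}]$ acts on $\mathcal{G}^0$ with the property that any two points in the same $\mathcal{G}$-orbit are connected by some $\phi \in [\mathcal{G}]$, so the full group is "large enough" to witness all the groupoid's multiplications. Once this dictionary between right-invariant global means and left-invariant fiberwise means is pinned down, the equivalence with the Anantharaman-Delaroche--Renault definition of amenability is immediate, and one cites the standard reference for it.

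A cleaner alternative, which I would actually prefer to write, avoids disintegration: recall that for an ergodic discrete p.m.p.\ groupoid, amenability is equivalent to the existence of a net $(\eta_i)$ of unit vectors in $L^1(\mathcal{G},\mu^1)$ with $\|R_\phi \eta_i - \eta_i\|_1 \to 0$ for every $\phi \in [\mathcal{G}]$ and $\|1_{\mathcal{G}_A}\eta_i\|_1 \to \mu(A)$ (an asymptotically right-invariant approximate-identity condition, analogous to conditions (i)--(ii) of Definition \ref{def:groupoid} but for right translation rather than conjugation). Then the lemma is just a weak-$*$ limit / Day-type convexity argument: a right-invariant mean is precisely a weak-$*$ cluster point in $L^\infty(\mathcal{G},\mu^1)^*$ of such a net, and conversely given a right-invariant mean one produces the net by a Namioka--Day argument using that norm and weak closures of convex sets in $L^1$ coincide. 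The only thing to verify carefully is that the mean obtained in the limit is automatically absolutely continuous with respect to $\mu^1$ and satisfies $\bm{m}(\mathcal{G}_A) = \mu(A)$ — the latter following from the balancedness built into the net, and the former from the fact that each $\eta_i \in L^1$. In either route the mathematical content is light; the work is entirely in setting up the correct conventions and citing the established equivalence between these reformulations and amenability in the sense of Anantharaman-Delaroche--Renault.
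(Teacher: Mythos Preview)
Your proposal has a genuine gap in the direction ``right-invariant mean $\Rightarrow$ amenable''. In Route 1 you want to disintegrate $\bm{m}$ over $\mathcal{G}^0$ to get a measurable field of fiberwise means, but $\bm{m}$ is only finitely additive, and finitely additive measures do not disintegrate; absolute continuity with respect to $\mu^1$ does not help here. In Route 2 the Namioka--Day trick applied to a right-invariant mean produces a net in $L^1$ that is asymptotically right-invariant, but nothing forces it to be asymptotically balanced (your characterization of amenability requires both conditions). In both routes the missing step is the same: you need to know that the pushforward of $\bm{m}$ under the source map is $\mu$ itself, i.e., $\bm{m}(s^{-1}(A)) = \mu(A)$ for every Borel $A \subset \mathcal{G}^0$. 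This is where ergodicity is actually used, and you have not identified it.

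The paper supplies exactly this step. Using ergodicity, if $\mu(A) = \mu(B)$ then some $\phi \in [\mathcal{G}]$ carries $A$ to $B$, and right-invariance gives $\bm{m}(s^{-1}(A)) = \bm{m}(R_\phi 1_{s^{-1}(B)}) = \bm{m}(s^{-1}(B))$; hence $\bm{m}(s^{-1}(A)) = \theta(\mu(A))$ for some function $\theta$. Partitioning $\mathcal{G}^0$ into $n$ pieces of equal measure forces $\theta(k/n) = k/n$, and monotonicity gives $\theta = \mathrm{id}$. Once $\bm{m}(s^{-1}(A)) = \mu(A)$ is in hand, the finitely additive assignments $A \mapsto \bm{m}(1_{s^{-1}(A)} f)$ become genuinely countably additive (bounded by $\|f\|_\infty \mu$), so Radon--Nikodym produces the conditional expectation $P \colon L^\infty(\mathcal{G}) \to L^\infty(\mathcal{G}^0)$ with $P(R_\phi f) = P(f) \circ \phi^o$, which is the definition of amenability. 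Your disintegration idea is morally this Radon--Nikodym step, but it only works \emph{after} the $\theta$-argument upgrades the pushforward from a mere mean to the normal state $\mu$.
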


\begin{proof}
For principal groupoids, this is proved in \cite[Remark 4.67]{kerr-li}, whose proof involves the Connes-Feldman-Weiss theorem \cite{cfw}, however.
The following proof is more direct and applies to general groupoids.

If $(\calg, \mu)$ is amenable, then by its definition (\cite[Definition 3.2.8]{ar}), there exists a unital positive linear map $P\colon L^\infty(\calg, \mu^1)\to L^\infty(\calg^0, \mu)$ which is right-invariant, i.e., satisfies $P(R_\phi f)=P(f)\circ \phi^o$ for all $\phi \in [\calg]$ and $f\in L^{\infty}(\calg,\mu ^1 )$.
Then a right-invariant mean $\bm{m}$ on $(\calg, \mu)$ is defined by $\bm{m}(f)\defeq \int_{\calg^0} P(f)\, d\mu$.

Conversely, assume that there exists a right-invariant mean $\bm{m}$ on $(\calg, \mu)$.
We identify each Borel subset of $\calg$ with its indicator function that belongs to $L^\infty(\calg, \mu^1)$. 
We prove the equation $\bm{m}(s^{-1}(A))=\mu (A)$ holds for all Borel subsets $A\subset \mathcal{G}^0$. 
Since $(\calg, \mu)$ is ergodic, if $A, B\subset \calg^0$ are Borel subsets with $\mu(A)=\mu(B)$, then we have some $\phi \in [\calg]$ with $\phi A=B$ and hence $\bm{m}(s^{-1}(A))=\bm{m}(R_\phi [s^{-1}(B)])= \bm{m}(s^{-1}(B))$.
Therefore there is some function $\theta \colon [0, 1]\to [0, 1]
$ such that $\bm{m}(s^{-1}(A))=\theta(\mu(A))$ for every Borel subset $A\subset \calg^0$.
Given $n\in \N$, we can find a Borel partition $A_1,\dots , A_n$ of $\mathcal{G}^0$ with $\mu (A_1)=\cdots = \mu (A_n)= 1/n$. 
Then $1=\sum _{i=1}^n \bm{m}(s^{-1}(A_i)) = n\theta (1/n)$, so that $\theta (1/n ) = 1/n$ and
\[
\theta (k/n) = \bm{m}\biggl(s^{-1}\biggl( \, \bigsqcup _{i=1}^k A_i\biggr) \biggr) = \sum _{i=1}^k \bm{m}(s^{-1}(A_i)) = k\theta (1/n ) = k/n
\]
for every $k\in \{ 1,\ldots, n\}$.
Therefore $\theta (q)=q$ for all rational $q\in [0,1]$.
Since $\theta$ is monotone increasing, this implies that $\theta (r)=r$ for all $r\in [ 0, 1]$, and hence $\bm{m}(s^{-1}(A))=\mu (A)$ for all Borel subsets $A\subset \mathcal{G}^0$.

For each $f\in L^\infty(\calg, \mu^1)$, we define a (countably additive) finite, complex Borel measure $\mu_f$ on $\calg^0$ by $\mu_f(A)\defeq \bm{m}(1_{s^{-1}(A)}f)$, which is absolutely continuous with respect to $\mu$.
Countable additivity of $\mu_f$ follows from the equation proved in the last paragraph.
Then the map $P\colon L^\infty(\calg, \mu^1)\to L^\infty(\calg^0, \mu)$ defined by $P(f)\defeq d\mu_f/d\mu$ is a unital positive linear map such that $P(R_\phi f)=P(f)\circ \phi^o$ for all $\phi \in [\calg]$ and $f\in L^\infty(\calg, \mu^1)$.
\end{proof}


\begin{lem}\label{lem:nonamenbal}
Let $(\mathcal{G},\mu )$ be an ergodic discrete p.m.p.\ groupoid which is non-amenable, and let $\bm{m}$ be a conjugation-invariant mean on $(\mathcal{G},\mu )$.
If $A\subset \mathcal{G}^0$ is a Borel subset with positive measure, then $\bm{m}(\mathcal{G}_{{\mathcal{G}^0}\setminus A, A}) =0$.
\end{lem}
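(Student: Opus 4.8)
The plan is to argue by contradiction and, exploiting the fact that conjugation by the full group degenerates to a \emph{one-sided} translation on the set of arrows ``crossing out'' of $A$, to manufacture a right-invariant mean on a restriction of $\calg$; this is forbidden by non-amenability through Lemma \ref{lem:amengroupoid}.

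First I would reduce to a convenient normal form. If $\mu(\calg^0\setminus A)=0$ then $\mu^1(\calg_{\calg^0\setminus A,A})=0$, and absolute continuity of $\bm{m}$ gives the claim; so assume $\mu(\calg^0\setminus A)>0$ and choose a finite Borel partition $\calg^0\setminus A=B_1\sqcup\cdots\sqcup B_N$ with $\mu(B_k)\le\mu(A)$ for every $k$ (possible since $\mu(A)>0$). Because $\calg_{\calg^0\setminus A,A}=\bigsqcup_k\calg_{B_k,A}$ and $\bm{m}$ is finitely additive, it suffices to prove that $\bm{m}(\calg_{B,A})=0$ whenever $A,B\subset\calg^0$ are disjoint Borel sets with $0<\mu(B)\le\mu(A)$. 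Suppose instead that $c\defeq\bm{m}(\calg_{B,A})>0$.

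The crucial observation is the following. Given $\psi\in[\calg_A]$, let $\bar\psi\in[\calg]$ denote its extension by the identity off $A$. Every $\gamma\in\calg_{B,A}$ has $r(\gamma)\in B$, so in $\gamma^{\bar\psi}=[\bar\psi^{r(\gamma)}]^{-1}\gamma\bar\psi^{s(\gamma)}$ the factor $[\bar\psi^{r(\gamma)}]^{-1}$ is the unit at $r(\gamma)$, whence $\gamma^{\bar\psi}=\gamma\psi^{s(\gamma)}$; thus conjugation by $\bar\psi$ restricts to the self-bijection $\gamma\mapsto\gamma\psi^{s(\gamma)}$ of $\calg_{B,A}$. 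Since $\bm{m}$ is conjugation-invariant, the finitely additive measure $D\mapsto\bm{m}(D)$ on Borel subsets of $\calg_{B,A}$ is therefore invariant under the right-translation action of $[\calg_A]$, i.e.\ under all maps $\gamma\mapsto\gamma\psi^{s(\gamma)}$ with $\psi\in[\calg_A]$. Now I would transport this to $\calg_A$: using ergodicity (via the maximality argument already used in the proof of Lemma \ref{lem:amengroupoid}) together with $\mu(B)\le\mu(A)$, fix a local section $\theta\subset\calg$ with $s(\theta)=B$ and $r(\theta)\subset A$, and set $\Phi\colon\calg_{B,A}\to\calg_A$, $\Phi(\gamma)\defeq\theta_{r(\gamma)}\gamma$. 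One checks that $\Phi$ is a Borel isomorphism onto $\calg_{r(\theta),A}$, that it preserves the source map and is hence $\mu^1$-preserving, and that it intertwines the right-translation actions: $\Phi(\gamma\psi^{s(\gamma)})=\Phi(\gamma)\psi^{s(\Phi(\gamma))}$ for $\psi\in[\calg_A]$. Consequently the pushforward $\Phi_*(\bm{m}\restriction_{\calg_{B,A}})$ is a finitely additive measure on $\calg_A$ of total mass $c>0$, absolutely continuous with respect to $\mu_A^1$, and invariant under $R_\psi$ for every $\psi\in[\calg_A]$; rescaling by $1/c$ yields a right-invariant mean on the ergodic groupoid $(\calg_A,\mu_A)$. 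By Lemma \ref{lem:amengroupoid} the groupoid $(\calg_A,\mu_A)$ is amenable, and since amenability of an ergodic discrete p.m.p.\ groupoid passes to and from its restrictions to positive-measure Borel subsets of the unit space, $(\calg,\mu)$ is amenable, contradicting the hypothesis.

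The heart of the matter is the degeneration identity $\gamma^{\bar\psi}=\gamma\psi^{s(\gamma)}$ on $\calg_{B,A}$ and the construction of the intertwining isomorphism $\Phi$; I expect the main care to be needed in matching these, and the identification of $\gamma\mapsto\gamma\psi^{s(\gamma)}$ with the operators $R_\psi$, against the conventions of Section \ref{sec-pre} (the meaning of $\phi^x$, of $R_\phi$, and of composition of local sections), whereas the reduction step, the existence of $\theta$, and the verification that $\Phi_*$ preserves absolute continuity and invariance are routine.
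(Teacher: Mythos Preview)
Your argument is correct and follows essentially the same strategy as the paper: you use the degeneration of conjugation by $\bar\psi$ to right translation on $\calg_{B,A}$, then transport via left multiplication to obtain a right-invariant mean on $(\calg_A,\mu_A)$, contradicting non-amenability through Lemma~\ref{lem:amengroupoid}. The only cosmetic difference is that the paper works directly with $\calg_{\calg^0\setminus A,A}$ and uses a (range-non-injective) Borel choice $x\mapsto T(x)\in\calg_x$ with $r(T(x))\in A$ in place of your local section $\theta$, thereby avoiding your preliminary partition of $\calg^0\setminus A$ into pieces of measure at most $\mu(A)$.
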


\begin{proof}
Suppose toward a contradiction that $\bm{m}(\mathcal{G}_{{\mathcal{G}^0}\setminus A, A}) >0$, and let $\bm{m}_0$ denote the normalized restriction of $\bm{m}$ to $\mathcal{G}_{{\mathcal{G}^0}\setminus A, A}$.
For $\phi \in [\mathcal{G}_A]$, let $\phi^\sim \in [\mathcal{G}]$ denote the extension of $\phi$ defined by $(\phi^\sim)_y= y$ for $y\in {\mathcal{G}^0}\setminus A$ (and $(\phi^\sim)_x = \phi_x$ for $x\in A$).
Then for every $\phi \in [\mathcal{G}_A]$, both left and right translations by $\phi^\sim$ fix the set $\mathcal{G}_{{\mathcal{G}^0}\setminus A, A}$, and for every $f\in L^{\infty}(\mathcal{G},\mu ^1 )$ supported on $\mathcal{G}_{{\mathcal{G}^0}\setminus A, A}$, we have $f^{\phi^\sim} = R_{\phi^\sim}f$ and hence
\begin{align}\label{eqn:fvan}
\bm{m}_0(R_{\phi^\sim}f) &= \bm{m}_0(f) .
\end{align}
Since $(\mathcal{G},\mu )$ is ergodic, we can find a Borel map $x\mapsto T(x)\in \calg_x$ with $r(T(x))\in A$ for almost every $x\in {\mathcal{G}^0}$.
For $f\in L^{\infty}(\mathcal{G}_A,\mu ^1 _A )$, we define $L_Tf\in L^{\infty}(\mathcal{G},\mu ^1 )$ by
\[
(L_Tf)(\gamma ) \defeq
\begin{cases}
f(T(r(\gamma ))\gamma ) &\text{if }\gamma \in \mathcal{G}_{{\mathcal{G}^0}\setminus A, A},\\
0 &\text{otherwise}.
\end{cases}
\]
Then $L_TR_{\phi}f=R_{\phi^\sim}L_Tf$ for every $\phi \in [\mathcal{G}_A]$ and every $f\in L^{\infty}(\mathcal{G}_A, \mu ^1 _A )$.
Define a mean $\bm{m}_1$ on $(\mathcal{G}_A, \mu _A)$ by $\bm{m}_1(f) \defeq \bm{m}_0 (L_Tf)$.
Then for every $\phi \in [\mathcal{G}_A]$ and every $f\in L^{\infty}(\mathcal{G}_A,\mu ^1 _A )$, by equation \eqref{eqn:fvan}, we have
\[
\bm{m}_1(R_{\phi}f)= \bm{m}_0 (L_TR_{\phi}f)=\bm{m}_0(R_{\phi^\sim}L_Tf) = \bm{m}_0(L_Tf)= \bm{m}_1(f) .
\]
Thus $\bm{m}_1$ is a right-invariant mean on $(\mathcal{G}_A, \mu _A )$, and hence $(\mathcal{G}_A, \mu _A)$ is amenable by Lemma \ref{lem:amengroupoid}.
Since $A$ has positive measure and $(\mathcal{G},\mu )$ is ergodic, this implies $(\mathcal{G},\mu )$ is amenable, a contradiction.
\end{proof}

\begin{rem}
In Lemma \ref{lem:nonamenbal}, non-amenability of $(\mathcal{G},\mu )$ is necessary:
Let the group $G\defeq \bigoplus_\N \Z /2\Z$ act on the compact group $X\defeq \prod_\N \Z /2\Z$ by translation, and equip $X$ with the normalized Haar measure $\mu$.
Let $\calr$ be the associated orbit equivalence relation.
For $n\in \N$, define the subgroup $F_n\defeq \bigoplus_{k=1}^n\Z /2\Z$ of $G$, and let $\calr_n$ be the subrelation of $\calr$ generated by $F_n$.
Define the non-negative unit vector $\xi_n\defeq 1_{\calr_n}/2^n\in L^1(\calr, \mu^1)$, and let $\bm{m}$ be any weak${}^*$-cluster point of the sequence $(\xi_n)$ in $L^\infty(\calr, \mu^1)^*$.
Then $\bm{m}$ is a mean on $(\calr, \mu)$ which is left and right-invariant and hence conjugation-invariant.
However, if $A\defeq \{ \, (x_k)\in X \mid x_1=0\, \}$, then $\int_{\calr}\xi_n1_{\calr_{A, X\setminus A}}\, d\mu^1=\mu(A)(2^{n-1}/2^n)=1/4$ for every $n$, and therefore $\bm{m}(\calr_{A, X\setminus A})=1/4\neq 0$.
\end{rem}

\begin{lem}\label{lem:bal}
Let $(\mathcal{G},\mu )$ be an ergodic discrete p.m.p.\ groupoid which is non-amenable.
Then every conjugation-invariant mean on $(\mathcal{G},\mu )$ is balanced.

It follows that if $(\xi _n )$ is any sequence of non-negative unit vectors in $L^1(\mathcal{G},\mu ^1)$ satisfying $\| \xi _n ^{\phi} - \xi _n \| _1\rightarrow 0$ for all $\phi \in [\mathcal{G}]$, then $(\xi _n )$ is balanced.
\end{lem}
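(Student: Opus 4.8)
The plan is to follow, almost verbatim, the structure of the proof of Lemma~\ref{lem:amengroupoid}, with conjugation-invariance playing the role that right-invariance plays there, and with Lemma~\ref{lem:nonamenbal} used to discard ``off-diagonal'' pieces of $\mathcal{G}$. So let $\bm{m}$ be a conjugation-invariant mean on $(\mathcal{G},\mu)$; the goal is $\bm{m}(\mathcal{G}_A)=\mu(A)$ for every Borel subset $A\subset\mathcal{G}^0$. The first step is a bookkeeping identity: for every $\phi\in[\mathcal{G}]$ one checks directly from the definitions that $\mathcal{G}_A^{\phi}=\mathcal{G}_{\phi^{-1}A}$, because $\gamma\mapsto\gamma^{\phi}$ is a bijection of $\mathcal{G}$ carrying the range and source of $\gamma$ to $(\phi^{o})^{-1}(r(\gamma))$ and $(\phi^{o})^{-1}(s(\gamma))$ respectively. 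Conjugation-invariance of $\bm{m}$ then gives $\bm{m}(\mathcal{G}_{\phi^{-1}A})=\bm{m}(\mathcal{G}_A)$. Since $(\mathcal{G},\mu)$ is ergodic, any two Borel subsets of $\mathcal{G}^0$ of equal measure are related by an element of $[\mathcal{G}]$ (as already used in the proof of Lemma~\ref{lem:amengroupoid}), so there is a function $\theta\colon[0,1]\to[0,1]$ with $\bm{m}(\mathcal{G}_B)=\theta(\mu(B))$ for every Borel $B\subset\mathcal{G}^0$, and $\theta$ is monotone increasing since $\bm{m}$ is monotone.

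Next I would fix $n\in\mathbb{N}$ and a Borel partition $A_1,\dots,A_n$ of $\mathcal{G}^0$ into sets of measure $1/n$, and decompose $\mathcal{G}_{A_1\sqcup\cdots\sqcup A_k}=\bigsqcup_{i,j\leq k}\mathcal{G}_{A_i,A_j}$ for each $k\leq n$. Here Lemma~\ref{lem:nonamenbal} enters: when $i\neq j$ we have $\mathcal{G}_{A_i,A_j}\subset\mathcal{G}_{\mathcal{G}^0\setminus A_j,A_j}$ and $\mu(A_j)>0$, so $\bm{m}(\mathcal{G}_{A_i,A_j})=0$; hence, by finite additivity, $\theta(k/n)=\bm{m}(\mathcal{G}_{A_1\sqcup\cdots\sqcup A_k})=\sum_{i=1}^{k}\bm{m}(\mathcal{G}_{A_i})=k\,\theta(1/n)$. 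Taking $k=n$ and using $\bm{m}(\mathcal{G})=1$ gives $\theta(1/n)=1/n$, whence $\theta(q)=q$ for every rational $q\in[0,1]$, and then $\theta(r)=r$ for all $r\in[0,1]$ by monotonicity. In particular $\bm{m}(\mathcal{G}_A)=\theta(\mu(A))=\mu(A)$. (If $\mu$ fails to be non-atomic, ergodicity forces $\mathcal{G}^0$ to consist of finitely many atoms of equal mass, and one runs the same computation with the partition into singletons; this edge case is harmless.)

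For the second assertion, suppose $(\xi_n)$ is a sequence of non-negative unit vectors in $L^1(\mathcal{G},\mu^1)$ with $\|\xi_n^{\phi}-\xi_n\|_1\to0$ for all $\phi\in[\mathcal{G}]$, and fix a Borel subset $A\subset\mathcal{G}^0$. Regarding each $\xi_n$ as the state $f\mapsto\int_{\mathcal{G}}f\xi_n\,d\mu^1$ on $L^\infty(\mathcal{G},\mu^1)$, any weak${}^*$-cluster point $\bm{m}$ of $(\xi_n)$ is a mean (the state space being weak${}^*$-compact), and moreover conjugation-invariant: for Borel $D\subset\mathcal{G}$ and $\phi\in[\mathcal{G}]$ one has $1_{D^\phi}=(1_D)^\phi$, and, conjugation by $\phi$ being $\mu^1$-preserving, $\int_{\mathcal{G}}(1_D)^\phi\xi_n\,d\mu^1=\int_{\mathcal{G}}1_D\,\xi_n^{\phi^{-1}}\,d\mu^1$, so that $\bigl|\int 1_{D^\phi}\xi_n\,d\mu^1-\int 1_D\xi_n\,d\mu^1\bigr|\leq\|\xi_n^{\phi^{-1}}-\xi_n\|_1\to0$ and hence $\bm{m}(D^\phi)=\bm{m}(D)$. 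By the first part, $\bm{m}$ is balanced, so $\bm{m}(\mathcal{G}_A)=\mu(A)$. To promote this from a subnet statement to convergence of the whole sequence I would argue by contradiction: if $\|1_{\mathcal{G}_A}\xi_n\|_1\not\to\mu(A)$, pass to a subsequence along which $\|1_{\mathcal{G}_A}\xi_n\|_1$ stays at distance $\geq\varepsilon$ from $\mu(A)$ for some $\varepsilon>0$; this subsequence still satisfies condition~(ii) of Definition~\ref{def:groupoid}, so it has a weak${}^*$-cluster point, which is again a conjugation-invariant mean and hence balanced, forcing its value on $\mathcal{G}_A$ to equal $\mu(A)$ -- but that value is a subnet limit of reals in $[0,1]$ all at distance $\geq\varepsilon$ from $\mu(A)$, a contradiction.

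The argument has no single hard step; the points requiring care are, first, to work with $\mathcal{G}_A$ itself (rather than $s^{-1}(A)$ as in Lemma~\ref{lem:amengroupoid}), so that the cross terms $\mathcal{G}_{A_i,A_j}$, $i\neq j$, fall within the scope of Lemma~\ref{lem:nonamenbal}, and, second, the soft passage in the second assertion from a weak${}^*$-cluster value -- which a priori sees only a subnet -- back to genuine convergence of $\|1_{\mathcal{G}_A}\xi_n\|_1$, handled by the contradiction argument above. One should also confirm the two small facts used along the way: that conjugation by a full-group element preserves $\mu^1$ (which licenses the adjoint identity transferring condition~(ii) to the cluster point), and the identity $\mathcal{G}_A^{\phi}=\mathcal{G}_{\phi^{-1}A}$.
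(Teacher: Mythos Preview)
Your proof is correct and follows essentially the same route as the paper's: both use conjugation-invariance together with ergodicity to reduce $\bm{m}(\mathcal{G}_A)$ to a function $\theta$ of $\mu(A)$, invoke Lemma~\ref{lem:nonamenbal} to kill the off-diagonal pieces $\mathcal{G}_{A_i,A_j}$ in a partition of $\mathcal{G}^0$ into $n$ pieces of equal measure, deduce $\theta(k/n)=k/n$, and finish by monotonicity. The paper leaves the second assertion as an implicit consequence, whereas you spell out the weak${}^*$-cluster-point and subsequence argument; this is a welcome elaboration but not a different idea.
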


\begin{proof}
Let $\bm{m}$ be a conjugation-invariant mean on $(\mathcal{G},\mu )$.
We follow the argument in the proof of Lemma \ref{lem:amengroupoid}.
Since $(\mathcal{G},\mu )$ is ergodic, if $A,B\subset \mathcal{G}^0$ are Borel subsets with $\mu (A) = \mu (B)$, then we have some $\phi \in [\mathcal{G}]$ with $\phi A= B$ and hence $\bm{m}(\mathcal{G}_{A}) = \bm{m} ((\mathcal{G}_B)^{\phi})= \bm{m}(\mathcal{G}_B)$.
Therefore there is some function $\theta \colon [0,1]\rightarrow [0,1]$ such that $\bm{m}(\mathcal{G}_A)= \theta (\mu (A))$ for every Borel subset $A\subset  \mathcal{G}^0$.
Given $n\in \N$, we can find a Borel partition $A_1,\dots , A_n$ of $\mathcal{G}^0$ with $\mu (A_1)=\cdots = \mu (A_n)= 1/n$, and Lemma \ref{lem:nonamenbal} implies that $1=\sum _{i=1}^n \bm{m}(\mathcal{G}_{A_i}) = n\theta (1/n)$, so that $\theta (1/n ) = 1/n$ and
\[
\theta (k/n) = \bm{m}(\mathcal{G}_{\bigsqcup _{i=1}^k A_i} ) = \sum _{i=1}^k \bm{m}(\mathcal{G}_{A_i}) = k\theta (1/n ) = k/n
\]
for every $k\in \{ 1,\ldots, n\}$.
Therefore $\theta (q)=q$ for all rational $q\in [0,1]$.
Since $\theta$ is monotone increasing, this implies that $\theta (r)=r$ for all $r\in [ 0, 1]$, and hence $\bm{m}(\mathcal{G}_A)=\mu (A)$ for all Borel subsets $A\subset \mathcal{G}^0$, i.e., $\bm{m}$ is balanced.
\end{proof}

\begin{lem}\label{lem:diffuse}
Let $(\mathcal{G},\mu )$ be a discrete p.m.p.\ groupoid and let
\[\mathcal{G}_{\mathrm{isot}} \defeq \{ \, \gamma \in \mathcal{G} \mid s(\gamma ) =r(\gamma ) \, \}\]
be the isotropy subgroupoid of $\calg$.
Let $\bm{m}$ be a balanced mean on $(\calg, \mu)$.
Then $\bm{m}(D)=0$ for all Borel subsets $D\subset \calg \setminus \calg_{\mathrm{isot}}$ with $\mu^1(D)<\infty$.
If $\bm{m}$ further satisfies $\bm{m}(E)=0$ for all bounded Borel subsets $E\subset \mathcal{G}_{\mathrm{isot}}$, then $\bm{m}$ is diffuse.

It follows that if $\mathcal{R}$ is a discrete p.m.p.\ equivalence relation on $(X,\mu )$, and if $\bm{m}$ is a balanced mean on $(\mathcal{R},\mu )$ satisfying $\bm{m} (\{ \, (x,x) \mid x\in X \, \} ) =0$, then $\bm{m}$ is diffuse.
\end{lem}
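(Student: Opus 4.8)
The plan is to reduce everything to two elementary consequences of $\bm{m}$ being balanced. By Remark \ref{rem-balanced}, $\bm{m}(\calg_{\calg^0\setminus A, A})=0$ for every Borel $A\subset\calg^0$; and writing $s^{-1}(B)=\calg_B\sqcup\calg_{\calg^0\setminus B, B}$, it follows that $\bm{m}(s^{-1}(B))=\mu(B)$ for every Borel $B\subset\calg^0$. Throughout I use that $\bm{m}$, being a mean, is monotone and finitely additive.

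The first step is to show $\bm{m}(\phi)=0$ for a single local section $\phi\subset\calg\setminus\calg_{\mathrm{isot}}$. Here the associated partial Borel transformation $\phi^o\colon s(\phi)\to r(\phi)$ moves every point of $s(\phi)$, so the Borel graph on $s(\phi)\cup r(\phi)$ with edge set $\{\,\{x,\phi^o(x)\}\mid x\in s(\phi)\,\}$ is loopless (since $\phi^o$ has no fixed points) and of degree at most two (each vertex lies on at most one edge $\{x,\phi^o(x)\}$ and on at most one edge $\{(\phi^o)^{-1}(y),y\}$, by injectivity of $\phi^o$), hence admits a Borel proper $3$-colouring by the standard fact on bounded-degree Borel graphs. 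Letting $A_1,A_2,A_3$ be the three colour classes inside $s(\phi)$, we get $\phi^o(A_i)\cap A_i=\emptyset$, so $\{\,\phi_x\mid x\in A_i\,\}\subset\calg_{\calg^0\setminus A_i, A_i}$ and therefore $\bm{m}(\{\,\phi_x\mid x\in A_i\,\})=0$; summing over $i$ gives $\bm{m}(\phi)=0$.

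Next, given any Borel $D$ with $\mu^1(D)<\infty$, decompose $D=\bigsqcup_n\phi_n$ into a countable disjoint union of local sections (possible since $s$ and $r$ are countable-to-one). Then $\sum_n\mu(s(\phi_n))=\sum_n\mu^1(\phi_n)=\mu^1(D)<\infty$, so for each $\ve>0$ I can fix $N$ with $\mu(B)<\ve$, where $B\defeq\bigcup_{n>N}s(\phi_n)$; since $\bigsqcup_{n>N}\phi_n\subset s^{-1}(B)$, this gives $\bm{m}(\bigsqcup_{n>N}\phi_n)\le\mu(B)<\ve$, and hence
\[
\bm{m}(D)=\sum_{n\le N}\bm{m}(\phi_n)+\bm{m}\Bigl(\bigsqcup_{n>N}\phi_n\Bigr)<\sum_{n\le N}\bm{m}(\phi_n)+\ve .
\]
If every $\phi_n$ is $\bm{m}$-null this forces $\bm{m}(D)\le\ve$ for all $\ve$, so $\bm{m}(D)=0$. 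Taking $D\subset\calg\setminus\calg_{\mathrm{isot}}$ makes each $\phi_n\subset\calg\setminus\calg_{\mathrm{isot}}$, null by the previous step, which gives the first assertion. Under the extra hypothesis, a local section contained in $\calg_{\mathrm{isot}}$ is bounded, hence $\bm{m}$-null, so the same argument shows $\bm{m}$ vanishes on every subset of $\calg_{\mathrm{isot}}$ of finite $\mu^1$-measure; splitting a general $D$ with $\mu^1(D)<\infty$ as $(D\setminus\calg_{\mathrm{isot}})\sqcup(D\cap\calg_{\mathrm{isot}})$ then shows $\bm{m}$ is diffuse. Finally, for a discrete p.m.p.\ equivalence relation $\calr$ we have $\calg_{\mathrm{isot}}=\{\,(x,x)\mid x\in X\,\}$, so $\bm{m}(\calg_{\mathrm{isot}})=0$ implies by monotonicity that $\bm{m}$ vanishes on every subset of $\calg_{\mathrm{isot}}$, giving the last assertion.

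The main obstacle is purely the finite additivity of $\bm{m}$: one cannot simply sum $\bm{m}$ over the (infinite) local-section decomposition of $D$, and the hypothesis $\mu^1(D)<\infty$ is used precisely to absorb the tail of that decomposition into a single set $s^{-1}(B)$ with $\mu(B)$ small, whose mass the balanced property controls. The auxiliary nullity of a single fixed-point-free local section is soft, resting only on a finite Borel colouring together with the identity $\bm{m}(\calg_{\calg^0\setminus A, A})=0$.
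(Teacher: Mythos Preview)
Your proof is correct and shares the same core mechanism as the paper's: reduce to a single local section $\phi\subset\calg\setminus\calg_{\mathrm{isot}}$, then split $s(\phi)$ into three Borel pieces $A_i$ with $\phi^o(A_i)\cap A_i=\emptyset$, so that each $\{\phi_x:x\in A_i\}\subset\calg_{\calg^0\setminus A_i,A_i}$ is $\bm{m}$-null by the balanced property. The two proofs differ only in the surrounding reductions.

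For the $3$-partition, you invoke a Borel $3$-colouring of the degree-$\le 2$ graph of $\phi^o$; the paper instead gives a direct measure-theoretic maximality argument (take $A_0$ maximal modulo $\mu$ with $\phi A_0\cap A_0=\emptyset$, set $A_1=\phi A_0\cap s(\phi)$ and $A_2=s(\phi)\setminus(A_0\cup A_1)$). The paper's version is more elementary, avoiding any appeal to Borel graph combinatorics, while yours is shorter once that machinery is taken for granted.

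For the passage from a single local section to a general $D$ with $\mu^1(D)<\infty$, the routes genuinely diverge. The paper first reduces to \emph{bounded} $D$, using the countable additivity of $\bm{m}$ over the partition $\calg^0=\bigsqcup_{n,m}A_{n,m}$ (Remark~\ref{rem-balanced}), and then covers a bounded $D$ by finitely many local sections. You instead decompose $D$ directly into countably many local sections and control the tail via the identity $\bm{m}(s^{-1}(B))=\mu(B)$, which you derive from the balanced property. Your tail estimate is a neat alternative to the paper's boundedness reduction; conversely, the paper's reduction makes the finiteness of the local-section cover automatic and keeps everything finitely additive throughout.
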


\begin{proof}
To prove the first assertion, we suppose toward a contradiction that there is some Borel subset $D\subset \calg \setminus \calg_{\mathrm{isot}}$ with $\bm{m}(D)>0$ and $\mu ^1(D)<\infty$.
Since $\mu ^1(D)<\infty$, we have both $|D^x|<\infty$ and $|D_x| < \infty$ for $\mu$-almost every $x\in \mathcal{G}^0$.
Then the sets
\[A_{n,m} \defeq  \{ \, x\in \mathcal{G}^0\mid |D_x| = n \text{ and } |D^x|=m \, \}\]
with non-negative integers $n$, $m$ partition $\mathcal{G}^0$.
Since $\bm{m}$ is balanced, we have $0<\bm{m}(D)=\sum _{n,m} \bm{m} (\mathcal{G}_{A_{n,m}}\cap D)$.
Thus, after replacing $D$ by one of the sets $\mathcal{G}_{A_{n,m}}\cap D$ if necessary, we may assume without loss of generality that $D$ is bounded.

Since $D$ is bounded, it is covered by finitely many local sections of $\mathcal{G}$, and hence we can find a local section $\phi \subset D$ of $\mathcal{G}$ with $\bm{m}(\phi ) > 0$.
We set $A\defeq s(\phi)$.
Since $D\cap \mathcal{G}_{\mathrm{isot}}=\emptyset$, we have $\phi^o(x)\neq x$ for all $x\in A$.
We can partition $A$ into three Borel subsets $A_0$, $A_1$ and $A_2$ such that $\phi A_i\cap A_i =\emptyset$ (modulo $\mu$) for every $i\in \{ 0, 1, 2\}$.
Indeed, take $A_0$ to be any maximal (modulo $\mu$) Borel subset of $A$ with $\phi A_0\cap A_0 =\emptyset$, and set $A_1\defeq \phi A_0\cap A$ and $A_2 \defeq A\setminus (A_0\cup A_1)$.
This works since we then have $\phi A_1\cap A_1 \subset \phi (A\setminus A_0)\cap \phi A_0=\emptyset$, and $\mu (\phi A_2 \cap A_2 )=0$ by maximality of $A_0$.
We set $\phi_i\defeq \{ \, \phi_x\mid x\in A_i\, \}$ for $i\in \{ 0,1,2\}$.
Then $\phi_i \subset \mathcal{G}_{\mathcal{G}^0\setminus A_i , A_i }$, and hence $\bm{m}(\phi_i)= 0$ since $\bm{m}$ is balanced. It therefore follows that $\bm{m}(\phi)=\bm{m}(\phi_0)+ \bm{m}(\phi_1)+ \bm{m}(\phi_2) = 0$, a contradiction.

For the second assertion, suppose that $\bm{m}$ satisfies $\bm{m}(E)=0$ for all bounded Borel subsets $E\subset \calg_{\mathrm{isot}}$.
Let $D\subset \calg$ be a Borel subset with $\mu^1(D)<\infty$.
We prove $\bm{m}(D)=0$.
The argument in the first paragraph of the proof shows that we may assume that $D$ is bounded.
Then $\bm{m}(D\cap \calg_{\mathrm{isot}})=0$ by the assumption on $\bm{m}$, and $\bm{m}(D\setminus \calg_{\mathrm{isot}})=0$ by the first assertion of the lemma proved above.
Thus $\bm{m}(D)=0$, and $\bm{m}$ is diffuse.
\end{proof}


\subsection{Amenability and inner amenability}\label{subsec-ame}

Every discrete, countably infinite, amenable group is inner amenable.
We extend this to a discrete p.m.p.\ groupoid.

\begin{prop}\label{prop:extend}
Let $(\mathcal{G},\mu )$ be a discrete p.m.p.\ groupoid.
Let
\[
\calg_{\mathrm{isot}}\defeq \{ \, \gamma \in \calg \mid s(\gamma)=r(\gamma)\, \} \quad \text{and}\quad \mathcal{R}_{\mathcal{G}} \defeq  \{ \, (r(\gamma ), s(\gamma ))\mid \gamma \in \mathcal{G} \, \}
\]
be the isotropy subgroupoid of $\calg$ and the equivalence relation associated to $\calg$, respectively.
Suppose that $(\calg_{\mathrm{isot}}, \mu)$ is inner amenable and $(\calr_\calg, \mu)$ is hyperfinite.
Then $(\mathcal{G},\mu )$ is inner amenable, and moreover there exists an inner amenability sequence $(\xi _n)$ for $(\mathcal{G}, \mu)$ such that each $\xi_n$ is supported on $\mathcal{G}_{\mathrm{isot}}$.
\end{prop}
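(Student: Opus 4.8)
The plan is to build an inner amenability sequence $(\xi_n)$ for $(\calg,\mu)$ out of two ingredients: an inner amenability sequence $(\eta_n)$ for the isotropy subgroupoid $(\calg_{\mathrm{isot}},\mu)$, which exists by hypothesis, and a F\o lner-type sequence of bounded sub-equivalence-relations $(\calr_k)$ of $\calr_\calg$ witnessing hyperfiniteness. Since $\calr_\calg$ is hyperfinite, we may write $\calr_\calg=\bigcup_k\calr_k$ as an increasing union of bounded (indeed finite) Borel sub-equivalence relations. The key point is that the Borel map $\gamma\mapsto(r(\gamma),s(\gamma))$ exhibits $\calg$ as "fibered" over $\calr_\calg$ with fibers that are cosets of isotropy groups: for $(y,x)\in\calr_\calg$ the fiber $\calg^y_x$ is a torsor over $\calg^x_x$. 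So a natural candidate is to define $\xi_n$ to be supported on $\calg_{\mathrm{isot}}$ and equal to $\eta_n$ there; more precisely, since we are allowed (and in fact required by the "moreover" clause) to take $\xi_n$ supported on $\calg_{\mathrm{isot}}$, the first attempt is simply $\xi_n\defeq\eta_n$, viewed inside $L^1(\calg,\mu^1)$ via the inclusion $\calg_{\mathrm{isot}}\hookrightarrow\calg$.

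\textbf{Verifying the four conditions.} With $\xi_n\defeq\eta_n$ supported on $\calg_{\mathrm{isot}}$, conditions (i), (iii) and (iv) of Definition \ref{def:groupoid} for $(\calg,\mu)$ follow almost immediately from the corresponding conditions for $(\eta_n)$ as an inner amenability sequence for $(\calg_{\mathrm{isot}},\mu)$: condition (i) holds because $\calg_{\mathrm{isot}}\cap\calg_A=(\calg_{\mathrm{isot}})_A$, so $\|1_{\calg_A}\xi_n\|_1=\|1_{(\calg_{\mathrm{isot}})_A}\eta_n\|_1\to\mu(A)$; condition (iii) follows since any $D\subset\calg$ with $\mu^1(D)<\infty$ meets $\calg_{\mathrm{isot}}$ in a set of finite $\mu^1$-measure, and $\|1_D\xi_n\|_1=\|1_{D\cap\calg_{\mathrm{isot}}}\eta_n\|_1\to0$; condition (iv) holds because for a.e. $x$ the sums $\sum_{\gamma\in\calg^x}\xi_n(\gamma)$ and $\sum_{\gamma\in\calg_x}\xi_n(\gamma)$ reduce to sums over $\calg^x_x=(\calg_{\mathrm{isot}})_x$, which equal $1$ by condition (iv) for $(\eta_n)$. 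The real work is condition (ii): we must show $\|\xi_n^\phi-\xi_n\|_1\to0$ for every $\phi\in[\calg]$, not merely for $\phi\in[\calg_{\mathrm{isot}}]$.

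\textbf{The main obstacle: handling conjugation by $[\calg]$.} Conjugation by a general $\phi\in[\calg]$ need not preserve $\calg_{\mathrm{isot}}$ — it sends $\calg^x_x$ to $\calg^{\phi^o(x)}_{\phi^o(x)}$ — so $\xi_n^\phi$ is again supported on $\calg_{\mathrm{isot}}$ but the conjugation mixes different isotropy groups according to the action of $\phi^o$ on $\calr_\calg$, which is where hyperfiniteness must be used. The strategy is: first reduce to the case where $\phi$ normalizes a bounded sub-equivalence-relation. Concretely, fix $\ve>0$; using that $\phi^o$ generates together with the $\calr_k$ an amenable (hyperfinite) equivalence relation, choose $k$ large enough that, off a set of measure $<\ve$, the "$\phi$-orbit" of a point stays close to its $\calr_k$-class in a F\o lner sense — i.e., $\mu\big(\{x:\phi^o(x)\notin\calr_k\text{-class of }x\}\big)$ can be controlled, or more robustly, approximate $\phi$ in the uniform topology of $[\calg]$ by an element $\psi$ with $\psi^o\in[\calr_k]$ together with a correction living in $\calg_{\mathrm{isot}}$. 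Then conjugation by the $\calg_{\mathrm{isot}}$-part is handled by condition (ii) for $(\eta_n)$ (using Remark \ref{rem:locsec} to pass to local sections), while conjugation by the bounded piece $\psi$ is handled by an averaging/F\o lner argument: replace $\eta_n$ by its symmetrization over the finite $\calr_k$-classes, i.e. pass to a modified sequence $\tilde\eta_n$ that is already (approximately) invariant under conjugation by all of $[\calr_k]$ and still satisfies (i)–(iv) because $\calr_k$ is bounded so the modification is a bounded averaging operator, hence continuous on $L^1$ and asymptotically negligible in $\ell^1$-norm against a diffuse sequence. A diagonal argument over an exhaustion $\calr_1\subset\calr_2\subset\cdots$ and a countable dense subset of $[\calg]$ in the uniform topology then produces a single sequence $(\xi_n)$, still supported on $\calg_{\mathrm{isot}}$, satisfying (i)–(iv) for all $\phi\in[\calg]$. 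The delicate bookkeeping is in showing the averaging over $\calr_k$-classes does not destroy diffuseness (condition (iii)) and balancedness (condition (i)); this is where one uses that each $\calr_k$ is \emph{bounded}, so the averaging operator has norm controlled by the bound $N_k$ on class sizes and moves $\ell^1$-mass by a bounded factor, keeping (iii) intact in the limit after the diagonalization.
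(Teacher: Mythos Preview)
Your setup is right: support $\xi_n$ on $\calg_{\mathrm{isot}}$, and conditions (i), (iii), (iv) transfer directly from the inner amenability sequence $(\eta_n)$ for $(\calg_{\mathrm{isot}},\mu)$. The problem is your treatment of condition (ii), which is where the argument has a genuine gap.

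You factor $\phi$ as $\psi\chi$ with $\psi^o\in[\calr_k]$ and $\chi\in[\calg_{\mathrm{isot}}]$, propose to handle $\chi$ by condition (ii) for $(\eta_n)$, and to handle $\psi$ by ``symmetrizing $\eta_n$ over the finite $\calr_k$-classes.'' But this symmetrization is not well-defined as stated, and no naive version of it achieves what you claim. To symmetrize you must choose a Borel groupoid section $\sigma\colon\calr_\calg\to\calg$ and transport $\eta_n|_{\calg_x^x}$ between points of a class; the result is then invariant under the specific \emph{$\sigma$-transport} maps, not under conjugation by an arbitrary $\psi$ with $\psi^o\in[\calr_k]$. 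The discrepancy between $\psi$-conjugation and $\sigma$-transport at a point $y$ is conjugation by $\sigma(x,\psi^o(y))\psi_y\sigma(y,x)\in\calg_x^x$, which depends on $\psi$ and on $y$. So after your symmetrization, handling $\psi$ \emph{still} requires the symmetrized sequence to be asymptotically invariant under a $\psi$-dependent family of isotropy conjugations. Your claim that ``the averaging operator is bounded, hence continuous'' only gives $T_k(\eta_n^\chi)\approx T_k(\eta_n)$; it does \emph{not} give $(T_k\eta_n)^\chi\approx T_k\eta_n$, because $T_k$ does not commute with $[\calg_{\mathrm{isot}}]$-conjugation. This is the missing idea.

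The paper's fix is not to post-process a fixed $(\eta_n)$ at all. Instead, for each $n$ one picks a transversal $X_n$ for $\calr_n$, a dense family $\{\phi_i\}\subset[\calg]$ with $\phi_i^o\in[\calr_n]$ for $i\le n$, and for each transversal point $x\in X_n$ forms the \emph{finite} set
\[
F_x^n=\{\,\sigma(x,r((\phi_i)_y))\,(\phi_i)_y\,\sigma(y,x)\;:\;y\in[x]_{\calr_n},\ i\le n\,\}\subset\calg_x^x.
\]
Inner amenability of $(\calg_{\mathrm{isot}},\mu)$ then allows one to choose, in a Borel way, a vector $\eta_n^x\in\ell^1(\calg_x^x)$ which is $(1/n)$-invariant under conjugation by every element of $F_x^n$ and puts small mass on a prescribed bounded set. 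Transporting $\eta_n^x$ via $\sigma$ to the whole $\calr_n$-class defines $\xi_n$, and now $\|\xi_n^{\phi_i}-\xi_n\|_1<1/n$ holds by construction, because the isotropy conjugations that arise are exactly the elements of $F_x^n$. The point is that the isotropy-level vectors must be chosen \emph{adapted to the target $\phi_i$'s}, not generic; an averaging of a fixed sequence cannot accomplish this. (An alternative, conceptually quite different, route is noted in Remark~\ref{rem-gp}: use Giordano--Pestov's extreme amenability of $[\calr_\calg]$ to find a $[\calg]$-fixed point in the compact convex set of balanced diffuse means that are already $[\calg_{\mathrm{isot}}]$-invariant.)
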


We note that by Proposition \ref{prop:ergdec}, the groupoid $(\calg_{\mathrm{isot}}, \mu)$ being inner amenable is equivalent to the isotropy group $\calg_x^x$ being inner amenable for $\mu$-almost every $x\in \calg^0$.

\begin{proof}[Proof of Proposition \ref{prop:extend}]
By hypothesis, we can write $\mathcal{R}_{\mathcal{G}}=\bigcup _n \mathcal{R}_n$, where $\mathcal{R}_1\subset \mathcal{R}_2\subset \cdots$ is an increasing sequence of Borel equivalence relations on $\mathcal{G}^0$ which are bounded subsets of $\calr_\calg$.
We can also find a Borel section $\sigma \colon \calr_\calg \to \calg$ of the quotient map which is a groupoid-homomorphism.
For each $n$, let $X_n\subset \mathcal{G}^0$ be a Borel transversal for $\mathcal{R}_n$, i.e., a Borel subset of $\calg^0$ which meets each $\mathcal{R}_n$-equivalence class in exactly one point.
We choose a countable dense subset $\{ \phi _n \}_{n\in \N}\subset [\mathcal{G}]$ such that $(x, \phi _i ^o(x)) \in \mathcal{R}_n$ for all $i\leq n$ and all $x\in \mathcal{G}^0$.
Let $D_1\subset D_2\subset \cdots$ be an exhaustion of $\calg_{\mathrm{isot}}$ by its bounded Borel subsets.

For $x\in \calg^0$, let $[x]_{\calr_n}$ be the $\calr_n$-equivalence class of $x$.
For each $n$, we define the set
\[D_n'\defeq \{ \, \sigma(x, y)\gamma \sigma(y, x)\mid x\in X_n,\, y\in [x]_{\calr_n} \ \text{and}\ \ \gamma \in D_n\cap \calg_y^y\, \},\]
which is a bounded Borel subset of $\calg_{\mathrm{isot}}$.
For $n\in \N$ and $x\in X_n$, we also define the set
\[F^n_x\defeq \{ \, \sigma(x, r((\phi_i)_y))(\phi_i)_y\sigma(y, x) \mid y\in [x]_{\mathcal{R}_n} \text{ and }i\leq n  \, \},\]
which is a finite subset of $\calg_x^x$.  
Since $(\calg_{\mathrm{isot}}, \mu)$ is inner amenable, for every $n\in \N$, we can find a Borel family $(\eta _n^ x)_{x\in X_n}$ of non-negative unit vectors $\eta ^x_n \in \ell ^1(\mathcal{G}_x^x)$ such that
\[\int_{X_n}|[x]_{\calr_n}|\sum_{\gamma \in \calg_x^x}\eta_n^x(\gamma)1_{D_n'}(\gamma)\, d\mu(x)<\frac{1}{n},\]
and $\Vert (\eta ^x_n )^{\delta} - \eta ^x_n \Vert_1 < 1/n$ for all $x\in X_n$ and all $\delta \in F_x^n$.
We define a non-negative unit vector $\xi _n \in L^1(\mathcal{G}, \mu^1)$ by $\xi _n (\gamma ) = 0$ for $\gamma \in \calg \setminus \mathcal{G}_{\mathrm{isot}}$, and
\[
\xi _n (\gamma ) = \eta_n^{x}(\sigma(x, y)\gamma \sigma(y, x))
\]
for $\gamma \in \mathcal{G}_y^y$ with $y\in \mathcal{G}^0$, where $x$ is the unique point in $[y]_{\calr_n}\cap X_n$.
It is clear that the sequence $(\xi _n)$ satisfies conditions (i) and (iv) of Definition \ref{def:groupoid}.
Condition (iii) follows from:
\begin{align*}
\Vert 1_{D_n}\xi_n\Vert_1&=\int_{X_n}\sum_{y\in [x]_{\calr_n}}\sum_{\gamma \in \calg_y^y}\eta_n^x(\sigma(x, y)\gamma \sigma(y, x))1_{D_n}(\gamma)\, d\mu(x)\\
&\leq \int_{X_n}\sum_{y\in [x]_{\calr_n}}\sum_{\gamma \in \calg_y^y}\eta_n^x(\sigma(x, y)\gamma \sigma(y, x))1_{D_n'}(\sigma(x, y)\gamma \sigma(y, x))\, d\mu(x)\\
&=\int_{X_n}|[x]_{\calr_n}|\sum_{\gamma \in \calg_x^x}\eta_n^x(\gamma)1_{D_n'}(\gamma)\, d\mu(x)<\frac{1}{n}.
\end{align*}
To verify condition (ii), it suffices to show that $\| \xi _n ^{\phi _i}- \xi _n \| _1 \to 0$ for all $i\in \N$.
For all $i\leq n$, $x\in X_n$, $y\in [x]_{\mathcal{R}_n}$ and $\gamma \in \mathcal{G}_y^y$, we have $(\phi _i)_y\gamma [(\phi_i)_y]^{-1}\in \calg_z^z$, where $z\defeq r((\phi_i)_y)$.
Hence if we put $\delta \defeq \sigma(x, z)(\phi_i)_y\sigma(y, x)\in F^n_x$, then
\begin{align*}
\xi _n^{\phi _i}(\gamma )&= \xi _n ((\phi _i)_y\gamma [(\phi_i)_y]^{-1}) = \eta _n^x (\sigma(x, z)(\phi _i)_y\gamma [(\phi_i)_y]^{-1}\sigma(z, x) )\\
&=(\eta _n ^x )^{\delta} (\sigma(x, y)\gamma \sigma(y, x)),
\end{align*}
and thus
\[
\sum _{\gamma \in \mathcal{G}_y^y} |\xi _n^{\phi _i}(\gamma ) - \xi _n (\gamma ) | = \sum _{\gamma \in \mathcal{G}_x^x} |(\eta _n ^x )^{\delta}(\gamma ) - \eta _n ^x (\gamma )| < 1/n .
\]
It follows that $\| \xi _n ^{\phi _i}- \xi _n \| _1 < 1/n \rightarrow 0$.
\end{proof}

\begin{rem}\label{rem-gp}
The referee pointed out to us another proof of Proposition \ref{prop:extend} using the result of Giordano-Pestov \cite[Proposition 5.3]{gp}:
the full group of a hyperfinite discrete p.m.p.\ equivalence relation, endowed with the uniform topology, is \textit{extremely amenable}, i.e., all its continuous actions on a compact space have a fixed point.

The proof is as follows:
Let $(\calg, \mu)$ be a discrete p.m.p.\ groupoid such that the isotropy subgroupoid $(\calg_{\mathrm{isot}}, \mu)$ is inner amenable and the associated equivalence relation $(\calr_\calg, \mu)$ is hyperfinite.
Let $M$ be the space of balanced diffuse means on $(\calg, \mu)$, which is compact with respect to the weak${}^*$-topology in $L^\infty(\calg, \mu)^*$.
Then the full group $[\calg]$ acts on $M$ by conjugation continuously (note that the balanced property of means is used in deducing this continuity).
Inner amenability of $(\calg_{\mathrm{isot}}, \mu)$ implies that there exists a balanced, diffuse, conjugation-invariant mean on $(\calg_{\mathrm{isot}}, \mu)$, which is identified with a point of $M$ fixed by the full group $[\calg_{\mathrm{isot}}]$.
Let $M_0$ be the space of points of $M$ fixed by $[\calg_{\mathrm{isot}}]$, on which the full group $[\calr_\calg]$ acts naturally (note that $[\calr_\calg]$ is identified with the quotient group of $[\calg]$ divided by $[\calg_{\mathrm{isot}}]$).
Now extreme amenability of $[\calr_\calg]$ implies that it has a fixed point in $M_0$.
That point is fixed by $[\calg]$ and is thus a conjugation-invariant mean on $(\calg, \mu)$.
\end{rem}

\begin{prop}\label{prop:aperamen}
Let $(\mathcal{G},\mu )$ be an amenable discrete p.m.p.\ groupoid which is aperiodic.
Then $(\mathcal{G},\mu )$ is inner amenable.
\end{prop}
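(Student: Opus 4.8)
The plan is to reduce to the ergodic case and then argue separately according to whether the isotropy groups are infinite or finite. By Proposition~\ref{prop:ergdec} I would first reduce to the case that $(\calg,\mu)$ is ergodic (using that inner amenability of almost every ergodic component implies inner amenability of $\calg$). Since $\calg$ is amenable, the associated equivalence relation $(\calr_\calg,\mu)$ is an amenable p.m.p.\ equivalence relation and hence hyperfinite by Connes--Feldman--Weiss, and each isotropy group $\calg_x^x$ is amenable (amenability passing to the subgroupoid $\calg_{\mathrm{isot}}$). Both sets $\{\,x\mid |\calg_x^x|=\infty\,\}$ and $\{\,x\mid |\calg_x^x|<\infty\,\}$ are $\calg$-invariant, so by ergodicity one of them is conull.

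If $\calg_x^x$ is infinite for a.e.\ $x$, then every $\calg_x^x$ is an infinite amenable group, hence inner amenable, so $(\calg_{\mathrm{isot}},\mu)$ is inner amenable by Proposition~\ref{prop:ergdec}; since $(\calr_\calg,\mu)$ is hyperfinite, Proposition~\ref{prop:extend} then gives that $(\calg,\mu)$ is inner amenable.

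If instead $\calg_x^x$ is finite for a.e.\ $x$, then, since each set $\{\,x\mid |\calg_x^x|\ls C\,\}$ is $\calg$-invariant and these exhaust $\calg^0$, ergodicity forces the isotropy to be uniformly bounded, $|\calg_x^x|\ls C$ a.e.; moreover $\calr_\calg$ is now aperiodic. Writing $\calr_\calg=\bigcup_n\calr_n$ as an increasing union of bounded Borel subequivalence relations and $\pi\colon\calg\to\calr_\calg$ for the quotient map, I would set $\calg_n\defeq\pi^{-1}(\calr_n)$, which is a bounded Borel subgroupoid of $\calg$ (boundedness using $|\calg_x^x|\ls C$) containing $\calg_{\mathrm{isot}}$, with $\calg=\bigcup_n\calg_n$, and take the non-negative unit vectors $\xi_n\in L^1(\calg,\mu^1)$ defined by $\xi_n(\gamma)\defeq 1_{\calg_n}(\gamma)/|(\calg_n)_{s(\gamma)}|$. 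A short computation shows $\xi_n$ satisfies condition (iv) of Definition~\ref{def:groupoid} and that $\xi_n^\phi=\xi_n$ for every $\phi\in[\calg_n]$; and since $|(\calg_n)_x|=|[x]_{\calr_n}|\,|\calg_x^x|\to\infty$ a.e.\ by aperiodicity, $\|1_D\xi_n\|_1=\int_{\calg^0}|D_x\cap(\calg_n)_x|/|(\calg_n)_x|\,d\mu(x)\to 0$ for every Borel $D$ with $\mu^1(D)<\infty$ by dominated convergence (the integrand is bounded by the integrable function $x\mapsto|D_x|$), which is condition (iii). For condition (ii) the key point is that $\bigcup_n[\calg_n]$ is uniformly dense in $[\calg]$: using the homomorphism section $\sigma\colon\calr_\calg\to\calg$ of $\pi$ (available since $\calr_\calg$ is hyperfinite, cf.\ the proof of Proposition~\ref{prop:extend}), one writes $\phi=\psi\rho$ with $\psi\in[\calg_{\mathrm{isot}}]$ and $\rho$ the $\sigma$-lift of $\phi^o$, approximates $\phi^o$ in $[\calr_\calg]$ by elements of $\bigcup_n[\calr_n]$, and lifts via $\sigma$ to obtain $\phi_n\in[\calg_n]$ with $\delta_u(\phi_n,\phi)\to 0$; then $\|\xi_m^\phi-\xi_m^{\phi_n}\|_1$ is bounded in terms of $\delta_u(\phi,\phi_n)$ using that $\xi_m$ satisfies (iv), and since $\xi_m^{\phi_n}=\xi_m$ once $m\ge n$ this gives $\|\xi_m^\phi-\xi_m\|_1\to 0$. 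Hence $(\xi_n)$ satisfies conditions (ii) and (iii), and as $\calg$ is ergodic, Theorem~\ref{thm:equiv}\,(3) shows $(\calg,\mu)$ is inner amenable.

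The main obstacle is the finite-isotropy case: it cannot be routed through Proposition~\ref{prop:extend}, because a bundle of finite groups is never inner amenable, so an inner amenability net must be constructed by hand, necessarily spread out along $\calr_\calg$ rather than supported on the isotropy. The technical heart is the uniform density of $\bigcup_n[\calg_n]$ in $[\calg]$, together with the observation that the normalization built into condition (iv) of Definition~\ref{def:groupoid} is exactly what makes approximate conjugation-invariance pass from the subgroups $[\calg_n]$ to all of $[\calg]$.
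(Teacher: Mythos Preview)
Your reduction to the ergodic case and your handling of the infinite-isotropy branch are fine and match the paper (the paper restricts to a $\calg$-invariant set where $|\calg_x^x|=M$ is constant rather than invoking Proposition~\ref{prop:ergdec}, but this comes to the same thing).

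The finite-isotropy branch, however, has a genuine circularity. In the paper's logical order, Proposition~\ref{prop:aperamen} is established \emph{before} the proof of Theorem~\ref{thm:equiv}, and the implication (3)$\Rightarrow$(1) in the ergodic case passes through (4)$\Rightarrow$(5), whose proof in the amenable case is literally ``apply Proposition~\ref{prop:aperamen}''. So invoking Theorem~\ref{thm:equiv}\,(3) here is a petitio principii. Nor can you sidestep this by checking condition~(i) of Definition~\ref{def:groupoid} directly for your $\xi_n$: it fails. A short computation gives
\[
\|1_{\calg_A}\xi_n\|_1=\int_A \frac{|[x]_{\calr_n}\cap A|}{|[x]_{\calr_n}|}\,d\mu(x),
\]
and since the integrand is $E[1_A\mid \cali_n]$ for the decreasing $\sigma$-algebras $\cali_n$ of $\calr_n$-invariant sets, the reverse martingale theorem gives $E[1_A\mid\cali_n]\to\mu(A)$ in $L^1$ (using ergodicity of $\calr_\calg$), so $\|1_{\calg_A}\xi_n\|_1\to\mu(A)^2\neq\mu(A)$ for $0<\mu(A)<1$. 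Thus your sequence is not balanced, and the only route from (ii)+(iii) to inner amenability available in the paper is the circular one.

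The paper's fix is to use a different sequence in the finite-isotropy case. Since $\calr_\calg$ is aperiodic hyperfinite, $[\calr_\calg]$ has a central sequence $(T_n)$ with $T_nx\neq x$ everywhere; letting $\eta_n$ be the indicator of the graph of $T_n$ and setting $\xi_n(\gamma)\defeq\eta_n(r(\gamma),s(\gamma))/M$, one gets a sequence that is balanced directly (because $\|1_{\calg_A}\xi_n\|_1=\mu(A\cap T_n^{-1}A)\to\mu(A)$), so all of (i)--(iv) hold without appeal to Theorem~\ref{thm:equiv}. Your density-of-$\bigcup_n[\calg_n]$ argument is correct and pleasant, but it only delivers (ii) and (iii); to close the gap you would need either to replace your $\xi_n$ by one that is balanced, or to lift an inner amenability sequence from $\calr_\calg$ as the paper does.
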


\begin{proof} 
By \cite[Corollary 5.3.33]{ar}, the groupoid $(\calg_{\mathrm{isot}}, \mu)$ is amenable, and the equivalence relation $\mathcal{R}_{\mathcal{G}} \defeq  \{ \, (r(\gamma ), s(\gamma )) \mid \gamma \in \mathcal{G}\, \}$ is amenable and is hence hyperfinite by \cite{cfw}.
By restricting $\calg$ to a $\calg$-invariant Borel subset of $\calg^0$, we may assume that there is some $M\in \N \cup \{ \infty \}$ such that $|\mathcal{G}_x^x|=M$ for almost every $x\in \mathcal{G}^0$.
If $M=\infty$, then we are done by Proposition \ref{prop:extend} because infinite amenable groups are inner amenable.

Suppose that $M\in \N$.
Since $\mathcal{G}$ is aperiodic and almost every $\mathcal{G}_x^x$ is finite, the equivalence relation $\mathcal{R}_{\mathcal{G}}$ is aperiodic and hyperfinite.
Hence $[\mathcal{R}_{\mathcal{G}}]$ admits a central sequence $(T_n)_{n\in\N}$ with $T_nx\neq x$ for all $n\in \N$ and all $x\in \mathcal{G}^0$.
For each $n\in \N$, let $\eta _n\in L^1(\calr_\calg, \mu^1)$ be the indicator function of the graph $\{ \, (T_nx,x) \mid x\in \mathcal{G}^0 \, \}$.
Then $(\eta  _n )$ is an inner amenability sequence for $(\mathcal{R}_{\mathcal{G}},\mu )$.
Define $\xi _n \colon \mathcal{G}\rightarrow [0,1]$ by $\xi _n (\gamma ) \defeq \eta _n (r(\gamma ) , s(\gamma ))/M$.
Then $(\xi _n)$ is an inner amenability sequence for $(\mathcal{G},\mu )$.
\end{proof}


\subsection{Proof of Theorem \ref{thm:equiv}}\label{subsec-proof}

The only place where we use ergodicity is in the proof of the implication (4)$\Rightarrow$(5):
Assume that $(\mathcal{G},\mu )$ is ergodic and that condition (4) holds, and let $\bm{m}$ be a diffuse, conjugation-invariant mean on $(\mathcal{G},\mu )$.
Let $\check{\bm{m}}$ be the mean defined by $\check{\bm{m}}(D)\defeq \bm{m}(D^{-1})$.
After replacing $\bm{m}$ by $(\bm{m} + \check{\bm{m}})/2$ if necessary, we may assume without loss of generality that $\bm{m}$ is symmetric.
Since $\bm{m}$ is diffuse, $(\mathcal{G},\mu )$ must be aperiodic.
Thus if $(\mathcal{G},\mu )$ is amenable, then condition (1) holds by Proposition \ref{prop:aperamen} and hence condition (5) holds, where the implication (1)$\Rightarrow$(5) will be proved for a general $(\calg, \mu)$ in the next paragraph.
If $(\mathcal{G},\mu )$ is non-amenable, then $\bm{m}$ is balanced by Lemma \ref{lem:bal}, and hence condition (5) holds in this case as well.

For the rest of the proof, we no longer assume that $(\mathcal{G},\mu )$ is ergodic.
The implications (1)$\Rightarrow$(2)$\Rightarrow$(3) are clear.
The implication (2)$\Rightarrow$(1) follows from separability of $[\mathcal{G}]$ and of $L^1(\mathcal{G}^0,\mu )$. 
The implication (3)$\Rightarrow$(4) follows from weak${}^*$-compactness of the set of means on $\mathcal{G}$, by identifying both $L^1(\mathcal{G},\mu ^1 )$ and the set of means on $(\mathcal{G},\mu ^1 )$ with subsets of $L^{\infty}(\mathcal{G},\mu ^1 )^*$:
If $(\xi _i )$ is a net as in condition (3), then any weak${}^*$-cluster point of $(\xi _i )$ in $L^{\infty}(\mathcal{G},\mu ^1 )^*$ is a diffuse, conjugation-invariant mean on $(\mathcal{G},\mu )$.
The implication (2)$\Rightarrow$(5) is analogous:
If $(\xi _i )$ is a net as in condition (2), then after replacing $\xi _i$ by $\xi _i '\defeq  (\xi _i + \check{\xi} _i)/2$, where $\check{\xi}_i(\gamma ) \defeq  \xi _i(\gamma ^{-1})$, we can assume that each $\xi _i$ is symmetric, and hence any weak${}^*$-cluster point of $(\xi _i)$ is a mean on $(\mathcal{G},\mu )$ satisfying condition (5).

(5)$\Leftrightarrow$(6):
If $\bm{m}$ is a mean on $(\mathcal{G}, \mu )$ as in condition (5), then for each $F\in L^\infty (\mathcal{G},\mu ^1 )$, we can define a (countably additive!) finite, complex Borel measure $\mu _F$ on $\mathcal{G}^0$ by $\mu _F(A)\defeq \int 1_{\mathcal{G}_A}F\, d\bm{m}$, which is absolutely continuous with respect to $\mu$.
Countable additivity of $\mu _F$ follows from $\bm{m}$ being balanced.
Then the map $P \colon L^\infty (\mathcal{G}, \mu ^1 ) \rightarrow L^\infty (\mathcal{G}^0,\mu )$ defined by $P(F) \defeq d\mu _F/d\mu$ verifies condition (6).
Conversely, if $P$ is as in condition (6), then $\bm{m}(D)\defeq \int _{\mathcal{G}^0} P(1_D)\, d\mu$ defines a mean on $(\mathcal{G},\mu )$ as in condition (5).

It remains to prove the implication (5)$\Rightarrow$(2). 
For each $\eta \in L^1(\mathcal{G} ,\mu ^1 )$, we define $\check{\eta}$ by $\check{\eta}(\gamma  ) = \eta (\gamma ^{-1})$, and call $\eta$ symmetric if $\check{\eta}=\eta$.
In what follows, we denote by $L^1(\mathcal{G},\mu ^1 )_{+,1}$ the set of all non-negative unit vectors in $L^1(\mathcal{G},\mu ^1 )$.
The next lemma (with $D=\emptyset$) will complete the proof.

\begin{lem}\label{lem:weakstar}
Let $(\mathcal{G},\mu )$ be a discrete p.m.p.\ groupoid and let $\bm{m}$ be a diffuse, conjugation-invariant mean on $(\mathcal{G},\mu )$ which is symmetric and balanced.
Let $D\subset \mathcal{G}$ be a Borel subset with $\bm{m}(D)=0$.
Then $\bm{m}$ is the weak${}^*$-limit of some net $(\xi _i)$ of symmetric vectors in $L^1(\mathcal{G},\mu ^1 )_{+, 1}$ which vanish on $D$ and satisfy conditions (i)--(iv) of Definition \ref{def:groupoid}.
\end{lem}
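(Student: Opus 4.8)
\emph{Overview and reductions.} The plan is a Day-type convexity argument in which the doubly stochastic normalization (iv) is encoded directly into the convex set, combined with a transportation-duality input. Since $\bm{m}$ is symmetric, $\bm{m}(D\cup D^{-1})=0$, so I would replace $D$ by $D\cup D^{-1}$ and assume $D=D^{-1}$; set $E\defeq\calg\setminus D$, so $E=E^{-1}$. Let $K\subset L^1(\calg,\mu^1)_{+,1}$ be the convex set of symmetric $\xi$ that are supported on $E$ (equivalently, vanish on $D$) and satisfy condition (iv) of Definition \ref{def:groupoid}. The virtue of this $K$ is that for \emph{any} net $(\xi_i)$ in $K$ converging to $\bm{m}$ in the weak${}^*$ topology of $L^\infty(\calg,\mu^1)^*$, conditions (i) and (iii) come for free: $\|1_{\calg_A}\xi_i\|_1=\int1_{\calg_A}\xi_i\,d\mu^1\to\int1_{\calg_A}\,d\bm{m}=\mu(A)$ because $\bm{m}$ is balanced, and $\|1_{D'}\xi_i\|_1\to\int1_{D'}\,d\bm{m}=0$ whenever $\mu^1(D')<\infty$ because $\bm{m}$ is diffuse. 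So everything reduces to producing a net in $K$ that converges weak${}^*$ to $\bm{m}$ and also satisfies $\|\xi_i^\phi-\xi_i\|_1\to0$ for all $\phi\in[\calg]$ (condition (ii)). I would obtain this from: (A) the assertion that $\bm{m}\in\overline{K}$, the weak${}^*$-closure of $K$ in $L^\infty(\calg,\mu^1)^*$; together with (B) a Mazur argument upgrading weak${}^*$-approximation by elements of $K$ to the norm control of conjugates in (ii).

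\emph{Step (B).} Fix a countable dense subgroup $\Phi_0$ of $[\calg]$ (available since $[\calg]$ is Polish) and direct a net by pairs $(\mathcal F,\ve)$ with $\mathcal F\subset\Phi_0\cup L^\infty(\calg,\mu^1)$ finite and $\ve>0$. For such a pair, writing $\mathcal F\cap\Phi_0=\{\phi_1,\dots,\phi_k\}$ and $\mathcal F\cap L^\infty=\{f_1,\dots,f_q\}$, I would find $\xi\in K$ with $\|\xi^{\phi_j}-\xi\|_1<\ve$ and $|\int f_s\xi\,d\mu^1-\int f_s\,d\bm{m}|<\ve$ for all $j,s$; the resulting net works, since (ii) for an arbitrary $\phi\in[\calg]$ then follows by density using the uniform bound $\|\xi^\phi-\xi^\psi\|_1\ls4\,\delta_u(\phi,\psi)$, valid for every $\xi\in K$ because $\xi^\phi$ is again symmetric and doubly stochastic. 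To produce such a $\xi$, apply the linear map $\Psi\colon L^1(\calg,\mu^1)\to L^1(\calg,\mu^1)^{\oplus k}\oplus\C^{q}$, $\Psi(\xi)=\bigl((\xi^{\phi_j}-\xi)_j,(\int f_s\xi\,d\mu^1)_s\bigr)$, so that $\Psi(K)$ is convex. By (A) there is a net $(\zeta_\lambda)$ in $K$ with $\zeta_\lambda\to\bm{m}$ weak${}^*$; since conjugation by $\phi_j$ on $L^\infty(\calg,\mu^1)$ is adjoint to conjugation by $\phi_j^{-1}$ and $\bm{m}$ is conjugation-invariant, also $\zeta_\lambda^{\phi_j}\to\bm{m}$ weak${}^*$, hence $\zeta_\lambda^{\phi_j}-\zeta_\lambda\to0$ weakly in $L^1(\calg,\mu^1)$, while $\int f_s\zeta_\lambda\,d\mu^1\to\int f_s\,d\bm{m}$. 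Thus $\bigl((0)_j,(\int f_s\,d\bm{m})_s\bigr)$ lies in the weak closure of the convex set $\Psi(K)$, so by Mazur's theorem in its norm closure, which produces $\xi$.

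\emph{Step (A), the heart.} Suppose for contradiction $\bm{m}\notin\overline{K}$. As $\overline{K}$ is weak${}^*$-closed and convex, Hahn--Banach separation yields a real $h_0\in L^\infty(\calg,\mu^1)$ and $\alpha\in\R$ with $\int h_0\xi\,d\mu^1\ls\alpha$ for all $\xi\in K$ but $\int h_0\,d\bm{m}>\alpha$. Put $g\defeq(h_0+\check h_0)/2$; since the elements of $K$ are symmetric, $\sup_{\xi\in K}\int g\xi\,d\mu^1\ls\alpha$, and, as $g=\check g$ and $E=E^{-1}$, this supremum equals the supremum of $\int g\xi\,d\mu^1$ over all $\xi\in L^1(\calg,\mu^1)_{+,1}$ supported on $E$ satisfying (iv). The latter is the value of a Monge--Kantorovich transportation problem on $\calg$ (transporting $\mu$ to $\mu$ along $E$, cost $g$), which is feasible --- in particular $K\neq\emptyset$ --- because $\bm{m}$ balanced gives $\int(w\circ s)\,d\bm{m}=\int(w\circ r)\,d\bm{m}=\int w\,d\mu$ for bounded measurable $w$ on $\calg^0$ (Remark \ref{rem-balanced}), which with $\bm{m}(D)=0$ forces the marriage condition $\mu(A)\ls\mu\bigl(r(E\cap s^{-1}(A))\bigr)$ for all Borel $A\subset\calg^0$, whence a measured version of Hall's theorem supplies an element of $[\calg]$ contained in $E$. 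Since the cost is bounded and the problem is feasible, Kantorovich duality (no duality gap) provides bounded measurable $u,v$ on $\calg^0$ with $u(s(\gamma))+v(r(\gamma))\geqslant g(\gamma)$ for $\mu^1$-a.e.\ $\gamma\in E$ and $\int(u+v)\,d\mu$ arbitrarily close to $\sup_{\xi\in K}\int g\xi\,d\mu^1\ls\alpha$. Then, using $\bm{m}(D)=0$, the identities above, and symmetry of $\bm{m}$,
\[
\int h_0\,d\bm{m}=\int g\,d\bm{m}=\int1_E\,g\,d\bm{m}\ls\int1_E(u\circ s+v\circ r)\,d\bm{m}=\int(u+v)\,d\mu\ls\alpha,
\]
contradicting $\int h_0\,d\bm{m}>\alpha$; hence $\bm{m}\in\overline{K}$, completing the argument.

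\emph{Main obstacle.} The difficulty concentrates in (A), and there in the extraction of the Kantorovich potentials $u,v$ from the inequality $\int g\xi\,d\mu^1\ls\alpha$ on $K$. The naive dual-feasible choice $u(x)=\operatorname{ess\,sup}_{\delta\in E_x}g(\delta)$, $v=0$ need not have $\int(u+v)\,d\mu\ls\alpha$, because sections in $[\calg]$ are bijective, so the fibrewise maximizers cannot be realized simultaneously by a single section; accounting for this matching constraint is exactly what a genuinely optimal pair of potentials does, and it is what the transportation duality provides. Making Monge--Kantorovich duality and the auxiliary measured marriage theorem rigorous in the groupoid setting --- for instance by approximating $\calg^0$ by a finite Borel partition, reducing to the Birkhoff polytope and finite-dimensional linear programming, and passing to the limit --- is the genuine technical work. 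The remaining ingredients (the reductions, Step (B), and routine facts such as that conjugation by an element of $[\calg]$ preserves $\mu^1$ and that $\xi^\phi$ stays doubly stochastic when $\xi\in K$) are straightforward.
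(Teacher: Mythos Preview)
Your framework is sound and, through Step (B), essentially parallels the paper: both arguments pass from weak${}^*$ approximation to norm control of conjugates via Mazur, and your observation that (i) and (iii) follow automatically from weak${}^*$ convergence once (iv) is built into $K$ is correct. The estimates $\bm{m}(s^{-1}(A))=\bm{m}(r^{-1}(A))=\mu(A)$ you need do follow from balance via Remark~\ref{rem-balanced}, and the Lipschitz bound $\|\xi^\phi-\xi^\psi\|_1\le 4\delta_u(\phi,\psi)$ for doubly stochastic $\xi$ is fine.

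The genuine gap is Step (A). You are candid that the Kantorovich duality and the measured Hall theorem are ``the genuine technical work,'' but neither is standard in this setting and neither follows from your sketch. The primal you write down is \emph{not} the usual optimal-transport problem: the feasible couplings are only those arising from $L^1$-functions on the groupoid with fibrewise marginals equal to $1$, a strict subclass of couplings of $\mu$ with itself, so the classical Kantorovich theorem does not apply. The finite-partition reduction you propose does not obviously work either, since the groupoid structure and the constraint $\xi|_D=0$ do not descend to partitions of $\calg^0$. Without a proof of zero duality gap here, your separation argument does not close.

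The paper bypasses all of this with an elementary device. Rather than proving $\bm{m}\in\overline{K}$ by duality, it first approximates $\bm{m}$ weak${}^*$ by symmetric $\eta_i\in L^1(\calg,\mu^1)_{+,1}$ vanishing on $D$ (no constraint (iv) yet --- this is easy), applies Mazur to get $\|s_{\eta_i}-1_{\calg^0}\|_1\to 0$ and $\|\eta_i^\phi-\eta_i\|_1\to 0$, and then invokes the key Claim~\ref{claim:transfin}: for every symmetric $\eta\in L^1(\calg,\mu^1)_{+,1}$ vanishing on $D$ there exists $\xi\in K$ with $\|\eta-\xi\|_1=\|s_\eta-1_{\calg^0}\|_1$. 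This claim is proved by a direct transfinite construction: at each successor step one finds, using $\bm{m}(\calg_{A_i}\setminus D)=\mu(A_i)>0$, symmetric Borel sets $C_0\subset\calg_{\{s_\eta<1-\ve\}}\setminus D$ and $C_1\subset\calg_{\{s_\eta>1+\ve\}}\setminus D$ of equal $\mu^1$-measure with $|C_i\cap\calg_x|\le 1$, and replaces $\eta$ by $\eta+\ve(1_{C_0}-1_{C_1})$; the process terminates by monotonicity of $\|s_{\eta_\alpha}-1_{\calg^0}\|_1$. In particular, Claim~\ref{claim:transfin} \emph{proves your Step (A)} directly (apply it to the Mazur-improved $\eta_i$), with no appeal to transport duality or matching theorems.
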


\begin{proof}
Since $\bm{m}$ is symmetric, by replacing $D$ by $D\cup D^{-1}$, we may assume that $D$ is symmetric as well.
For each $\eta \in L^1(\mathcal{G} ,\mu ^1 )$, define $r_{\eta}, s_{\eta} \in L^1(\mathcal{G}^0,\mu )$ by $r_{\eta} (x) \defeq \sum _{\gamma \in \calg^x}\eta (\gamma )$ and $s_{\eta} (x) \defeq \sum _{\gamma \in \calg_x}\eta (\gamma )$.
If $\eta$ is a vector in $L^1(\mathcal{G},\mu ^1 )_{+, 1}$, then $r_{\eta}$ and $s_{\eta}$ are non-negative unit vectors in $L^1(\mathcal{G}^0,\mu )$, and if $\eta$ is symmetric, then $r_{\eta}=s_{\eta}$.

\begin{claim}\label{claim:transfin}
Let $\eta \in L^1(\mathcal{G},\mu ^1 )_{+, 1}$ be a symmetric vector which vanishes on $D$.
Then there exists a symmetric vector $\xi \in L ^1 (\mathcal{G},\mu ^1 )_{+, 1}$ which vanishes on $D$ and satisfies $s_{\xi} = 1_{\mathcal{G}^0}$ and $\| \eta - \xi \| _1 = \| s_{\eta}- 1_{\mathcal{G}^0} \| _1$.
\end{claim}

\begin{proof}
Let $\eta _0 \defeq \eta$.
We proceed by transfinite induction on countable ordinals $\alpha$ to define a symmetric vector $\eta _{\alpha}\in L^1 (\mathcal{G},\mu ^1 )_{+, 1}$ vanishing on $D$ and satisfying, for all $\beta  < \alpha$:
\begin{enumerate}
\item[(i)] $\| \eta _{\beta} - \eta _{\alpha} \| _1 = \| s_{\eta _{\beta }} -s_{\eta _{\alpha }}  \| _1$.
\item[(ii)] For almost every $x\in \mathcal{G}^0$, if $s_{\eta _{\beta}}(x) \leq 1$, then $s_{\eta _{\beta}}(x)\leq s_{\eta _{\alpha}}(x)\leq 1$.
\item[(iii)] For almost every $x\in \mathcal{G}^0$, if $s_{\eta _{\beta}}(x) \geq 1$, then $s_{\eta _{\beta}}(x)\geq s_{\eta _{\alpha}}(x)\geq 1$.
\item[(iv)] If $\| s_{\eta _{\beta}}- 1_{\mathcal{G}^0}\| _1 >0$, then $\| s_{\eta _{\alpha}} - 1_{\mathcal{G}^0}\| _1 < \| s_{\eta _{\beta}}- 1_{\mathcal{G}^0} \| _1$, and if $s_{\eta _{\beta}} = 1_{\mathcal{G}^0}$, then $\eta _{\alpha}=\eta _{\beta}$.
\end{enumerate}
Assume that $\{ \eta _{\beta } \} _{\beta <\alpha}$ has already been defined, and we show how to define $\eta _{\alpha}$.
If $\alpha$ is a limit ordinal, say $\alpha = \sup _{n\in \N}\beta _n$, where $\beta _1<\beta _2<\cdots$, then, by the induction hypothesis (namely, properties (ii) and (iii)), the sequence $(s_{\eta _{\beta _n}})_{n\in \N}$ is Cauchy in $L^1(\mathcal{G}^0,\mu )$.
Hence property (i) implies that the sequence $(\eta _{\beta _n} ) _{n\in \N}$ is Cauchy in $L^1(\mathcal{G},\mu ^1 )$, so we define $\eta _{\alpha}$ to be its limit in $L^1(\mathcal{G},\mu ^1)$.
If $\alpha$ is a successor, then we define $\eta _{\alpha}$ from $\eta _{\alpha -1}$ as follows:
If $s_{\eta _{\alpha - 1}} = 1_{\mathcal{G}^0}$, then we put $\eta _{\alpha} = \eta _{\alpha -1}$.
Otherwise, i.e., if $s_{\eta _{\alpha - 1}} \neq 1_{\mathcal{G}^0}$, then for some $\ve >0$, both the sets
\[A _0 \defeq  \{ \, x\in {\mathcal{G}^0} \mid s_{\eta _{\alpha -1}}(x)< 1-\ve \, \} \quad \text{and} \quad A_1 \defeq  \{ \, x\in {\mathcal{G}^0} \mid s_{\eta _{\alpha -1}}(x)>1+\ve \, \}\]
have positive measure.
For each $i\in \{ 0,1 \}$, we have $\bm{m}(\mathcal{G}_{A_i}\setminus D) =\bm{m}(\calg_{A_i})=\mu (A_i)>0$, and hence $\mu^1(\calg_{A_i}\setminus D)>0$. 
We may find symmetric Borel subsets $C_i\subset \mathcal{G}_{A_i}\setminus D$ with $\mu ^1 (C_0)=\mu ^1  (C_1)>0$ and $|C_i \cap \calg_x| \leq 1$ for all $x\in {\mathcal{G}^0}$ (indeed, by restricting $\calg$ to a $\calg$-invariant Borel subset of $\calg^0$, we may assume without loss of generality that $(\calg^0, \mu)$ either is atomless or consists of atoms with the same measure, and in both cases, we can find such $C_0$ and $C_1$).
Then the function $\eta _{\alpha}\defeq  \eta _{\alpha -1}+\ve (1_{C_0} - 1_{C_1})$ has the required properties, and the induction is complete.

By property (iv), there is some countable ordinal $\alpha _0$ such that $s_{\eta _{\alpha _0}} = 1_{\mathcal{G}^0}$, so letting $\xi \defeq  \eta _{\alpha _0}$ works.\end{proof}

Return to the proof of Lemma \ref{lem:weakstar}.
By the Hahn-Banach theorem, the set $L^1(\mathcal{G},\mu ^1 )_{+,1}$ is weak${}^*$-dense in the set of all means on $(\mathcal{G},\mu )$.
Since $\bm{m}$ is symmetric and $\bm{m}(D)=0$, $\bm{m}$ belongs to the weak${}^*$-closure of the set of all symmetric vectors in $L^1(\mathcal{G},\mu ^1 )_{+,1}$ that vanish on $D$.
Let $(\eta _i )$ be a net weak${}^*$-converging to $\bm{m}$ and consisting of symmetric vectors in $L^1(\mathcal{G},\mu ^1 )_{+,1}$ that vanish on $D$.
Then $s_{\eta _i}$ converges weakly to $1_{\mathcal{G}^0}$ in $L^1({\mathcal{G}^0},\mu )$, and $\eta _i ^{\phi} - \eta _i$ converges weakly to $0$ in $L^1({\mathcal{G}^0},\mu )$ for every $\phi \in [\mathcal{G}]$.
Thus, by the Hahn-Banach theorem, after taking convex sums, we may assume without loss of generality that $\| s_{\eta _i}- 1_{\mathcal{G}^0} \|_1 \rightarrow 0$ and $\| \eta _i ^{\phi } - \eta _i \| _1 \rightarrow 0$ for every $\phi \in [\mathcal{G}]$.
Applying Claim \ref{claim:transfin} to each $\eta _i$, we obtain the required net $(\xi _i )$.
\end{proof}


\subsection{Permanence of inner amenability}\label{subsec-perma}

We discuss permanence of inner amenability under the following constructions: inflations, restrictions, finite-index inclusions, measure-preserving extensions, ergodic decompositions, and inverse limits.

\subsubsection{Inflations and restrictions}

\begin{prop}\label{prop:inflate}
Let $(\mathcal{G},\mu )$ be an ergodic discrete p.m.p.\ groupoid and let $A\subset \calg^0$ be a Borel subset with positive measure.
Then $(\mathcal{G},\mu )$ is inner amenable if and only if $(\mathcal{G}_A,\mu _A )$ is inner amenable, where $\mu_A$ is the normalized restriction of $\mu$ to $A$.
\end{prop}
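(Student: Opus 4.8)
The forward direction is essentially Remark~\ref{rem:locsec}, so the work is in the converse: given an inner amenability sequence for $(\mathcal{G}_A, \mu_A)$, I want to produce one for $(\mathcal{G}, \mu)$. The plan is to use ergodicity to ``spread'' the sequence from $A$ to all of $\mathcal{G}^0$ via a partition of $\mathcal{G}^0$ indexed by pieces that can be moved into $A$. First I would fix, using ergodicity, a countable Borel partition $\mathcal{G}^0 = \bigsqcup_{k} B_k$ together with local sections $\psi_k$ with $s(\psi_k) = B_k$ and $r(\psi_k) \subset A$; concretely one can take a single Borel map $x \mapsto T(x) \in \mathcal{G}_x$ with $r(T(x)) \in A$ (as in the proof of Lemma~\ref{lem:nonamenbal}) and slice $\mathcal{G}^0$ so that on each piece $T$ restricts to a local section. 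Then, given a sequence $(\eta_n)$ in $L^1(\mathcal{G}_A, \mu_A^1)_{+,1}$ witnessing inner amenability of $(\mathcal{G}_A, \mu_A)$, I would define $\xi_n \in L^1(\mathcal{G}, \mu^1)$ by transporting $\eta_n$: for $\gamma \in \mathcal{G}$ with $s(\gamma) \in B_k$, $r(\gamma) \in B_l$, set $\xi_n(\gamma) \defeq \mu(A)\,\eta_n\bigl((\psi_l)_{r(\gamma)}\,\gamma\,[(\psi_k)_{s(\gamma)}]^{-1}\bigr)$, i.e.\ $\xi_n$ is the pullback $\eta_n^{\psi^{-1}}$ under the local section $\psi \defeq \bigsqcup_k \psi_k \in [\mathcal{G}_{A, \mathcal{G}^0}]$, suitably normalized. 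Equivalently and more cleanly: extend $\psi$ to work with $\mathcal{G}_{A}$-valued data and note $\xi_n = $ ``$\eta_n$ conjugated across $\psi$''.

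Next I would verify conditions (i)--(iv) of Definition~\ref{def:groupoid} for $(\xi_n)$. Condition (iv) (the row/column sums equal $1$) is a direct computation: the sums over $\mathcal{G}^x$ or $\mathcal{G}_x$ of $\xi_n$ reduce, under the bijection $\gamma \mapsto (\psi_l)_{r(\gamma)}\gamma[(\psi_k)_{s(\gamma)}]^{-1}$, to sums of $\eta_n$ over $(\mathcal{G}_A)^{\psi^o(x)}$ or $(\mathcal{G}_A)_{\psi^o(x)}$, which are $1$ by condition (iv) for $\eta_n$ --- one has to be slightly careful that after normalizing by $\mu(A)$ these come out to $1$ and not $\mu(A)$, so I may instead normalize so that $\|\xi_n\|_1 = 1$ and allow the row sums to be the constant $1$; this forces the normalizing constant to be exactly what makes $\|\xi_n\|_1 = 1$, and a short argument using $\mu(A)\cdot(\text{average column-size of }\mathcal{G}_{A,\mathcal{G}^0})$ pins it down. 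Condition (iii) (diffuseness) follows because a bounded subset $D \subset \mathcal{G}$ pulls back to a bounded subset of $\mathcal{G}_A$ along $\psi$, on which $\eta_n$'s mass tends to $0$; since $\mathcal{G}^0$ is covered by countably many $B_k$, a standard $\varepsilon$-tail argument over $k$ handles sets of finite $\mu^1$-measure. Condition (i) (balancedness) is similar, testing against $1_{\mathcal{G}_E}$ for Borel $E \subset \mathcal{G}^0$ and again using countability of the partition; alternatively, once (ii) is known, if $(\mathcal{G},\mu)$ is non-amenable, balancedness is automatic by Lemma~\ref{lem:bal}, and if $(\mathcal{G},\mu)$ is amenable one can fall back on Proposition~\ref{prop:aperamen} (noting $(\mathcal{G}_A,\mu_A)$ inner amenable forces aperiodicity of $\mathcal{G}_A$ and hence of $\mathcal{G}$).

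The main obstacle --- and the step to treat most carefully --- is condition (ii): $\|\xi_n^\phi - \xi_n\|_1 \to 0$ for all $\phi \in [\mathcal{G}]$. The issue is that conjugating $\xi_n$ by a global $\phi \in [\mathcal{G}]$ does not directly correspond to conjugating $\eta_n$ by an element of $[\mathcal{G}_A]$, because $\phi$ moves the partition pieces $B_k$ around. The plan is to decompose: $\phi^\psi$ (the transport of $\phi$ across $\psi$, landing in the groupoid over $A$) can, using Remark~\ref{rem:suffice}-style reasoning and a further countable partition of $\mathcal{G}^0$, be written so that its action on the transported data is that of countably many elements of $[\mathcal{G}_A]$ on disjoint invariant pieces. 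Then $\|\xi_n^\phi - \xi_n\|_1$ becomes a countable sum $\sum_k \mu(B_k')\|\eta_n^{\rho_k} - \eta_n\|_1$-type expression (with $\rho_k \in [\mathcal{G}_A]$ depending on $\phi$ and the partition), and by condition (ii) for $\eta_n$ together with dominated convergence over the countable index set, this tends to $0$. It actually suffices to check (ii) on a countable uniformly-dense subgroup of $[\mathcal{G}]$, which makes the bookkeeping with partitions cleaner; I would set this up first. Once (i)--(iv) are in hand, $(\xi_n)$ is an inner amenability sequence for $(\mathcal{G},\mu)$, completing the converse.
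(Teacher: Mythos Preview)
Your approach has a genuine error in the verification of condition~(iv). You claim that for $x \in B_k$ with $a = \psi_k^o(x)$, the map $\Phi\colon \gamma \mapsto (\psi_l)_{r(\gamma)}\,\gamma\,[(\psi_k)_x]^{-1}$ is a bijection from $\mathcal{G}_x$ onto $(\mathcal{G}_A)_a$. It is not: for $\delta \in (\mathcal{G}_A)_a$ with $r(\delta)=b$, each index $l$ with $b \in \psi_l(B_l)$ contributes a distinct preimage (namely $[(\psi_l)_y]^{-1}\delta\,(\psi_k)_x$ where $y$ is the unique point of $B_l$ with $\psi_l^o(y)=b$), so $|\Phi^{-1}(\delta)| = m(b) \defeq |\{l : b \in \psi_l(B_l)\}|$. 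Since $\sum_l \mu(\psi_l(B_l)) = \sum_l \mu(B_l) = 1 > \mu(A)$, the images $\psi_l(B_l)$ must overlap and $m$ cannot be identically $1$ (and need not even be bounded). Your column sum at $x$ is therefore $\mu(A)\sum_{\delta \in (\mathcal{G}_A)_a} m(r(\delta))\,\eta_n(\delta)$, which depends on where $\eta_n$ places its mass; it is not constant in $x$, and no single normalizing constant repairs this. The ``short argument'' you allude to does not exist, and indeed $\|\xi_n\|_1$ may fail to be finite.

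The fix is precisely what the paper does: first shrink $A$ (permissible by Remark~\ref{rem:locsec}) so that $\mu(A)=1/n$ for an integer $n$, and then use ergodicity to find a single $\phi\in[\mathcal{G}]$ with $\{\phi^iA\}_{i=0}^{n-1}$ partitioning $\mathcal{G}^0$ and $(\phi^n)^o=\mathrm{id}$; now the transport is exactly $n$-to-one and $m\equiv n$. The paper further simplifies matters by working with means (condition~(5) of Theorem~\ref{thm:equiv}) rather than sequences: setting $\bm{m}(D)=\tfrac{1}{n}\sum_{i=0}^{n-1}\bm{m}_A((\mathcal{G}_{\phi^iA}\cap D)^{\phi^i})$, one checks balancedness, diffuseness, and conjugation-invariance directly. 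This sidesteps condition~(iv) entirely, and---more to the point for your sketch of~(ii), where ``$\phi^\psi$'' is ill-defined since your $\psi$ is not a local section---turns the conjugation-invariance check into a finite computation: decompose $\mathcal{G}^0$ into the sets $A_i\cap\psi A_j$, and on each piece $\phi^{-i}\psi\phi^j$ restricts to a genuine local section of $\mathcal{G}_A$, under which $\bm{m}_A$ is invariant.
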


\begin{proof}
As seen in Remark \ref{rem:locsec}, if $(\mathcal{G},\mu )$ is inner amenable, then so is $(\mathcal{G}_A,\mu _A )$.
Conversely, assume $(\mathcal{G}_A, \mu _A)$ is inner amenable and let $\bm{m}_A$ be a mean on $(\mathcal{G}_A,\mu _A)$ as in condition (5) of Theorem \ref{thm:equiv}.
After shrinking $A$, we may assume that $\mu (A) = 1/n$ for some $n\in \N$.
Since $(\mathcal{G},\mu )$ is ergodic, we can find some $\phi \in [\mathcal{G}]$ with $\{ \phi^i A \}_{i=0}^{n-1}$ partitioning $\mathcal{G}^0$ and $(\phi ^n)^o(x) =x$ for every $x\in  \mathcal{G}^0$, where $\phi^i\in [\calg]$ is the $i$-th iterate of $\phi$.
Let $A_i \defeq  \phi ^i A$ for $i\in \{ 0, 1,\ldots, n-1\}$.
Define a mean $\bm{m}$ on $(\mathcal{G},\mu )$ by
\[\bm{m}(D) \defeq \frac{1}{n}\sum _{i=0}^{n-1} \bm{m}_A((\mathcal{G}_{A_i}\cap D)^{\phi ^i}).\]
Then it is clear that $\bm{m}$ is balanced and diffuse.
Fix $\psi \in [\mathcal{G}]$ and a Borel subset $D\subset \mathcal{G}$ toward verifying conjugation-invariance of $\bm{m}$.
Since $\bm{m}$ is balanced, we have
\begin{align*}
\bm{m}(D) &= \sum _{i, j=0}^{n-1} \bm{m} (\mathcal{G}_{A_i\cap \psi A_j} \cap D)\quad \text{and} \\ 
\bm{m} ( D^{\psi}) &= \sum _{i, j=0}^{n-1}\bm{m} (\mathcal{G}_{\psi^{-1}A_i\cap A_j}\cap D^{\psi}) = \sum _{i,j=0}^{n-1} \bm{m} ((\mathcal{G}_{A_i\cap \psi A_j} \cap D)^{\psi}) ,
\end{align*}
so it suffices to show that if $D\subset \mathcal{G}_{A_i\cap \psi A_j}$, then $\bm{m}(D)= \bm{m}(D^{\psi })$.
Let $\chi$ be the local section of $\calg$ given as the restriction of $\phi^{-i} \psi \phi^j$ with domain $\phi^{-j}(\psi^{-1}A_i\cap A_j )\subset A$.
Then the range of $\chi$ is $\phi^{-i}(A_i \cap \psi A_j)\subset A$, so $\chi$ is a local section of $\mathcal{G}_A$.
Since $D\subset \mathcal{G}_{A_i\cap \psi A_j}$, we have $D^{\phi ^i}\subset \mathcal{G}_{r(\chi)}$ and
\begin{align*}
\bm{m} (D) &= \bm{m}_A (D^{\phi ^i })/n = \bm{m}_A (D^{\phi ^i \chi })/n = \bm{m}_A(D^{\psi  \phi ^j})/n = \bm{m}(D^{\psi} ).
\end{align*}
Thus $\bm{m}$ is conjugation-invariant, and by Theorem \ref{thm:equiv}, $(\mathcal{G},\mu )$ is inner amenable.
\end{proof}


\subsubsection{Finite-index inclusions}

Let $(\calg, \mu)$ be a discrete p.m.p.\ groupoid and let $\calh$ be a Borel subgroupoid of $\calg$.
For each $x\in \calg^0$, we have the equivalence relation on $\calg^x$ such that two elements $\gamma, \delta \in \calg^x$ are equivalent if and only if $\gamma^{-1} \delta \in \calh$.
The function assigning to each $x\in \calg^0$ the number of equivalence classes in $\calg^0$ is Borel and $\calg$-invariant, and hence constant on a conull set if $(\calg, \mu)$ is ergodic.
If $(\calg, \mu)$ is ergodic, this constant value is called the \textit{index} of $\calh$ in $\calg$.
This definition extends the index of a subrelation of a discrete p.m.p.\ equivalence relation given in \cite[Section 1]{fsz}.

\begin{prop}\label{prop-finite-index}
Let $(\calg, \mu)$ be an ergodic discrete p.m.p.\ groupoid and let $\calh$ be a finite-index Borel subgroupoid of $\calg$.
If $(\calh, \mu)$ is inner amenable, then $(\calg, \mu)$ is inner amenable.
\end{prop}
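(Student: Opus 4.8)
The plan is to transport a conjugation-invariant mean from $\calh$ up to $\calg$ by averaging over the finitely many cosets. By Theorem \ref{thm:equiv}, inner amenability of $(\calh,\mu)$ provides a diffuse, symmetric, balanced, conjugation-invariant mean $\bm{m}_\calh$ on $(\calh,\mu)$; and since $(\calg,\mu)$ is ergodic, it will suffice, again by Theorem \ref{thm:equiv}, to produce a diffuse, conjugation-invariant mean on $(\calg,\mu)$. Let $n$ be the index of $\calh$ in $\calg$. The first step is to fix a measurable system of coset representatives: by a Borel selection argument in the spirit of \cite[Section~1]{fsz}, choose $\phi_1=\calg^0,\phi_2,\dots,\phi_n\in[\calg]$ such that for $\mu$-almost every $x\in\calg^0$ the sets $(\phi_i)^x\calh$ ($i=1,\dots,n$) partition $\calg^x$. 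It is convenient to repackage this as the $n$-sheeted coset bundle $\mathcal{C}\to\calg^0$ with fibre $\mathcal{C}_x=\calg^x/\calh$, carrying the canonical section $x\mapsto[\calh]$, the natural $\calg$-action $\gamma\cdot(\delta\calh)=\gamma\delta\calh$ on fibres, and the induced conjugation action of $[\calg]$; the $\phi_i$ are a Borel trivialization, and $\calh$ is precisely the stabilizer of the canonical section.

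Next, one defines the induced mean. For a Borel set $D\subset\calg$, one restricts $D$ to each of the $n$ ``sheets'' picked out by the representatives, translates the restriction back into $\calh$ using the $\phi_i$, and sets $\bm{m}(D)$ to be $\tfrac1n$ times the average of the $\bm{m}_\calh$-masses of the resulting subsets of $\calh$; the precise recipe must be chosen with an eye to the conjugation-invariance below. That $\bm{m}$ is a countably additive mean on $(\calg,\mu)$ uses the balancedness of $\bm{m}_\calh$, exactly as in Remark \ref{rem-balanced}. Balancedness and diffuseness of $\bm{m}$ follow from the corresponding properties of $\bm{m}_\calh$: conjugation by each $\phi_i\in[\calg]$ is $\mu^1$-preserving and carries $\calg_A$ onto $\calg_{\phi_i^o{}^{-1}(A)}$, so finite sets stay finite and the masses over $\calg_A$ add up to $\mu(A)$. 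These verifications are routine. (Equivalently, the whole argument can be run at the level of an inner amenability sequence $(\eta_k)$ for $\calh$, replacing $\bm{m}$ by an appropriate coset-average of the conjugates $\eta_k^{\phi_i^{-1}}\in L^1(\calg,\mu^1)$ and checking conditions (i)--(iv) of Definition \ref{def:groupoid} directly.)

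The heart of the proof is conjugation-invariance of $\bm{m}$ under all of $[\calg]$. The difficulty is that conjugation by an arbitrary $\phi\in[\calg]$ preserves neither $\calh$ nor the chosen representatives. What rescues the argument is that $\phi$ does act on the coset bundle $\mathcal{C}$ compatibly with the $\calg$-action: in the trivialization this is encoded by an $S_n$-valued Borel cocycle $x\mapsto\pi^\phi_x$, and one has an identity of the form
\[
\tau_{\gamma^{[\phi^{-1}]}}\;=\;\bigl(\pi^\phi_{r(\gamma)}\bigr)^{-1}\circ\tau_\gamma\circ\pi^\phi_{s(\gamma)},
\]
where $\tau_\gamma\in S_n$ records the permutation of cosets effected by $\gamma$ itself. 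This identity lets the coset indices appearing on the range and source sides be reconciled after conjugating, so that $\bm{m}(D^\phi)$ is brought into the form of $\bm{m}(D)$ up to a residual discrepancy given by left and right translation by elements of $[\calh]$; that residual term is then killed using the conjugation-invariance of $\bm{m}_\calh$ together with its symmetry and balancedness. Along the way one partitions $\calg^0$ into finitely many Borel pieces on which the relevant $S_n$-valued functions are constant, so as to keep all data Borel. I expect this last step --- controlling the $[\calh]$-valued discrepancy, which genuinely needs the full strength of $\bm{m}_\calh$ rather than mere conjugation-invariance, and keeping the bookkeeping Borel --- to be the main obstacle; the construction of the representatives $\phi_i$ inside $[\calg]$ is a secondary technical point.
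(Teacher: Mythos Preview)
Your approach is essentially the paper's: average a balanced, diffuse, conjugation-invariant mean on $\calh$ over coset representatives $\psi_1,\dots,\psi_N\in[\calg]$, then verify conjugation-invariance of the result by partitioning $\calg^0$ according to the coset permutation induced by a given $\phi\in[\calg]$, reducing to conjugation by local sections of $\calh$. Two small corrections to your assessment of where the difficulties lie: the ``residual discrepancy'' is disposed of purely by conjugation-invariance (under local sections of $\calh$) together with balancedness of $\bm{m}_\calh$ --- symmetry is not needed --- so your ``main obstacle'' is in fact routine; whereas the ``secondary technical point'' of getting the representatives $\phi_i$ inside $[\calg]$ (rather than merely as Borel choice functions) genuinely requires $(\calh,\mu)$ to be ergodic, as in \cite[Lemma~1.3]{fsz}, and the paper handles the general case by first restricting to a non-null $\calh$-invariant set on which $\calh$ is ergodic (Remark~\ref{rem:locsec}) and then inflating back via Proposition~\ref{prop:inflate}.
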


\begin{proof}
Assume first that $(\calh, \mu)$ is ergodic.
By assumption, we have a balanced, diffuse, conjugation-invariant mean $\bm{m}_0$ on $(\calh, \mu)$.
By setting $\bm{m}_0(\calg \setminus \calh)=0$, we regard $\bm{m}_0$ as a mean on $(\calg, \mu)$.
Then $\bm{m}_0$ is a balanced mean on $(\calg, \mu)$, and is conjugation-invariant under $\calh$, i.e., we have $\bm{m}_0(D^\phi)=\bm{m}_0(D)$ for all Borel subsets $D\subset \calg$ and all $\phi \in [\calh]$.

For $\gamma \in \calg$, we set $\gamma \calh \defeq \{ \, \gamma \delta \in \calg \mid \delta \in \calh^{s(\gamma )}\, \}$.
Let $N$ be the index of $\calh$ in $\calg$.
Since $(\calh, \mu)$ is ergodic, we may choose $\psi_1,\ldots, \psi_N\in [\calg]$, as in \cite[Lemmas 1.1 and 1.3]{fsz}, such that for all $x\in \calg^0$, the sets $(\psi_i)^x\calh$ with $i=1, \ldots, N$ partition $\calg^x$.
We define a mean $\bm{m}$ on $(\calg, \mu)$ by $\bm{m}(D)\defeq N^{-1}\sum_{i=1}^N\bm{m}_0(D^{\psi_i})$.
Pick a Borel subset $D\subset \calg$ and $\phi \in [\calg]$.
Let $A_{ij}$ be the Borel subset of all $x\in \calg^0$ such that $\phi_x (\psi_i)^x\calh = (\psi_j)^y\calh$ with $y\defeq r(\phi_x)$.
For almost every $x\in \calg^0$, for every $i$, there exists a unique $j$ such that $\phi_x (\psi_i)^x\calh = (\psi_j)^y\calh$, and moreover this assignment $i\mapsto j$ is bijective.
Therefore $\calg^0=\bigsqcup_{j=1}^N A_{ij}=\bigsqcup_{i=1}^NA_{ij}$ for all $i$ and $j$.
We define
\[\theta_{ij}\defeq \{ \, [\phi_x(\psi_i)^x]^{-1}(\psi_j)^y\mid x\in A_{ij},\ y=r(\phi_x)\, \},\]
which is a local section of $\calh$ with $s(\theta_{ij})=\psi_j^{-1}\phi A_{ij}$ and $r(\theta_{ij})=\psi_i^{-1}A_{ij}$.
Then we have
\begin{align*}
\bm{m}(D^\phi)&=\frac{1}{N}\sum_{i=1}^N\bm{m}_0(D^{\phi \psi_i})=\frac{1}{N}\sum_{i, j=1}^N\bm{m}_0\bigl(D^{\phi \psi_i}\cap \calh_{\psi_i^{-1}A_{ij}}\bigr)\\
&=\frac{1}{N}\sum_{i, j=1}^N\bm{m}_0\Bigl(\bigl(D^{\phi \psi_i}\cap \calh_{\psi_i^{-1}A_{ij}}\bigr)^{\theta_{ij}}\Bigr)=\frac{1}{N}\sum_{i, j=1}^N\bm{m}_0\bigl(D^{\psi_j}\cap \calh_{\psi_j^{-1}\phi A_{ij}}\bigr)\\
&=\frac{1}{N}\sum_{j=1}^N \bm{m}_0(D^{\psi_j})=\bm{m}(D),
\end{align*}
where the second and fifth equations hold because $\bm{m}_0$ is balanced, and the third equation holds because $\bm{m}_0$ is conjugation-invariant under $\calh$.
The mean $\bm{m}$ is therefore conjugation-invariant under $\calg$.
Since $\bm{m}$ is diffuse, $(\calg, \mu)$ is inner amenable by Theorem \ref{thm:equiv}.

In general, since $(\calg, \mu)$ is ergodic and $\calh$ has finite index in $\calg$, there is some non-null $\calh$-invariant Borel subset $A\subset \calg ^0$ such that $(\calh _A, \mu_A)$ is ergodic, where $\mu_A$ is the normalized restriction of $\mu$ to $A$ (this follows from e.g., \cite[Lemma 2.1]{ha}). 
Since $(\calh, \mu)$ is inner amenable, so is $(\calh_A, \mu_A)$ by Remark \ref{rem:locsec}.
Since $\calh_A$ has finite index in $\calg_A$ (in fact, the index of $\calh_A$ in $\calg_A$ is not larger than the index of $\calh$ in $\calg$), it follows from the ergodic case proved above that $(\calg_A, \mu_A)$ is inner amenable, and hence $(\calg, \mu)$ is inner amenable by Proposition \ref{prop:inflate}.
\end{proof}

The converse of Proposition \ref{prop-finite-index} also holds (see Corollary \ref{cor-finite-index-cpt-ext}).


\subsubsection{Measure-preserving extensions}

Let $(\mathcal{G} ,\mu )$ be a discrete p.m.p.\ groupoid.
Let $(Z,\zeta )$ be a standard probability space and let $\alpha \colon \mathcal{G} \rightarrow \mathrm{Aut}(Z ,\zeta )$ be a cocycle.
The associated \textit{extension groupoid} $(\mathcal{G} ,\mu )\ltimes _{\alpha}(Z ,\zeta )= (\tilde{\mathcal{G}},\tilde{\mu} )$ is the discrete p.m.p.\ groupoid defined as follows:
The set of groupoid elements is $\tilde{\mathcal{G}} \defeq  \mathcal{G}\times Z$, with unit space $\tilde{\mathcal{G}}^0 \defeq  \mathcal{G}^0\times Z$ and measure $\tilde{\mu} \defeq  \mu \times \zeta$ on $\tilde{\calg}^0$.
The source and range maps are defined by $\tilde{s}(\gamma , z ) = (s(\gamma ), z)$ and $\tilde{r}(\gamma ,z )= (r(\gamma ), \alpha (\gamma )z )$, respectively, with groupoid operations defined by $(\gamma _1, \alpha (\gamma _0)z)(\gamma _0, z) = (\gamma _1\gamma _0 , z )$ and $(\gamma ,z )^{-1} = (\gamma ^{-1}, \alpha (\gamma )z )$.

\begin{prop}\label{prop:Ginn}
Let $(\mathcal{G},\mu )$ be a discrete p.m.p.\ groupoid, let $(Z, \zeta)$ be a standard probability space, and let $\alpha \colon \mathcal{G}\rightarrow \mathrm{Aut}(Z,\zeta )$ be a cocycle.
Suppose that the extension groupoid $(\mathcal{G} ,\mu )\ltimes _{\alpha}(Z ,\zeta )$ is inner amenable.
Then $(\mathcal{G},\mu )$ is inner amenable.

In particular, if a countable group $G$ admits a p.m.p.\ action $G\curvearrowright (Z,\zeta )$ such that the associated translation groupoid $G\ltimes (Z,\zeta )$ is inner amenable, then $G$ is inner amenable.
\end{prop}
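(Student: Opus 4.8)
The idea is to push an inner amenability sequence (or a mean) for the extension groupoid $(\tilde{\calg}, \tilde{\mu}) = (\calg, \mu) \ltimes_\alpha (Z, \zeta)$ down to $(\calg, \mu)$ by integrating out the $Z$-coordinate. The natural Borel projection $\pi \colon \tilde{\calg} = \calg \times Z \to \calg$ sends $\tilde{\calg}_A$ to $\calg_A$ and is compatible with the groupoid structure in the sense that $\tilde{\mu}^1 = \mu^1 \times \zeta$ pushes forward to $\mu^1$ (up to the obvious identifications). Given an inner amenability sequence $(\tilde{\xi}_n)$ for $(\tilde{\calg}, \tilde{\mu})$, I would define $\xi_n \in L^1(\calg, \mu^1)$ by the partial integral $\xi_n(\gamma) \defeq \int_Z \tilde{\xi}_n(\gamma, z)\, d\zeta(z)$, i.e.\ the pushforward of the measure $\tilde{\xi}_n \, d\tilde{\mu}^1$ under $\pi$, and check that $(\xi_n)$ is an inner amenability sequence for $(\calg, \mu)$. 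Working with means instead is cleaner: using Theorem \ref{thm:equiv}(5) (valid without ergodicity), take a balanced, diffuse, symmetric, conjugation-invariant mean $\tilde{\bm{m}}$ on $(\tilde{\calg}, \tilde{\mu})$ and define $\bm{m}(D) \defeq \tilde{\bm{m}}(\pi^{-1}(D))$ for Borel $D \subset \calg$.

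First I would record how the basic structures transport. The preimage $\pi^{-1}(\calg_A) = \tilde{\calg}_{A \times Z}$, so balancedness of $\tilde{\bm{m}}$ gives $\bm{m}(\calg_A) = \tilde{\bm{m}}(\tilde{\calg}_{A \times Z}) = \tilde{\mu}(A \times Z) = \mu(A)$, i.e.\ $\bm{m}$ is balanced. For the inverse map, $(\gamma, z)^{-1} = (\gamma^{-1}, \alpha(\gamma)z)$, so $\pi$ intertwines the inverse maps of $\tilde{\calg}$ and $\calg$, giving $\pi^{-1}(D^{-1}) = (\pi^{-1}(D))^{-1}$ and hence symmetry of $\bm{m}$. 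For diffuseness, if $\mu^1(D) < \infty$ then $\tilde{\mu}^1(\pi^{-1}(D)) = (\mu^1 \times \zeta)(D \times Z) = \mu^1(D) < \infty$, so $\tilde{\bm{m}}(\pi^{-1}(D)) = 0$; thus $\bm{m}$ is diffuse.

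The one step requiring genuine care is conjugation-invariance, and this is the main obstacle. Given $\phi \in [\calg]$, I need $\bm{m}(D^\phi) = \bm{m}(D)$, i.e.\ $\tilde{\bm{m}}(\pi^{-1}(D^\phi)) = \tilde{\bm{m}}(\pi^{-1}(D))$. The section $\phi$ does \emph{not} lift to a single section of $\tilde{\calg}$ unless $\alpha \circ \phi$ acts trivially, but it does lift to a local section: for a Borel subset $B \subset Z$, the set $\tilde{\phi}_B \defeq \{ (\phi_x, z) \mid x \in \calg^0,\ z \in B \}$ is a local section of $\tilde{\calg}$ with domain $\calg^0 \times B$ and range $\calg^0 \times \phi^o(x)$-dependent set. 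The key identity to verify by a direct computation on elements is that $\pi^{-1}(D^\phi) \cap \tilde{\calg}_{\tilde{A}}$, for suitable pieces $\tilde{A}$ of the form $A \times B$, agrees with $(\pi^{-1}(D) \cap \tilde{\calg}_{\tilde{A}'})^{\tilde{\phi}}$ for appropriately matched local sections $\tilde{\phi}$ of $\tilde{\calg}$ lying over $\phi$; one partitions $\calg^0 \times Z$ into countably many such rectangles so that on each the cocycle contribution is well-controlled (using that $\aut(Z, \zeta)$ with the weak topology is separable, one approximates $z \mapsto \alpha(\phi_x)$ by a countable family). Then conjugation-invariance of $\tilde{\bm{m}}$ under these local sections, together with the "summing over a partition" property of balanced means from Remark \ref{rem-balanced}, yields $\bm{m}(D^\phi) = \bm{m}(D)$. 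With $\bm{m}$ shown to be a balanced, diffuse, symmetric, conjugation-invariant mean, Theorem \ref{thm:equiv} (in the form valid without ergodicity, via condition (5)) gives that $(\calg, \mu)$ is inner amenable. The "in particular" clause is the special case $\calg = G$ (a group on a point) with $Z = Z$ and the cocycle induced by the action $G \c (Z, \zeta)$, since then $G \ltimes (Z, \zeta)$ is exactly the extension groupoid $(\{ \mathrm{pt} \}, \delta) \ltimes_\alpha (Z, \zeta)$, up to the identification of $G$ with its copy of sections inside the translation groupoid.
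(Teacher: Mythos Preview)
Your approach---pushing forward a balanced, diffuse, symmetric, conjugation-invariant mean $\tilde{\bm{m}}$ on $(\tilde{\calg},\tilde{\mu})$ via $\bm{m}(D)\defeq\tilde{\bm{m}}(D\times Z)$---is exactly the paper's proof, and your verifications of balancedness, symmetry and diffuseness are correct.

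The one place where you go astray is conjugation-invariance. You assert that $\phi\in[\calg]$ ``does \emph{not} lift to a single section of $\tilde{\calg}$ unless $\alpha\circ\phi$ acts trivially,'' and then sketch a complicated and somewhat vague argument involving countable partitions of $\calg^0\times Z$ and approximation in $\aut(Z,\zeta)$. This assertion is false, and the complication is unnecessary. The natural lift $\tilde{\phi}$ given by $\tilde{\phi}_{(x,z)}\defeq(\phi_x,z)$ \emph{is} an element of the full group $[\tilde{\calg}]$: the source map $\tilde{s}(\phi_x,z)=(x,z)$ is obviously a bijection onto $\tilde{\calg}^0$, and the range map $\tilde{r}(\phi_x,z)=(\phi^o(x),\alpha(\phi_x)z)$ is also a bijection onto $\tilde{\calg}^0$, since given $(y,w)\in\calg^0\times Z$ one takes $x=(\phi^o)^{-1}(y)$ and $z=\alpha(\phi_x)^{-1}w$. (This is precisely the lift used later in the paper, in the proof of Corollary~\ref{cor:cpctext}.) A direct computation with the groupoid operations of $\tilde{\calg}$ gives
\[
(\gamma,z)^{\tilde{\phi}}=\bigl(\gamma^\phi,\ \alpha(\phi^{s(\gamma)})^{-1}z\bigr),
\]
and since for each fixed $\gamma$ the map $z\mapsto\alpha(\phi^{s(\gamma)})^{-1}z$ is a bijection of $Z$, it follows that $(D\times Z)^{\tilde{\phi}}=D^\phi\times Z$. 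Conjugation-invariance of $\bm{m}$ is then immediate:
\[
\bm{m}(D^\phi)=\tilde{\bm{m}}(D^\phi\times Z)=\tilde{\bm{m}}\bigl((D\times Z)^{\tilde{\phi}}\bigr)=\tilde{\bm{m}}(D\times Z)=\bm{m}(D).
\]
With this correction the proof is complete, and in fact coincides with the paper's (which simply asserts that $\bm{m}$ ``witnesses'' inner amenability without spelling out these checks).
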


\begin{proof}
Suppose that the groupoid $(\tilde{\mathcal{G}},\tilde{\mu} )\defeq  (\mathcal{G} ,\mu )\ltimes _{\alpha}(Z ,\zeta )$ is inner amenable, and let $\tilde{\bm{m}}$ be a mean on $(\tilde{\mathcal{G}},\tilde{\mu})$ as in condition (5) of Theorem \ref{thm:equiv}.
Then the mean $\bm{m}$ on $(\mathcal{G},\mu )$ defined by $\bm{m}(D)\defeq  \tilde{\bm{m}}(D\times Z )$ witnesses that $(\mathcal{G},\mu )$ is inner amenable.
\end{proof}

While the converse of Proposition \ref{prop:Ginn} does not hold in general (e.g., Corollary \ref{cor-ber}), it does hold for compact extensions (Corollary \ref{cor:cpctext}), and more generally for distal extensions (Corollary \ref{cor:distal}).


\subsubsection{Ergodic decompositions}

We refer to \cite{hahn} for the ergodic decomposition of discrete p.m.p.\ groupoids.

\begin{prop}\label{prop:ergdec}
Let $(\mathcal{G},\mu )$ be a discrete p.m.p.\ groupoid with ergodic decomposition map $\pi \colon (\mathcal{G}^0,\mu )\rightarrow  (Z,\zeta )$ and disintegration $(\mathcal{G},\mu ) = \int _Z (\mathcal{G}_z,\mu _z )\, d\zeta (z)$.
Then $(\mathcal{G},\mu )$ is inner amenable if and only if $(\mathcal{G}_z,\mu _z)$ is inner amenable for $\zeta$-almost every $z\in Z$.
\end{prop}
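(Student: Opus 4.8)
The plan is to prove both directions by passing between a mean witnessing inner amenability of $(\calg, \mu)$ and a measurable field of means witnessing inner amenability of the fibers $(\calg_z, \mu_z)$. The key tool is condition (6) of Theorem \ref{thm:equiv}, which packages a symmetric, balanced, diffuse, conjugation-invariant mean as a positive linear map $P \colon L^\infty(\calg, \mu^1) \to L^\infty(\calg^0, \mu)$ with the listed equivariance properties; this formulation behaves well under disintegration because its target is already an $L^\infty$-space over $\calg^0$, and $L^\infty(\calg^0, \mu)$ fibers over $Z$ via the ergodic decomposition map $\pi$.

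For the forward direction, suppose $(\calg, \mu)$ is inner amenable and let $P$ be a map as in condition (6). First I would disintegrate: since $\mu^1 = \int_Z \mu_z^1 \, d\zeta(z)$ and $\mu = \int_Z \mu_z \, d\zeta(z)$, one can identify $L^\infty(\calg, \mu^1)$ with a field $(L^\infty(\calg_z, \mu_z^1))_{z \in Z}$ and restrict $P$ fiberwise to obtain, for $\zeta$-almost every $z$, a positive linear map $P_z \colon L^\infty(\calg_z, \mu_z^1) \to L^\infty(\calg_z^0, \mu_z)$. The three bulleted properties of $P$ are closed conditions that can be checked against a countable dense family (of Borel subsets $A \subset \calg^0$, of elements $\phi \in [\calg]$ — note every $\phi \in [\calg]$ restricts to an element of $[\calg_z]$ for a.e.\ $z$, using that $[\calg_z]$ elements can be glued, and of functions $F$), so they pass to almost every fiber. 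Hence $P_z$ witnesses condition (6) for $(\calg_z, \mu_z)$, which is ergodic by construction, so $(\calg_z, \mu_z)$ is inner amenable for $\zeta$-a.e.\ $z$.

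For the converse, suppose $(\calg_z, \mu_z)$ is inner amenable for $\zeta$-a.e.\ $z$. By Theorem \ref{thm:equiv} (ergodic case), each fiber admits a map $P_z$ as in condition (6); the main technical point is to choose the family $(P_z)_{z\in Z}$ in a $\zeta$-measurable way, which I would arrange by a measurable selection argument — e.g., realize the $P_z$ as weak${}^*$-limits along a fixed sequence of balanced symmetric vectors in $L^1(\calg_z, \mu_z^1)_{+,1}$ built measurably via the proof of Lemma \ref{lem:weakstar}, or invoke a Kuratowski--Ryll-Nardzewski type selection for the (weak${}^*$-closed, nonempty-valued) field of such means. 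Then define $P \colon L^\infty(\calg, \mu^1) \to L^\infty(\calg^0, \mu)$ by $(P F)(x) \defeq (P_{\pi(x)}(F|_{\calg_{\pi(x)}}))(x)$; positivity and linearity are immediate, and the three bulleted properties follow fiberwise since every Borel $A \subset \calg^0$, every $\phi \in [\calg]$, and every $F$ restrict to the corresponding data on almost every fiber. Thus $P$ satisfies condition (6) and $(\calg, \mu)$ is inner amenable.

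The main obstacle is the measurable-selection step in the converse: condition (6) only guarantees \emph{existence} of $P_z$ for each $z$, and one must produce a genuinely measurable field $z \mapsto P_z$. I expect this to be handled cleanly by running the constructions of Subsection \ref{subsec-proof} (the Hahn--Banach/weak${}^*$ approximation of Lemma \ref{lem:weakstar} and the transfinite recursion of Claim \ref{claim:transfin}) relative to the disintegration so that all choices are made measurably in $z$ from the outset, rather than selecting means fiber-by-fiber after the fact; alternatively one can cite the measurable ergodic decomposition machinery of \cite{hahn} together with a standard selection theorem. Everything else is a routine check that the defining properties of a conjugation-invariant balanced diffuse mean are fiberwise in nature.
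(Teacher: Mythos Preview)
Your approach via condition (6) differs from the paper's, which works with inner amenability \emph{sequences} (condition (1) of Definition~\ref{def:groupoid}) in both directions, and your forward direction has a real gap. The phrase ``restrict $P$ fiberwise'' does not make sense as stated: $P$ is a positive map $L^\infty(\calg,\mu^1)\to L^\infty(\calg^0,\mu)$ that is \emph{not normal} (a normal such map would be given by integration against a single $L^1$ function, impossible once the mean is diffuse and $\calg$ is aperiodic). Even after observing that $P$ is $L^\infty(Z)$-linear (which does follow from $P(1_{\calg_B})=1_B$ and positivity for $\calg$-invariant $B$), non-normal $L^\infty(Z)$-module maps do not disintegrate: each fiber $\calg_z$ is $\mu^1$-null, so the value $P(F)|_{\calg^0_z}$ depends on the chosen Borel representative of $P(F)\in L^\infty(\calg^0,\mu)$, and there is no consistent way to define $P_z$ on all of $L^\infty(\calg_z,\mu_z^1)$. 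Checking the bulleted properties against a countable family does not help until $P_z$ actually exists.

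The paper instead disintegrates the $L^1$ vectors $\xi_n$ themselves (automatic for $L^1$), shows that the fiberwise error quantities $\|1_{(\calg_z)_B}\xi_n\|$, $\|\xi_n^\phi-\xi_n\|$, $\|1_D\xi_n\|$ tend to $0$ in $L^1(Z,\zeta)$, and extracts one subsequence that works for $\zeta$-a.e.\ $z$. For the converse, your acknowledged obstacle is also handled differently: rather than selecting \emph{means} (which live in a non-separable weak${}^*$-compact set, so Kuratowski--Ryll-Nardzewski does not apply directly), the paper fixes $\ve>0$ and finite test data and selects a single vector $\xi_z\in L^1$ satisfying the $\ve$-approximate conditions (1.$z$)--(4.$z$). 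After a Lusin--Novikov parametrization of $\calg$, the set $\Omega$ of good pairs $(z,\xi)$ is Borel in the product of Polish spaces $Z\times L^1(\N\times Y)$, and Jankov--von Neumann uniformization yields a measurable choice. This is strictly easier than trying to run Lemma~\ref{lem:weakstar} or the transfinite recursion of Claim~\ref{claim:transfin} measurably in $z$.
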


\begin{proof}
Assume first that $(\mathcal{G}_z,\mu _z)$ admits an inner amenability sequence for $\zeta$-almost every $z\in Z$.
The groupoid $(\mathcal{G}_z, \mu _z)$, being inner amenable, is aperiodic for $\zeta$-almost every $z\in Z$, and hence $(\mathcal{G}, \mu )$ is aperiodic as well.
Let $\mu _{\N}$ denote the counting measure on $\N$, the set of non-negative integers.
Since the source map $s\colon \mathcal{G}\rightarrow \mathcal{G}^0$ is a countably infinite-to-one Borel map, by the Lusin-Novikov Uniformization Theorem (\cite[Theorem 18.10]{kec-set}), we may find an isomorphism of measure spaces
\[
\varphi \colon (\N\times \mathcal{G}^0, \, \mu _{\N}\times \mu ) \rightarrow (\mathcal{G},\mu ^1)
 \]
satisfying $\varphi (0,x)=x\in \mathcal{G}^0$ and $s(\varphi (i,x)) = x$ for every $i\in \N$ and $\mu$-almost every $x\in \mathcal{G}^0$.
We may therefore assume without loss of generality that there is some standard probability space $(X,\mu _X )$ such that, as measure spaces we have $(\mathcal{G},\mu ^1 )=(\N\times X, \mu _{\N}\times \mu _X )$ and $(\mathcal{G}^0 , \mu ) =  ( \{ 0 \} \times X, \delta _0 \times \mu _X )$, with the source map $s\colon \mathcal{G}\rightarrow \mathcal{G}^0$ being given by $s(i,x)=(0,x)$ for $\mu^1$-almost every $(i,x) \in \mathcal{G}$.

Let $Z_0$ consist of all $z\in Z$ for which the measure $\mu _z$ on $\mathcal{G}^0_z$ is atomless, and for each integer $n\geq 1$, let $Z_n$ consist of all $z\in Z$ for which the measure $\mu _z$ is uniformly distributed on $n$ points.
Then $Z_0,Z_1,\dots$ partition $Z$, and $\pi ^{-1}(Z_0), \pi ^{-1}(Z_1),\dots $ partition $\mathcal{G}^0$ into $\mathcal{G}$-invariant sets, so it is enough to show that, for each $n$ with $\mu (\pi ^{-1}(Z_n))>0$, the groupoid $(\mathcal{G}_{\pi ^{-1}(Z_n) }, \mu _{\pi ^{-1}(Z_n)} )$ admits an inner amenability sequence.
We may therefore assume without loss of generality that $Z=Z_n$ for some $n$, and hence (arguing as in the proof of \cite[Theorem 3.18]{gla}) we may also assume that $(X,\mu _X )=(Y\times Z ,\nu \times \zeta )$ for some standard probability space $(Y,\nu )$, with $\pi \colon \mathcal{G}^0 \rightarrow Z$ being the projection map to the $Z$-coordinate, $\pi (0,(y,z)) = z$.
Then for each $z\in Z$, the measures $\mu _z$ on $\mathcal{G}_z^0= \{ 0 \} \times Y \times \{ z\} $ and $\mu ^1_z$ on $\mathcal{G}_z = \N \times Y \times \{ z \}$ are respectively given by $\mu _z = \delta _0 \times \nu \times \delta _z$ and $\mu ^1_z = \mu _{\N} \times \nu \times \delta _z$.

Fix $\ve >0$, along with a finite subset $\Phi \subset [\mathcal{G}]$, finite Borel partitions $\mathcal{B}$ and $\mathcal{C}$ of $Y$ and $Z$, respectively, and a Borel subset $D\subset \mathcal{G}$ with $\mu ^1 (D)<\infty$.
It is enough to find a symmetric, non-negative unit vector $\xi \in L^1(\mathcal{G},\mu ^1 )=L^1(\N \times Y\times Z , \mu _{\N} \times \nu \times \zeta )$ satisfying
\begin{enumerate}
\item[(1)] $| \| 1_{\calg_{\{ 0 \} \times B\times C}}\xi \| _1 - (\nu \times \zeta ) (B\times C) | < \ve$ for all $B\in \mathcal{B}$ and all $C\in \mathcal{C}$,
\item[(2)] $\| \xi ^{\phi } - \xi \| _1 < \ve$ for all $\phi \in \Phi$,
\item[(3)] $\| 1_D \xi \| _1 < \ve$, and
\item[(4)] $\sum _{i \in \N} \xi (i,(y,z)) =1$ for $(\nu\times \zeta )$-almost every $(y,z)\in Y\times Z$.
\end{enumerate}
For $z\in Z$ and $\xi \in L^1( \N \times Y , \mu _{\N } \times \nu )$, let $\xi ^{(z)} \in L^1(\mathcal{G}_z, \mu ^1 _z ) = L^1(\N \times Y \times \{ z \} , \mu _{\N}\times \nu \times \delta _z )$ be the vector given by $\xi ^{(z)} (i,y,z)\defeq \xi (i,y)$. 
For almost every $z\in Z$, the groupoid $(\mathcal{G}_z, \mu _z )$ is inner amenable and $\mu ^1_z (D)<\infty$, so we may find some $\xi \in L^1 (\N \times Y, \mu _{\N}\times \nu )$ such that $\xi ^{(z)}\in L^1(\mathcal{G}_z, \mu ^1 _z)$ is a symmetric, non-negative unit vector satisfying
\begin{enumerate}
\item[(1.$z$)] $| \| 1_{\calg_{\{ 0 \} \times B\times C}}\xi ^{(z)} \| _{L^1(\mu ^1_z )} -  \nu (B)1_C(z) | < \ve$ for all $B\in \mathcal{B}$ and all $C\in \mathcal{C}$,
\item[(2.$z$)] $\| (\xi ^{(z)} ) ^{\phi } - \xi ^{(z)} \| _{L^1(\mu ^1_z)} < \ve$ for all $\phi \in \Phi$,
\item[(3.$z$)] $\| 1_D \xi ^{(z)} \| _{L^1(\mu ^1 _z)} < \ve$, and
\item[(4.$z$)] $\sum _{i \in \N} \xi (i, y) =1$ for $\nu$-almost every $y\in Y$.
\end{enumerate}
Let $\Omega$ be the set of all such pairs $(z,\xi )$, i.e., all pairs $(z,\xi )\in Z\times L^1(\N \times Y ,\mu _{\N}\times \nu )$ such that $\xi ^{(z)}\in L^1(\mathcal{G}_z, \mu ^1 _z)$ is a symmetric, non-negative unit vector satisfying conditions (1.$z$)--(4.$z$).
Then $\Omega$ is a Borel subset of $Z\times L^1 (\N \times Y, \mu _{\N}\times \nu )$, where the reason (2.$z$) defines a Borel property is because all the groupoid operations are by assumption Borel and each $\phi \in \Phi$ is Borel.
By applying the Jankov-von Neumann Uniformization Theorem (\cite[Theorem 18.1]{kec-set}) and Lusin's theorem that analytic sets are universally measurable (\cite[Theorem 21.10]{kec-set}), after discarding a $\zeta$-null set from $Z$, we may find a Borel map $Z\rightarrow L^1(\N\times Y, \mu _{\N}\times \nu )$, $z\mapsto \xi _z$, such that $(z,\xi _z ) \in \Omega$ for almost every $z\in Z$. Define $\xi \in L^1(\calg,\mu ^1 )$ by $\xi (i,y,z)\defeq \xi _z (i,y)$.
Then $\xi$ is a symmetric, non-negative unit vector in $L^1(\calg,\mu ^1 )$ satisfying conditions (1)--(4).

Conversely, assume that $(\xi _n)_{n\in \N}$ is an inner amenability sequence for $(\mathcal{G},\mu )$.
By properties (i) and (iv) of Definition \ref{def:groupoid}, for every Borel subset $B\subset \mathcal{G}^0$, we have
\begin{align*}
&\int _Z \biggl| \mu _z (B) - \int _{(\mathcal{G}_z)_B} \xi _n \, d\mu ^1_z\biggr| \, d\zeta(z) = \int _Z \biggl| \int _B \biggl( 1 - \sum _{\gamma \in (\calg_B)_x} \xi _n (\gamma )\biggr) \, d\mu _z(x) \biggr| \, d\zeta(z) \\
&=\int _Z \int _B\biggl( 1 - \sum _{\gamma \in (\calg_B)_x} \xi _n (\gamma )\biggr)\, d\mu _z(x) \, d\zeta(z) = \mu (B) - \int _{\mathcal{G}_B}\xi _n \, d\mu ^1 \rightarrow 0.
\end{align*}
Likewise, for every $\phi \in [\mathcal{G}]$, we have $\int _{Z} \| \xi _n ^{\phi} - \xi _n \| _{L^1(\mu ^1 _z)} \, d\zeta(z) \rightarrow 0$, and for every bounded Borel subset $D\subset \mathcal{G}$, we have $\int _Z \| 1_D \xi _n \| _{L^1(\mu ^1 _z)}\, d\zeta(z) \rightarrow 0$.
Therefore, by separability of $L^1(\mathcal{G}^0,\mu )$ and of $[\mathcal{G}]$, we can find a single subsequence $(\xi _{n_i})$ such that for $\zeta$-almost every $z\in Z$, $(\xi _{n_i})$ is an inner amenability sequence for $(\mathcal{G}_z,\mu _z)$.
\end{proof}


\subsubsection{Inverse limits}

Let $(\mathcal{G}_1, \mu _1 )$ be a discrete p.m.p.\ groupoid.
A \textit{locally bijective extension} of $(\mathcal{G}_1 ,\mu _1 )$ is a measure-preserving groupoid homomorphism $\varphi \colon (\mathcal{G}, \mu ) \rightarrow (\mathcal{G}_1,\mu _1 )$, from a discrete p.m.p.\ groupoid $(\mathcal{G}, \mu )$ to $(\mathcal{G}_1,\mu _1 )$, such that for almost every $x\in \mathcal{G}^0$, its restriction $\varphi \colon \calg_x \rightarrow (\calg_1)_{\varphi (x)}$ is bijective.
Clearly, compositions of locally bijective extensions are locally bijective.

Suppose that $I$ is a countable directed set and we have a directed family $(\varphi _{i,j} \colon (\mathcal{G}_j,\mu _j)\rightarrow (\mathcal{G}_i,\mu _i ) )_{i,j\in I , i<j}$ of locally bijective extensions of groupoids, that is, $\varphi_{i, j}$ is a measure-preserving groupoid homomorphism such that $\varphi _{i,j}\circ \varphi _{j,k}=\varphi _{i,k}$ whenever $i<j<k$.
Then the \textit{inverse limit} of this family is the discrete p.m.p.\ groupoid $(\mathcal{G},\mu )$ defined by
\begin{align*}
\mathcal{G} &\defeq \left\{  \left. (\gamma _i )_{i\in I} \in \prod _{i\in I}\mathcal{G}_i \, \right| \, \varphi _{i,j} (\gamma _j ) = \gamma _i \text{ for  all }i<j \, \right\}  
\end{align*}
and
\begin{align*}
\mathcal{G}^0 &\defeq \left\{ \left. (x_i )_{i\in I} \in \prod _{i\in I}\mathcal{G}_i^0 \, \right| \, \varphi _{i,j}(x_j ) = x_i \text{ for all }i <j \, \right\} ,
\end{align*}
with $(\mathcal{G}^0, \mu )$ the inverse limit of the measure spaces $(\mathcal{G}_i^0,\mu _i^0 )$, and with source and range maps defined by $s((\gamma _i)_{i\in I}) = (s(\gamma _i))_{i\in I}$ and $r((\gamma _i)_{i\in I}) = (r(\gamma _i))_{i\in I}$, respectively.
For each $i\in I$, the projection map $\varphi _i \colon (\mathcal{G},\mu )\rightarrow (\mathcal{G}_i,\mu _i )$ is a locally bijective extension of groupoids, and if $i<j$, then $\varphi _{i,j}\circ \varphi _j = \varphi _i$.

\begin{prop}\label{prop:invlim}
Let $(\varphi _{i,j} \colon (\mathcal{G}_j,\mu _j)\rightarrow (\mathcal{G}_i,\mu _i ) )_{i,j\in I , i<j}$ be a countable directed family of locally bijective extensions of groupoids, and let $(\mathcal{G},\mu )$ be its inverse limit.
If each of the groupoids $(\mathcal{G}_i, \mu _i )$ is inner amenable, then $(\mathcal{G},\mu )$ is inner amenable.
\end{prop}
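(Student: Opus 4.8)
The plan is to construct a balanced, diffuse, symmetric, conjugation-invariant mean on $(\mathcal{G},\mu)$ and invoke condition (5) of Theorem \ref{thm:equiv}. Fix once and for all some $i_0\in I$; the mean will be produced as a weak${}^*$-cluster point of means pulled back from the levels $i\geq i_0$, and Remark \ref{rem:suffice} will be used to upgrade the limited conjugation-invariance visible at finite levels to invariance under all of $[\mathcal{G}]$.

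First I would record the relevant structure of the projections $\varphi_i\colon(\mathcal{G},\mu)\to(\mathcal{G}_i,\mu_i)$. Since $\varphi_i$ is a locally bijective extension it is measure preserving as a map $(\mathcal{G},\mu^1)\to(\mathcal{G}_i,\mu_i^1)$, so there is a unital positive conditional expectation $E_i\colon L^\infty(\mathcal{G},\mu^1)\to L^\infty(\mathcal{G}_i,\mu_i^1)$, where $L^\infty(\mathcal{G}_i,\mu_i^1)$ is identified with its isometric image in $L^\infty(\mathcal{G},\mu^1)$ under composition with $\varphi_i$. For a local section $\psi$ of $\mathcal{G}_i$ with conull domain, using local bijectivity of $\varphi_i$ on every source- and range-fiber one checks that $\varphi_i^{-1}(\psi)$ is again a local section of $\mathcal{G}$ with conull domain; this yields a group homomorphism $[\mathcal{G}_i]\to[\mathcal{G}]$, $\psi\mapsto\tilde\psi\defeq\varphi_i^{-1}(\psi)$, compatible with the transition maps, and a short computation (from the fact that $\varphi_i$ intertwines the measure-preserving transformation $\gamma\mapsto\tilde\psi_{r(\gamma)}\gamma[\tilde\psi_{s(\gamma)}]^{-1}$ of $\mathcal{G}$ with the corresponding transformation of $\mathcal{G}_i$) gives $E_i(f^{\tilde\psi})=(E_if)^{\psi}$ for all $f\in L^\infty(\mathcal{G},\mu^1)$. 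Finally, since $\mathcal{G}_{i_0}$ is covered by countably many local sections, each extendable to an element of $[\mathcal{G}_{i_0}]$ by Remark \ref{rem:locsec}, there is a countable subgroup $\Gamma_{i_0}\leq[\mathcal{G}_{i_0}]$ covering $\mathcal{G}_{i_0}$, and then $\varphi_{i_0}^{-1}(\Gamma_{i_0})$ is a countable subgroup of $[\mathcal{G}]$ covering $\mathcal{G}$.

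Next, for each $i$ I would use inner amenability of $(\mathcal{G}_i,\mu_i)$ to pick, via condition (5) of Theorem \ref{thm:equiv}, a balanced, diffuse, conjugation-invariant mean $\bm{m}_i$ on $(\mathcal{G}_i,\mu_i)$, and set $\tilde{\bm{m}}_i\defeq\bm{m}_i\circ E_i$, a mean on $(\mathcal{G},\mu)$. The points to check are: (a) for a bounded Borel set $D\subset\mathcal{G}$, the set $\{E_i(1_D)>\tau\}$ is a bounded Borel subset of $\mathcal{G}_i$ for every $\tau>0$, since the sums of $E_i(1_D)$ over source- and range-fibers of $\mathcal{G}_i$ are dominated by the bound on $D$; hence $\tilde{\bm{m}}_i(1_D)=\bm{m}_i(E_i(1_D))\leq\tau$ by diffuseness of $\bm{m}_i$, so $\tilde{\bm{m}}_i(1_D)=0$; (b) $\tilde{\bm{m}}_i(1_{s^{-1}(F)})=\tilde{\bm{m}}_i(1_{r^{-1}(F)})=\mu(F)$ for every Borel $F\subset\mathcal{G}^0$ (because $E_i(1_{s^{-1}(F)})$ is the pullback along $s$ of the conditional expectation of $1_F$, and balancedness of $\bm{m}_i$ makes $\bm{m}_i$ equal to integration against $\mu_i$ on functions pulled back along $s$), while $\tilde{\bm{m}}_i(1_{\mathcal{G}_A})=\mu(A)$ whenever $A$ is pulled back from a level $\leq i$ (there $1_{\mathcal{G}_A}$ is $E_i$-invariant); (c) for every $j\leq i$ and $\psi\in[\mathcal{G}_j]$, $\tilde{\bm{m}}_i$ is invariant under conjugation by $\varphi_j^{-1}(\psi)$, which follows at once from $E_i(f^{\tilde\psi})=(E_if)^{\psi'}$ ($\psi'$ the lift of $\psi$ to level $i$) and conjugation-invariance of $\bm{m}_i$. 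Now let $\bm{m}$ be a weak${}^*$-cluster point of the net $(\tilde{\bm{m}}_i)_{i\in I}$ in the weak${}^*$-compact state space of $L^\infty(\mathcal{G},\mu^1)$. By (a), $\bm{m}(1_D)=0$ for all bounded $D$; by (b), together with density of $\bigcup_i\varphi_i^{-1}(\mathcal{B}(\mathcal{G}_i^0))$ in the measure algebra of $(\mathcal{G}^0,\mu)$ (a standard feature of inverse limits of probability spaces) and the inclusion $\mathcal{G}_A\bigtriangleup\mathcal{G}_{A'}\subset s^{-1}(A\bigtriangleup A')\cup r^{-1}(A\bigtriangleup A')$, $\bm{m}$ is balanced; combining the last two with Lemma \ref{lem:diffuse} shows $\bm{m}$ is diffuse; and by (c) and cofinality of $\{i:i\geq i_0\}$, $\bm{m}$ is invariant under conjugation by every element of $\varphi_{i_0}^{-1}(\Gamma_{i_0})$. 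Since $\bm{m}$ is balanced and $\varphi_{i_0}^{-1}(\Gamma_{i_0})$ is a countable covering subgroup of $[\mathcal{G}]$, Remark \ref{rem:suffice} gives that $\bm{m}$ is conjugation-invariant. Replacing $\bm{m}$ by $(\bm{m}+\check{\bm{m}})/2$ (still balanced, diffuse, conjugation-invariant, and now symmetric) produces a mean as in condition (5) of Theorem \ref{thm:equiv}, so $(\mathcal{G},\mu)$ is inner amenable.

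The main obstacle is the mismatch between the conjugation action of $[\mathcal{G}]$ and the inverse-limit structure: a typical element of $[\mathcal{G}]$ descends to no single $\mathcal{G}_i$, so the means $\tilde{\bm{m}}_i$ are only visibly invariant under the ``level'' full groups $\varphi_i^{-1}([\mathcal{G}_i])$. Remark \ref{rem:suffice} is exactly what repairs this, but it applies only to balanced means, which is why point (b) — ensuring that balancedness survives passage to the cluster point — must be handled with care, and why one needs a countable covering subgroup sitting inside a single $\varphi_{i_0}^{-1}([\mathcal{G}_{i_0}])$. The remaining technical work is the bookkeeping behind the conditional-expectation identity $E_i(f^{\tilde\psi})=(E_if)^{\psi}$ and the fiberwise domination estimate underlying (a).
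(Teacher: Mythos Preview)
Your proof is correct and follows essentially the same approach as the paper: both pull back a balanced, diffuse, conjugation-invariant mean $\bm{n}_i$ on $(\mathcal{G}_i,\mu_i)$ to a mean $\tilde{\bm{m}}_i=\bm{n}_i\circ E_i$ on $(\mathcal{G},\mu)$ via the conditional expectation $E_i$, and then take a weak${}^*$-cluster point. The paper obtains $\bm{n}_i$ from the map $P_i$ of condition (6) of Theorem \ref{thm:equiv}, but this is cosmetic since $\bm{n}_i(D)=\int P_i(1_D)\,d\mu_i$ is exactly the mean in condition (5).

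Where you go beyond the paper is in justifying the three properties of the cluster point; the paper simply asserts ``Any weak${}^*$-cluster point of the net $(\bm{m}_i)_{i\in I}$ is then a balanced, diffuse, conjugation-invariant mean.'' Your detailed treatment of (a) and (b) is routine, but your handling of (c) via Remark \ref{rem:suffice} genuinely addresses a point the paper glosses over: a generic $\phi\in[\mathcal{G}]$ need not descend to any level $\mathcal{G}_i$, so the $\tilde{\bm{m}}_i$ are only visibly invariant under the lifted full groups $\varphi_i^{-1}([\mathcal{G}_i])$. Your device of fixing a countable covering subgroup $\varphi_{i_0}^{-1}(\Gamma_{i_0})\subset[\mathcal{G}]$, verifying invariance of every $\tilde{\bm{m}}_i$ (for $i\geq i_0$) under it, and then invoking Remark \ref{rem:suffice} once the cluster point is known to be balanced, is the natural way to make this rigorous.
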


\begin{proof}
Since $(\mathcal{G}_i, \mu _i )$ is inner amenable, we may find a positive linear map $P_i\colon L^{\infty}(\mathcal{G}_i,\mu ^1 _i ) \rightarrow L^{\infty}(\mathcal{G}_i^0,\mu _i )$ as in condition (6) of Theorem \ref{thm:equiv}, so that the mean $\bm{n}_i$ on $(\calg_i, \mu_i)$ defined by $\bm{n}_i(D) \defeq \int _{\mathcal{G}_i^0} P_i(1_D) \, d\mu _i$ is a balanced, diffuse, conjugation-invariant mean on $(\mathcal{G}_i,\mu _i)$.
Let $\mu = \int _{\mathcal{G}_i^0} \mu _i^z \, d\mu _i(z)$ be the disintegration of $\mu$ via $\varphi _i$.
For $\mu_i^1$-almost every $\delta \in \mathcal{G}_i$, there is the bijection $\gamma _{\varphi _i, \delta} \colon \varphi _i^{-1}(s(\delta ))\rightarrow \varphi _i^{-1}(\delta )$ that sends each $x\in \varphi _i^{-1}(s(\delta ))$ to the unique element $\gamma = \gamma _{\varphi _i ,\delta}(x)$ in $\calg_x$ with $\varphi _i (\gamma ) = \delta$.
We then obtain the conditional expectation $E_i\colon L^{\infty}(\mathcal{G},\mu ^1 ) \rightarrow L^{\infty}(\mathcal{G}_i,\mu ^1 _i )$ given by
\[
E_i(F)(\delta )= \int _{\varphi _i^{-1}(s(\delta ))}F(\gamma _{\varphi _i,\delta}(x))\, d\mu _i ^{s(\delta )}(x)
\]
for $F\in L^\infty(\calg, \mu^1)$ and $\delta \in \calg_i$.
We define $\bm{m}_i$ as the mean on $(\mathcal{G} ,\mu )$ given by $\bm{m}_i(D) \defeq \int _{\mathcal{G}_i^0} P_i(E_i(1_D))\, d\mu _i$, which projects via $\varphi _i$ to $\bm{n}_i$.
Any weak${}^*$-cluster point of the net $(\bm{m}_i )_{i\in I}$ is then a balanced, diffuse, conjugation-invariant mean on $(\mathcal{G},\mu )$, and hence $(\mathcal{G},\mu )$ is inner amenable by Theorem \ref{thm:equiv}.
\end{proof}


\subsection{Central sequences in the full group}\label{subsec-schmidt}

Let $\mathcal{R}$ be an ergodic discrete p.m.p.\ equivalence relation on a standard probability space $(X,\mu )$.
A sequence $(T_n)$ of elements of $[\calr]$ is called \textit{central} in $[\calr]$ if it asymptotically commutes with every element of $[\calr]$, i.e., for every $S\in [\mathcal{R}]$, we have $\mu (\{ \, x\in X \mid ST_nx\neq  T_nSx \, \} )\to 0$.
A central sequence $(T_n)$ in $[\calr]$ is called \textit{trivial} if $\mu (\{ \, x\in X \mid T_nx = x \, \} ) \to 1$.
A sequence $(A_n)$ of Borel subsets of $X$ is called \textit{asymptotically invariant} for $\calr$ if $\mu(TA_n\bigtriangleup A_n)\to 0$ for every $T\in [\calr]$.

\begin{rem}\label{rem-central}
This remark is analogous to Remark \ref{rem:suffice}. 
Let $(T_n)$ be a sequence in $[\calr]$.
If $(T_n)$ is central in $[\calr]$, then $\mu(T_nA\bigtriangleup A)\to 0$ for every Borel subset $A\subset X$.
Indeed for each Borel subset $A\subset X$, if we pick $S\in [\calr]$ such that $S$ is the identity on $A$ and $Sx\neq x$ for all $x\in X\setminus A$, then the set $T_n^{-1}A\bigtriangleup A$ is contained in the set $\{ \, x\in X \mid ST_nx\neq  T_nSx \, \}$.
Conversely, if $(T_n)$ satisfies $\mu(T_nA\bigtriangleup A)\to 0$ for every Borel subset $A\subset X$, then for the sequence $(T_n)$ to be central in $[\calr]$, it is sufficient that there is some countable subgroup $G$ of $[\calr]$ generating $\calr$, such that $\mu (\{ \, x\in X \mid ST_nx\neq  T_nSx \, \} )\to 0$ for all $S\in G$ (see \cite[Remark 3.3]{js} and see also \cite[Proposition 6.2]{kec} for a more general statement).
\end{rem}

\begin{lem}\label{lem-ai}
Let $(A_n)$ be an asymptotically invariant sequence for $\calr$ with $\mu(A_n)\to r$ for some number $r$.
Then for every Borel subset $B\subset X$, we have $\mu(A_n\cap B)\to r\mu(B)$.
\end{lem}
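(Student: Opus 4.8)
The plan is to prove this by a standard approximation argument that reduces the general Borel set $B$ to sets that are manifestly controllable by asymptotic invariance. First I would observe that the conclusion $\mu(A_n \cap B) \to r\mu(B)$ is equivalent, after rewriting, to the statement that $\mu(A_n \cap B) - \mu(A_n)\mu(B) \to 0$, since $\mu(A_n) \to r$. So it suffices to show that $A_n$ becomes asymptotically ``independent'' of every fixed $B$ in this weak sense. The key structural input is that asymptotic invariance is preserved under the operations one needs: if $(A_n)$ is asymptotically invariant, then so is $(X \setminus A_n)$, and the conclusion for $B$ implies the conclusion for $X \setminus B$ (because $\mu(A_n \cap (X\setminus B)) = \mu(A_n) - \mu(A_n \cap B)$).

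The main step is to handle the case where $B$ has a particular form adapted to $\calr$. Here is where ergodicity of $\calr$ enters. Using ergodicity I would, for each fixed $m$, partition $X$ (up to null sets) into $m$ Borel pieces $X_0, \dots, X_{m-1}$ of equal measure $1/m$, chosen so that there is some $\phi \in [\calr]$ cyclically permuting these pieces, i.e.\ $\phi X_j = X_{j+1 \bmod m}$; this is possible by the same kind of argument used in the proof of Proposition~\ref{prop:inflate}. Then asymptotic invariance gives $\mu(\phi A_n \bigtriangleup A_n) \to 0$, hence $\mu(A_n \cap X_{j+1}) = \mu(\phi A_n \cap \phi X_j) \to_{\!\!n} \mu(A_n \cap X_j) + o(1)$ uniformly in $j$, so all the quantities $\mu(A_n \cap X_j)$ agree asymptotically as $n\to\infty$; since they sum to $\mu(A_n) \to r$, each one tends to $r/m$. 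Thus the lemma holds whenever $B = X_j$ for such a partition, and by finite additivity whenever $B$ is a union of such pieces, i.e.\ whenever $\mu(B)$ is a rational multiple $k/m$ of $1$ realized by a piece of such a cyclically-permuted partition.

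The final step is to upgrade from these special sets to an arbitrary Borel $B$ by approximation. Given $B$ and $\varepsilon > 0$, choose $m$ large and, using ergodicity and hyperfiniteness of $\calr$ (or just ergodicity together with a maximality/exhaustion argument), produce a cyclically $\phi$-permuted partition $X_0,\dots,X_{m-1}$ together with a sub-union $B'$ of its pieces such that $\mu(B \bigtriangleup B') < \varepsilon$. Then
\begin{align*}
|\mu(A_n \cap B) - r\mu(B)|
&\leq |\mu(A_n\cap B) - \mu(A_n \cap B')| + |\mu(A_n \cap B') - r\mu(B')| + r|\mu(B') - \mu(B)| \\
&\leq \mu(B \bigtriangleup B') + |\mu(A_n \cap B') - r\mu(B')| + r\varepsilon,
\end{align*}
and the middle term tends to $0$ by the special case, so $\limsup_n |\mu(A_n \cap B) - r\mu(B)| \leq 2\varepsilon$; letting $\varepsilon \to 0$ finishes the proof. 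The one point requiring care—and the main obstacle—is the construction of the cyclically-permuted partition together with the good approximation $B'$: one must invoke ergodicity to build $\phi \in [\calr]$ with the required cyclic action on equal-measure pieces (as in Proposition~\ref{prop:inflate}), and then arrange the pieces to be fine enough to approximate $B$, which is where a little argument using a generating sequence or the aperiodicity of $\calr$ is needed. An alternative, perhaps cleaner, route avoids the explicit combinatorics entirely: pass to a weak-$*$ cluster point $\bm{m}$ of $(1_{A_n})$ in $L^\infty(X,\mu)^*$, note that asymptotic invariance makes the corresponding finitely additive measure $[\calr]$-invariant hence (by ergodicity, as in Lemma~\ref{lem:bal}) a scalar multiple $r$ of $\mu$ on the $\sigma$-algebra it is defined on, and read off $\lim_n \mu(A_n \cap B) = r\mu(B)$ along that subnet; since the limit value $r\mu(B)$ is independent of the cluster point, the full sequence converges. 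I would present the cluster-point argument as the main proof and mention the combinatorial one as motivation.
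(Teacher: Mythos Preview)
Your cluster-point argument is essentially the paper's proof, but the paper makes one simplifying choice you should adopt: take the weak${}^*$ cluster point of $(1_{A_n})$ in $L^\infty(X,\mu)$, viewed as the dual of $L^1(X,\mu)$, rather than in $L^\infty(X,\mu)^*$. The cluster point is then an honest function $f\in L^\infty(X,\mu)$; asymptotic invariance gives $f\circ T = f$ for every $T\in[\calr]$, so $f$ is $\calr$-invariant, and ergodicity forces $f$ to be the constant $r$. Then $\mu(A_n\cap B)=\int 1_{A_n}1_B\,d\mu \to \int r\,1_B\,d\mu = r\mu(B)$ along the subsequence, and the paper phrases the reduction exactly as you do (every subsequence has a further subsequence with the right limit). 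By working in $(L^\infty)^*$ instead, you land on a finitely additive mean and must then argue that $[\calr]$-invariance plus ergodicity forces it to be $r\mu$; this is true via the partition argument you have in mind, but it is an unnecessary detour, and Lemma~\ref{lem:bal} itself concerns conjugation-invariant means on a groupoid $(\calg,\mu)$, not $[\calr]$-invariant means on $X$, so the citation is off (the relevant mechanism is rather the one in the proof of Lemma~\ref{lem:amengroupoid}).

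Your combinatorial route, by contrast, has a genuine gap precisely where you flag it: for a fixed cyclic $\phi$-partition $X_0,\dots,X_{m-1}$ there is no reason an arbitrary $B$ is approximable by unions of the $X_j$, and arranging this requires a Rokhlin-tower input (e.g., an ergodic element of $[\calr]$ together with the classical Rokhlin lemma applied relative to $B$). That machinery exists but is much heavier than the two-line weak${}^*$ argument, so I would drop the combinatorial approach rather than present it as motivation.
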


\begin{proof}
This is observed in the proof of \cite[Lemma 2.3]{js}.
Since it will frequently be applied in the sequel, we give its proof here for the reader's convenience.
It is enough to show that the convergence holds for some subsequence of $(A_n)$.
Passing to a subsequence, we may assume that $1_{A_n}$  converges to some $f\in L^\infty(X, \mu)$ in the weak${}^*$-topology.
Since $(A_n)$ is asymptotically invariant for $\calr$, $f$ is invariant under $\calr$.
By ergodicity of $\calr$, $f$ is constant, and since $\mu (A_n)\to r$, that constant must be $r$.
Thus for every Borel subset $B\subset X$, we have $\mu(A_n\cap B)=\int 1_{A_n}1_B\, d\mu \to \int r1_B \, d\mu =r\mu(B)$.
\end{proof}

\begin{prop}\label{prop:iagroupoid}
Let $\mathcal{R}$ be an ergodic discrete p.m.p.\ equivalence relation on a standard probability space $(X,\mu )$, and suppose that $[\mathcal{R}]$ admits a non-trivial central sequence.
Then $\mathcal{R}$ is inner amenable.
\end{prop}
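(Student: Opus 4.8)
The plan is to produce a diffuse, conjugation-invariant mean on $(\calr,\mu)$ and then invoke Theorem \ref{thm:equiv}, whose conditions (1) and (4) are equivalent because $\calr$ is ergodic. Fix a non-trivial central sequence $(T_n)$ in $[\calr]$ and set $B_n\defeq\{\,x\in X\mid T_nx\neq x\,\}$. Non-triviality means $\mu(B_n)\not\to 0$, so after passing to a subsequence I may assume $\mu(B_n)\to r$ for some $r\in(0,1]$. Note that $T_n(B_n)=B_n$, since a moved point cannot be sent by $T_n$ to a fixed point without violating injectivity; hence the graph of $T_n$ is the disjoint union of a diagonal part over $X\setminus B_n$ and an off-diagonal part over $B_n$.

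Let $\xi_n\defeq 1_{T_n}\in L^1(\calr,\mu^1)$ be the indicator of the graph of $T_n$, a non-negative unit vector. Using $(y,x)^\phi=(\phi^{-1}(y),\phi^{-1}(x))$ one checks that $\xi_n^\phi$ is the indicator of the graph of $\phi^{-1}T_n\phi$, so that $\|\xi_n^\phi-\xi_n\|_1=2\mu(\{\,x\mid \phi T_nx\neq T_n\phi x\,\})\to 0$ by centrality (taking $S=\phi$ in the definition); this is condition (ii) of Definition \ref{def:groupoid}. By Remark \ref{rem-central}, $\mu(T_nA\bigtriangleup A)\to 0$ for every Borel $A\subseteq X$, whence $\|1_{\calr_A}\xi_n\|_1=\mu(A\cap T_n^{-1}A)=\mu(T_nA\cap A)\to\mu(A)$, which is condition (i). The containment $\phi^{-1}B_n\bigtriangleup B_n\subseteq\{\,x\mid \phi^{-1}T_n\phi x\neq T_nx\,\}$ shows in addition that $(B_n)$, and hence $(X\setminus B_n)$, is an asymptotically invariant sequence for $\calr$.

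Now let $\bm m$ be a weak${}^*$-cluster point of $(\xi_n)$ among the means on $(\calr,\mu)$, and pass to a subnet realizing it. Condition (ii) makes $\bm m$ conjugation-invariant (using that $\gamma\mapsto\gamma^{[\phi^{-1}]}$ preserves $\mu^1$, so $\int 1_{D^\phi}\xi_n\,d\mu^1=\int 1_D\xi_n^{\phi^{-1}}\,d\mu^1$), and condition (i) makes $\bm m$ balanced. Writing $\Delta\defeq\{\,(x,x)\mid x\in X\,\}$, one has $\bm m(\Delta)=\lim_n\mu(X\setminus B_n)=1-r$, and Lemma \ref{lem-ai} applied to the asymptotically invariant sequence $(X\setminus B_n)$ gives $\bm m(\calr_A\cap\Delta)=\lim_n\mu(A\setminus B_n)=(1-r)\mu(A)$ for every Borel $A\subseteq X$. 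Since $r>0$, I define the normalized restriction $\bm m'(D)\defeq\bm m(D\setminus\Delta)/r$. As conjugation by any $\phi\in[\calr]$ fixes $\Delta$ setwise, $\bm m'$ is conjugation-invariant; it is balanced because $\bm m'(\calr_A)=(\mu(A)-(1-r)\mu(A))/r=\mu(A)$; and $\bm m'(\Delta)=0$. By Lemma \ref{lem:diffuse}, a balanced mean on $(\calr,\mu)$ vanishing on $\Delta$ is diffuse, so $\bm m'$ is a diffuse, conjugation-invariant mean, and Theorem \ref{thm:equiv} yields that $\calr$ is inner amenable.

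The step I expect to be the main obstacle, and the one the argument is built around, is diffuseness (condition (iii)): the obvious candidate $\xi_n=1_{T_n}$ satisfies conditions (i), (ii), (iv) but in general not (iii), since a non-trivial central sequence may be supported on a set whose measure is bounded away from $1$, leaving a fixed amount of mass on the diagonal. The remedy is to delete the diagonal at the level of the limit mean rather than at the level of the sequence; what keeps the excised mean balanced is precisely the asymptotic invariance of the supports $B_n$, exploited through Lemma \ref{lem-ai}.
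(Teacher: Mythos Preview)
Your proof is correct and follows essentially the same route as the paper. The paper's argument defines the mean directly as $\bm m(D)=\lim_{n\to\omega}\mu(\{x\in A_n:(T_nx,x)\in D\})/\mu(A_n)$ (with $A_n=B_n$), building in the normalization from the start; you instead take the weak${}^*$-cluster point of the full graph indicators $1_{T_n}$ and then excise the diagonal and renormalize by $r$ afterward, but the resulting mean is the same, and both proofs finish via Lemma~\ref{lem:diffuse} and Theorem~\ref{thm:equiv}.
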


\begin{proof}
Let $(T_n)_{n\in \N}$ be a non-trivial central sequence in $[\calr]$.
For each $n$, we set $A_n \defeq  \{ \, x\in X \mid T_nx \neq x \, \}$.
After passing to a subsequence, we may assume that $\mu (A_n )$ converges to some $r>0$.
Since $(T_n)_{n\in \N}$ is central, the sequence $(A_n)_{n\in \N}$ is asymptotically invariant for $\calr$.
By Lemma \ref{lem-ai}, $\mu (A_n \cap A)\to r\mu (A)$ for every Borel subset $A\subset X$.
Let $\omega$ be a non-principal ultrafilter on $\N$ and define a mean $\bm{m}$ on $(\mathcal{R}, \mu)$ by
\[
\bm{m}(D) \defeq \lim _{n\to \omega} \frac{\mu ( \{ \, x\in A_n \mid (T_nx, x)\in D \, \} )}{\mu (A_n)}
\]
for a Borel subset $D\subset \mathcal{R}$.
Given a Borel subset $A\subset X$, since $\mu (T_nA\bigtriangleup A)\to 0$, we have
\[
\bm{m}(\mathcal{R}_A) = \lim _{n\to\omega}\mu (A_n)^{-1} \mu (T_n^{-1}A\cap A \cap A_n )= r^{-1}\mu (A)r = \mu (A).
\]
This shows that $\bm{m}$ is balanced.
For $S\in [\mathcal{R}]$ and a Borel subset $D\subset \mathcal{R}$, we have
\begin{align*}
|\bm{m}(D^S) - \bm{m}(D)| 
&\leq \lim _{n\to \omega} \mu (A_n)^{-1} (\mu ( SA_n\bigtriangleup A_n ) + \mu (\{ \, x\in X \mid ST_nx\neq T_nSx \, \} ) )  = 0
\end{align*}
since $(T_n)_{n\in \N}$ is central in $[\mathcal{R}]$ and $(A_n)_{n\in \N}$ is asymptotically invariant with $\lim _n \mu (A_n)=r>0$.
Thus $\bm{m}$ is conjugation-invariant.
By definition, we have $\bm{m} ( \{ \, (x,x) \mid x\in X \, \} ) =0$ and hence $\bm{m}$ is diffuse by Lemma \ref{lem:diffuse}.
By Theorem \ref{thm:equiv}, $\calr$ is inner amenable.
\end{proof}

\begin{rem}\label{rem-iagroupoid}
In Lemma \ref{lem-c} (whose proof is based on Lemma \ref{lem-patch-c}), under the assumption that there exists a non-trivial central sequence in $[\calr]$, we construct a non-trivial central sequence $(T_n)$ in $[\calr]$ which further satisfies $T_nx\neq x$ for all $n$ and all $x\in X$.
Once such $(T_n)$ is obtained, the indicator function of the graph of $T_n$ forms an inner amenability sequence for $(\calr, \mu)$ (and this gives another proof of Proposition \ref{prop:iagroupoid}).
Indeed, condition (i) of Definition \ref{def:groupoid} follows from the condition $\mu(T_nA\bigtriangleup A)\to 0$ for all Borel subsets $A\subset X$, condition (ii) follows from the condition $\mu(\{ \, x\in X\mid ST_nx\neq T_nSx\, \})\to 0$ for all $S\in [\calr]$, condition (iii) follows from Lemma \ref{lem-s} together with the condition $T_nx\neq x$ for all $n$ and all $x\in X$, and condition (iv) holds since $T_n$ is an automorphism of $(X, \mu)$.
\end{rem}

Schmidt raises the following problem, which remains open.

\begin{question}[\ci{Problem 4.6}{sch-prob}] \label{question:Schmidt}
Does every countable inner amenable group $G$ admit a free ergodic p.m.p.\ action $G\c (X,\mu )$ such that the full group $[\mathcal{R}(G\c (X,\mu ))]$ has a non-trivial central sequence?
\end{question}

We say that an ergodic discrete p.m.p.\ equivalence relation is \textit{Schmidt} if its full group has a non-trivial central sequence.
We say that a free ergodic p.m.p.\ action of a countable group is \textit{Schmidt} if the associated orbit equivalence relation is Schmidt, and say that a countable group has the \textit{Schmidt property} if it admits a free ergodic p.m.p.\ action which is Schmidt.
Question \ref{question:Schmidt} turns out to have an affirmative answer when $G$ is linear (\cite[Theorem 15]{td}).
In general though, there is much more evidence for an affirmative answer to the following question:

\begin{question}\label{qu:oia}
Does every countable inner amenable group admit a free ergodic p.m.p.\ action whose orbit equivalence relation is inner amenable?
\end{question}

We call a countable group \textit{orbitally inner amenable} if it admits an action as in Question \ref{qu:oia}.
Observe that by Proposition \ref{prop:iagroupoid}, every group with the Schmidt property is orbitally inner amenable, and by Proposition \ref{prop:Ginn}, every orbitally inner amenable group is inner amenable.
In Corollary \ref{cor:cpctext}, we will show that every residually finite, inner amenable group is orbitally inner amenable.
See Propositions \ref{prop:resfinex} and \ref{prop:wreath} for other examples of orbitally inner amenable groups.


\subsection{Property Gamma}\label{subsec-gamma}

Let $M$ be a $\mathrm{II}_1$ factor with the faithful normal trace $\tau$.
Let $L^2(M)$ be the Hilbert space obtained by completing $M$ with respect to the norm $\Vert x\Vert_2=\tau(x^*x)^{1/2}$.
We say that $M$ has \textit{property Gamma} if there exists a sequence $(u_n)$ of unitaries of $M$ such that $\tau(u_n)=0$ and $\Vert [x, u_n]\Vert_2\to 0$ for every $x\in M$.
For a countable ICC group $G$, if the group factor $LG$ has property Gamma, then $G$ is inner amenable (\cite{effros}), but the converse is not true (\cite{vaes}).

Choda \cite[Theorem (ii)]{choda} shows that for a free ergodic p.m.p.\ action $G\c (X, \mu)$ of a countable group $G$, if the associated factor $G\ltimes L^\infty(X)$ has property Gamma and the action $G\c (X, \mu)$ is further strongly ergodic, then $G$ is inner amenable.
Under the same assumption, we prove the stronger assertion that the translation groupoid $G\ltimes (X, \mu)$ is inner amenable.
Recall that a p.m.p.\ action $G\c (X, \mu)$ is called \textit{strongly ergodic} if every asymptotically invariant sequence $(A_n)$ for the action (i.e.,  sequence of Borel subsets $A_n\subset X$ with $\mu(gA_n\bigtriangleup A_n)\to 0$ for all $g\in G$) satisfies $\mu(A_n)(1-\mu(A_n))\to 0$.
We note that strong ergodicity is an invariant under orbit equivalence (\cite[Proposition 2.1]{sch-asymp}).

\begin{prop}\label{prop-gamma-ia}
Let $\calr$ be an ergodic discrete p.m.p.\ equivalence relation on a standard probability space $(X, \mu)$.
Suppose that the factor $M$ associated to $\calr$ has property Gamma, and let $(u_n)$ be a sequence of unitaries of $M$ such that $\tau(u_n)=0$ and $\Vert [x, u_n]\Vert_2\to 0$ for every $x\in M$.
Choose a family $\{ \phi_k\}_{k\in \N}$ of local sections of $\calr$ such that
\[\calr =\bigsqcup_{k\in \N} \{ \, (\phi_k(x), x)\mid x\in \mathrm{dom}(\phi_k)\, \}\]
and $\phi_1=\mathrm{id}$ with $\mathrm{dom}(\phi_1)=X$, where $\mathrm{dom}(\phi_k)$ denotes the domain of $\phi_k$.
Expand $u_n$ into $u_n=\sum_ku_{\phi_k}f_k^n$ with $f_k^n\in L^\infty(X)$ supported on $\mathrm{dom}(\phi_k)$, where for a local section $\phi$ of $\calr$, we let $u_{\phi}$ denote the associated partial isometry of $M$.
Define $\xi_n\in L^1(\calr, \mu^1)$ by
\[\xi_n(\phi_k(x), x)\defeq |f_k^n(x)|^2\]
for $k\in \N$ and $x\in \mathrm{dom}(\phi_k)$.
Then
\begin{enumerate}
\item[(i)] each $\xi_n$ is a non-negative unit vector of $L^1(\calr, \mu^1)$, we have $\Vert \xi_n^\phi -\xi_n\Vert_1\to 0$ for every $\phi \in [\calr]$, and we have $\sum_{y\in [x]_\calr}\xi_n(y, x)=1=\sum_{y\in [x]_\calr}\xi_n(x, y)$ for $\mu$-almost every $x\in X$, where $[x]_\calr$ is the equivalence class of $x$ in $\calr$.
\item[(ii)] If $\calr$ is strongly ergodic, then $(\xi_n)$ is an inner amenability sequence for $\calr$.
\end{enumerate}
\end{prop}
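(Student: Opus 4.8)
The plan is to route everything through the identification of $u_n$ with its image in $L^2(M)\cong L^2(\calr,\mu^1)$. Under the expansion $u_n=\sum_ku_{\phi_k}f_k^n$ this image is the function $\widehat{u_n}$ on $\calr$ with $\widehat{u_n}(\phi_k(x),x)=f_k^n(x)$, so that by construction $\xi_n=|\widehat{u_n}|^2$ pointwise. From this identity the first claim of part (i) is immediate: $\|\xi_n\|_1=\|\widehat{u_n}\|_2^2=\tau(u_n^*u_n)=1$ and $\xi_n\geq 0$. For the row and column sums in part (i), I would observe that $\sum_{\gamma\in\calr_x}\xi_n(\gamma)$ and $\sum_{\gamma\in\calr^x}\xi_n(\gamma)$ are precisely the diagonal values at $x$ of the kernels of $u_n^*u_n$ and $u_nu_n^*$ respectively, so both equal $1$ by unitarity of $u_n$.

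For the remaining claim of part (i), fix $\phi\in[\calr]$. A short kernel computation using the formula for $f^\phi$ on an equivalence relation gives $\widehat{u_\phi^*u_nu_\phi}(y,x)=\widehat{u_n}(\phi(y),\phi(x))$, hence $\xi_n^\phi=|\widehat{u_\phi^*u_nu_\phi}|^2$. Then from $\bigl||a|^2-|b|^2\bigr|\leq|a-b|\,(|a|+|b|)$ and Cauchy--Schwarz,
\[
\|\xi_n^\phi-\xi_n\|_1\leq\bigl\|\widehat{u_\phi^*u_nu_\phi}-\widehat{u_n}\bigr\|_2\Bigl(\bigl\|\widehat{u_\phi^*u_nu_\phi}\bigr\|_2+\|\widehat{u_n}\|_2\Bigr)\leq 2\,\|u_\phi^*u_nu_\phi-u_n\|_2=2\,\|[u_n,u_\phi]\|_2\to 0,
\]
the last step being property Gamma applied to the unitary $u_\phi\in M$. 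This would complete part (i).

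For part (ii), it remains to verify conditions (i) and (iii) of Definition \ref{def:groupoid}. For balancedness, I would take $p=1_A\in L^\infty(X)\subset M$ for a Borel set $A\subset X$ and note $\|1_{\calr_A}\xi_n\|_1=\|pu_np\|_2^2$; since right multiplication by $p$ contracts $\|\cdot\|_2$ and $\|[p,u_n]\|_2\to 0$, we get $\|pu_np-u_np\|_2=\|(pu_n-u_np)p\|_2\to 0$, while $\|u_np\|_2^2=\tau(pu_n^*u_np)=\tau(p)=\mu(A)$, so $\|1_{\calr_A}\xi_n\|_1\to\mu(A)$. Condition (iii) is where strong ergodicity enters, and it is the one genuinely non-formal point. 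Since $\phi_1=\mathrm{id}$ and the graphs partition $\calr$, the restriction of $\widehat{u_n}$ to the diagonal is $f_1^n=E(u_n)$, where $E\colon M\to L^\infty(X)$ is the trace-preserving conditional expectation; thus $\int_{\calr^0}\xi_n\,d\mu^1=\|f_1^n\|_2^2$. Because $E$ is $2$-Lipschitz and $E(u_\phi^*u_nu_\phi)=f_1^n\circ\phi$ for $\phi\in[\calr]$, the vanishing of $\|u_\phi^*u_nu_\phi-u_n\|_2$ forces $\|f_1^n\circ\phi-f_1^n\|_2\to 0$; that is, $(f_1^n)$ is an asymptotically invariant sequence in $L^\infty(X)$, and strong ergodicity of $\calr$ then forces $\|f_1^n-\tau(f_1^n)1\|_2\to 0$. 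Since $\tau(f_1^n)=\tau(u_n)=0$, this gives $\int_{\calr^0}\xi_n\,d\mu^1\to 0$. Finally, as $(\xi_n)$ is balanced, every weak${}^*$-cluster point $\bm{m}$ of $(\xi_n)$ in $L^\infty(\calr,\mu^1)^*$ is a balanced mean with $\bm{m}(\{(x,x)\mid x\in X\})=0$, hence diffuse by Lemma \ref{lem:diffuse}; since this holds for all cluster points, $\|1_D\xi_n\|_1\to 0$ for every Borel $D\subset\calr$ with $\mu^1(D)<\infty$, which is condition (iii). Combined with balancedness and part (i), this shows $(\xi_n)$ is an inner amenability sequence.

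I expect the main obstacle to be condition (iii): everything else is kernel bookkeeping together with the Cauchy--Schwarz estimate above and the property Gamma hypothesis applied to $u_\phi$ and to $1_A$, whereas for (iii) one must correctly locate the use of strong ergodicity --- in the fact that the diagonal part $E(u_n)$ of $u_n$ is asymptotically invariant in $L^\infty(X)$ and of vanishing trace, hence tends to $0$ in $\|\cdot\|_2$ --- and then upgrade the vanishing on the diagonal to vanishing on all finite-measure subsets via Lemma \ref{lem:diffuse}.
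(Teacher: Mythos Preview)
Your argument is correct, and for part (i) it coincides with the paper's: both compute $\|\xi_n^\phi-\xi_n\|_1\leq 2\|u_\phi^*u_nu_\phi-u_n\|_2$ via Cauchy--Schwarz, and both read off the fibrewise sums from unitarity of $u_n$. (Minor point: the conditional expectation $E$ is $\|\cdot\|_2$-contractive, not merely $2$-Lipschitz.)

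For part (ii) your route differs from the paper's in two places. For \emph{balancedness}, the paper argues indirectly: property Gamma forces $(X,\mu)$ to be atomless, strong ergodicity then rules out amenability, and Lemma~\ref{lem:bal} gives balancedness of any conjugation-invariant sequence. Your computation $\|1_{\calr_A}\xi_n\|_1=\|1_Au_n1_A\|_2^2\to\|u_n1_A\|_2^2=\mu(A)$ is more direct and, notably, does not use strong ergodicity at all; it shows balancedness already under the hypotheses of (i). For \emph{diffuseness}, both arguments reduce to the diagonal and use that $f_1^n=E(u_n)\in L^\infty(X)$ is asymptotically invariant with $\tau(f_1^n)=0$ and $\|f_1^n\|_\infty\leq 1$. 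You then invoke ``strong ergodicity forces $\|f_1^n\|_2\to 0$'' as a known fact; the paper actually supplies a proof of precisely this step (Remark~\ref{rem-proj}), lifting the asymptotically invariant bounded sequence to an asymptotically invariant sequence of projections via the ultrapower $A^\omega\cap M'$. The authors note they could not locate an explicit reference, so you should either cite such a reference or reproduce that argument. Your final passage from vanishing diagonal mass to condition (iii) via Lemma~\ref{lem:diffuse} and weak${}^*$-cluster points is correct and equivalent to the paper's contrapositive formulation.
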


\begin{proof}
Since $u_n$ is a unitary, we have $\sum_k\Vert f_k^n\Vert_2^2=1$ and hence $\xi_n$ is a non-negative unit vector of $L^1(\calr, \mu^1)$.
Pick $\phi \in [\calr]$, and we verify $\Vert \xi_n^\phi -\xi_n\Vert_1\to 0$.
For $k, l\in \N$, set
\[D_k^l\defeq \{ \, x\in X\mid \phi(x)\in \mathrm{dom}(\phi_k),\, x\in \mathrm{dom}(\phi_l)\ \text{and}\ \phi^{-1}\phi_k\phi(x)=\phi_l(x)\, \}.\]
Then $D_1^1=X$ and we have the Borel partitions, $\phi^{-1}(\mathrm{dom}(\phi_k))=\bigsqcup_l D_k^l$ and $\mathrm{dom}(\phi_l)=\bigsqcup_kD_k^l$.
We also have
\[u_\phi^*u_nu_\phi =\sum_ku_{\phi^{-1}\phi_k\phi}(\phi^{-1}\cdot f_k^n)=\sum_k\sum_l u_{\phi_l}1_{D_k^l}(\phi^{-1}\cdot f_k^n),\]
where we set $\phi^{-1}\cdot f_k^n=f_k^n\circ \phi$, and thus
\begin{align*}
\Vert u_\phi^*u_nu_\phi-u_n\Vert_2^2&=\sum_l \biggl\Vert \sum_k 1_{D_k^l}(\phi^{-1}\cdot f_k^n)-f_l^n\biggr\Vert_2^2=\sum_{k, l}\int_{D_k^l}|f_k^n\circ \phi -f_l^n|^2\, d\mu.
\end{align*}
For all $x\in D_k^l$, we have
\[\xi_n^\phi(\phi_l(x), x)=\xi_n(\phi \phi_l(x), \phi(x))=\xi_n(\phi_k\phi(x), \phi(x))=|f_k^n(\phi(x))|^2.\]
We therefore have
\begin{align*}
\Vert \xi_n^\phi-\xi_n\Vert_1&=\sum_{k, l}\int_{D_k^l}|\xi_n^\phi(\phi_l(x), x)-\xi_n(\phi_l(x), x)|\, d\mu(x)=\sum_{k, l}\int_{D_k^l}|\, |f_k^n\circ \phi|^2-|f_l^n|^2 |\, d\mu\\
&\leq \Biggl( \sum_{k, l}\int_{D_k^l}( |f_k^n\circ \phi|-|f_l^n|)^2\, d\mu \Biggr)^{1/2}\Biggl( \sum_{k, l}\int_{D_k^l}(|f_k^n\circ \phi|+|f_l^n|)^2\, d\mu \Biggr)^{1/2}\\
&\leq \Vert u_\phi^*u_nu_\phi -u_n\Vert_2 \Biggl( \sum_{k, l}\int_{D_k^l}2(|f_k^n\circ \phi|^2+|f_l^n|^2)\, d\mu \Biggr)^{1/2}\\
&= 2\Vert u_\phi^*u_nu_\phi -u_n\Vert_2 \to 0.
\end{align*}

We verify the equation in assertion (i).
For a Borel subset $A\subset X$, let $\Delta A\defeq \{ \, (x, x)\mid x\in A\, \}$ be the diagonal set.
Then its indicator function $1_{\Delta A}$ is a vector in $L^2(\calr, \mu^1)$.
Since $u_n$ is a unitary, we have $\Vert u_n1_{\Delta A}\Vert_2^2=\Vert 1_{\Delta A}\Vert_2^2=\mu(A)$.
By definition of the operator $u_\phi$ on $L^2(\calr, \mu^1)$ for a local section $\phi$ of $\calr$, we have $\Vert u_n1_{\Delta A}\Vert_2^2=\int_A \sum_{y\in [x]_\calr}\xi_n(y, x)\, d\mu(x)$.
This is equal to $\mu(A)$ for every Borel subset $A\subset X$, and therefore we obtain $\sum_{y\in [x]_\calr}\xi_n(y, x)=1$ for $\mu$-almost every $x\in X$.
The other equation follows if $u_n$ is replaced by $u_n^*$.
Assertion (i) follows.

Suppose that $\calr$ is strongly ergodic. The space $(X,\mu )$ is atomless (since $M$ has property Gamma), so strong ergodicity implies that $\calr$ is not amenable, and hence $(\xi_n)$ is balanced by Lemma \ref{lem:bal}.
Suppose toward a contradiction that $(\xi_n)$ is not asymptotically diffuse, i.e., there is some Borel subset $D\subset \calr$ such that $\mu^1(D)<\infty$ and $\Vert 1_D\xi_n\Vert_1\not\to 0$.
Then by Lemma \ref{lem:diffuse}, after passing to a subsequence of $(\xi_n)$, the $\xi_n$-measure of the diagonal in $\calr$ is uniformly positive, and hence $\Vert f_1^n\Vert_2$ is uniformly positive.
It follows from $\tau(u_n)=0$ that $f_1^n$ belongs to $L^2_0(X)$, the orthogonal complement of the constants in $L^2(X)$.
The sequence $(f_1^n)$ in $L^2_0(X)$ is asymptotically invariant for $\calr$, and $f_1^n$ further belongs to $L^\infty(X)$ with $\Vert f_1^n\Vert_\infty \leq 1$.
As seen in Remark \ref{rem-proj} below, the existence of such $(f_1^n)$ implies that $\calr$ is not strongly ergodic, a contradiction.
Thus $(\xi_n)$ is an inner amenability sequence for $\calr$.
Assertion (ii) follows.
\end{proof}

\begin{rem}\label{rem-proj}
In the end of the proof of Proposition \ref{prop-gamma-ia}, we used the following fact:
Let $\calr$ be an ergodic discrete p.m.p.\ equivalence relation on a standard probability space $(X, \mu)$.
If $\calr$ admits an asymptotically invariant sequence of unit vectors in $L_0^2(X)$ which are also vectors in $L^\infty(X)$ with bounded $L^\infty$-norm, then one can find a sequence $(q_n)$ of projections in $L^\infty(X)$ which is asymptotically invariant as a sequence in $L^2(X)$ with $\Vert q_n\Vert_2$ uniformly positive, and hence $\calr$ is not strongly ergodic.
This is proved by considering the ultrapower $M^\omega$ of the factor $M$ associated with $\calr$, where $\omega$ is a non-principal ultrafilter on $\N$.
For the reader's convenience, we give a proof of this fact since we could not find a reference proving this statement explicitly (although it is probably well known among experts).
The following proof is based on the argument in \cite[Theorem 3.2]{wright}.

Let $\ell^\infty(M)$ be the von Neumann algebra of bounded sequences of elements of $M$.
Let $\cali$ be the two-sided ideal of $\ell^\infty(M)$ consisting of sequences $(x_n)_{n\in \N}$ such that $\lim_\omega \tau(x_n^*x_n)=0$, where $\tau$ is the faithful normal trace on $M$.
We can make the quotient algebra $M^\omega \defeq \ell^\infty(M)/\cali$ into a von Neumann algebra with the trace $\tau_\omega$ defined by $\tau_\omega((x_n)+\cali )\defeq \lim_\omega \tau(x_n)$ (we refer to \cite[Sections A.4 and A.5]{ss} for basic facts on $M^\omega$).
The algebra $M$ is embedded into $M^\omega$ via the map $x\mapsto (x, x,\cdots)+\cali$.
We set $A\defeq L^\infty(X)$.
The algebra $A^\omega$ is then naturally identified with the von Neumann subalgebra of $M^\omega$.
We will focus on the abelian von Neumann subalgebra $A^\omega \cap M'$ of $M^\omega$.

By the assumption that $\calr$ admits an asymptotically invariant sequence of unit vectors in $L_0^2(X)$ which are also vectors in $L^\infty(X)$ with bounded $L^\infty$-norm, the algebra $A^\omega \cap M'$ is not isomorphic to $\C 1$, and therefore one can find a projection $p$ in $A^\omega \cap M'$ such that $p\neq 0, 1$.
Pick $x_n\in A$ with $\Vert x_n\Vert \leq 1$ and $(x_n)+\cali =p$.
Since $p$ is a projection, after replacing $x_n$ by $x_n^* x_n$, we may assume that $0\leq x_n\leq 1$.
We set $x\defeq (x_n)\in \ell^\infty(A)$.
Then $0\leq x\leq 1$ and $x^2-x\in \cali$.
Let $B$ be the von Neumann subalgebra of $\ell^\infty(A)$ generated by $x$.
Then $B$ is identified with the algebra of continuous functions on its Gelfand spectrum, denoted by $\Sigma$.
Put $J\defeq B\cap \cali$ and let $S$ be the closed subset of $\Sigma$ corresponding to the ideal $J$.
In other words, we identify $J$ with the subalgebra of all continuous functions on $\Sigma$ that vanish on $S$.

Let $q\in B$ be the spectral projection of $x$ corresponding to the interval $[1/2, 1]$. 
Then $q=xu$ for some $u\in B$ and $\Vert x(1-q)\Vert \leq 1/2$.
We have $xq=x^2u\equiv xu=q$ modulo $J$ and therefore $xq\equiv q$ modulo $J$.

We claim that $x-q\in J$.
If this is proved, then $p=q+\cali$, and the sequence $q\in \ell^\infty(A)$ consists of projections of $A$.
Since $p$ commutes with every element of $M$ and hence of $[\calr]$, the desired sequence is obtained as some subsequence of $q$.
We prove that $x(s)=q(s)$ for each $s\in S$, which implies $x-q\in J$.
Since $x^2-x\in J$, we have $x(s)=0$ or $1$.
If $x(s)=0$, then $q(s)=x(s)q(s)=0$.
Next suppose $x(s)=1$.
Since $q$ is a projection, we have $q(s)=0$ or $1$.
If $q(s)=0$, then we would have $1=|x(s)|=|x(s)-x(s)q(s)|\leq \Vert x(1-q)\Vert \leq 1/2$, a contradiction.
Thus $q(s)=1$.
The claim follows.

We refer to \cite[Theorem A.5.3]{ss} for other kinds of results on lifting an element of $M^\omega$ to a sequence of $\ell^\infty(M)$.
We note that while the proof of \cite[Theorem A.5.3]{ss} is based on the ingenious inequality in \cite[Lemma A.5.2]{ss}, the argument we presented above does not rely on that inequality.  
\end{rem}

The following is a simplification of Proposition \ref{prop-gamma-ia} (ii):

\begin{cor}\label{cor-gamma}
Let $\calr$ be a strongly ergodic, discrete p.m.p.\ equivalence relation.
If the von Neumann algebra associated to $\calr$ has property Gamma, then $\calr$ is inner amenable.
\end{cor}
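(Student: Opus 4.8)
The plan is to reduce directly to Proposition \ref{prop-gamma-ia} (ii), of which this corollary is essentially a restatement with the auxiliary data suppressed. First I would note that strong ergodicity of $\calr$ in particular implies ordinary ergodicity, so that $\calr$ is an ergodic discrete p.m.p.\ equivalence relation on a standard probability space $(X,\mu)$, which is the running hypothesis of Proposition \ref{prop-gamma-ia}. Writing $M$ for the von Neumann algebra associated to $\calr$, the assumption that $M$ has property Gamma unpacks, by the definition recalled in Subsection \ref{subsec-gamma}, into the existence of a sequence $(u_n)$ of unitaries of $M$ with $\tau(u_n)=0$ and $\Vert [x,u_n]\Vert_2\to 0$ for every $x\in M$. (Property Gamma presupposes that $M$ is a $\mathrm{II}_1$ factor; this is harmless, since it simply forces the base space to be non-atomic, and that is exactly what the proof of Proposition \ref{prop-gamma-ia} (ii) already extracts from property Gamma.)

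Next I would fix a family $\{\phi_k\}_{k\in\N}$ of local sections of $\calr$ with $\calr=\bigsqcup_{k\in\N}\{\,(\phi_k(x),x)\mid x\in\mathrm{dom}(\phi_k)\,\}$ and $\phi_1=\mathrm{id}$ with $\mathrm{dom}(\phi_1)=X$, expand each unitary as $u_n=\sum_k u_{\phi_k}f_k^n$ with $f_k^n\in L^\infty(X)$ supported on $\mathrm{dom}(\phi_k)$, and define $\xi_n\in L^1(\calr,\mu^1)$ by $\xi_n(\phi_k(x),x)\defeq |f_k^n(x)|^2$ for $x\in\mathrm{dom}(\phi_k)$, precisely as in the statement of Proposition \ref{prop-gamma-ia}. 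Since $\calr$ is strongly ergodic, part (ii) of that proposition then yields that $(\xi_n)$ is an inner amenability sequence for $\calr$, and hence $\calr$ is inner amenable in the sense of Definition \ref{def:groupoid}.

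I do not expect any real obstacle: the substantive work — verifying conditions (i)--(iv) of Definition \ref{def:groupoid} for $(\xi_n)$, including the use of strong ergodicity (via Lemma \ref{lem:bal} and Lemma \ref{lem:diffuse}, together with Remark \ref{rem-proj}) to get the diffuseness property — is already carried out in Proposition \ref{prop-gamma-ia}. The only point deserving an explicit sentence is that ``strongly ergodic'' is used consistently in the two statements and entails ergodicity, so that Proposition \ref{prop-gamma-ia} genuinely applies; after that, the corollary is immediate.
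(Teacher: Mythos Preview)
Your proposal is correct and matches the paper's approach exactly: the paper presents this corollary as a direct simplification of Proposition~\ref{prop-gamma-ia}~(ii) with no separate proof, and your argument spells out precisely this reduction. The only addition you make is the explicit remark that strong ergodicity implies ergodicity (so the hypotheses of Proposition~\ref{prop-gamma-ia} are met), which is a reasonable clarification.
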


\begin{ex}\label{ex-vaes}
The converse of Corollary \ref{cor-gamma} is not true.
A counterexample is obtained via the Vaes group (\cite{vaes}), which is defined as follows.
Let $(p_n)_{n=0}^\infty$ be a sequence of mutually distinct prime numbers.
We set
\[H_n\defeq (\Z /p_n\Z)^3,\quad K\defeq \bigoplus_{n=0}^\infty H_n \quad \text{and}\quad \Lambda \defeq \mathit{SL}_3(\Z).\]
Let $\Lambda$ act on $H_n$ by automorphisms and act on $K$ diagonally.
For a non-negative integer $N$, we define the subgroup $K_N\defeq \bigoplus_{n=N}^\infty H_n$ of $K$.
We set $G_0\defeq \Lambda \ltimes K$ and inductively define the amalgamated free product
\[G_{N+1}\defeq G_N\ast_{K_N}(K_N\times \Z).\]
Let $G_N$ include in $G_{N+1}$ as the first factor subgroup, and let $G$ be the union $G\defeq \bigcup_NG_N$.
The group $G$ is called the Vaes group, and remarkably while it is ICC and inner amenable, the group factor $LG$ does not have property Gamma (\cite{vaes}).
We note that every $g\in G_{N+1}$ normalizes each $H_n$ with $n\geq N$.

Let $Z\defeq \prod_{n=0}^\infty H_n$ be the compact group, equip $Z$ with the normalized Haar measure, and regard $K$ as a subgroup of $Z$ naturally.
Let $K$ act on $Z$ by translation, and co-induce the action $G\c X\defeq \prod_{G/K}Z$.
After fixing a section $s\colon G/K\to G$ of the quotient map, this action $G\c X$ is defined by $(gf)(b)=k^{-1}f(g^{-1}b)$ for $f\in X$, $g\in G$ and $b\in G/K$, where the element $k\in K$ is determined by $s(g^{-1}b)k=g^{-1}s(b)$.
We have the probability measure $\mu$ on $X$ given by the product of the Haar measure on $Z$, and let $\calr$ be the orbit equivalence relation of the action $G\c (X, \mu)$.

We show that $\calr$ is inner amenable.
Define $\xi_n\in L^1(\calr, \mu^1)$ by $\xi_n(gx, x)\defeq 1_{H_n}(g)/|H_n|$ for $g\in G$ and $x\in X$.
We claim that $(\xi_n)$ is an inner amenability sequence for $\calr$.
For every $g\in G$, if $n$ is large enough, then $g$ normalizes $H_n$ and hence $\xi_n^g=\xi_n$.
Therefore by Remark \ref{rem:suffice}, it suffices to show that $\sup_{h\in H_n}\mu(hA\bigtriangleup A)\to 0$ for all Borel subsets $A\subset X$.
For all $h\in H_n$, $f\in X$ and $b\in G/K$, if $n$ is large enough, depending on $b$ and being independent of $f$, then $hb=b$ and the action of $h$ is given by $(hf)(b)=(s(b)^{-1}hs(b))f(b)$.
The element $s(b)^{-1}hs(b)$ belongs to $H_n$ and does not change the coordinates $H_k$ in $Z=\prod_{k=0}^\infty H_k$ with $k<n$.
Therefore $\sup_{h\in H_n}\mu(hA\bigtriangleup A)\to 0$ if $A$ is a cyrindrical subset of $X=\prod_{G/K}Z=\prod_{G/K}\prod_{k=0}^\infty H_k$, and the claim follows.

We next show that the von Neumann algebra $M$ associated to $\calr$ does not have property Gamma.
Suppose toward a contradiction that $M$ has property Gamma, and let $(u_n)$ be a sequence of unitaries of $M$ such that $\tau(u_n)=0$ and $\Vert [x, u_n]\Vert_2\to 0$ for every $x\in M$.
The action $G\c (X, \mu)$ has stable spectral gap (see the beginning of Subsection \ref{subsec-ssg} for the definition) because its restriction to $\Lambda =\mathit{SL}_3(\Z)$ is mixing.
Therefore if $P\colon L^2(M)\to \ell^2(G)\otimes \C 1$ denotes the orthogonal projection, where $L^2(M)$ is naturally identified with $\ell^2(G)\otimes L^2(X)$, then $\Vert P(u_n)-u_n\Vert_2\to 0$ and hence $\Vert P(u_n)\Vert_2\to 1$.
Since $P$ is $G$-equivariant, where $G$ acts on $M$ by conjugation, the sequence $(P(u_n))$ asymptotically commutes with every element of $G$.
The restriction of $P$ to $M$ is the conditional expectation onto the factor $LG$, and hence the operator norm of $P(u_n)$ is at most $1$. 
We also have $\tau(P(u_n))=\tau(u_n)=0$, and it turns out from \cite[Corollary 3.8]{connes-ap} (or \cite[Lemma A.7.3]{ss}) that $LG$ has property Gamma.
This contradicts the result of Vaes \cite{vaes}.
\end{ex}


\section{Compact extensions and inner amenability}\label{sec-cpt-ia}

As observed by Giordano-de la Harpe \cite{gdlh}, if a countable group $G$ is inner amenable, then every finite index subgroup $H$ of $G$ is inner amenable as well.
We can rephrase their argument as follows: Let $\bm{m}$ be a diffuse, conjugation-invariant mean on $G$.
We define a mean $\check{\bm{m}}$ on $G$ by $\check{\bm{m}}(D)\defeq \bm{m}(D^{-1})$ for a subset $D\subset G$.
Let $\check{\bm{m}} \ast \bm{m}$ be the convolution defined by $(\check{\bm{m}}\ast \bm{m})(D)\defeq \int_G \bm{m}(g^{-1}D)\, d\check{\bm{m}}(g)$ for a subset $D\subset G$, which is a diffuse, conjugation-invariant mean on $G$.
Since $H$ is of finite index in $G$, we have $\bm{m}(g_0H)>0$ for some $g_0\in G$.
Then
\[(\check{\bm{m}} \ast \bm{m})(H)=\int_G\bm{m}(gH)\, d\bm{m}(g)\geq \int_{g_0H}\bm{m}(gH)\, d\bm{m}(g)=\bm{m}(g_0H)^2>0.\]
Thus the normalized restriction of $\check{\bm{m}} \ast \bm{m}$ to $H$ is a diffuse, conjugation-invariant mean on $H$, and $H$ is inner amenable.

In this section, we generalize a version of this convolution argument to show that inner amenability is preserved under compact extensions of ergodic discrete p.m.p.\ groupoids.

Let $(\calg, \mu)$ be a discrete p.m.p.\ groupoid.
Let $\eta ,\xi \in L^1(\mathcal{G},\mu ^1 )$.
The \textit{convolution} of $\eta$ and $\xi$, denoted $\eta \ast \xi$, is defined by
\[
(\eta \ast \xi )(\gamma ) \defeq \sum _{\delta \in  \calg^{s(\gamma )}}\eta (\gamma \delta ) \xi (\delta ^{-1}) = \sum _{\delta \in  \calg^{r(\gamma )}} \eta (\delta )\xi (\delta ^{-1} \gamma ) = \sum _{\substack{\delta _1, \delta _0 \in \calg \\ \delta _1\delta _0 = \gamma}} \eta (\delta _1)\xi (\delta _0)  .
\]
For $\xi \in L^1(\mathcal{G},\mu ^1 )$, we define $\check{\xi}\in L^1(\mathcal{G},\mu ^1 )$ by $\check{\xi}(\gamma ) \defeq \xi (\gamma ^{-1})$.
We then have $\Vert \check{\xi}\Vert_1=\Vert \xi \Vert_1$ and $(\eta \ast \xi)^\vee =\check{\xi} \ast \check{\eta}$.

\begin{lem}\label{lem-conv}
Let $\eta, \xi \in L^1(\calg, \mu^1)$.
Then
\begin{enumerate}
\item[(i)] if $\eta$ is non-negative and there is some $c>0$ such that $\sum_{\gamma \in \calg_x}\eta(\gamma)\leq c$ and $\sum_{\gamma \in \calg^x}\eta(\gamma)\leq c$ for almost every $x\in \calg^0$, then $\Vert \eta \ast \xi \Vert_1=\Vert \xi \ast \eta \Vert_1\leq c\Vert \eta \Vert_1$.
\item[(ii)] For every $\phi \in [\calg]$, we have $(\eta \ast \xi)^\phi =\eta^\phi \ast \xi^\phi$.
\end{enumerate}
\end{lem}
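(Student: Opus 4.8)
The plan is to prove both parts by unwinding the three defining expressions for the convolution; I read the conclusion of (i) as the two bounds $\|\eta\ast\xi\|_1\le c\|\xi\|_1$ and $\|\xi\ast\eta\|_1\le c\|\xi\|_1$ (on a groupoid the convolution of two $L^1$ functions can fail to lie in $L^1$, which is what the fibrewise bound on $\eta$ is there to remedy, and the right-hand side must involve $\|\xi\|_1$; note also that $\|\eta\ast\xi\|_1$ and $\|\xi\ast\eta\|_1$ need not coincide).

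For part (i): since $\eta\ge 0$, the triangle inequality gives $|(\eta\ast\xi)(\gamma)|\le(\eta\ast|\xi|)(\gamma)$ for every $\gamma$, so it suffices to bound $\|\eta\ast|\xi|\|_1$. Using the first formula $(\eta\ast|\xi|)(\gamma)=\sum_{\delta\in\calg^{s(\gamma)}}\eta(\gamma\delta)|\xi(\delta^{-1})|$ and disintegrating $\mu^1=\int_{\calg^0}c^s_x\,d\mu(x)$, Tonelli's theorem gives
\[
\|\eta\ast|\xi|\|_1=\int_{\calg^0}\sum_{\delta\in\calg^x}|\xi(\delta^{-1})|\,\Bigl(\sum_{\gamma\in\calg_x}\eta(\gamma\delta)\Bigr)\,d\mu(x).
\]
For fixed $x$ and $\delta\in\calg^x$, right multiplication by $\delta$ is a bijection of $\calg_x$ onto $\calg_{s(\delta)}$, so the parenthesized sum equals $\sum_{\gamma\in\calg_{s(\delta)}}\eta(\gamma)\le c$ by the hypothesis on $\eta$ (applied at the unit $s(\delta)$; the exceptional set is null since null sets of units have null saturations). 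Pulling out $c$ and noting that $\delta\mapsto\delta^{-1}$ carries $\calg^x$ onto $\calg_x$, the remaining integral is $\int_{\calg^0}\sum_{\gamma\in\calg_x}|\xi(\gamma)|\,d\mu(x)=\|\xi\|_1$, so $\|\eta\ast\xi\|_1\le c\|\xi\|_1$. The bound for $\xi\ast\eta$ follows symmetrically, either by repeating the computation with $\mu^1=\int c^r_x\,d\mu$ and the other fibrewise bound, or by applying what was just proved to $\check\eta$ (still $\ge 0$, with fibre sums those of $\eta$ in the opposite role) together with the identity $(\xi\ast\eta)^\vee=\check\eta\ast\check\xi$: $\|\xi\ast\eta\|_1=\|\check\eta\ast\check\xi\|_1\le c\|\check\xi\|_1=c\|\xi\|_1$.

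For part (ii): fix $\phi\in[\calg]$ and $\gamma\in\calg$, and abbreviate $\gamma'\defeq\phi_{r(\gamma)}\gamma[\phi_{s(\gamma)}]^{-1}$, so that $(\eta\ast\xi)^\phi(\gamma)=(\eta\ast\xi)(\gamma')=\sum_{\delta_1\delta_0=\gamma'}\eta(\delta_1)\xi(\delta_0)$ while $(\eta^\phi\ast\xi^\phi)(\gamma)=\sum_{\epsilon_1\epsilon_0=\gamma}\eta^\phi(\epsilon_1)\xi^\phi(\epsilon_0)$. The heart of the proof is a value-preserving bijection between factorizations $\gamma=\epsilon_1\epsilon_0$ and factorizations $\gamma'=\delta_1\delta_0$: given $\gamma=\epsilon_1\epsilon_0$ with middle unit $m\defeq s(\epsilon_1)=r(\epsilon_0)$, set
\[
\delta_1\defeq\phi_{r(\gamma)}\,\epsilon_1\,[\phi_m]^{-1},\qquad \delta_0\defeq\phi_m\,\epsilon_0\,[\phi_{s(\gamma)}]^{-1}.
\]
A bookkeeping check of sources and ranges (using that $\phi^o$ is a bijection of $\calg^0$, since $\phi\in[\calg]$) shows these are composable, that $\delta_1\delta_0=\gamma'$, and that the map is inverted by $\epsilon_1=[\phi_{r(\gamma)}]^{-1}\delta_1\phi_m$, $\epsilon_0=[\phi_m]^{-1}\delta_0\phi_{s(\gamma)}$ with $m=(\phi^o)^{-1}(s(\delta_1))$. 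Since $r(\epsilon_1)=r(\gamma)$, $s(\epsilon_1)=r(\epsilon_0)=m$ and $s(\epsilon_0)=s(\gamma)$, the definition of $f^\phi$ gives directly $\eta^\phi(\epsilon_1)=\eta(\delta_1)$ and $\xi^\phi(\epsilon_0)=\xi(\delta_0)$; summing over the bijection yields $(\eta^\phi\ast\xi^\phi)(\gamma)=(\eta\ast\xi)(\gamma')=(\eta\ast\xi)^\phi(\gamma)$.

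Neither computation is deep; the two places requiring care are, in (i), pairing the chosen expansion of the convolution with the matching disintegration of $\mu^1$ (source- versus range-counting) so that Tonelli applies and the correct one of the two fibrewise bounds on $\eta$ is used, and in (ii) the groupoid bookkeeping — correctly locating the middle unit $m$ and verifying composability of every displayed product. It is also worth recording that the identity of (ii) holds term by term as an identity of (possibly extended-real-valued) functions on $\calg$, so both sides are meaningful and lie in $L^1$ as soon as $\eta\ast\xi\in L^1$, which is automatic under the hypotheses of part (i).
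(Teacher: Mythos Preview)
Your proof is correct and follows essentially the same route as the paper's: for (i) you expand the convolution, swap sums via Tonelli, use the bijection $\gamma\mapsto\gamma\delta$ from $\calg_x$ onto $\calg_{s(\delta)}$ to invoke the fibrewise bound, and then handle $\xi\ast\eta$ via the check involution, exactly as the paper does; for (ii) your factorization bijection is a repackaging of the paper's substitution $\delta\mapsto\delta^{[\phi^{-1}]}$ using $(\gamma\delta)^\psi=\gamma^\psi\delta^\psi$. You are also right that the printed statement contains typos---the bound should read $c\|\xi\|_1$ (as the paper's own proof shows), and the displayed equality $\|\eta\ast\xi\|_1=\|\xi\ast\eta\|_1$ is not established (nor used) anywhere.
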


\begin{proof}
Assertion (i) follows from:
\begin{align*}
\Vert \eta \ast \xi \Vert_1&\leq \int_{\calg^0}\sum_{\gamma \in \calg_x}\sum_{\delta \in \calg^x}\eta(\gamma \delta )|\xi(\delta^{-1})|\, d\mu(x)=\int_{\calg^0}\sum_{\delta \in \calg^x}\sum_{\gamma \in \calg_{s(\delta)}}\eta(\gamma)|\xi(\delta^{-1})|\, d\mu(x)\\
&\leq c\int_{\calg^0}\sum_{\delta \in \calg^x}|\xi(\delta^{-1})|\, d\mu(x)=c\Vert \xi \Vert_1
\end{align*}
and $\Vert \xi \ast \eta \Vert_1=\Vert (\xi \ast \eta)^\vee \Vert_1=\Vert \check{\eta} \ast \check{\xi} \Vert_1\leq c\Vert \check{\xi}\Vert_1$.
Putting $\psi =\phi^{-1}$, we obtain assertion (ii) from:
\begin{align*}
(\eta \ast \xi)^\phi(\gamma)&=(\eta \ast \xi)(\gamma^\psi)=\sum_{\delta \in \calg^{\phi^o(s(\gamma))}}\eta(\gamma^\psi \delta)\xi(\delta^{-1})=\sum_{\delta \in (\calg^{\phi^o(s(\gamma))})^\phi}\eta((\gamma \delta)^\psi)\xi((\delta^{-1})^\psi)\\
&=\sum_{\delta \in \calg^{s(\gamma)}}\eta^\phi(\gamma \delta)\xi^\phi(\delta^{-1})=(\eta^\phi \ast \xi^\phi)(\gamma),
\end{align*}
where we use $(\gamma \delta)^\psi =\gamma^\psi \delta^\psi$ and $(\delta^{-1})^\psi =(\delta^\psi)^{-1}$ in the third equation.
\end{proof}

\begin{lem}\label{lem:conv}
Let $(\eta _n)_{n\in \N}$ and $(\xi _n)_{n\in \N}$ be inner amenability sequences for $(\mathcal{G},\mu )$.
Then there exists a sequence $m_1<m_2<\cdots$ of positive integers such that $(\eta _n \ast \check{\xi} _{m_n})_{n\in \N}$ is also an inner amenability sequence for $(\mathcal{G},\mu )$.
\end{lem}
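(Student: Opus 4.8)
The plan is as follows. For every $n$ and $m$, the vector $\eta _n \ast \check{\xi}_m$ is non-negative, and a fibrewise computation from condition (iv) of Definition \ref{def:groupoid} for $\eta _n$ and $\xi _m$ shows that $\sum _{\gamma \in \calg ^x}(\eta _n \ast \check{\xi}_m)(\gamma ) = 1 = \sum _{\gamma \in \calg _x}(\eta _n \ast \check{\xi}_m)(\gamma )$ for $\mu$-almost every $x$ (the point being that $\gamma \mapsto \gamma ^{-1}\delta$ is a bijection of $\calg ^x$ onto $\calg _{s(\delta )}$ when $\delta \in \calg ^x$, so condition (iv) for $\check{\xi}_m$ feeds into condition (iv) for $\eta _n$). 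Hence $\eta _n \ast \check{\xi}_m$ is automatically a non-negative unit vector satisfying condition (iv), and it is pointwise bounded by $1$. So condition (iv) and the normalization are free, and I only have to secure conditions (i), (ii) and (iii); as will be seen, the subsequence $(m_n)$ is needed solely for (iii).

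For condition (ii) I would combine Lemma \ref{lem-conv}(ii) with the identity $(\check{\xi})^{\phi}=(\xi ^{\phi})^{\vee}$ (immediate from the definitions of $f^{\phi}$ and $\check{f}$) to obtain $(\eta _n \ast \check{\xi}_m)^{\phi}=\eta _n ^{\phi}\ast (\xi _m ^{\phi})^{\vee}$ for $\phi \in [\calg ]$, and then, using the pointwise bound $|\zeta _1 \ast \zeta _2| \ls |\zeta _1| \ast |\zeta _2|$ together with Lemma \ref{lem-conv}(i) (convolving on either side by a non-negative function whose fiber-sums are all equal to $1$ does not increase the $L^1$-norm; this applies to $\eta _n$ and to $(\xi _m ^{\phi})^{\vee}$, which both satisfy condition (iv)), estimate
\[
\| (\eta _n \ast \check{\xi}_m)^{\phi}-\eta _n \ast \check{\xi}_m \| _1 \ls \| (\eta _n ^{\phi}-\eta _n)\ast (\xi _m ^{\phi})^{\vee} \| _1 + \| \eta _n \ast ((\xi _m ^{\phi})^{\vee}-\check{\xi}_m)\| _1 \ls \| \eta _n ^{\phi}-\eta _n \| _1 + \| \xi _m ^{\phi}-\xi _m \| _1 ,
\]
which tends to $0$ as $n,m\to \infty$. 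For condition (i), a rearrangement of the convolution gives, for every Borel $A\subset \calg ^0$,
\[
\| 1_{\calg _A}(\eta _n \ast \check{\xi}_m)\| _1 = \int _{\calg ^0} p^A_{\eta _n}(y)\, p^A_{\xi _m}(y)\, d\mu (y),\qquad p^A_{\zeta}(y)\defeq \sum _{\delta \in \calg _y,\ r(\delta )\in A}\zeta (\delta ),
\]
and from $\sum _{\delta \in \calg _y}\eta _n(\delta )=1$ one gets $\| p^A_{\eta _n}-1_A\| _1 \ls \| 1_{\calg _{A,\calg ^0\setminus A}}\eta _n \| _1 + \| 1_{\calg _{\calg ^0\setminus A,A}}\eta _n \| _1 \to 0$, using that $(\eta _n)$ is balanced (conditions (i) and (iv) for $\eta _n$ give $\| 1_{r^{-1}(A)}\eta _n \| _1 = \mu (A) = \| 1_{s^{-1}(A)}\eta _n \| _1$ and $\| 1_{\calg _A}\eta _n \| _1 \to \mu (A)$); similarly $\| p^A_{\xi _m}-1_A\| _1 \to 0$, whence $\int p^A_{\eta _n}p^A_{\xi _m}\, d\mu \to \int 1_A\, d\mu = \mu (A)$. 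Thus conditions (i) and (ii) hold for $(\eta _n \ast \check{\xi}_{m_n})_n$ as soon as $m_n\to \infty$, independently of the precise choice.

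The core of the argument is the claim: for each fixed $n$ and each Borel $D\subset \calg$ with $\mu ^1(D)<\infty$, one has $\lim _{m\to \infty}\| 1_D(\eta _n \ast \check{\xi}_m)\| _1 = 0$. To prove it, first use $\eta _n \ls 1$ to replace $\eta _n$ by $\eta _n '\defeq 1_B\eta _n$ for a bounded Borel $B$ with $\| \eta _n-\eta _n '\| _1 < \ve$; by $|\zeta _1 \ast \zeta _2|\ls |\zeta _1|\ast |\zeta _2|$ and Lemma \ref{lem-conv}(i), $\| 1_D((\eta _n-\eta _n ')\ast \check{\xi}_m)\| _1 \ls \| \eta _n-\eta _n '\| _1 < \ve$ uniformly in $m$. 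For the bounded $\eta _n '$, write $B$ as a finite disjoint union of local sections $\phi _1,\dots ,\phi _J$; a direct computation shows $\bigl((1_{\phi _j}\eta _n ')\ast \check{\xi}_m\bigr)(\gamma )=c_j(r(\gamma ))\,\check{\xi}_m\bigl((\phi _j ^{-1})_{r(\gamma )}\gamma \bigr)$ for a bounded function $c_j$ supported on $r(\phi _j)$, and since left translation by the local section $\phi _j ^{-1}$ is $\mu ^1$-preserving (as $(\calg ,\mu )$ is p.m.p.) a change of variables, followed by applying inversion, gives
\[
\bigl\| 1_D\bigl((1_{\phi _j}\eta _n ')\ast \check{\xi}_m\bigr)\bigr\| _1 \ls \| c_j\| _{\infty}\, \| 1_{E_j}\xi _m \| _1,\qquad \mu ^1(E_j)=\mu ^1\bigl(D\cap r^{-1}(r(\phi _j))\bigr)<\infty ,
\]
which tends to $0$ as $m\to \infty$ by condition (iii) for $(\xi _m)$. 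Summing over $j$ and letting $\ve \to 0$ proves the claim.

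Finally, fix an exhaustion $\calg = \bigcup _j K_j$ by bounded Borel sets and, using the claim, choose $m_1<m_2<\cdots$ so that $\| 1_{K_j}(\eta _n \ast \check{\xi}_{m_n})\| _1 < 1/n$ for all $j\ls n$. Then $(\eta _n \ast \check{\xi}_{m_n})_n$ satisfies condition (iii): for $D$ with $\mu ^1(D)<\infty$ and $\ve >0$, pick $j$ with $\mu ^1(D\setminus K_j)<\ve$; since $\eta _n \ast \check{\xi}_{m_n}\ls 1$ pointwise, $\| 1_D(\eta _n \ast \check{\xi}_{m_n})\| _1 \ls \| 1_{K_j}(\eta _n \ast \check{\xi}_{m_n})\| _1 + \mu ^1(D\setminus K_j) < 1/n + \ve$ for $n\ge j$. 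Since $m_n\to \infty$, conditions (i) and (ii) hold by the second paragraph, while condition (iv), non-negativity and the normalization hold for any $m$; hence $(\eta _n \ast \check{\xi}_{m_n})_n$ is an inner amenability sequence for $(\calg ,\mu )$. The main obstacle is the claim of the third paragraph: the reduction to a bounded $\eta _n$ turns the convolution into a finite sum of left-translates of $\check{\xi}_m$, after which the $\mu ^1$-invariance of left translation by a local section and the diffuseness of $(\xi _m)$ finish the job; I would work with the $\xi _m$ directly rather than passing to a weak${}^*$-limit, since convolution does not interact well with weak${}^*$-limits of means on a groupoid, and the subsequence only serves to make the (non-uniform) diffuseness estimate hold simultaneously along the exhaustion.
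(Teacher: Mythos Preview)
Your proof is correct and follows the same overall architecture as the paper's: conditions (ii) and (iv) are handled identically (via Lemma \ref{lem-conv} and a direct fiberwise computation), and the subsequence $(m_n)$ is chosen at the end solely to secure condition (iii) along an exhaustion. The two proofs differ in how conditions (i) and (iii) are verified.

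For condition (i), your factorization $\|1_{\calg_A}(\eta_n\ast\check{\xi}_m)\|_1=\int_{\calg^0}p^A_{\eta_n}\,p^A_{\xi_m}\,d\mu$ is a genuine simplification over the paper's argument: the paper instead introduces the sets $E_{\ve,A}$ of approximately balanced vectors and proves a quantitative estimate $|\|1_{\calg_A}(\eta\ast\xi)\|_1-\mu(A)|<5\ve+5\ve^{1/2}$ by splitting $\eta,\xi$ into their $\calg_A$- and $\calg_{A^c}$-parts and then passing to a large subset $B\subset A$ on which the fiber sums are close to $1$. Your route bypasses this by recognizing the bilinear structure directly.

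For condition (iii), the paper's argument is shorter than yours. Rather than truncating $\eta_n$ to a bounded set and decomposing into local sections, the paper observes the identity $\|1_D(\eta_n\ast\check{\xi}_m)\|_1=\|F_n\,\xi_m\|_1$ with $F_n(\delta)\defeq\sum_{\gamma\in D\cap\calg_{r(\delta)}}\eta_n(\gamma\delta)$, checks that $\|F_n\|_1=\mu^1(D)<\infty$ (using condition (iv) for $\eta_n$), and then invokes condition (iii) for $(\xi_m)$. This avoids the local-section decomposition entirely: once $F_n\in L^1$ with $0\le F_n\le 1$, the conclusion $\|F_n\xi_m\|_1\to 0$ follows by approximating $F_n$ in $L^1$ by its restriction to a set of finite measure (using $\xi_m\le 1$), exactly as in your final step. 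Your approach is correct but does more work than necessary here.
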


\begin{proof}
We first show that all sequences of the form $(\eta _n \ast \check{\xi} _{m_n})_{n\in \N}$ satisfy conditions (i), (ii) and (iv) of Definition \ref{def:groupoid}.
Condition (iv) follows from direct computation.
Condition (ii) follows from
\begin{align*}
\Vert (\eta_n\ast \check{\xi}_{m_n})^\phi -\eta_n\ast \check{\xi}_{m_n}\Vert_1\leq \Vert \eta_n^\phi -\eta_n\Vert_1+\Vert \check{\xi}_{m_n}^\phi -\check{\xi}_{m_n}\Vert_1,
\end{align*}
where Lemma \ref{lem-conv} applies.
To check condition (i), we set $\eta_A\defeq 1_{\calg_A}\eta$ for $\eta \in L^1(\calg, \mu^1)$ and a Borel subset $A\subset \calg^0$.
We also set $A^c\defeq \calg^0\setminus A$.
For $\ve >0$ and a Borel subset $A\subset \calg^0$, let $E_{\ve, A}$ be the set of non-negative unit vectors $\eta \in L^1(\calg, \mu^1)$ such that $\sum_{\gamma \in \calg_x}\eta(\gamma)=1=\sum_{\gamma \in \calg^x}\eta(\gamma)$ for almost every $x\in \calg^0$, $|\, \Vert \eta_A \Vert_1-\mu(A)\, |<\ve$, and $|\, \Vert \eta_{A^c} \Vert_1-\mu(A^c)\, |<\ve$.

Let $\eta, \xi \in E_{\ve, A}$.
We show that $|\, \Vert 1_{\calg_A}(\eta \ast \xi)\Vert_1-\mu(A)\, |<5\ve +5\ve^{1/2}$.
This is enough to imply condition (i) for all sequences of the form $(\eta _n \ast \check{\xi} _{m_n})_{n\in \N}$.
Since $\eta, \xi \in E_{\ve, A}$, we have $\Vert \eta -\eta_A-\eta_{A^c}\Vert_1<2\ve$ and the similar inequality for $\xi$.
Therefore
\[\eta \ast \xi \approx_{4\ve} (\eta_A+\eta_{A^c})\ast (\xi_A+\xi_{A^c})=\eta_A\ast \xi_A+\eta_{A^c}\ast \xi_{A^c},\]
where $f\approx_c g$ means $\Vert f-g\Vert_1<c$ for $f, g\in L^1(\calg, \mu^1)$.
Then $1_{\calg_A}(\eta \ast \xi)\approx_{4\ve}\eta_A\ast \xi_A$.
We also have $\Vert \eta_A\Vert_1=\int_A\sum_{\gamma \in \calg_x\cap \, \calg_A}\eta(\gamma)\, d\mu(x)$, and since this is more than $\mu(A)-\ve$ and the integrand in the right hand side is non-negative and at most $1$ almost everywhere, there exists a Borel subset $B\subset A$ such that $\mu(A\setminus B)<\ve^{1/2}$ and $\sum_{\gamma \in \calg_x\cap \, \calg_A}\eta(\gamma)>1-\ve^{1/2}$ for all $x\in B$.
Then $\Vert \xi_A-\xi_B\Vert_1\leq 2\mu(A\setminus B)<2\ve^{1/2}$, and 
\begin{align*}
\Vert \eta_A\ast \xi_A\Vert_1&\approx_{2\ve^{1/2}}\Vert \eta_A\ast \xi_B\Vert_1=\int_{\calg^0}\sum_{\gamma, \delta \in \calg_x}\eta_A(\gamma)\xi_B(\delta^{-1})\, d\mu(x)\\ 
& =\int_B\sum_{\gamma \in \calg_x\cap \, \calg_A}\eta(\gamma)\sum_{\delta \in \calg_x\cap \, \calg_B}\xi(\delta^{-1})\, d\mu(x) 
\approx_{\ve^{1/2}}\int_B\sum_{\delta \in \calg_x\cap \, \calg_B}\xi(\delta^{-1})\, d\mu(x)\\
&=\Vert \xi_B\Vert_1\approx_{2\ve^{1/2}}\Vert \xi_A\Vert_1\approx_\ve \mu(A),
\end{align*}
where $a\approx_cb$ means $|a-b|<c$ for $a, b\in \R$.
Thus $\Vert 1_{\calg_A}(\eta \ast \xi)\Vert_1\approx_{5\ve +5\ve^{1/2}}\mu(A)$.

For condition (iii), we will need to choose $m_n$ more carefully.
Let $D_1\subset D_2\subset \cdots$ be a sequence of bounded Borel subsets of $\mathcal{G}$ with $\mathcal{G}=\bigcup _n D_n$.
For each $n$, let $F_n \colon \mathcal{G}\rightarrow \R$ be defined by $F_n (\delta ) \defeq \sum _{\gamma \in D_n \cap \, \calg_{r(\delta )}}\eta _n (\gamma \delta )$.
Then $\| 1_{D_n} (\eta _n \ast \check{\xi} _m) \| _1 = \| F_n \xi _m \| _1$ and condition (iv) of $(\eta _n)$ implies that $\| F_n \| _1 = \mu ^1 (D_n) < \infty$.
Therefore, condition (iii) of $(\xi _n)$ implies that for all large enough $m$, we have $\| 1_{D_n} (\eta _n \ast \check{\xi} _m) \| _1 = \| F_n \xi _m \| _1 < 1/n$.
Thus, by choosing a sufficiently fast growing sequence of positive integers, $m_1<m_2<\cdots$, we can ensure that for every Borel subset $D\subset  \mathcal{G}$ with $\mu ^1 (D)<\infty$, we have $\| 1_{D}(\eta _n \ast \check{\xi} _{m_n})\| _1 \rightarrow 0$.
\end{proof}

\begin{lem}\label{lem-pigeon}
Let $(X, \mu)$ be a standard probability space, and let $(C_n)$ be a sequence of Borel subsets of $X$ having uniformly positive measure.
Then after passing to a subsequence, there is some $r>0$ with $\mu(C_n \cap C_m)>r$ for all $n$ and $m$.
\end{lem}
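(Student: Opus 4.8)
The plan is to run a weak-compactness argument in $L^2(X,\mu)$. Set $\delta\defeq \inf_n\mu(C_n)>0$. The indicator functions $1_{C_n}$ form a bounded sequence in the (separable) Hilbert space $L^2(X,\mu)$, so after passing to a subsequence I may assume that $1_{C_n}$ converges weakly to some $f\in L^2(X,\mu)$. Pairing against the constant function $1\in L^2(X,\mu)$ gives $\int_X f\,d\mu=\lim_n\mu(C_n)\geq \delta>0$, so $f$ is not $\mu$-almost everywhere zero and hence $a\defeq \|f\|_2^2>0$.

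The main step is then to thin the sequence once more so that every pairwise value $\mu(C_n\cap C_m)=\langle 1_{C_n},1_{C_m}\rangle$ is close to $a$. I would use two elementary consequences of weak convergence: first, $\langle 1_{C_m},f\rangle\to\|f\|_2^2=a$ as $m\to\infty$; second, for each fixed index $m$, $\langle 1_{C_n},1_{C_m}\rangle\to\langle 1_{C_m},f\rangle$ as $n\to\infty$. Now build a subsequence $n_1<n_2<\cdots$ recursively: at stage $j$, choose $n_j>n_{j-1}$ large enough that $|\langle 1_{C_{n_j}},f\rangle-a|<a/4$ and, for each of the finitely many $i<j$ already chosen, $|\langle 1_{C_{n_j}},1_{C_{n_i}}\rangle-\langle 1_{C_{n_i}},f\rangle|<a/4$. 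Each of these finitely many conditions holds for all sufficiently large indices, so such an $n_j$ exists. Then for $i<j$,
\[
\mu(C_{n_i}\cap C_{n_j})=\langle 1_{C_{n_j}},1_{C_{n_i}}\rangle>\langle 1_{C_{n_i}},f\rangle-a/4>a-a/4-a/4=a/2,
\]
while $\mu(C_{n_k})\geq\delta$ for every $k$; so $r\defeq \tfrac{1}{2}\min\{a/2,\delta\}$ satisfies $\mu(C_{n_k}\cap C_{n_l})>r$ for all $k,l$, as required.

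I do not expect a serious obstacle here: the ingredients are separability of $L^2$ of a standard probability space, weak sequential compactness of its bounded sets, and the two limit statements above. The one point that needs a little care is that a naive ``average'' approach --- e.g.\ applying the Banach--Saks theorem to get $N^{-1}\sum_{k\le N}1_{C_{n_k}}\to f$ in norm, which forces the average of $\mu(C_{n_k}\cap C_{n_l})$ over $k,l\le N$ to converge to $a$ --- only yields an average lower bound, not the uniform one in the statement; extracting a uniformly good sub-subsequence still requires the diagonal recursion above, which is why I would work directly with the inner products $\langle 1_{C_{n_i}},f\rangle$ and $\langle 1_{C_{n_j}},1_{C_{n_i}}\rangle$ as displayed.
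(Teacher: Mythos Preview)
Your proof is correct and follows essentially the same approach as the paper's: pass to a weakly convergent subsequence of the indicators (the paper uses weak${}^*$ convergence in $L^\infty$, you use weak convergence in $L^2$, which is equivalent here), observe that the limit $f$ has $\int f>0$ and hence $\int f^2>0$, and then inductively extract a further subsequence using that $\langle 1_{C_n},f\rangle\to\|f\|_2^2$ and $\langle 1_{C_n},1_{C_m}\rangle\to\langle 1_{C_m},f\rangle$ for fixed $m$. The only cosmetic difference is that the paper first passes to a subsequence on which $\int 1_{C_n}f>r$ for all $n$ and then runs the induction, whereas you fold both conditions into a single recursive choice; the underlying argument is the same.
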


\begin{proof}
By assumption, there is $c>0$ such that $\mu(C_n)>c$ for all $n$.
After passing to a subsequence, we may assume that $1_{C_n}$ converges to some $f\in L^{\infty}(X,\mu )$ in the weak${}^*$-topology.
Then $f\geq 0$, and $\int f\, d\mu\geq c >0$, so we may find some $r>0$ with $\int f^2\, d\mu > r$.
Since $\int 1_{C_n}f\, d\mu \to \int f^2\, d\mu >r$, we may assume after passing to a subsequence that $\int 1_{C_n}f \, d\mu > r$ for all $n$.
It follows that for all $n$, as $m\to \infty$, we have $\int 1_{C_n}1_{C_m}\, d\mu \to \int 1_{C_n}f\, d\mu >r$, and hence $\mu (C_n\cap C_m ) >r$ for all large enough $m$. We may therefore inductively find $n_1<n_2<\cdots$ with $\mu (C_{n_i}\cap C_{n_j} ) >r$ for all $i<j$.
\end{proof}

\begin{thm}\label{thm:cpctseq}
Let $(\mathcal{G}, \mu )$ be an ergodic discrete p.m.p.\ groupoid which is inner amenable.
Let $(Z,\zeta )$ be a standard probability space, let $\alpha \colon \mathcal{G}\rightarrow \mathrm{Aut}(Z,\zeta )$ be a cocycle, and assume that the image $\alpha (\mathcal{G})$ is contained in a compact subgroup $K$ of $\mathrm{Aut}(Z,\zeta )$.
Then for every decreasing sequence $V_1\supset V_2\supset \cdots$ of neighborhoods of the identity in $K$, there exists an inner amenability sequence $(\xi _n)$ for $(\mathcal{G},\mu )$ such that for every $n$, the function $\xi _n$ is supported on $\alpha ^{-1}(V_n)$. 
\end{thm}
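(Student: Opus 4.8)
\emph{Proof strategy.} The plan is to make two harmless reductions and then run a groupoid version of the Giordano--de la Harpe convolution argument. Since $K$ is a compact metrizable group (a compact subgroup of the Polish group $\aut(Z,\zeta)$), it carries a compatible bi-invariant metric $d$, so the balls $B_\rho=\{k\in K\mid d(k,\mathrm{id})<\rho\}$ form a neighbourhood basis of $\mathrm{id}$ consisting of symmetric, conjugation-invariant open sets; replacing each $V_n$ by such a ball contained in it, we may assume every $V_n$ is symmetric and conjugation-invariant, and we may also assume $K=\overline{\alpha(\mathcal{G})}$. By the usual diagonalisation (using separability of $[\mathcal{G}]$ and of $L^1(\mathcal{G}^0,\mu)$, and an exhaustion of $\mathcal{G}$ by bounded Borel sets), it then suffices to produce, for each such $V$, a sequence of non-negative unit vectors of $L^1(\mathcal{G},\mu^1)$, each supported on $\alpha^{-1}(V)$, satisfying conditions (i)--(iv) of Definition~\ref{def:groupoid}. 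Moreover, restricting to $\alpha^{-1}(V_n)$ and renormalising a symmetric vector that satisfies (i)--(iii) and carries almost all of its mass on $\alpha^{-1}(V_n)$, and then repairing condition (iv) by the argument proving Claim~\ref{claim:transfin} (which preserves the property of vanishing outside $\alpha^{-1}(V_n)$, because the sets $C_0,C_1$ it introduces lie in $\mathcal{G}_{A_i}\cap\alpha^{-1}(V_n)$, a set of positive $\mu^1$-measure since it contains the units over $A_i$), the problem reduces to producing a \emph{symmetric} inner amenability sequence $(\zeta_n)$ for $\mathcal{G}$ with $\|1_{\alpha^{-1}(V_n)}\zeta_n\|_1\to 1$; that is, one whose mass asymptotically concentrates on $\alpha^{-1}(V_n)$.

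The concentration engine is the cocycle identity $\alpha(\gamma\delta)=\alpha(\gamma)\alpha(\delta)$ combined with the convolution calculus of Lemmas~\ref{lem-conv} and \ref{lem:conv}: if $\eta$ is supported on $\alpha^{-1}(S)$ and $\xi$ on $\alpha^{-1}(T)$, then $\eta\ast\check\xi$ is supported on $\alpha^{-1}(ST^{-1})$. Starting from a symmetric inner amenability sequence $(\eta_n)$ for $\mathcal{G}$, choosing a symmetric conjugation-invariant open $W$ with $W^2\subseteq V$ and a finite cover $K=\bigcup_{i=1}^m c_iW$, and splitting $\eta_n=\sum_{i=1}^m\eta_n^{(i)}$ along a Borel partition of $K$ refining this cover, Lemma~\ref{lem-pigeon} (applied to the sets on which the relevant pieces carry uniformly positive mass) lets us pass to a subsequence and a single index $i_0$ so that $\|\eta_n^{(i_0)}\|_1$ is bounded below while $\eta_n^{(i_0)}$ is supported on $\alpha^{-1}(c_{i_0}W)$. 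Then $\eta_n^{(i_0)}\ast\check{\eta_n^{(i_0)}}$ is symmetric, is supported on $\alpha^{-1}(c_{i_0}WW^{-1}c_{i_0}^{-1})=\alpha^{-1}(W^2)\subseteq\alpha^{-1}(V)$ (using that $W$ is symmetric and conjugation-invariant), and has $L^1$-norm $\int_{\mathcal{G}^0}\bigl(\sum_{\gamma\in\mathcal{G}_x}\eta_n^{(i_0)}(\gamma)\bigr)^2\,d\mu(x)\ge\|\eta_n^{(i_0)}\|_1^2$ bounded below by Cauchy--Schwarz.

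The obstacle --- and the point where the groupoid case genuinely departs from the group case --- is that $\eta_n^{(i_0)}$, being merely a restriction of $\eta_n$, need not satisfy condition (ii): conjugation by $\phi\in[\mathcal{G}]$ acts on $K$-values not by a single inner automorphism of $K$ but by the two-sided translation $\alpha(\gamma)\mapsto\alpha(\phi_{r(\gamma)})^{-1}\alpha(\gamma)\alpha(\phi_{s(\gamma)})$ in which the two translating elements are uncorrelated, so $\alpha^{-1}(V)$ is \emph{not} invariant under conjugation in $[\mathcal{G}]$ even when $V$ is conjugation-invariant, and the ``normalise the restriction of a conjugation-invariant mean'' step of Giordano--de la Harpe has no direct analogue; self-convolving the restricted piece does not cure this, since the two factors themselves fail condition (ii). I would get around this by passing to the compact extension $\widetilde{\mathcal{G}}:=\mathcal{G}\ltimes_\alpha(Z,\zeta)$ (equivalently $\mathcal{G}\ltimes_\alpha(K,\lambda)$ with $K$ acting on itself by left translation): writing $\pi\colon\widetilde{\mathcal{G}}\to\mathcal{G}$ for the projection, the composite cocycle $\beta:=\alpha\circ\pi$ is a \emph{coboundary} $\beta=\delta h$ with $h(x,z)=z$, and the push-forward $\pi_\ast\colon L^1(\widetilde{\mathcal{G}},\widetilde{\mu}^1)\to L^1(\mathcal{G},\mu^1)$, $(\pi_\ast\xi)(\gamma)=\int_Z\xi(\gamma,z)\,d\zeta(z)$, carries any inner amenability sequence for $\widetilde{\mathcal{G}}$ supported on $\beta^{-1}(V_n)=\pi^{-1}(\alpha^{-1}(V_n))$ to an inner amenability sequence for $\mathcal{G}$ supported on $\alpha^{-1}(V_n)$; one checks conditions (i)--(iv) directly, the key identities being $\widetilde{\mathcal{G}}_{A\times Z}=\pi^{-1}(\mathcal{G}_A)$, the fact that $\pi_\ast$ intertwines conjugation by $\phi\in[\mathcal{G}]$ with conjugation by the lift $(x,z)\mapsto(\phi_x,z)$, and the fact that integrating out the $\zeta$-variable converts condition (iv) on $\widetilde{\mathcal{G}}$ into condition (iv) on $\mathcal{G}$ by $K$-invariance of $\zeta$. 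On $\widetilde{\mathcal{G}}$ the concentration is then performed using the coboundary: partition $\widetilde{\mathcal{G}}^0=\mathcal{G}^0\times Z$ into pieces on which $h$ varies within a set $T$ with $TT^{-1}\subseteq V$, so that the restriction of $\widetilde{\mathcal{G}}$ to such a piece already lies inside $\beta^{-1}(V)$, and assemble these restricted pieces into a global inner amenability sequence for $\widetilde{\mathcal{G}}$ using Propositions~\ref{prop:inflate} and~\ref{prop:ergdec}, repairing with the convolution calculus of Lemmas~\ref{lem-conv}--\ref{lem:conv} whatever the assembly disturbs. I expect this last assembly --- reconciling the support constraint with invariance under the full group $[\widetilde{\mathcal{G}}]$, which is strictly larger than the lifts of $[\mathcal{G}]$ --- to be the main technical burden of the proof.
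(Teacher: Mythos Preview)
You correctly identify the central obstacle: restricting to $\alpha^{-1}(c_{i_0}W)$ destroys asymptotic conjugation-invariance, because conjugation by $\phi$ moves $\alpha(\gamma)$ to $\alpha(\phi_{r(\gamma)})^{-1}\alpha(\gamma)\alpha(\phi_{s(\gamma)})$ with the two outer factors a priori unrelated. But your proposed workaround through the extension $\widetilde{\mathcal G}=\mathcal G\ltimes_\alpha(K,\lambda)$ is circular. The lift $\tilde\xi_n(\gamma,k)=\xi_n(\gamma)$ of an inner amenability sequence for $\mathcal G$ is \emph{not} balanced on $\widetilde{\mathcal G}$ unless $\xi_n$ already concentrates near $\alpha^{-1}(e)$---which is exactly the conclusion you are after---so Remark~\ref{rem:suffice} does not apply and you have no independent way to see that $\widetilde{\mathcal G}$, or its restrictions $\widetilde{\mathcal G}_{P_j}$, are inner amenable. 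Proposition~\ref{prop:inflate} only transfers inner amenability between $\widetilde{\mathcal G}$ and $\widetilde{\mathcal G}_{P_j}$ once one of them is known to have it; and inner amenability of $\widetilde{\mathcal G}$ is Corollary~\ref{cor:cpctext}, which the paper deduces \emph{from} the present theorem. The ``main technical burden'' you anticipate is not a burden but a loop.

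The paper resolves the obstacle without leaving $\mathcal G$, by two ideas you did not find. First, rather than restricting before convolving, it convolves the \emph{unrestricted} sequences: by Lemma~\ref{lem:conv}, $\eta_n\ast\check\eta_{m_n}$ is again an inner amenability sequence, and a direct computation gives $\nu_{\eta_n\ast\check\eta_m}(V_{\varepsilon_1})\ge\int f_nf_m\,d\mu$ where $f_n(x)=\sum_{\delta\in\alpha^{-1}(V_{\varepsilon_2}c)_x}\eta_n(\delta)$; Lemma~\ref{lem-pigeon} applied to the super-level sets of $f_n$ makes this uniformly positive. So one obtains, for free, a genuine inner amenability sequence with uniformly positive mass on $\alpha^{-1}(V_{\varepsilon_1})$---positive mass, not full support, and that is enough. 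Second---and this is the heart of the matter---once a balanced conjugation-invariant mean $\bm m_1$ with $\bm m_1(\alpha^{-1}(U))>0$ is in hand (with $U$ chosen so that the pushed-forward limit measure $\nu_\infty$ gives $\partial U$ zero mass), the set $\alpha^{-1}(U)$ turns out to be conjugation-invariant \emph{modulo $\bm m_1$}. The balanced property is precisely what tames your ``uncorrelated two-sided translation'': partition $\mathcal G^0=\bigcup_i Y_i$ so that $\alpha(\phi_x)\in W_1b_i$ on $Y_i$; balancedness forces $\bm m_1(\bigcup_i\mathcal G_{Y_i})=1$, and on each $\mathcal G_{Y_i}$ both $\alpha(\phi_{r(\gamma)})$ and $\alpha(\phi_{s(\gamma)})$ lie in the \emph{same} ball $W_1b_i$, so their ratio lies in $W$ and $\alpha(\gamma^{[\phi^{-1}]})\in WU$. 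Hence $\bm m_1(\alpha^{-1}(U)\setminus\alpha^{-1}(U)^\phi)\le\nu_\infty(\overline{WU}\setminus U)\to 0$ as $W$ shrinks, by the boundary-regularity choice of $U$. One then normalises $\bm m_1$ to $\alpha^{-1}(U)$, uses ergodicity to see the result is still balanced, and finishes with Lemma~\ref{lem:weakstar}.
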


\begin{proof}
Fix a bi-invariant metric on $K$. 
For each $\ve >0$, let $V_{\ve}$ denote the open $\ve$-ball about the identity in $K$.
By Lemma \ref{lem:weakstar}, it is enough to show that for all $\ve >0$, we can find a mean $\bm{m}$ on $(\mathcal{G},\mu )$ as in condition (5) of Theorem \ref{thm:equiv} such that $\bm{m}(\alpha ^{-1}(V_{\ve}))=1$.

Toward this goal, fix $\ve >0$ and pick $0<\ve _2< \ve _1<\ve$ such that $V_{\ve _2}^2\subset V_{\ve _1}$. 
Since $K$ is compact, we may find $c_1,\dots , c_N\in K$ such that $K=\bigcup _{i=1}^NV_{\ve _2}c_i$.
Let $(\eta _n )_{n\in \N}$ be an inner amenability sequence for $(\mathcal{G},\mu )$.
For each non-negative unit vector $\eta \in L^1(\mathcal{G},\mu ^1 )$, let $\nu _{\eta}$ be the probability measure on $K$ given by $\nu _{\eta}(B) = \int _{\alpha ^{-1}(B)}\eta \, d\mu ^1$ for a Borel subset $B\subset K$.
After passing to a subsequence of $(\eta _n )$, we may assume that for some $i\in \{ 1,\ldots, N\}$ and $r>0$, we have $\inf _{n\in \N} \nu _{\eta _n} (V_{\ve  _2}c_i )>r$.
We put $c= c_i$ and define a function $f_n$ on $\calg^0$ by
\[f_n(x) \defeq \sum _{\delta \in \alpha ^{-1}(V_{\ve _2}c)_x}\eta _n (\delta ).\]
Then $\int _{\mathcal{G}^0} f_n \, d\mu = \nu _{\eta _n} (V_{\ve _2}c)>r$ and $0\leq f_n \leq 1$, and hence the sets $C_n$ for $n\in \N$ defined by $C_n \defeq \{ \, x\in \mathcal{G}^0 \mid f_n(x) > r^2 \, \}$ have uniformly positive measure.
By Lemma \ref{lem-pigeon}, after passing to a subsequence of $(\eta _n )$, we may assume without loss of generality that the sets $C_n\cap C_m$ for $n,m\in \N$ have uniformly positive measure, and $\inf _{n,m\in \N} \int _{\mathcal{G}^0} f_nf_m\, d\mu > r_0$ for some $r_0 >0$.
Therefore
\begin{align*}
&\nu _{\eta _n\ast \check{\eta} _m}(V_{\ve _1})  = \int _{\mathcal{G}^0} \sum _{\substack{\delta _1, \delta _0 \in \calg_x \\ \alpha (\delta _1\delta _0^{-1})\in V_{\ve _1} }}\eta _n (\delta _1)\eta _m (\delta _0) \, d\mu(x)  \\
&\geq \int  _{\mathcal{G}^0} \sum _{\delta _1\in \alpha ^{-1}(V_{\ve _2}c)_x}\eta _n (\delta _1) \sum _{\delta _0 \in \alpha ^{-1}(V_{\ve _2}c)_x} \eta _m (\delta _0 ) \, d\mu(x) = \int _{\mathcal{G}^0} f_n(x)f_m(x)\, d\mu(x) > r_0 .
\end{align*}
Thus, by Lemma \ref{lem:conv}, by choosing an appropriate subsequence $m_1<m_2<\cdots$, we obtain an inner amenability sequence $\xi_n \defeq  \eta _n \ast \check{\eta}_{m_n}$ satisfying $\inf _{n\in \N}\nu _{\xi_n}(V_{\ve _1}) > r_0>0$.

We may therefore assume without loss of generality that our original sequence $(\eta _n)$ already satisfies $\inf _{n\in \N}\nu _{\eta _n}(V_{\ve _1})>r_0>0$.
Since $V_{\ve _1}$ is symmetric, after replacing $\eta _n$ by $(\eta _n + \check{\eta}_n )/2$, we may assume that each $\eta _n$ is symmetric as well.
We may also assume, after passing to a subsequence, that the sequence $(\nu _{\eta _n} )$ converges to some probability measure $\nu _{\infty}$ in the compact space of Borel probability measures on $K$.
Then for all $\ve _0$ with $\ve _1 <\ve _0 < \ve$, we have $\nu _{\infty}(V_{\ve_0}) \geq \limsup _n \nu _{\eta _n} (\overline{V_{\ve _1}})\geq r_0 > 0$.
Since, as $\ve _0$ varies, the boundaries $\partial V_{\ve _0}$ are pairwise disjoint, we may find some $\ve _0$ with $\ve _1<\ve _0 <\ve$ such that $\nu _{\infty}(\partial V_{\ve _0}) = 0$.
Let $U\defeq  V_{\ve _0}$ so that $U\subset V_{\ve}$, $\nu _{\infty}(U)>0$ and $\nu _{\infty}(\partial U ) =0$.

Since $\nu _{\infty}(\partial U )=0$, it follows that $\nu _{\eta _n}(U)\rightarrow \nu _{\infty}(U)$.
Let $\omega$ be a non-principal ultrafilter on $\N$ and let $\bm{m}_1$ be the weak${}^*$-limit $\bm{m}_1 =\lim _{n\rightarrow  \omega} \eta _n$ in $L^{\infty}(\mathcal{G} ,\mu ^1  )^*$, so that $\bm{m}_1$ is a mean on $(\mathcal{G},\mu )$ satisfying condition (5) of Theorem \ref{thm:equiv} with $\bm{m}_1(\alpha ^{-1}(U))=\lim _{n\rightarrow \omega} \nu _{\eta _n}(U) =\nu _{\infty}(U)> 0$.

\begin{claim}\label{claim:alphaU}
Let $\phi \in [\mathcal{G}]$.
Then $\bm{m}_1(\alpha ^{-1}(U)\setminus \alpha ^{-1}(U)^{\phi} ) = 0$. 
\end{claim}

\begin{proof}
Let $W$ be an open neighborhood of the identity in $K$.
Let $W_1$ be a symmetric open neighborhood of the identity in $K$ with $W_1^2\subset W$, and let $b_1,\dots , b_M\in K$ be such that $K=\bigcup _{i=1}^M W_1b_i$.
For each $i\in \{ 1,\ldots, M\}$, we set $Y_i \defeq \{ \, x\in  \mathcal{G}^0 \mid \alpha (\phi_x) \in W_1b_i \, \}$ so that $\mathcal{G}^0=\bigcup _{i=1}^M Y_i$.
Then $\bm{m}_1(\bigcup_{i=1}^M \mathcal{G}_{Y_i}) =1$.
If $\gamma \in (\bigcup_{i=1}^M \mathcal{G}_{Y_i}) \cap (\alpha ^{-1}(U)\setminus \alpha ^{-1}(U)^{\phi})$, then there is some $i\in \{ 1,\ldots, M\}$ such that $\alpha (\phi_{r(\gamma )} ), \alpha (\phi_{s(\gamma )}) \in W_1b_i$, and hence
\begin{align*}
\alpha (\gamma ^{[\phi ^{-1}]})&= \alpha (\phi_{r(\gamma )} )\alpha (\gamma ) \alpha (\phi_{s(\gamma )})^{-1} \\
&= \alpha (\phi_{r(\gamma )}) \alpha (\phi_{s(\gamma )})^{-1} \alpha (\phi_{s(\gamma )})\alpha (\gamma ) \alpha (\phi_{s(\gamma )})^{-1} \in W_1b_ib_i^{-1}W_1^{-1}U \subset WU ,
\end{align*}
and so $\gamma ^{[\phi ^{-1}]} \in \alpha ^{-1}(WU  \setminus U)$.
This shows that
\[\Biggl(\bigcup _{i=1}^M \mathcal{G}_{Y_i}\Biggr) \cap (\alpha ^{-1}(U)\setminus \alpha ^{-1}(U)^{\phi})\subset \alpha ^{-1}(WU\setminus U) ^{\phi },\]
and therefore
\[
\bm{m}_1 (\alpha ^{-1}(U)\setminus \alpha ^{-1}(U)^{\phi} ) \leq \bm{m}_1 (\alpha ^{-1}(\overline{WU}\setminus U)) = \lim _{n\rightarrow \omega}\nu _{\eta _n}(\overline{WU}\setminus U)\leq \nu _{\infty}(\overline{WU}\setminus U) .
\]
Since $\nu _{\infty}(\partial U ) = 0$, we can make $\nu _{\infty}(\overline{WU}\setminus U )$ as small as we like by choosing an appropriate neighborhood $W$ of the identity in $K$.
This proves the claim.
\end{proof}

Consider now the (countably additive, finite Borel) measure $\mu _U$ on $\mathcal{G}^0$ given by $\mu _U(A)\defeq \bm{m}_1(\mathcal{G}_A \cap \alpha ^{-1}(U))$.
The measure $\mu _U$ is absolutely continuous with respect to $\mu$, and for every $\phi \in [\mathcal{G}]$ and every Borel subset $A\subset \mathcal{G}^0$, by Claim \ref{claim:alphaU}, we have
\begin{align*}
\mu _U (\phi^{-1}A)&= \bm{m}_1(\mathcal{G}_{\phi^{-1}A}\cap \alpha ^{-1}(U))= \bm{m}_1((\mathcal{G}_A \cap \alpha ^{-1}(U))^{\phi}) = \bm{m}_1(\mathcal{G}_A \cap \alpha ^{-1}(U))\\
& = \mu _U (A).
\end{align*}
Therefore, the Radon-Nikodym derivative $d\mu _U/d\mu$ must be constant by ergodicity of $(\mathcal{G},\mu )$, and hence $\bm{m}_1(\mathcal{G}_A\cap \alpha ^{-1}(U))= \mu (A)\bm{m}_1(\alpha ^{-1}(U))$ for every Borel subset $A\subset \mathcal{G}^0$.
Define the mean $\bm{m}$ on $(\mathcal{G},\mu )$ by $\bm{m}(D) \defeq \bm{m}_1(D\cap \alpha ^{-1}(U))/\bm{m}_1(\alpha ^{-1}(U))$.
It is now clear that $\bm{m}$ is a mean on $(\mathcal{G},\mu )$ satisfying condition (5) of Theorem \ref{thm:equiv}, and moreover $\bm{m}(\alpha ^{-1}(V_{\ve}))\geq  \bm{m}(\alpha ^{-1}(U))=1$.
\end{proof}

\begin{cor}\label{cor-finite-index-cpt-ext}
Let $(\mathcal{G}, \mu )$ be an ergodic discrete p.m.p.\ groupoid, and let $\calh$ be a finite-index Borel subgroupoid of $\calg$.
If $(\calg, \mu)$ is inner amenable, then $(\calh, \mu)$ is inner amenable.
\end{cor}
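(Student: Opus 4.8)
The plan is to realize the inclusion $\calh<\calg$ through a cocycle into a finite (hence compact) group and then feed this into Theorem \ref{thm:cpctseq}, which produces an inner amenability sequence for $(\calg,\mu)$ supported arbitrarily close to the ``kernel'' of the cocycle. Since the target group is finite, ``arbitrarily close'' can be taken to mean ``exactly on'' a Borel subgroupoid contained in $\calh$, after which the resulting sequence will be an inner amenability sequence for $(\calh,\mu)$ essentially by restriction. This bypasses any ergodicity assumption on $\calh$ (alternatively one could first reduce to the case where $\calh$ is ergodic, as in the proof of Proposition \ref{prop-finite-index}, and use full sections $\psi_i\in[\calg]$ from \cite{fsz} in place of the Borel maps $\beta_i$ below).

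Concretely, since $(\calg,\mu)$ is ergodic, $\calh$ has a well-defined finite index $N\defeq[\calg:\calh]$ (see the discussion before Proposition \ref{prop-finite-index}), so for each $x\in\calg^0$ the fiber $\calg^x$ decomposes into exactly $N$ classes under $\gamma\sim\delta\iff\gamma^{-1}\delta\in\calh$. By the Lusin--Novikov uniformization theorem, choose Borel maps $\beta_1,\dots,\beta_N\colon\calg^0\to\calg$ with $r(\beta_i(x))=x$, with $\beta_1(x)$ equal to the unit at $x$, and with $\beta_1(x),\dots,\beta_N(x)$ representatives of the $N$ classes in $\calg^x$. Define $\alpha\colon\calg\to\mathrm{Sym}(\{1,\dots,N\})$ by declaring $\alpha(\gamma)(i)=j$ precisely when $\beta_j(r(\gamma))^{-1}\gamma\beta_i(s(\gamma))\in\calh$. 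Since $\calh$ is a subgroupoid, a direct computation shows that $\alpha$ is a Borel cocycle, and because $\beta_1$ selects the unit section, $\alpha(\gamma)=e$ forces, by taking $i=1$, that $\gamma=\gamma\beta_1(s(\gamma))\in\beta_1(r(\gamma))\calh=\calh$; hence $\alpha^{-1}(\{e\})\subseteq\calh$.

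Viewing $\mathrm{Sym}(\{1,\dots,N\})$ as a finite, hence compact, subgroup of $\aut(Z,\zeta)$ for a suitable probability space $(Z,\zeta)$ (for instance $Z=[0,1]$ with $\mathrm{Sym}(\{1,\dots,N\})$ permuting $N$ subintervals of equal length), $\alpha$ is a cocycle into a compact subgroup of $\aut(Z,\zeta)$. Applying Theorem \ref{thm:cpctseq} with $V_n\defeq\{e\}$ for every $n$, we obtain an inner amenability sequence $(\xi_n)$ for $(\calg,\mu)$ with each $\xi_n$ supported on $\alpha^{-1}(\{e\})\subseteq\calh$. Regarding each $\xi_n$ as an element of $L^1(\calh,\mu^1)$, conditions (i), (iii) and (iv) of Definition \ref{def:groupoid} for $(\calh,\mu)$ follow at once from the corresponding conditions for $(\calg,\mu)$, since $\xi_n$ is supported on $\calh$ (so $1_{\calh_A}\xi_n=1_{\calg_A}\xi_n$ and $\sum_{\gamma\in\calh^x}\xi_n(\gamma)=\sum_{\gamma\in\calg^x}\xi_n(\gamma)$, and likewise for sources). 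For condition (ii), given $\phi\in[\calh]$ — which, having conull domain, also lies in $[\calg]$ — the map $\gamma\mapsto\phi_{r(\gamma)}\gamma[\phi_{s(\gamma)}]^{-1}$ preserves both $\calh$ and $\calg\setminus\calh$ (because every $\phi_x$ lies in the subgroupoid $\calh$), so $\xi_n^\phi$ relative to $\calh$ coincides with $\xi_n^\phi$ relative to $\calg$ on $\calh$, and both $\xi_n^\phi$ and $\xi_n$ vanish off $\calh$; hence $\|\xi_n^\phi-\xi_n\|_{L^1(\calh,\mu^1)}=\|\xi_n^\phi-\xi_n\|_{L^1(\calg,\mu^1)}\to0$, and $(\calh,\mu)$ is inner amenable. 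The only delicate points are the normalization $\beta_1(x)=x$ — which is exactly what forces $\alpha^{-1}(\{e\})$ into $\calh$ rather than merely into some conjugate of the normal core — and, for condition (ii), the observation that $\phi$-conjugation by elements of $[\calh]$ respects the decomposition $\calg=\calh\sqcup(\calg\setminus\calh)$; everything else is immediate once Theorem \ref{thm:cpctseq} is in hand.
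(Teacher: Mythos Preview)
Your proof is correct and follows essentially the same approach as the paper: construct the index cocycle into the symmetric group $\Sigma$ on $N$ letters (your $\alpha$ is exactly the paper's $\tau$, with your $\beta_i$ playing the role of the paper's $\psi_i$), then invoke Theorem~\ref{thm:cpctseq} with the target group finite and hence discrete. The paper's write-up is terser---it simply notes that $\calh=\tau^{-1}(\Sigma_1)$ with $\Sigma_1=\{\sigma\in\Sigma:\sigma(1)=1\}$ and says the corollary follows from Theorem~\ref{thm:cpctseq}---whereas you spell out the verification that the resulting sequence, supported on $\alpha^{-1}(\{e\})\subset\calh$, satisfies Definition~\ref{def:groupoid} (i)--(iv) for $(\calh,\mu)$; this verification is implicit in the paper. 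Your choice of $V_n=\{e\}$ rather than $V_n=\Sigma_1$ is immaterial since either is open in the discrete group $\Sigma$, and your embedding of $\Sigma$ into some $\aut(Z,\zeta)$ is the right way to match the literal hypotheses of Theorem~\ref{thm:cpctseq}.
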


\begin{proof}
Let $N$ be the index of $\calh$ in $\calg$ and let $\tau \colon \calg \to \Sigma$ be the index cocycle, where $\Sigma$ is the symmetric group of $N$ letters.
This cocycle is constructed in \cite[Section 1]{fsz} when $\calg$ is principal, and it is similarly defined for general $\calg$ as follows:
We choose Borel maps $\psi_1,\ldots, \psi_N\colon \calg^0\to \calg$ such that for almost every $x\in \calg^0$, $\psi_1(x)=x$, $\psi_i(x)\in \calg^x$, and the sets $\{ \, \psi_i(x)h\mid h\in \calh^{s(\psi_i(x))}\, \}$ with $i=1,\ldots, N$ partition $\calg^x$.
For $\gamma \in \calg$ with $x\defeq s(\gamma)$ and $y\defeq r(\gamma)$, we define the permutation $\tau(\gamma)\in \Sigma$ so that $\gamma \psi_i(x)=\psi_{\tau(\gamma)(i)}(y)h$ for some $h\in \calh$.
Then $\calh$ is equal to the inverse image under $\tau$ of the subgroup $\{
\, \sigma \in \Sigma \mid \sigma(1)=1\, \}$.
The corollary thus follows from Theorem \ref{thm:cpctseq}.
\end{proof}

\begin{cor}\label{cor:cpctext}
Let $(\mathcal{G}, \mu )$ be an ergodic discrete p.m.p.\ groupoid which is inner amenable.
Let $(Z,\zeta )$ be a standard probability space, let $\alpha \colon \mathcal{G}\rightarrow \mathrm{Aut}(Z,\zeta )$ be a cocycle, and assume that the image $\alpha (\mathcal{G})$ is contained in a compact subgroup of $\mathrm{Aut}(Z,\zeta )$.
Then the extension groupoid $(\mathcal{G} ,\mu )\ltimes _{\alpha}(Z,\zeta )$ is inner amenable.

In particular, if $G$ is a countable inner amenable group which is a subgroup of a compact group $K$, $L$ is a closed subgroup of $K$, and we let $G$ act on $(K/L, \mu)$ by left multiplication, where $\mu$ is the $K$-invariant probability measure on $K/L$, then the associated translation groupoid $G\ltimes (K/L, \mu)$ is inner amenable.
\end{cor}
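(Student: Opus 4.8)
The plan is to reduce the first assertion to Theorem~\ref{thm:equiv} by producing, on the extension groupoid $(\tilde{\calg},\tilde{\mu})\defeq(\calg,\mu)\ltimes_\alpha(Z,\zeta)$, a balanced, symmetric, diffuse, conjugation-invariant mean, i.e.\ a mean as in condition~(5); since conditions~(1),(2),(5),(6) of Theorem~\ref{thm:equiv} are equivalent without any ergodicity hypothesis, this suffices (and is the right target, as $(\tilde{\calg},\tilde{\mu})$ need not be ergodic). Write $K$ for the compact subgroup of $\aut(Z,\zeta)$ containing $\alpha(\calg)$; since $\aut(Z,\zeta)$ is Polish, $K$ is metrizable. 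First I would fix, once and for all, a countable subalgebra $\{B_j\}_{j\in\N}$ of Borel subsets of $Z$ dense in measure, a countable subalgebra of Borel subsets of $\calg^0$ dense in measure, and a countable subgroup $G_0$ of $[\calg]$ covering $\calg$ (write $\calg$ as a countable union of local sections, possible since $s$ is countable-to-one, extend each to a full bisection as permitted by Remark~\ref{rem:locsec}, and let $G_0$ be the group they generate). Using continuity of the action $K\c(Z,\zeta)$ for the weak topology, I would choose a decreasing sequence $V_1\supset V_2\supset\cdots$ of symmetric open neighborhoods of the identity in $K$ with $V_n\subset\{\,k\in K\mid \zeta(kB_j\bigtriangleup B_j)<1/n\text{ for all }j\le n\,\}$, and apply Theorem~\ref{thm:cpctseq} to obtain an inner amenability sequence $(\xi_n)$ for $(\calg,\mu)$ with $\xi_n$ supported on $\alpha^{-1}(V_n)$ for every $n$.

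Next I would lift: define $\tilde{\xi}_n\in L^1(\tilde{\calg},\tilde{\mu}^1)$ by $\tilde{\xi}_n(\gamma,z)\defeq\xi_n(\gamma)$, a non-negative unit vector. Condition~(iv) of Definition~\ref{def:groupoid} for $(\tilde{\xi}_n)$ is immediate, and condition~(iii) follows by Fubini and dominated convergence: with $\tilde{D}_z\defeq\{\,\gamma\mid(\gamma,z)\in\tilde{D}\,\}$ one has $\|1_{\tilde{D}}\tilde{\xi}_n\|_1=\int_Z\|1_{\tilde{D}_z}\xi_n\|_1\,d\zeta(z)$, and $\mu^1(\tilde{D}_z)<\infty$ for a.e.\ $z$ when $\tilde{\mu}^1(\tilde{D})<\infty$. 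For condition~(ii) I would only verify it along the lifts $\tilde{g}\defeq\{\,(g_x,z)\mid x\in\calg^0,\ z\in Z\,\}\in[\tilde{\calg}]$ of $g\in G_0$; these lifts cover $\tilde{\calg}$, and a direct computation with the groupoid operations of $\tilde{\calg}$ gives $\tilde{\xi}_n^{\tilde{g}}(\eta,w)=\xi_n^g(\eta)$, whence $\|\tilde{\xi}_n^{\tilde{g}}-\tilde{\xi}_n\|_1=\|\xi_n^g-\xi_n\|_1\to0$. The heart of the matter is condition~(i). I would first treat rectangles $\tilde{A}=A\times B_j$: expanding $\|1_{\tilde{\calg}_{\tilde{A}}}\tilde{\xi}_n\|_1$ and using that $\xi_n$ is supported on $\alpha^{-1}(V_n)$ with $V_n$ symmetric, one replaces $1_{B_j}(\alpha(\gamma)z)$ by $1_{B_j}(z)$ at a cost of at most $\zeta(\alpha(\gamma)^{-1}B_j\bigtriangleup B_j)<1/n$, reducing to $\zeta(B_j)\|1_{\calg_A}\xi_n\|_1\to\zeta(B_j)\mu(A)=\tilde{\mu}(\tilde{A})$. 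The same device gives $\|1_{\tilde{\calg}_{\tilde{A}_i,\tilde{A}_j}}\tilde{\xi}_n\|_1\to\zeta(B_i\cap B_j)\mu(A_i\cap A_j)$ for pairs of rectangles, once one records the elementary consequence of conditions~(i) and~(iv) for $(\xi_n)$ that $\int 1_E(s(\gamma))1_F(r(\gamma))\xi_n(\gamma)\,d\mu^1(\gamma)\to\mu(E\cap F)$ for all Borel $E,F\subset\calg^0$. Summing over a finite disjoint union of rectangles yields condition~(i) for $\tilde{A}$ in a countable family dense in measure, and the general case follows from $|\,\|1_{\tilde{\calg}_{\tilde{A}}}\tilde{\xi}_n\|_1-\|1_{\tilde{\calg}_{\tilde{A}'}}\tilde{\xi}_n\|_1\,|\le 2\tilde{\mu}(\tilde{A}\bigtriangleup\tilde{A}')$, itself a consequence of condition~(iv).

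To finish, I would take a weak${}^*$-cluster point $\bm{m}$ of $(\tilde{\xi}_n)$ in $L^\infty(\tilde{\calg},\tilde{\mu}^1)^*$: it is a mean on $(\tilde{\calg},\tilde{\mu})$, balanced by condition~(i), diffuse by condition~(iii), and invariant under conjugation by $\tilde{g}$ for every $g\in G_0$ by condition~(ii) together with the standard estimate $|\bm{m}(D^{\tilde{g}})-\bm{m}(D)|\le\limsup_n\|\tilde{\xi}_n^{\tilde{g}^{-1}}-\tilde{\xi}_n\|_1=\limsup_n\|\xi_n^{g^{-1}}-\xi_n\|_1=0$. Since $\bm{m}$ is balanced and $\{\tilde{g}\mid g\in G_0\}$ covers $\tilde{\calg}$, Remark~\ref{rem:suffice} upgrades this to invariance under conjugation by all of $[\tilde{\calg}]$; replacing $\bm{m}$ by $(\bm{m}+\check{\bm{m}})/2$ makes it symmetric without destroying the other properties, so $(\tilde{\calg},\tilde{\mu})$ satisfies condition~(5) of Theorem~\ref{thm:equiv} and is inner amenable. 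The ``in particular'' clause is the special case $(\calg,\mu)=G$ (a singleton groupoid, which is ergodic and, by hypothesis, inner amenable), $(Z,\zeta)=(K/L,\mu)$, and $\alpha\colon G\to\aut(K/L,\mu)$ the left-translation cocycle, whose image lies in the compact image of $K$ in $\aut(K/L,\mu)$; then $(\calg,\mu)\ltimes_\alpha(Z,\zeta)$ is precisely $G\ltimes(K/L,\mu)$. The main obstacle, as indicated, is condition~(i) for the lifted sequence: this is exactly where the support refinement supplied by Theorem~\ref{thm:cpctseq} is indispensable, and it forces the neighborhood basis $(V_n)$ to be chosen in advance so as to be compatible with a countable family of Borel subsets of $Z$ that is rich enough to detect $\tilde{\mu}$ on $\calg^0\times Z$.
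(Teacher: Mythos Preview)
Your proposal is correct and follows essentially the same route as the paper: apply Theorem~\ref{thm:cpctseq} to obtain an inner amenability sequence $(\xi_n)$ for $(\calg,\mu)$ supported on $\alpha^{-1}(V_n)$, lift it to $\tilde{\xi}_n(\gamma,z)\defeq\xi_n(\gamma)$, verify conditions (iii), (iv) directly and (ii) only along lifts of a covering subgroup, establish condition (i) on rectangles $A\times B$ using the support constraint, and then invoke Remark~\ref{rem:suffice}.

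The differences are cosmetic. For condition~(i) the paper sandwiches $\|1_{\tilde{\calg}_{A\times B}}\eta_n\|_1$ between $b_n\|1_{\calg_A}\xi_n\|_1$ and $\zeta(B)\|1_{\calg_A}\xi_n\|_1$ with $b_n\defeq\inf_{T\in V_n}\zeta(B\cap T^{-1}B)\nearrow\zeta(B)$, and simply asserts that rectangles suffice; you instead tie the $V_n$ to a countable dense family $\{B_j\}$ in advance, treat rectangles by the same mechanism, and then explicitly extend to finite disjoint unions (via the cross-term computation $\|1_{\calg_{F,E}}\xi_n\|_1\to\mu(E\cap F)$) and to general sets (via the $2\tilde{\mu}(\tilde A\bigtriangleup\tilde A')$ bound from condition~(iv)). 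For the upgrade of condition~(ii), the paper asserts at the sequence level that Remark~\ref{rem:suffice} gives $\|\eta_n^\phi-\eta_n\|_1\to0$ for all $\phi\in[\tilde{\calg}]$ once balancedness is known; you instead pass to a weak${}^*$-cluster mean and apply Remark~\ref{rem:suffice} in the form it is literally stated (for means), then cite the non-ergodic equivalence $(5)\Leftrightarrow(1)$ of Theorem~\ref{thm:equiv}. Your version is arguably more carefully justified on both points, but the underlying argument is the same.
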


\begin{proof}
Let $K$ be a compact subgroup of $\aut(Z, \zeta)$ such that $\alpha(\calg)\subset K$, and let $V_1\supset V_2\supset \cdots$ be a decreasing sequence of neighborhoods of the identity in $K$ with $\bigcap_n V_n=\{ e\}$.
By Theorem \ref{thm:cpctseq}, we have an inner amenability sequence $(\xi_n)$ for $(\mathcal{G},\mu )$ such that for all $n$, $\xi_n$ is supported on $\alpha^{-1}(V_n)$.
We show that its lift $(\eta _n )$ defined by $\eta _n (\gamma , z )\defeq  \xi _n (\gamma )$ is an inner amenability sequence for the extension groupoid $(\tilde{\calg}, \tilde{\mu})\defeq (\mathcal{G}  ,\mu  )\ltimes _{\alpha}(Z,\zeta )$.
For each $\phi \in [\calg]$, its lift $\psi \in [\tilde{\calg}]$ is defined by $\psi_{(x, z)}\defeq (\phi_x, z)$, and then $\Vert \eta_n^{\psi} -\eta_n\Vert_1=\Vert \xi_n^\phi -\xi_n\Vert_1\to 0$.
By Remark \ref{rem:suffice}, once $(\eta_n)$ is shown to be balanced, it satisfies $\Vert \eta_n^\phi -\eta_n\Vert_1\to 0$ for all $\phi \in [\tilde{\calg}]$.
Conditions (iii) and (iv) of Definition \ref{def:groupoid} for $(\eta_n)$ follow from those for $(\xi_n)$, and thus the conclusion follows.

To see that $(\eta_n)$ is balanced, it suffices to verify that for all Borel subsets $A\subset \calg^0$ and $B\subset Z$, we have $\Vert 1_{\tilde{\calg}_{A\times B}}\eta_n\Vert_1\to \mu(A)\zeta(B)$.
Set $b_n\defeq \inf \{ \, \zeta(B\cap T^{-1}B)\mid T\in V_n\, \}$.
Then $b_n\nearrow \zeta(B)$ since the function $T\mapsto \zeta(B\cap T^{-1}B)$ is continuous on $\aut(Z, \zeta)$.
We have
\begin{align*}
\Vert 1_{\tilde{\calg}_{A\times B}}\eta_n\Vert_1&=\int_{A\times B}\sum_{\substack{\gamma \, \in \calg_x\cap \, \calg_A \\ \alpha(\gamma)z\in B}} \eta_n(\gamma, z)\,  d\tilde{\mu}(x, z)\\
&=\int_{A}\sum_{\gamma \in \calg_x\cap \, \calg_A} \zeta(B\cap \alpha(\gamma)^{-1}B)\, \xi_n(\gamma)\, d\mu(x),
\end{align*}
and thus $b_n\Vert 1_{\calg_A}\xi_n\Vert_1\leq \Vert 1_{\tilde{\calg}_{A\times B}}\eta_n\Vert_1\leq \zeta(B)\Vert 1_{\calg_A}\xi_n\Vert_1$.
Since $\Vert 1_{\calg_A}\xi_n\Vert_1\to \mu(A)$, the desired convergence follows.
\end{proof}

By Corollary \ref{cor:cpctext}, if $G$ is a countable, residually finite, inner amenable group, then the translation groupoid associated with any profinite free action of $G$ is inner amenable and therefore $G$ is orbitally inner amenable.

\begin{cor}\label{cor:distal}
Let $G$ be a countable inner amenable group.
Let $G\curvearrowright (X,\mu )$ be an ergodic p.m.p.\ action of $G$ which is measure distal.
Then the translation groupoid $G \ltimes (X,\mu )$ is inner amenable.
\end{cor}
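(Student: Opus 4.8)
The plan is to run a transfinite induction along the distal tower of the action, using Corollary \ref{cor:cpctext} at the successor stages and Proposition \ref{prop:invlim} at the limit stages. By the structure theory of measure-distal actions (see, e.g., \cite{gla}), there is an ordinal $\theta$ and an increasing transfinite chain $(X_\alpha, \mu_\alpha)_{\alpha \le \theta}$ of $G$-factors of $(X,\mu)$ such that $X_0$ is the trivial one-point factor, $X_\theta = (X,\mu)$, each extension $X_{\alpha+1} \to X_\alpha$ is a compact (isometric) extension, and $X_\lambda = \varprojlim_{\alpha < \lambda} X_\alpha$ for every limit ordinal $\lambda \le \theta$. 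Since $(X,\mu)$ is standard, a strictly increasing well-ordered chain of $G$-invariant separable sub-$\sigma$-algebras must be countable, so we may take $\theta < \omega_1$. As $G \curvearrowright (X,\mu)$ is ergodic, each factor $G \curvearrowright (X_\alpha, \mu_\alpha)$ is ergodic, and hence each translation groupoid $G \ltimes (X_\alpha, \mu_\alpha)$ is an ergodic discrete p.m.p.\ groupoid.

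I would then prove by transfinite induction on $\alpha \le \theta$ that $G \ltimes (X_\alpha, \mu_\alpha)$ is inner amenable. The base case $\alpha = 0$ is exactly the hypothesis that $G$ is inner amenable, since $G \ltimes (X_0, \mu_0)$ is the group $G$ regarded as a discrete p.m.p.\ groupoid on a singleton (cf.\ the remark after Definition \ref{def:groupoid}). For the successor step, I would invoke the group model for compact extensions: there are a compact group $K$, a closed subgroup $L \le K$ with $K/L$ carrying the $K$-invariant probability measure $\zeta$, and a measurable cocycle $\sigma \colon G \times X_\alpha \to K$ such that $X_{\alpha+1}$ is isomorphic, as a $G$-space, to the skew product $X_\alpha \times (K/L)$ with the action $g\cdot(x,kL) = (gx, \sigma(g,x)kL)$. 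Viewing $\sigma$ as a cocycle $G \ltimes (X_\alpha, \mu_\alpha) \to K$ with values in the compact subgroup $K \le \aut(K/L, \zeta)$ acting by left translation, one checks directly from the definitions that $G \ltimes (X_{\alpha+1}, \mu_{\alpha+1})$ is canonically isomorphic to the extension groupoid $(G \ltimes (X_\alpha, \mu_\alpha)) \ltimes_\sigma (K/L, \zeta)$. Since $G \ltimes (X_\alpha, \mu_\alpha)$ is ergodic and, by the inductive hypothesis, inner amenable, Corollary \ref{cor:cpctext} shows that $G \ltimes (X_{\alpha+1}, \mu_{\alpha+1})$ is inner amenable. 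For the limit step, if $\lambda \le \theta$ is a limit ordinal, the factor maps $\pi_{\beta'\beta} \colon X_\beta \to X_{\beta'}$ ($\beta' < \beta < \lambda$) induce a countable directed family of groupoid homomorphisms $\varphi_{\beta'\beta} \colon G \ltimes (X_\beta, \mu_\beta) \to G \ltimes (X_{\beta'}, \mu_{\beta'})$, $(g,x) \mapsto (g, \pi_{\beta'\beta}(x))$; each $\varphi_{\beta'\beta}$ restricts to a bijection $(G \ltimes (X_\beta,\mu_\beta))_x \to (G \ltimes (X_{\beta'},\mu_{\beta'}))_{\pi_{\beta'\beta}(x)}$ for a.e.\ $x$ (both sides being identified with $G$), and is therefore a locally bijective extension. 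Identifying $G \ltimes (X_\lambda, \mu_\lambda)$ with the inverse limit of this family, Proposition \ref{prop:invlim} together with the inductive hypothesis yields that $G \ltimes (X_\lambda, \mu_\lambda)$ is inner amenable. Taking $\alpha = \theta$ completes the proof.

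The main difficulty I anticipate is not conceptual but lies in carefully matching the classical structure theory with the definitions used here: (i) justifying that the distal tower may be taken of countable length, so that every limit stage involves only a countable directed index set and Proposition \ref{prop:invlim} genuinely applies; and (ii) verifying that the translation groupoid of a compact skew-product extension is literally an extension groupoid in the sense of Corollary \ref{cor:cpctext}, with cocycle valued in a compact subgroup of $\aut(Z,\zeta)$, and that at limit ordinals the translation groupoid of an inverse limit of $G$-actions coincides with the groupoid inverse limit defined before Proposition \ref{prop:invlim}. Neither point is hard, but they are where a hasty argument could slip.
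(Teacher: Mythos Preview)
Your proposal is correct and follows essentially the same approach as the paper: transfinite induction along the distal tower, invoking Corollary~\ref{cor:cpctext} at successor stages and Proposition~\ref{prop:invlim} at limit stages. You have simply fleshed out the details (countability of the tower, identification of the skew product with the extension groupoid, local bijectivity of the factor maps) that the paper leaves implicit.
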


\begin{proof}
We proceed by transfinite induction on the length of the distal tower associated to the action $G \curvearrowright (X,\mu )$.
At successor stages we apply Corollary \ref{cor:cpctext}, and at limit stages the translation groupoid $G\ltimes (X,\mu )$ is the inverse limit of the translation groupoids of the tower, so Proposition \ref{prop:invlim} applies.
\end{proof}



\section{Compact extensions and central sequences}\label{sec-cpt-cs}

Following the previous section, we investigate existence of a central sequence in the full group and existence of a stability sequence under compact extensions.
Main results of this section are collected in Subsections \ref{subsec-main} and \ref{subsec-main2}.
Throughout this section, let $(X, \mu)$ be a standard probability space and let $\calb$ be the measure algebra of $\mu$.

\subsection{Stability sequences}

Let $\calr$ be an ergodic discrete p.m.p.\ equivalence relation on $(X, \mu)$.
Let $(T_n, A_n)_{n\in \N}$ be a sequence of pairs of $T_n\in [\calr]$ and $A_n\in \calb$.
We call $(T_n, A_n)_{n\in \N}$ a \textit{stability sequence} for $\calr$ if the following three conditions hold:
\begin{enumerate}
\item[(1)] For all $B\in \calb$, we have $\mu(T_nB\bigtriangleup B)\to 0$.
\item[(2)] For all $g\in [\calr]$, we have $\mu (\{ gT_n\neq T_ng \})\to 0$.
\item[(3)] The sequence $(A_n)$ is asymptotically invariant for $\calr$, $T_n^2=\text{id}$ and $T_nA_n\bigtriangleup A_n=X$ for all $n\in \N$, and $T_nT_m=T_mT_n$ and $T_nA_m=A_m$ for all distinct $n, m\in \N$.
\end{enumerate}
We call $\calr$ \textit{stable} if $\calr$ is isomorphic to the direct product $\calr \times \calr_0$, where $\calr_0$ is the ergodic p.m.p.\ aperiodic hyperfinite equivalence relation.
By \cite[Theorem 3.4]{js}, $\calr$ is stable if and only if it admits a stability sequence.
The theorem also says that $\calr$ is stable if it admits a sequence $(T_n, A_n)$ satisfying conditions (1) and (2) and the following condition weaker than condition (3):
\begin{enumerate}
\item[$(4)$] The sequence $(A_n)$ is asymptotically invariant for $\calr$, and $\mu(T_nA_n\setminus A_n)$ is uniformly positive.
\end{enumerate}
We call a sequence $(T_n, A_n)$ satisfying conditions (1), (2) and (4) a \textit{pre-stability sequence} for $\calr$.


\subsection{Preliminary lemmas}

Throughout this subsection, let $\calr$ be an ergodic discrete p.m.p.\ equivalence relation on $(X, \mu)$.

\begin{lem}\label{lem-ai-cover}
If $(D_n)$ is an asymptotically invariant sequence for $\calr$ such that $\mu(D_n)$ is uniformly positive, then $\bigcup_nD_n=X$.
\end{lem}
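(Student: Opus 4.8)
The plan is to argue by contradiction, reducing immediately to Lemma \ref{lem-ai}. Suppose that $E \defeq X \setminus \bigcup_n D_n$ has positive measure. Since $\mu(D_n)$ is uniformly positive, we can pass to a subsequence along which $\mu(D_n) \to r$ for some $r > 0$; this subsequence is again asymptotically invariant for $\calr$, so Lemma \ref{lem-ai} applies and yields, on taking $B = E$, that $\mu(D_n \cap E) \to r\mu(E)$. On the other hand, $D_n \cap E = \emptyset$ for every $n$ by the very definition of $E$, so $\mu(D_n \cap E) = 0$ for all $n$, forcing $r\mu(E) = 0$. Since $r > 0$, this gives $\mu(E) = 0$, i.e., $\bigcup_n D_n = X$ up to a null set.

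I expect there to be essentially no genuine obstacle here; the only bookkeeping point is that $E$ is disjoint from every term of the original sequence, hence also from every term of the chosen subsequence, so the contradiction survives the passage to a subsequence. If one prefers to make the argument self-contained rather than invoking Lemma \ref{lem-ai}, one can reprove the needed fact inline: pass to a subsequence along which $1_{D_n}$ converges in the weak${}^*$-topology to some $f \in L^\infty(X,\mu)$; asymptotic invariance makes $f$ invariant under $\calr$, ergodicity then makes $f$ constant, and $\int f\, d\mu = \lim_n \mu(D_n) = r > 0$ forces $f = r$ almost everywhere. Consequently $\mu(D_n \cap E) = \int 1_{D_n} 1_E\, d\mu \to \int r\, 1_E\, d\mu = r\mu(E)$, and the same contradiction with $D_n \cap E = \emptyset$ closes the argument.
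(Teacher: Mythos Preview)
Your proof is correct. Both your argument and the paper's rely on Lemma~\ref{lem-ai}, but you use it in a more direct way: you fix the putative exceptional set $E = X\setminus \bigcup_n D_n$ and apply Lemma~\ref{lem-ai} once with $B=E$ to reach an immediate contradiction. The paper instead argues constructively: after passing to a subsequence with $\mu(D_n)\to d>0$ and $\mu(D_n)>d/2$, it applies Lemma~\ref{lem-ai} iteratively to arrange that $\mu\bigl(\bigcap_{k=1}^n D_k^c\bigr)$ is close to $\mu(D_1^c)\cdots\mu(D_n^c)\le (1-d/2)^n$, whence $\mu\bigl(\bigcap_n D_n^c\bigr)=0$. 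Your route is shorter and avoids the inductive bookkeeping; the paper's route has the minor advantage of exhibiting an explicit subsequence whose union is already conull, which is occasionally convenient downstream (though not needed here).
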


\begin{proof}
Passing to a subsequence, we may assume that $\mu(D_n)\to d$ for some $d>0$ and also $\mu(D_n)> d/2$ for all $n$.
By Lemma \ref{lem-ai}, passing to a subsequence further, we may assume that for all $n$, the two values $\mu(\bigcap_{k=1}^nD_k^c)$ and $\mu(D_1^c)\cdots \mu(D_n^c)$ are close.
The latter value is less than $(1-d/2)^n$ and hence $\mu(\bigcap_nD_n^c)=0$.
\end{proof}

\begin{lem}\label{lem-comp}
For all $A, A', B, B'\in \calb$, the following inequality holds:
\[\mu(A\setminus B)\leq 2\mu(A\bigtriangleup A')+\mu(B\bigtriangleup B')+\mu(A'\setminus B').\]
\end{lem}

\begin{proof}
Let $\Vert \cdot \Vert_1$ denote the norm on $L^1(X, \mu)$.
The inequality follows from:
\begin{align*}
\mu(A\setminus B)&=\Vert 1_A-1_A1_B\Vert_1 \\
&\leq \Vert 1_A -1_{A'}\Vert_1 +\Vert 1_{A'}-1_{A'}1_{B'}\Vert_1 +\Vert (1_{A'}-1_A)1_{B'}\Vert_1 +\Vert 1_A (1_{B'}-1_B)\Vert_1\\
&\leq \mu(A\bigtriangleup A')+\mu(A'\setminus B')+\mu(A\bigtriangleup A')+\mu(B\bigtriangleup B').\qedhere
\end{align*}
\end{proof}

\begin{lem}\label{lem-patch}
Let $(T_n, D_n)_{n\in \N}$ be a sequence of a pair of $T_n\in [\calr]$ and $D_n\in \calb$ such that
\begin{itemize}
\item $(T_n)$ is a central sequence in $[\calr]$ and $T_n^2=\mathrm{id}$ for every $n$,
\item $(D_n)$ is an asymptotically invariant sequence for $\calr$ such that $\mu(D_n)$ is uniformly positive, and
\item $T_nD_n=D_n$ for every $n$.
\end{itemize}
Then for every $\ve >0$ and every finite subset $Q\subset [\calr]$, we can find an $S\in [\calr]$ such that
\begin{enumerate}
\item[(i)] the map $S$ is obtained by patching together pieces of the restrictions $T_n|_{D_n}$, $n\in \N$, along with a piece of the identity map such that the latter piece is small.
More precisely: for all $x\in X$ outside a subset of measure less than $\ve$, there exists an $n\in \N$ with $x\in D_n$ and $Sx=T_nx$, and for every point $y$ in the excluded subset, we have $Sy=y$;
\item[(ii)] $\mu(\{ gS\neq Sg \})<\ve$ for every $g\in Q$.
\end{enumerate}
If $(A_n)$ is further an asymptotically invariant sequence for $\calr$ with $\mu(T_nA_n\bigtriangleup A_n)=1$ for every $n$, then after replacing $A_n$ by $X\setminus A_n$ if necessary, we may assume that $\mu(A_n\cap D_n)\geq \mu(D_n)/2$ for every $n$, and we can find a $Z\in \calb$ such that
\begin{enumerate}
\item[(iii)] $\mu(SZ\setminus Z)>1/10$, and
\item[(iv)] $\mu(gZ\bigtriangleup Z)<\ve$ for every $g\in Q$.
\end{enumerate}
\end{lem}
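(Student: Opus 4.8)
The plan is to build $S$ by a greedy exhaustion of $X$ using the pieces $T_n|_{D_n}$. First I would use Lemma \ref{lem-ai-cover} to note that $\bigcup_n D_n = X$ (modulo null sets), since $(D_n)$ is asymptotically invariant with $\mu(D_n)$ uniformly positive; this is what makes the ``small excluded subset'' claim attainable. Then, given $\ve>0$ and a finite $Q\subset[\calr]$, I would inductively choose indices $n_1<n_2<\cdots$ and disjoint Borel sets $E_k\subset D_{n_k}$ as follows: having chosen $E_1,\dots,E_{k-1}$ with union $F_{k-1}$, pick $n_k$ large enough that $T_{n_k}$ approximately commutes with all $g\in Q$ to within an error budget $\ve/2^{k}$, that $(D_{n_k})$ is nearly $\calr$-invariant to within that budget, and that $\mu(D_{n_k}\setminus F_{k-1})$ is close to $\mu(D_{n_k})(1-\mu(F_{k-1}))$ (possible by Lemma \ref{lem-ai}); set $E_k := D_{n_k}\setminus F_{k-1}$. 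Since $\mu(D_{n_k})$ is uniformly bounded below by some $\delta>0$, the complements satisfy $\mu(X\setminus F_k)\le \prod_{j\le k}(1-\delta/2)+(\text{small errors})\to 0$, so after finitely many steps $k=k_0$ we have $\mu(X\setminus F_{k_0})<\ve$. Define $S$ to agree with $T_{n_k}$ on each $E_k$ ($1\le k\le k_0$) and with the identity on the remainder $X\setminus F_{k_0}$; this $S$ lies in $[\calr]$ because each $E_k\subset D_{n_k}$ is $T_{n_k}$-invariant (so $S$ is a bijection), and property (i) holds by construction. Property (ii): for $g\in Q$, the set $\{gS\neq Sg\}$ is covered by $\bigcup_k\{gT_{n_k}\neq T_{n_k}g\}$ together with the sets where $g$ or $T_{n_k}$ moves points across the partition boundaries $\{E_k\}$; the first part is controlled by the choice of $n_k$, and the boundary terms are controlled using near-$\calr$-invariance of the $D_{n_k}$ (hence of the $E_k$) under $g$ and the fact that there are only $k_0$ pieces, so the total is $<\ve$ after rescaling the budget.

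For the second half, I would first arrange $\mu(A_n\cap D_n)\ge\mu(D_n)/2$: since $T_nA_n\bigtriangleup A_n=X$ and $T_nD_n=D_n$, we have $X=T_nA_n\bigtriangleup A_n$, so $D_n=(T_n(A_n\cap D_n))\bigtriangleup(A_n\cap D_n)\subset (A_n\cap D_n)\cup T_n(A_n\cap D_n)$ — wait, more carefully: $D_n\cap(T_nA_n\bigtriangleup A_n)=D_n$ and $T_n$ preserves $D_n$, so writing $A_n':=A_n\cap D_n$ we get $D_n=T_nA_n'\bigtriangleup A_n'$ inside $D_n$; hence $\mu(A_n')+\mu(T_nA_n')\ge\mu(D_n)$, i.e. $2\mu(A_n\cap D_n)\ge\mu(D_n)$ since $\mu(T_nA_n')=\mu(A_n')$. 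Symmetrically $2\mu(D_n\setminus A_n)=2\mu(D_n\cap A_n^c)\ge\mu(D_n)$, so by replacing $A_n$ with its complement if necessary — which does not affect asymptotic invariance or the identity $T_nA_n\bigtriangleup A_n=X$ — both inequalities can be assumed. Now set $Z:=\bigcup_{k=1}^{k_0}(A_{n_k}\cap E_k)$ (the same indices and pieces used to build $S$). On each piece $E_k$ we have $SZ\cap E_k = T_{n_k}(A_{n_k}\cap E_k)$, and since $E_k\subset D_{n_k}$ is $T_{n_k}$-invariant while $T_{n_k}A_{n_k}\bigtriangleup A_{n_k}=X$, the set $T_{n_k}(A_{n_k}\cap E_k)$ is disjoint from $A_{n_k}\cap E_k$ up to the measure-zero symmetric-difference discrepancy; also $\mu(T_{n_k}(A_{n_k}\cap E_k)) = \mu(A_{n_k}\cap E_k)\ge\mu(E_k)/2$ roughly (using $\mu(A_n\cap D_n)\ge\mu(D_n)/2$ plus near-invariance of $A_n$ to transfer from $D_n$ to $E_k$). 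Summing over $k$ gives $\mu(SZ\setminus Z)=\sum_k\mu(T_{n_k}(A_{n_k}\cap E_k))\gtrsim \tfrac12\sum_k\mu(E_k)=\tfrac12\mu(F_{k_0})\ge\tfrac12(1-\ve)$, comfortably exceeding $1/10$ once $\ve$ is small; to be safe I would simply demand in the inductive construction that $\mu(F_{k_0})>9/10$, say, which only costs finitely many more pieces.

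Property (iv), $\mu(gZ\bigtriangleup Z)<\ve$ for $g\in Q$, follows the same way as (ii): $gZ\bigtriangleup Z$ is covered by the sets where $g$ fails to preserve the partition $\{E_k\}$ (controlled by near-$\calr$-invariance of each $E_k$, hence of the finitely many $E_k\subset D_{n_k}$, under $g$) together with $\bigcup_k(g(A_{n_k}\cap E_k)\bigtriangleup(A_{n_k}\cap E_k))$, which is controlled by choosing $n_k$ large enough that $\mu(gA_{n_k}\bigtriangleup A_{n_k})$ is within the budget $\ve/2^k$; finitely many terms, total $<\ve$. I expect the main obstacle to be bookkeeping the error budgets so that all four properties hold simultaneously for the \emph{same} $S$ and $Z$ — in particular interleaving the three "largeness/closeness" demands at step $n_k$ (approximate commutation with $Q$, near-$\calr$-invariance of $D_{n_k}$ and $A_{n_k}$, and the measure-splitting estimate $\mu(D_{n_k}\setminus F_{k-1})\approx\mu(D_{n_k})(1-\mu(F_{k-1}))$) and verifying that the partition-boundary error terms in (ii) and (iv) genuinely go to zero; everything else is a routine application of Lemmas \ref{lem-ai}, \ref{lem-ai-cover}, and \ref{lem-comp}.
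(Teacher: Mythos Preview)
Your overall strategy matches the paper's: patch $S$ from pieces $T_n|_{Y_n}$ where $Y_n$ is essentially $D_n$ minus what has already been covered, and take $Z$ to be the union of the $A_n$-parts of those pieces. The paper uses infinitely many pieces after passing to a subsequence with summable error budgets, whereas you run a finite greedy exhaustion with geometric budgets $\ve/2^k$; either bookkeeping works.

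There is, however, one genuine gap. You assert that ``each $E_k\subset D_{n_k}$ is $T_{n_k}$-invariant (so $S$ is a bijection),'' but $E_k := D_{n_k}\setminus F_{k-1}$ is \emph{not} $T_{n_k}$-invariant in general: $T_{n_k}$ preserves $D_{n_k}$, but it has no reason to preserve the previously-used set $F_{k-1}$, so $T_{n_k}(E_k)$ may spill into $F_{k-1}$ and your $S$ need not be a bijection. The paper handles exactly this point by defining the pieces as
\[
Y_n := D_n \setminus (C_n \cup T_n^{-1}C_n), \qquad C_n=\bigcup_{k<n}D_k,
\]
which \emph{is} $T_n$-invariant because $T_n$ is an involution preserving $D_n$. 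The extra set $T_n^{-1}C_n\setminus C_n$ thrown away this way is small since $(T_n)$, being central, asymptotically fixes each $D_k$; this is the paper's condition (3), $\sum_n\sum_{k<n}\mu(T_nD_k\bigtriangleup D_k)<\ve$. In your finite scheme the fix is to set $E_k := D_{n_k}\setminus(F_{k-1}\cup T_{n_k}^{-1}F_{k-1})$ and to add, at step $k$, the demand that $\mu(T_{n_k}F_{k-1}\bigtriangleup F_{k-1})$ be within budget. The same correction is needed for your claim ``$SZ\cap E_k = T_{n_k}(A_{n_k}\cap E_k)$'' in part (iii), which also presupposes $T_{n_k}E_k=E_k$. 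With this repair in place the rest of your argument goes through as you describe.
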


As a result of the former assertion of the lemma, varying $\ve$ and $Q$, we obtain a central sequence $(S_m)$ in $[\calr]$ such that for each $m$, the map $S_m$ is obtained by patching together pieces of the maps $T_n|_{D_n}$, $n\in \N$, along with a piece of the identity map such that the latter piece is small.
The central sequence $(S_m)$ is hence non-trivial as long as $T_nx\neq x$ for all $n$ and all $x\in X$.
Under the assumption in the latter assertion of the lemma, we further obtain a pre-stability sequence $(S_m, Z_m)$ for $\calr$ with $\mu(S_mZ_m\setminus Z_m)>1/10$ for all $m$.
In the proof of Theorem \ref{thm-main} below, it will be significant that this lower bound ``1/10" can be taken independently of the uniform lower bound of $\mu(D_n)$.

\begin{proof}[Proof of Lemma \ref{lem-patch}]
Passing to a subsequence of $(T_n, D_n)$, we may assume that the following three conditions hold:
\begin{enumerate}
\item[(1)] $\sum_n\mu(gD_n\bigtriangleup D_n)<\ve$ for every $g\in Q$.
\item[(2)] $\sum_n\mu(\{ gT_n\neq T_ng \} )<\ve$ for every $g\in Q$.
\item[(3)] $\sum_n \sum_{k<n}\mu(T_nD_k\bigtriangleup D_k)<\ve$.
\end{enumerate}
These conditions follow from the sequence $(D_n)$ being asymptotically invariant for $\calr$, and the sequence $(T_n)$ being central in $[\calr]$.
Under the assumption in the latter assertion of the lemma, we may further assume that $\mu(D_n)\to d_1$ and $\mu(A_n\cap D_n)\to d_2$ for some $d_1>0$ and $d_2>0$ and that the following three conditions hold:
\begin{enumerate}
\item[(4)] For every $n$, setting $C_n\defeq \bigcup_{k<n}D_k$, we have $\mu(D_n)\mu(C_n^c)\geq (2/3)\mu(D_n\setminus C_n)$.
\item[(5)] For every $n$, setting $E_n\defeq (A_n\cap D_n)\setminus C_n$, we have $\mu(E_n)\geq (2/3)\mu(A_n\cap D_n)\mu(C_n^c)$.
\item[(6)] $\sum_n \mu(gE_n\bigtriangleup E_n)<\ve$ for every $g\in Q$.
\end{enumerate}
Indeed, condition (4) is obtained as follows:
If $D_1,\ldots, D_{n-1}$ are chosen, then by Lemma \ref{lem-ai}, we have $\mu(D_m\cap C_n^c)\to d_1\mu(C_n^c)$ as $m\to \infty$.
For all large $m$, $\mu(D_m)$ and $d_1$ are close and hence $\mu(D_m\cap C_n^c)=\mu(D_m\setminus C_n)$ and $\mu(D_m)\mu(C_n^c)$ are close.
Condition (4) therefore holds after relabeling $D_m$ for a sufficiently large $m$ as $D_n$.
Condition (5) is similarly obtained from Lemma \ref{lem-ai} and the convergence $\mu(A_n\cap D_n)\to d_2$.
Condition (6) is obtained from asymptotic invariance of the sequences $(D_n)$ and $(A_n)$.

We set
\[Y_1\defeq D_1,\ Y_n\defeq D_n\setminus (C_n\cup T_n^{-1}C_n)\ \ \text{for}\ \ n\geq 2,\ \ \text{and} \ \ Y\defeq \bigcup_{n=1}^\infty Y_n.\]
Note that the last union is a disjoint union.
For each $n$, we have $T_nY_n=Y_n$ because $T_n$ is an involution and $T_nD_n=D_n$.
The inclusion $Y_n\subset D_n\setminus C_n$ holds.
By condition (3), we have $\sum_n \mu(T_nC_n\bigtriangleup C_n)<\ve$.
Therefore $\sum_n \mu((D_n\setminus C_n)\setminus Y_n)<\ve$, and $\mu(\bigcup_n(D_n\setminus C_n)\setminus Y)<\ve$.
By the definition of $C_n$, the equation $\bigcup_n (D_n\setminus C_n)=\bigcup_nD_n$ holds, and this is equal to $X$ by Lemma \ref{lem-ai-cover}.
It follows that
\begin{enumerate}
\item[(7)] $\mu(X\setminus Y)<\ve$.
\end{enumerate}

We pick $g\in Q$ and estimate $\sum_n\mu(gY_n\bigtriangleup Y_n)$.
Pick $y\in Y_n\setminus gY_n$.
Since $g^{-1}y\not \in Y_n$, either $g^{-1}y\not \in D_n$ or $g^{-1}y\in C_n\cup T_n^{-1}C_n$.
In the former case, we have $y\in D_n\setminus gD_n$.
In the latter case, we have
\begin{align*}
y & \in (g(C_n\cup T_n^{-1}C_n)\setminus (C_n\cup T_n^{-1}C_n))\cap Y_n\\
& \subset \Biggl( \, \bigcup_{k<n}(gD_k\setminus D_k)\cap Y_n\Biggr)\cup \Biggl(\, \bigcup_{k<n}(gT_n^{-1}D_k\setminus T_n^{-1}D_k)\cap Y_n\Biggr). 
\end{align*}
Let $N$ be a positive integer.
We have
\begin{align*}
&\sum_{n=1}^N\mu(Y_n\setminus gY_n)\leq \sum_{n=1}^N\mu(D_n\setminus gD_n)\\
&\hspace{5em}+\sum_{n=1}^N\sum_{k=1}^{n-1}\mu((gD_k\setminus D_k)\cap Y_n)+\sum_{n=1}^N\sum_{k=1}^{n-1}\mu((gT_n^{-1}D_k\setminus T_n^{-1}D_k)\cap Y_n).
\end{align*}
By condition (1), in the right hand side, the first term is less than $\ve$, and the second term is at most
\[\sum_{n=1}^N\sum_{k=1}^{N-1}\mu((gD_k\setminus D_k)\cap Y_n)\leq \sum_{k=1}^{N-1}\mu(gD_k\setminus D_k)<\ve.\]
The third term is
\begin{align*}
&\sum_{n=1}^N\sum_{k=1}^{n-1}\mu((gT_n^{-1}D_k\cap Y_n)\setminus (T_n^{-1}D_k\cap Y_n))\\
& \leq \sum_{n=1}^N\sum_{k=1}^{n-1}(\mu((gD_k\cap Y_n)\setminus (D_k\cap Y_n))+3\mu(T_n^{-1}D_k\bigtriangleup D_k))\\
& <\sum_{n=1}^N\sum_{k=1}^{n-1} \mu((gD_k\setminus D_k)\cap Y_n)+3\ve <4\ve,
\end{align*}
where Lemma \ref{lem-comp} and condition (3) apply in the first and second inequalities, respectively.
It follows that $\sum_{n=1}^N\mu(Y_n\setminus gY_n)<6\ve$ and therefore
\begin{enumerate}
\item[(8)] $\sum_n \mu(gY_n\bigtriangleup Y_n)< 12\ve$ for every $g\in Q$.
\end{enumerate}

We define $S\in [\calr]$ as follows:
For each $n$, we set $S=T_n$ on $Y_n$ and define $S$ on $X\setminus Y$ to be the identity map.
This map $S$ is an automorphism of $X$ because $T_n$ preserves $Y_n$, and by condition (7), it satisfies condition (i).
To check condition (ii), pick $g\in Q$.
We have the inclusions
\begin{align*}
\{ gS\neq Sg\} &\subset \bigcup_n \, (\{ gS\neq Sg\} \cap Y_n)\cup (X\setminus Y),\\
\{ gS\neq Sg\} \cap Y_n &\subset (\{ gS\neq Sg\} \cap (Y_n\cap g^{-1}Y_n))\cup (Y_n\setminus g^{-1}Y_n)\ \text{and}\\
\{ gS\neq Sg\} \cap (Y_n\cap g^{-1}Y_n)&\subset \{ gT_n\neq T_ng\} \cap (Y_n\cap g^{-1}Y_n).
\end{align*}
It follows from conditions (2), (8) and (7) that
\[\mu(\{ gS\neq Sg\})\leq \sum_n(\mu(\{ gT_n\neq T_ng\})+\mu(Y_n\setminus g^{-1}Y_n))+\mu(X\setminus Y)<\ve +6\ve +\ve.\]
Condition (ii) follows after scaling $\ve$ appropriately.
This completes the proof of the former assertion of the lemma.

We prove the latter assertion of the lemma.
By conditions (4) and (5) and the inequality $\mu(A_n\cap D_n)\geq \mu(D_n)/2$, for all $n$, we have
\begin{enumerate}
\item[(9)] $\mu(E_n)\geq (2/9)\mu(D_n\setminus C_n)$.
\end{enumerate}
We set
\[Z\defeq \bigcup_n \, (A_n\cap Y_n).\]
We check condition (iii).
Since the set $E_n$ is defined as $E_n=(A_n\cap D_n)\setminus C_n$, we have the inclusions $A_n\cap Y_n\subset E_n$ and $E_n\setminus (A_n\cap Y_n)\subset T_n^{-1}C_n\setminus C_n$.
It follows that
\[0\leq \sum_n\mu(E_n)-\mu(Z)= \sum_n\mu(E_n\setminus (A_n\cap Y_n))\leq \sum_n\sum_{k<n}\mu(T_n^{-1}D_k\setminus D_k)<\ve,\]
where the last inequality follows from condition (3).
On the other hand, by condition (9), we have $\sum_n\mu(E_n)\geq (2/9)\sum_n\mu(D_n\setminus C_n)$, and by condition (7),
\[\sum_n\mu(D_n\setminus C_n)\geq \sum_n\mu(Y_n)=\mu(Y)>1-\ve.\]
It follows that $\sum_n\mu(E_n)>2/9-2\ve /9$ and $\mu(Z)>\sum_n\mu(E_n)-\ve>2/9-11\ve/9$.
The sets $SZ$ and $Z$ are disjoint because $T_nA_n$ and $A_n$ are disjoint and $S$ is equal to $T_n$ on $Y_n$.
We therefore have $\mu(SZ\setminus Z)=\mu(SZ)=\mu(Z)>1/10$, where the last inequality holds if $\ve$ is taken to be small enough.
Condition (iii) follows.

Finally we check condition (iv).
Pick $g\in Q$.
By Lemma \ref{lem-comp},
\[\mu(g(A_n\cap Y_n)\setminus (A_n\cap Y_n))\leq 3\mu(E_n\bigtriangleup (A_n\cap Y_n))+\mu(gE_n\setminus E_n).\]
Summing over $n$, we obtain
\begin{align*}
\mu(gZ\setminus Z)&\leq \sum_n\mu(g(A_n\cap Y_n)\setminus (A_n\cap Y_n))\leq \sum_n(3\mu(E_n\bigtriangleup (A_n\cap Y_n))+\mu(gE_n\setminus E_n))\\
&<3\ve +\ve,
\end{align*}
where the last inequality follows from the inclusion $A_n\cap Y_n\subset E_n$, the inequality shown in the previous paragraph, and condition (6).
Condition (iv) follows.
\end{proof}

Without assuming that $T_n$ is an involution and that $D_n$ is invariant under $T_n$, we prove the following lemma in which conclusion (i) is slightly milder than that in Lemma \ref{lem-patch}.
This will be used in the proof of Lemma \ref{lem-c} and Theorem \ref{thm-main-c}.

\begin{lem}\label{lem-patch-c}
Let $(T_n, D_n)_{n\in \N}$ be a sequence of a pair of $T_n\in [\calr]$ and $D_n\in \calb$ such that
\begin{itemize}
\item $(T_n)$ is a central sequence in $[\calr]$,
\item $(D_n)$ is an asymptotically invariant sequence for $\calr$ such that $\mu(D_n)$ is uniformly positive, and
\item $\mu(T_nD_n\bigtriangleup D_n)\to 0$ as $n\to \infty$.
\end{itemize}
Then for every $\ve >0$ and every finite subset $Q\subset [\calr]$, we can find an $S\in [\calr]$ such that
\begin{enumerate}
\item[(i)] a large piece of the map $S$ is obtained by patching together pieces of the restrictions $T_n|_{D_n}$, $n\in \N$. More precisely, for every $x\in X$ outside a subset of measure less than $\ve$, there exists an $n\in \N$ with $x\in D_n$ and $Sx=T_nx$, and
\item[(ii)] $\mu(\{ gS\neq Sg \})<\ve$ for every $g\in Q$.
\end{enumerate}
\end{lem}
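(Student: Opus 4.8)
The plan is to adapt the proof of Lemma \ref{lem-patch}, the only real differences being that $T_n$ need no longer be an involution and $D_n$ need no longer be $T_n$-invariant; the hypothesis $\mu(T_nD_n\bigtriangleup D_n)\to 0$ is precisely what compensates for this. Fix $\ve>0$ and a finite set $Q\subset[\calr]$, which we may assume is symmetric. First I would pass to a subsequence of $(T_n,D_n)_{n\in\N}$ so that, for every $g\in Q$, $\sum_n\mu(gD_n\bigtriangleup D_n)<\ve$ and $\sum_n\mu(\{gT_n\neq T_ng\})<\ve$, and in addition $\sum_n\mu(T_nD_n\bigtriangleup D_n)<\ve$ and $\sum_n\sum_{k<n}\mu(T_nD_k\bigtriangleup D_k)<\ve$. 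The first two come from asymptotic invariance of $(D_n)$ and centrality of $(T_n)$, and the last two from the new hypothesis together with Remark \ref{rem-central} (which gives $\mu(T_nD_k\bigtriangleup D_k)\to 0$ for each fixed $k$), choosing each new term far enough out along the sequence.

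Set $C_n\defeq\bigcup_{k<n}D_k$ and define $Y_n\defeq(D_n\cap T_n^{-1}D_n)\setminus(C_n\cup T_n^{-1}C_n)$ and $Y\defeq\bigcup_nY_n$. The reason for intersecting with $T_n^{-1}D_n$ and deleting $T_n^{-1}C_n$, instead of using the sets from Lemma \ref{lem-patch}, is that now both $Y_n$ and $T_nY_n$ lie in $D_n\setminus C_n$; since the sets $D_n\setminus C_n$ are pairwise disjoint, the families $(Y_n)$ and $(T_nY_n)$ are each pairwise disjoint. Moreover $(D_n\setminus C_n)\setminus Y_n\subset(D_n\setminus C_n)\bigtriangleup T_n^{-1}(D_n\setminus C_n)$, whose measure is at most $\mu(T_nD_n\bigtriangleup D_n)+\sum_{k<n}\mu(T_nD_k\bigtriangleup D_k)$; summing over $n$ gives $\sum_n\mu((D_n\setminus C_n)\setminus Y_n)<2\ve$, and since $\bigcup_n(D_n\setminus C_n)=\bigcup_nD_n=X$ by Lemma \ref{lem-ai-cover}, this yields $\mu(X\setminus Y)<2\ve$. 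I would then define $S\in[\calr]$ to agree with $T_n$ on $Y_n$ for each $n$ and to agree on $X\setminus Y$ with a local section $\theta$ of $\calr$ having domain $X\setminus Y$ and range $X\setminus\bigcup_nT_nY_n$; such $\theta$ exists because, by disjointness, $\mu(\bigcup_nT_nY_n)=\sum_n\mu(Y_n)=\mu(Y)$, so the two sets have equal measure, and $\calr$ is ergodic. Then $S$ is a well-defined element of $[\calr]$, and after replacing $\ve$ by $\ve/2$ at the outset, $S$ satisfies (i).

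For (ii) one estimates $\sum_n\mu(gY_n\bigtriangleup Y_n)$ for $g\in Q$, exactly as in Lemma \ref{lem-patch}: given $y\in Y_n\setminus gY_n$, the point $g^{-1}y$ lies in one of $D_n^c$, $T_n^{-1}D_n^c$, $C_n$, or $T_n^{-1}C_n$, and I would handle these four cases using Lemma \ref{lem-comp}, keeping every piece intersected with $Y_n$ so that the sums over $n$ collapse via disjointness of the $Y_n$. For the two cases involving $T_n^{-1}$ one uses the identity $\mu(\{gT_n^{-1}\neq T_n^{-1}g\})=\mu(\{gT_n\neq T_ng\})$ to commute $g$ past $T_n^{-1}$ at the cost of the summable errors $\mu(\{gT_n\neq T_ng\})$ and $\mu(gD_n\bigtriangleup D_n)$ (and, for the $T_n^{-1}C_n$ case, of the condition $\sum_n\sum_{k<n}\mu(T_nD_k\bigtriangleup D_k)<\ve$). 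This gives $\sum_n\mu(gY_n\bigtriangleup Y_n)\leq c\ve$ for an absolute constant $c$. Finally one decomposes $\{gS\neq Sg\}$, up to the null modification coming from $\theta$, into: pieces contained in $Y_n\cap g^{-1}Y_n$, on which $S$ acts as $T_n$ on both sides so the discrepancy lies in $\{gT_n\neq T_ng\}$; pieces contained in $(Y_n\setminus g^{-1}Y_n)\cup(g^{-1}Y_n\setminus Y_n)$, handled by the bound on $\sum_n\mu(gY_n\bigtriangleup Y_n)$ together with $\sum_n\sum_{m\ne n}\mu(Y_n\cap g^{-1}Y_m)\leq\sum_n\mu(Y_n\setminus g^{-1}Y_n)$; and the remainder inside $(X\setminus Y)\cup g^{-1}(X\setminus Y)$, of measure $<4\ve$. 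Summing, $\mu(\{gS\neq Sg\})\leq c'\ve$, and rescaling $\ve$ gives (ii). The main obstacle is precisely this last bookkeeping: in Lemma \ref{lem-patch} the equality $T_nY_n=Y_n$ made the images line up automatically, whereas here one must simultaneously track how $g$ interacts with $T_n$ and with $D_n$ inside each $T_n^{-1}(\cdot)$ term and arrange the finitely many error sums so that none of them diverges with $n$.
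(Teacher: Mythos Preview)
Your proposal is correct and follows essentially the same approach as the paper: your sets $Y_n=(D_n\cap T_n^{-1}D_n)\setminus(C_n\cup T_n^{-1}C_n)$ are exactly the sets the paper calls $Y_n'$, and the construction of $S$ and the commutator estimate proceed identically. The only organizational difference is that the paper first introduces the intermediate sets $D_n\setminus(C_n\cup T_n^{-1}C_n)$ (its $Y_n$) so as to reuse the estimate $\sum_n\mu(gY_n\bigtriangleup Y_n)<12\ve$ verbatim from the proof of Lemma~\ref{lem-patch}, and then passes to $Y_n'$ via the triangle inequality using $\sum_n\mu(Y_n\setminus Y_n')\leq\sum_n\mu(D_n\setminus T_n^{-1}D_n)<\ve$; you instead go directly to the final sets and handle the extra case $g^{-1}y\notin T_n^{-1}D_n$ by hand, which works but duplicates some bookkeeping.
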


\begin{proof}
As in the proof of Lemma \ref{lem-patch}, after passing to a subsequence of $(T_n, D_n)$, we may assume that conditions (1)--(3) in that proof hold.
By the third assumption in the present lemma, we may further assume that
\begin{enumerate}
\item[(10)] $\sum_n\mu(T_nD_n\bigtriangleup D_n)<\ve$.
\end{enumerate}
We set $C_n\defeq \bigcup_{k<n}D_k$, $Y_1\defeq D_1$, $Y_n\defeq D_n\setminus (C_n\cup T_n^{-1}C_n)$ for $n\geq 2$, and $Y\defeq \bigcup_{n=1}^\infty Y_n$ in the same way.
As in the proof of Lemma \ref{lem-patch}, condition (3) implies $\mu(X\setminus Y)<\ve$, and conditions (1) and (3) imply $\sum_n(gY_n\bigtriangleup Y_n)< 12\ve$ for every $g\in Q$.
We set
\[Y_1'\defeq D_1\cap T_1^{-1}D_1,\ Y_n'\defeq (D_n\cap T_n^{-1}D_n)\setminus (C_n\cup T_n^{-1}C_n)\ \text{for}\ n\geq 2,\ \text{and} \ Y'\defeq \bigcup_{n=1}^\infty Y_n'.\]
For all $n$, the inclusions $Y_n'\subset Y_n$ and $Y_n\setminus Y_n'\subset D_n\setminus T_n^{-1}D_n$ hold, and hence
\[\sum_n\mu(Y_n\setminus Y_n')\leq \sum_n\mu(D_n\setminus T_n^{-1}D_n)<\ve /2\]
by condition (10).
Hence $\mu(Y\setminus Y')<\ve /2$ and $\mu(X\setminus Y')<2\ve$.
For all $g\in Q$, we have
\[\sum_n\mu(gY_n'\bigtriangleup Y_n')<\sum_n(\mu(gY_n'\bigtriangleup gY_n)+\mu(gY_n\bigtriangleup Y_n)+\mu(Y_n\bigtriangleup Y_n'))<\ve +12\ve +\ve.\]

We define $S\in [\calr]$ as follows:
We first define it on $Y'$ so that $S=T_n$ on $Y_n'$ for every $n$.
This map $S\colon Y'\to X$ is injective.
Indeed, if $k<n$ and $T_kx=T_ny$ with $x\in Y_k'$ and $y\in Y_n'$, then it follows from $T_kY_k'\subset D_k$ that $T_kx\in D_k$ and thus $y\in T_n^{-1}D_k\subset T_n^{-1}C_n$.
This contradicts $y\in Y_n'$.
It turns out that the measures of the sets $Y'$ and $SY'$ are equal.
Pick a Borel isomorphism between $X\setminus Y'$ and $X\setminus SY'$ which is a local section of $\calr$.
We define the map $S$ on $X\setminus Y'$ to be that isomorphism.
The obtained map $S\colon X\to X$ is then an automorphism of $X$ and belongs to $[\calr]$.
Condition (i) then follows.
Along the proof in Lemma \ref{lem-patch}, condition (ii) also follows from the estimates for $Y_n'$ and $Y'$ obtained in the previous paragraph, after scaling $\ve$ appropriately.
\end{proof}

As a simple application of the last lemma, we obtain the following:

\begin{lem}\label{lem-c}
If $\calr$ is Schmidt, then there exists a central sequence $(T_n)$ in $[\calr]$ such that $T_nx\neq x$ for all $n$ and all $x\in X$.
\end{lem}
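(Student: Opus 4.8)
The plan is to bootstrap the non-triviality of a given central sequence using the patching construction of Lemma \ref{lem-patch-c}. First I would fix a non-trivial central sequence $(S_n)$ in $[\calr]$ and set $D_n\defeq\{\, x\in X\mid S_nx\neq x\,\}$. Non-triviality means $\limsup_n\mu(D_n)>0$, so after passing to a subsequence we may assume $\mu(D_n)$ is uniformly positive. Since $D_n$ is the support of $S_n$, we have $S_nD_n=D_n$, so in particular $\mu(S_nD_n\bigtriangleup D_n)=0$; and for $g\in[\calr]$ the set $gD_n$ is the support of $gS_ng^{-1}$, whence $gD_n\bigtriangleup D_n\subset\{\, x\mid gS_ng^{-1}x\neq S_nx\,\}$, a set of measure tending to $0$ because $(S_n)$ is central, so $(D_n)$ is asymptotically invariant for $\calr$. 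Thus the sequence of pairs $(S_n,D_n)$ satisfies the hypotheses of Lemma \ref{lem-patch-c}.

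Next I would apply Lemma \ref{lem-patch-c} countably often along an exhaustion. Fix a countable subgroup $G_0=\{g_1,g_2,\dots\}$ of $[\calr]$ that is dense in the uniform topology. For each $m$, Lemma \ref{lem-patch-c} applied with $\ve=1/m$ and $Q=\{g_1,\dots,g_m\}$ produces $U_m\in[\calr]$ and a set $F_m$ with $\mu(F_m)<1/m$ such that every $x\notin F_m$ lies in some $D_n$ with $U_mx=S_nx$, and $\mu(\{\, g_iU_m\neq U_mg_i\,\})<1/m$ for all $i\leq m$. For $x\notin F_m$ we then have $x\in D_n$, hence $S_nx\neq x$, hence $U_mx=S_nx\neq x$, so $\{\, x\mid U_mx=x\,\}\subset F_m$ has measure less than $1/m$. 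The sequence $(U_m)$ is central: given $g\in[\calr]$ and $\ve>0$, choose $g_i$ with $\delta_u(g,g_i)<\ve$; since $U_m$ preserves $\mu$, the set $\{\, gU_m\neq U_mg\,\}$ is contained in $\{\, gx\neq g_ix\,\}\cup U_m^{-1}\{\, gx\neq g_ix\,\}\cup\{\, g_iU_m\neq U_mg_i\,\}$, of measure less than $2\ve+1/m$ once $m\geq i$, and $\ve$ was arbitrary. Thus $(U_m)$ is a central sequence with $\mu(\{\, x\mid U_mx=x\,\})<1/m\to 0$.

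Finally I would upgrade $(U_m)$ to a sequence that is fixed-point-free everywhere. We may assume $\calr$ is aperiodic (otherwise, by ergodicity and the p.m.p.\ condition, $\calr$ is the complete relation on $k$ atoms of equal mass, and such a relation is Schmidt only for $k=2$, a case in which the conclusion is trivial). Put $F_m'\defeq\{\, x\mid U_mx=x\,\}$; then $U_m$ is the identity on $F_m'$ and an automorphism of $X\setminus F_m'$, and since $\calr_{F_m'}$ is aperiodic there is $V_m\in[\calr_{F_m'}]$ with $V_mx\neq x$ for $\mu$-a.e.\ $x\in F_m'$. Let $T_m$ equal $U_m$ off $F_m'$ and $V_m$ on $F_m'$; then $\delta_u(T_m,U_m)\leq\mu(F_m')<1/m$, so $(T_m)$ is still central, while $\{\, x\mid T_mx=x\,\}$ is $\mu$-null. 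Discarding the ($\mu$-null and $\calr$-invariant) $\calr$-saturation of $\bigcup_m\{\, x\mid T_mx=x\,\}$ and re-identifying the remaining conull invariant set with $(X,\mu)$ then yields a central sequence $(T_m)$ with $T_mx\neq x$ for every $x$. The only point needing genuine care is the verification that the patched sequence $(U_m)$ is again central — this is where the density of $G_0$ and the description of each $U_m$ in terms of the $S_n$ are used; everything else is routine. Alternatively, one can sidestep the last modification entirely by strengthening Lemma \ref{lem-patch-c} so that, when $\calr$ is aperiodic, the element it produces is chosen fixed-point-free on the exceptional set as well.
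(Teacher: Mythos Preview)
Your overall strategy is exactly the paper's: show the supports $D_n$ of a given non-trivial central sequence are asymptotically invariant with uniformly positive measure, feed $(S_n,D_n)$ into Lemma~\ref{lem-patch-c} to manufacture elements that are fixed-point-free off a set of small measure and almost commute with a prescribed finite set, and then repair the small leftover set. The paper does the repair for each pair $(\ve,Q)$ before assembling the sequence, whereas you first assemble $(U_m)$ and then repair; this is only a cosmetic difference.

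There is, however, a genuine gap in your repair step. You assert that $\calr_{F_m'}$ is aperiodic, but aperiodicity of $\calr$ does not pass to restrictions: for instance, for an irrational rotation the restriction to a short arc has many singleton classes. What you actually need is much weaker: that for $\mu$-a.e.\ $x\in F_m'$ the class $[x]_\calr\cap F_m'$ has at least two points, so that a fixed-point-free $V_m\in[\calr_{F_m'}]$ exists. This does follow from aperiodicity of $\calr$, since the set of $x\in F_m'$ with $[x]_\calr\cap F_m'=\{x\}$ is a Borel partial transversal of an aperiodic p.m.p.\ equivalence relation and hence is $\mu$-null. With this correction your argument goes through. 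The paper handles the repair slightly differently: rather than modifying on the fixed-point set $F_m'$, it redefines $S$ on the entire exceptional set $X\setminus Y$ via a fixed-point-free local section of $\calr$ from $X\setminus Y$ onto $S(X\setminus Y)$; the existence of such a section again rests on aperiodicity (your ``alternative'' sentence is essentially this route).

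A minor point: your treatment of the non-aperiodic case is off --- an ergodic p.m.p.\ relation with finite classes on a non-atomic space is not supported on finitely many atoms. You can avoid this entirely by noting that Schmidt implies inner amenable (Proposition~\ref{prop:iagroupoid}), and inner amenability forces aperiodicity since a diffuse mean cannot exist otherwise.
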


\begin{proof}
Let $(T_n)$ be a non-trivial central sequence in $[\calr]$.
Set $D_n\defeq \{ \, x\in X\mid T_nx\neq x\, \}$.
The measure $\mu(D_n)$ is uniformly positive, and $T_nD_n=D_n$ for all $n$.
We claim that $(D_n)$ is an asymptotically invariant sequence for $\calr$.
Indeed, for every $g\in [\calr]$, if $n$ is large, then for all $x\in X$ outside a subset of small measure, we have $gT_nx=T_ngx$, and if furthermore $x \in X\setminus D_n$, then $gx=T_ngx$, that is, $gx\in X\setminus D_n$.
The sequence $(X\setminus D_n)$ is therefore asymptotically invariant for $\calr$, and the claim follows.

Pick $\ve >0$ and a finite subset $Q\subset [\calr]$.
By Lemma \ref{lem-patch-c}, we can find an $S\in [\calr]$ with $\mu(\{ gS\neq Sg\})<\ve$ for all $g\in Q$ and $Sx\neq x$ for all $x\in Y$, where $Y$ is a Borel subset of $X$ with $\mu(X\setminus Y)<\ve$.
There exists an isomorphism from $X\setminus Y$ onto $S(X\setminus Y)$ which fixes no point and is a local section of $\calr$.
We define $R\in [\calr]$ as the map equal to $S$ on $Y$ and equal to that isomorphism on $X\setminus Y$.
It turns out that $\mu(\{ gR\neq Rg\})<3\ve$ for all $g\in Q$ and $Rx\neq x$ for all $x\in X$.
\end{proof}

The next lemma will be used in the proof of Theorem \ref{thm-main-c}.

\begin{lem}\label{lem-s}
Let $(T_n)$ be a sequence in $[\calr]$ such that $\mu(T_nA\bigtriangleup A)\to 0$ for all Borel subsets $A\subset X$.
Then $\mu(\{ \, x\in X\mid T_nx=Sx\neq x\, \})\to 0$ for all $S\in [\calr]$.
\end{lem}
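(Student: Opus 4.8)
The plan is to fix $S \in [\calr]$ and reduce the claim to a short comparison estimate on finitely many pieces of $X$ on which $S$ acts ``without overlap''. The point is that the hypothesis says, informally, that $T_n$ tends to the identity in the weak topology on $\aut(X,\mu)$ — which does \emph{not} force $\mu(\{x\mid T_nx\neq x\})\to 0$ — so we must use the structure of $S$ to localize where $T_n$ could agree with $S$ while still moving the point.

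First I would set $D \defeq \{ \, x\in X \mid Sx \neq x \, \}$ and, exactly as in the proof of Lemma \ref{lem:diffuse}, partition $D$ modulo $\mu$ into three Borel sets $A_1, A_2, A_3$ with $SA_i \cap A_i = \emptyset$ for each $i$ (take $A_1$ to be a maximal Borel subset of $D$ with $SA_1 \cap A_1 = \emptyset$, and put $A_2 \defeq SA_1 \cap D$, $A_3 \defeq D \setminus (A_1 \cup A_2)$). Since
\[ \{ \, x\in X \mid T_nx = Sx \neq x \, \} = \bigsqcup_{i=1}^3 \{ \, x\in A_i \mid T_nx = Sx \, \}, \]
it then suffices to prove that $\mu(\{ \, x\in A_i \mid T_nx = Sx \, \}) \to 0$ for each $i$.

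The crux is the estimate on a single piece. If $x \in A_i$ and $T_nx = Sx$, then $T_nx = Sx \in SA_i$, hence $x \in A_i \cap T_n^{-1}(SA_i)$; that is, $\{ \, x\in A_i \mid T_nx = Sx \, \} \subseteq A_i \cap T_n^{-1}(SA_i)$. Since $T_n$ preserves $\mu$, the hypothesis applied to the Borel set $SA_i$ gives $\mu(T_n^{-1}(SA_i) \bigtriangleup SA_i) = \mu(SA_i \bigtriangleup T_n(SA_i)) \to 0$, and therefore
\[ \mu\bigl(\{ \, x\in A_i \mid T_nx = Sx \, \}\bigr) \leqslant \mu\bigl(A_i \cap T_n^{-1}(SA_i)\bigr) \longrightarrow \mu(A_i \cap SA_i) = 0 . \]
Summing over $i$ yields the conclusion.

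I do not expect a genuine obstacle: the argument is short, and the only slightly delicate ingredient — the three-set decomposition of $D$ — is already available from the proof of Lemma \ref{lem:diffuse} and requires neither ergodicity nor any property of $\calr$ beyond $S$ being a Borel automorphism fixing no point of $D$. If one wished to avoid invoking it, one could instead fix a countable subgroup $G \leqslant [\calr]$ generating $\calr$, decompose $D$ into the countably many Borel sets on which $S$ agrees with a fixed non-identity element of $G$, and run the same comparison on each piece while controlling the tail — but the finite decomposition is cleaner.
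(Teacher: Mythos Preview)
Your proof is correct, and it takes a genuinely different route from the paper's. The paper argues by contradiction: assuming the conclusion fails for some $S$, it passes to a subsequence along which $\mu(\{x\in Y\mid T_nx=Sx\})\to s>0$ (with $Y=\{Sx\neq x\}$), fixes a non-principal ultrafilter $\omega$, and forms the mean $\bm{m}(D)=\lim_{n\to\omega}\mu(\{x\mid (T_nx,x)\in D\})$ on $\calr$. The hypothesis makes $\bm{m}$ balanced, so Lemma~\ref{lem:diffuse} applies as a black box to give $\bm{m}(\{(Sx,x)\mid x\in Y\})=0$, contradicting $s>0$. By contrast, you unpack the three-set decomposition from inside the proof of Lemma~\ref{lem:diffuse} and feed it directly into the elementary estimate $\mu(A_i\cap T_n^{-1}(SA_i))\to \mu(A_i\cap SA_i)=0$, with no ultrafilters and no contradiction. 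Your argument is shorter and more self-contained; the paper's version has the virtue of showing how the balanced-mean machinery already built pays off cleanly, but at the cost of more overhead.
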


\begin{proof}
Suppose that the conclusion fails for some $S\in [\calr]$. Set $Y\defeq \{ \, x\in X\mid Sx\neq x\, \}$. Passing to a subsequence, we may assume that $\mu(\{ \, x\in Y\mid T_nx=Sx \, \}) \rightarrow s$ for some $s>0$. Fix a non-principal ultrafilter $\omega$ on $\N$ and let $\bm{m}$ be the mean on $\calr$ defined by
\[
\bm{m}(D)\defeq \lim _{n\rightarrow \omega} \mu ( \{ \, x\in X \mid (T_n x, x) \in D \, \} ) .
\]
The mean $\bm{m}$ is balanced since $\bm{m}(\calr _A) = \lim _{n\to\omega} \mu (T_n ^{-1}A \cap A ) = \mu (A)$. Therefore, by Lemma \ref{lem:diffuse} we have $\bm{m}( \{ \, (Sx,x) \mid x\in Y \, \} ) =0$. Hence
\[
0 = \bm{m}( \{ \, (Sx,x) \mid x\in Y \, \} ) = \lim _{n\to\omega} \mu ( \{ \, x\in Y \mid T_nx = Sx \, \} ) = s,
\]
a contradiction.
\end{proof}


\subsection{Stability under compact extensions}\label{subsec-main}


\begin{thm}\label{thm-main}
Let $\calr$ be an ergodic discrete p.m.p.\ equivalence relation on $(X, \mu)$ and let $\alpha \colon \calr \to K$ be a cocycle into a compact group $K$.
If $\calr$ is stable, then for every decreasing sequence $V_1\supset V_2\supset \cdots$ of open neighborhoods of the identity in $K$, there exists a pre-stability sequence $(T_n, A_n)$ for $\calr$ such that $\alpha(T_n x, x)\in V_n$ for every $n$ and almost every $x\in X$.
\end{thm}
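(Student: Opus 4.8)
The plan is to run a stability-sequence analogue of the proof of Theorem~\ref{thm:cpctseq}, with composition of involutions playing the role of the convolution of inner amenability sequences, and with Lemma~\ref{lem-patch} replacing the weak${}^*$-limit-and-normalization step used there. Fix a bi-invariant metric $d$ on $K$, so that every open ball $V_\delta$ about the identity $e$ is symmetric and conjugation-invariant, and fix a stability sequence $(T_n,A_n)$ for $\calr$; recall that $\mu(A_n)=1/2$, $T_nA_n=X\setminus A_n$, and $T_nA_m=A_m$, $T_nT_m=T_mT_n$ for $n\neq m$. Write $c_n(x)\defeq\alpha(T_nx,x)$, so that $c_n(T_nx)=c_n(x)^{-1}$. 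By a routine diagonalization it suffices to show: for every open neighborhood $V$ of $e$ in $K$, every $\varepsilon>0$, and every finite $Q\subset[\calr]$, there exist $S\in[\calr]$ and $Z\in\calb$ with $\alpha(Sx,x)\in V$ for a.e.\ $x$, with $\mu(\{gS\neq Sg\})<\varepsilon$ and $\mu(gZ\bigtriangleup Z)<\varepsilon$ for all $g\in Q$, and with $\mu(SZ\setminus Z)>1/10$. Taking $V=V_m$, $\varepsilon=1/m$ and $Q=Q_m$, where $(Q_m)$ increases to a fixed countable generating subgroup of $[\calr]$ enlarged to contain, for each $A$ in a countable dense family of Borel sets, an element $g_A$ with $g_A|_A=\mathrm{id}$ and $g_Ax\neq x$ off $A$, one checks using Remark~\ref{rem-central} that the resulting $(S_m,Z_m)$ is a pre-stability sequence for $\calr$ with $\alpha(S_mx,x)\in V_m$.

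To produce such an $S$ and $Z$ for a given $V$, pick radii $\delta_4<\delta_3<\delta_1$ with $V_{\delta_1}\subset V$ and $V_{\delta_4}^2\subset V_{\delta_3}$, cover $K$ by balls $V_{\delta_4}c_1,\dots,V_{\delta_4}c_N$, and pass to a subsequence of $(T_n,A_n)$ along which $\mu(G_n)\geq 1/N$ for a single $c\defeq c_{i_0}$, where $G_n\defeq\{x:c_n(x)\in V_{\delta_4}c\}$. Put $P_n\defeq T_n(G_n)$; then $\mu(P_n)\geq1/N$, and since $c_n(T_nx)=c_n(x)^{-1}$ and $V_{\delta_4}$ is conjugation-invariant, $c_n(y)\in V_{\delta_4}c^{-1}$ for $y\in P_n$. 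Applying Lemma~\ref{lem-pigeon} to $(P_n)$ and relabeling, we may assume $\mu(P_n\cap P_m)>r'$ for all $n,m$ and some $r'>0$. Now set $W_k\defeq T_{2k-1}T_{2k}$: each $W_k$ is a central involution with $W_kA_{2k-1}=X\setminus A_{2k-1}$, and for $x\in G_{2k}\cap T_{2k}(P_{2k-1})$ we have $\alpha(W_kx,x)=c_{2k-1}(T_{2k}x)\,c_{2k}(x)\in V_{\delta_4}c^{-1}\,V_{\delta_4}c\subset V_{\delta_4}^2\subset V_{\delta_3}$, this set having measure $\mu(P_{2k}\cap P_{2k-1})>r'$ (since $T_{2k}$ preserves $\mu$ and $T_{2k}(G_{2k})=P_{2k}$). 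Hence the set $D_k\defeq\{x:\alpha(W_kx,x)\in V_{\delta_3}\}$ has $\mu(D_k)>r'$, and it is $W_k$-invariant because $V_{\delta_3}$ is symmetric.

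The crux is to shrink $D_k$ to an asymptotically invariant, still $W_k$-invariant set of uniformly positive measure lying inside $\{x:\alpha(W_kx,x)\in V_{\delta_1}\}$. Fix a continuous $\phi\colon[0,\infty)\to[0,1]$ with $\phi\equiv1$ on $[0,\delta_3]$ and $\phi\equiv0$ on $[\delta_1,\infty)$, and set $h_k(x)\defeq\phi(d(\alpha(W_kx,x),e))$; then $h_k\circ W_k=h_k$ (so every level set of $h_k$ is $W_k$-invariant), $\int h_k\,d\mu\geq\mu(D_k)>r'$, and $\{h_k>0\}\subset\{x:\alpha(W_kx,x)\in V_{\delta_1}\}$. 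For $g\in[\calr]$ put $\beta_g(y)\defeq\alpha(gy,y)$; the cocycle identity gives $\alpha(W_kgx,gx)=\beta_g(W_kx)\,\alpha(W_kx,x)\,\beta_g(x)^{-1}$ whenever $gW_kx=W_kgx$. Since $(W_k)$ is central we have $\mu(\{gW_k\neq W_kg\})\to0$, and since a central sequence asymptotically fixes every Borel set (Remark~\ref{rem-central}), hence $\beta_g\circ W_k\to\beta_g$ in measure, a short estimate using left-, right- and conjugation-invariance of $d$ shows $\|h_k\circ g-h_k\|_1\to0$ for every $g\in[\calr]$. Thus $(h_k)$ is an asymptotically invariant sequence of $[0,1]$-valued functions with $\liminf_k\int h_k>0$, and a Fubini argument over the threshold produces a $t\in(0,1)$ for which $E_k\defeq\{h_k>t\}$ is asymptotically invariant with $\liminf_k\mu(E_k)>0$; after one further passage to a subsequence we may assume $\mu(E_k)$ uniformly positive. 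These $E_k$ are $W_k$-invariant and satisfy $E_k\subset\{x:\alpha(W_kx,x)\in V_{\delta_1}\}$.

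Finally, apply Lemma~\ref{lem-patch} to the sequence $(W_k,E_k)_k$, with the role of $(A_n)$ there played by $(A_{2k-1})_k$ (passing to complements as the lemma permits): for the given $\varepsilon$ and $Q$ it yields $S\in[\calr]$, obtained by patching together pieces of the restrictions $W_k|_{E_k}$ and one small piece of the identity, with $\mu(\{gS\neq Sg\})<\varepsilon$ for $g\in Q$, together with $Z\in\calb$ satisfying $\mu(gZ\bigtriangleup Z)<\varepsilon$ for $g\in Q$ and $\mu(SZ\setminus Z)>1/10$. On each piece we either have $Sx=W_kx$ with $x\in E_k$, so $\alpha(Sx,x)\in V_{\delta_1}\subset V$, or $Sx=x$, so $\alpha(Sx,x)=e\in V$; hence $\alpha(Sx,x)\in V$ a.e. It is precisely here that it matters that the bound $1/10$ in Lemma~\ref{lem-patch} is independent of $\inf_k\mu(E_k)$, which may be tiny when $V$ is small. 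The diagonalization of the first paragraph then completes the proof. The main obstacle is the third paragraph: a priori the ``good sets'' $D_k$ need not be asymptotically invariant, and the passage from them to asymptotically invariant, $W_k$-invariant subsets — via the soft indicators $h_k$ and the fact that central sequences asymptotically fix the functions $\beta_g$ — is the technical heart of the argument.
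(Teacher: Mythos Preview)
Your proof is correct and follows the same overall architecture as the paper's: pigeonhole on $K$ to locate a coset where the cocycle concentrates, compose two members of the stability sequence to land near the identity, show the resulting ``good sets'' are asymptotically invariant and $W_k$-invariant, then feed everything into Lemma~\ref{lem-patch}. The one substantive difference is how you establish asymptotic invariance of the good sets. The paper pushes $\mu$ forward to $K$ via $x\mapsto\alpha(T_nx,x)$, takes a weak${}^*$ limit $\nu_\infty$, and chooses a radius $\varepsilon_0$ with $\nu_\infty(\partial V_{\varepsilon_0})=0$; the Portmanteau-type estimate $\limsup_n\nu_n(\overline{WU}\setminus U)\le\nu_\infty(\overline{WU}\setminus U)$ then drives Claim~\ref{claim-d-ai}. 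You instead smooth the indicator with a bump $\phi$, show $\|h_k\circ g-h_k\|_1\to0$ directly from the cocycle identity and the convergence $\beta_g\circ W_k\to\beta_g$ in measure, and extract a threshold $t$ by Fubini. Your route avoids the measure-theoretic compactness argument on $K$; the paper's route avoids the level-set extraction.

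One remark on that extraction: as written, ``a Fubini argument over the threshold'' is a little brisk. From $\int_0^1\mu(gE_k^t\bigtriangleup E_k^t)\,dt=\|h_k\circ g^{-1}-h_k\|_1\to0$ one does not get, for a \emph{fixed} $t$, that $\mu(gE_k^t\bigtriangleup E_k^t)\to0$. The clean fix is to first pass to a subsequence along which $\sum_k\|h_k\circ g_j-h_k\|_1<\infty$ for every $g_j$ in a countable dense set; then Fubini gives the desired convergence for a.e.\ $t$. Combine this with the monotonicity of $t\mapsto\mu(\{h_k>t\})$ and $\int_0^1\mu(\{h_k>t\})\,dt=\int h_k\ge r'$, which forces $\mu(\{h_k>r'/2\})\ge r'/2$ for all $k$, to pick $t\in(0,r'/2)$ in the a.e.\ set. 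With this said, the argument goes through.
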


\begin{proof}
Throughout this proof, for $T\in [\calr]$ and $x\in X$, we denote $\alpha(Tx, x)$ by $\alpha(T, x)$ for the ease of symbols.
Fix a bi-invariant metric on $K$.
For $\ve >0$, let $V_\ve$ denote the open $\ve$-ball about the identity in $K$.
Pick $\ve >0$.
We will find a pre-stability sequence $(T_n, A_n)$ for $\calr$ such that $\alpha(T_n, x)\in V_\ve$ for all $n$ and all $x\in X$.

Let $(T_n, A_n)_{n\in \N}$ be a stability sequence for $\calr$.
Choose $0<\ve_2<\ve_1<\ve$ with $V_{\ve_2}^2\subset V_{\ve_1}$.
Since $K$ is compact, there are finitely many $c_1,\ldots, c_N\in K$ with $K=\bigcup_{i=1}^NV_{\ve_2}c_i$.
Passing to a subsequence of $(T_n, A_n)_{n\in \N}$, we can find $i\in \{ 1,\ldots, N\}$ such that the set
\[C_n\defeq \{ \, x\in X\mid \alpha(T_n, x)\in V_{\ve_2}c_i\, \}\]
has uniformly positive measure.
We put $c=c_i$.
By Lemma \ref{lem-pigeon}, passing to a subsequence, we may assume that the sets $C_n\cap C_m$ with $n, m\in \N$ have uniformly positive measure.
If $x\in C_n\cap C_m$, then
\[\alpha(T_mT_n^{-1}, T_nx)=\alpha(T_m, x)\alpha(T_n, x)^{-1}\in V_{\ve_2}c(V_{\ve_2}c)^{-1}=V_{\ve_2}^2\subset V_{\ve_1},\]
and hence
\[T_n(C_n\cap C_m)\subset \{ \, x\in X\mid \alpha(T_mT_n^{-1}, x)\in V_{\ve_1}\, \}.\]
It follows that the set in the right hand side has uniformly positive measure.
Therefore, after replacing the pair $(T_n, A_n)$ into the pair $(T_{2n+1}T_{2n}^{-1}, A_{2n+1})$, we may assume that the stability sequence $(T_n, A_n)_{n\in \N}$ is such that the set
\[E_n\defeq \{ \, x\in X\mid \alpha(T_n, x)\in V_{\ve_1}\, \}.\]
has uniformly positive measure.
We should note that for all distinct $n, m\in \N$, $T_mT_n^{-1}$ is involutive and satisfies $(T_mT_n^{-1})A_m\bigtriangleup A_m=X$ thanks to the condition, $T_nT_m=T_mT_n$ and $T_nA_m=A_m$, that are required for a stability sequence, while not being required for a pre-stability sequence.

Let $\nu_n$ be the probability measure on $K$ given by the image of $\mu$ under the map $X\to K$, $x\mapsto \alpha(T_n, x)$.
Let $\nu_\infty$ be any weak${}^*$-cluster point of $\nu_n$.
We may assume that $\nu_n$ converges to $\nu_\infty$ in the weak${}^*$-topology.
Since $\mu(E_n)$ is uniformly positive, we have $\nu_\infty(\overline{V_{\ve_1}})>0$.
For every $\ve_0$ with $\ve_1<\ve_0<\ve$, we have $\nu_\infty(V_{\ve_0})>0$.
Since, as $\ve_0$ varies, the boundaries $\partial V_{\ve_0}$ are mutually disjoint, we may find some $\ve_0$ with $\ve_1<\ve_0<\ve$ such that $\nu_\infty(\partial V_{\ve_0})=0$.
We set $U\defeq V_{\ve_0}$ and set
\[D_n\defeq \{ \, x\in X\mid \alpha(T_n, x)\in U\, \}.\]
Since $E_n\subset D_n$, the set $D_n$ has uniformly positive measure.
The equation $T_nD_n=D_n$ follows from $T_n$ being involutive and $U^{-1}=U$: for all $x\in D_n$,
\[\alpha(T_n, T_nx)=\alpha(T_n^{-1}, T_nx)=\alpha(T_n, x)^{-1}\in U^{-1}=U\]
and hence $T_nx\in D_n$.
Using $\nu_\infty(\partial U)=0$, we verify the following:

\begin{claim}\label{claim-d-ai}
The sequence $(D_n)$ is asymptotically invariant for $\calr$.
\end{claim}

\begin{proof}
Pick $g\in [\calr]$ and $\ve >0$.
Since $\nu_\infty(\partial U)=0$, there exists an open ball $W$ in $K$ about the identity with $\nu_\infty(\overline{WU}\setminus U)<\ve$.
We claim that for all sufficiently large $n$,
\[\mu(\{ \, x\in X\mid \alpha(g, T_nx)\alpha(g, x)^{-1}\not\in W\, \})<\ve.\]
Indeed, choosing an open ball $W_1$ in $K$ centered at the identity with $W_1^2\subset W$ and finitely many elements $b_1,\ldots, b_M\in K$ with $K=\bigcup_{i=1}^MW_1b_i$, we set
\[Y_i\defeq \{ \, x\in X\mid \alpha(g, x)\in W_1b_i\, \}\]
for $i\in \{ 1,\ldots, M\}$.
The set $X$ is covered by the sets $Y_1,\ldots, Y_M$.
If $n$ is sufficiently large, then for all $i$, we have $\mu(T_nY_i\bigtriangleup Y_i)<\ve /M$ and for all $x\in T_n^{-1}Y_i\cap Y_i$,
\[\alpha(g, T_nx)\alpha(g, x)^{-1}\in W_1b_i(W_1b_i)^{-1}=W_1^2\subset W.\]
The claim follows.

If $n$ is sufficiently large, then for all $x\in D_n$ outside a subset of small measure, we have $gT_nx=T_ngx$ and $\alpha(g, T_nx)\alpha(g, x)^{-1}\in W$, and therefore
\begin{align*}
\alpha(T_n, gx)&=\alpha(T_ng, x)\alpha(g, x)^{-1}=\alpha(gT_n, x)\alpha(g, x)^{-1}\\
&=\alpha(g, T_nx)\alpha(T_n, x)\alpha(g, x)^{-1}\\
&=\alpha(g, T_nx)\alpha(g, x)^{-1}\alpha(g, x)\alpha(T_n, x)\alpha(g, x)^{-1}\\
&\in W\alpha(g, x)U\alpha(g, x)^{-1}=WU.
\end{align*}
Since $\limsup_n \nu_n(\overline{WU}\setminus U)\leq \nu_\infty(\overline{WU}\setminus U)$ and the measure $\nu_\infty(\overline{WU}\setminus U)$ is small, for all $x\in D_n$ outside a subset of small measure, we have $\alpha(T_n, gx)\in U$, i.e., $gx\in D_n$.
It turns out that the measure of $D_n\setminus g^{-1}D_n$ is small if $n$ is sufficiently large.
\end{proof}

By Lemma \ref{lem-patch}, there exists a pre-stability sequence $(S_m, Z_m)$ for $\calr$ such that for every $m$, the map $S_m$ is obtained by patching together pieces of the restrictions $T_n|_{E_n}$, $n\in \N$, along with a piece of the identity map, and $\mu(S_mZ_m\setminus Z_m)>1/10$.
The former condition implies that for all $x\in X$, $\alpha(S_m, x)$ belongs to $U$ and hence to $V_\ve$.
This proves the claim in the beginning of the proof of Theorem \ref{thm-main}.

To obtain the conclusion of Theorem \ref{thm-main}, we vary $\ve$.
Let $V_1\supset V_2\supset \cdots$ be a decreasing sequence of open balls in $K$ about the identity.
We then obtain a pre-stability sequence $(R_n, Y_n)$ for $\calr$ such that $\alpha(R_n, x)\in V_n$ for all $n$ and all $x\in X$.
We note that $\mu(R_nY_n\setminus Y_n)$ is uniformly positive because $\mu(S_mZ_m\setminus Z_m)$ is uniformly positive independently of $V_n$.
This completes the proof of Theorem \ref{thm-main}.
\end{proof}

We recall fundamental facts on compact extensions of equivalence relations.
Let $\calr$ be an ergodic discrete p.m.p.\ equivalence relation on $(X, \mu)$.
Let $\alpha \colon \calr \to K$ be a cocycle into a compact group $K$, and let $L$ be a closed subgroup of $K$.
We equip the space $X\times K/L$ with the product measure of $\mu$ and the $K$-invariant probability measure on $K/L$.
We define the equivalence relation $\calr_{\alpha, L}$ on $X\times K/L$ so that for all $(y, x)\in \calr$ and $k\in K$, $(x, kL)$ and $(y, \alpha(y, x)kL)$ are equivalent.
Then $\calr_{\alpha, L}$ is a discrete p.m.p.\ equivalence relation.
We call the equivalence relation $\calr_{\alpha, L}$ obtained through this procedure a \textit{compact extension} of $\calr$.
When $L$ is trivial, the extension $\calr_{\alpha, L}$ is simply denoted by $\calr_\alpha$.

For every cocycle $\alpha \colon \calr \to K$ into a compact group $K$, there exist a closed subgroup $K_0$ of $K$ and a cocycle $\alpha_0\colon \calr \to K$ equivalent to $\alpha$ such that values of $\alpha_0$ are in $K_0$ and there is no cocycle equivalent to $\alpha_0$ with values in a proper closed subgroup of $K_0$.
The subgroup $K_0$ is uniquely determined up to conjugacy in $K$, and it is called the Mackey range of the cocycle $\alpha$.
The extension $\calr_{\alpha_0}$ is then ergodic.
We refer to \cite[Corollary 3.8]{z} for details.

\begin{cor}\label{cor-ext-s}
Let $\calr$ be an ergodic discrete p.m.p.\ equivalence relation on $(X, \mu)$.
Let $\alpha \colon \calr \to K$ be a cocycle into a compact group $K$ such that there is no cocycle equivalent to $\alpha$ with values in a proper closed subgroup of $K$.
Let $L$ be a closed subgroup of $K$.
If $\calr$ is stable, then the extension $\calr_{\alpha, L}$ is stable.
\end{cor}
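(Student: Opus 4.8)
The plan is to reduce the stability of the compact extension $\calr_{\alpha,L}$ to Theorem \ref{thm-main} by producing, from a stability sequence for $\calr$ whose cocycle values are close to the identity, an actual stability sequence for $\calr_{\alpha,L}$. The starting observation is that an element $T\in[\calr]$ with $\alpha(Tx,x)\in V$ for a small neighborhood $V$ of the identity lifts naturally to a transformation $\tilde T$ of $X\times K/L$ by $\tilde T(x,kL)\defeq (Tx,\alpha(Tx,x)kL)$, which belongs to $[\calr_{\alpha,L}]$; and if $A\subset X$ is an asymptotically invariant set for $\calr$, then $A\times K/L$ is asymptotically invariant for $\calr_{\alpha,L}$. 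So the skeleton of the argument is: apply Theorem \ref{thm-main} to get a pre-stability sequence $(T_n,A_n)$ for $\calr$ with $\alpha(T_n,x)\in V_n$ for a sequence $V_n\searrow\{e\}$, lift to $(\tilde T_n,A_n\times K/L)$, and check this is a pre-stability sequence for $\calr_{\alpha,L}$; then invoke \cite[Theorem 3.4]{js} (quoted in Subsection \ref{subsec-main}) to conclude that $\calr_{\alpha,L}$ is stable.

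The key steps, in order. First, set up the lift map and verify $\tilde T\in[\calr_{\alpha,L}]$ and that $(\tilde T)^2=\widetilde{T^2}$, using the cocycle identity; note that because $V_n\to\{e\}$, $\tilde T_n$ moves the $K/L$-coordinate by less and less. Second, verify condition (1) of a pre-stability sequence, i.e. $\mu\otimes m_{K/L}(\tilde T_n\mathcal{C}\bigtriangleup \mathcal{C})\to 0$ for all measurable $\mathcal{C}\subset X\times K/L$: by Remark \ref{rem-central}-type reasoning it suffices to check this on a generating family of sets, namely products $B\times (\text{ball in }K/L)$, where the $X$-coordinate contribution goes to $0$ by condition (1) for $(T_n)$ and the $K/L$-coordinate contribution goes to $0$ because $\alpha(T_n,x)\in V_n$ with $V_n\to\{e\}$ together with continuity of the $K$-action on $K/L$. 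Third, verify condition (2), asymptotic commutation with every element of $[\calr_{\alpha,L}]$: one reduces to a countable generating subgroup of $[\calr_{\alpha,L}]$, which can be taken to consist of lifts of elements of $[\calr]$ together with the "fiberwise rotations" $(x,kL)\mapsto(x,\beta(x)kL)$ for Borel $\beta\colon X\to K$; asymptotic commutation with the former follows from condition (2) for $(T_n)$ plus the vanishing of the fiber displacement of $\tilde T_n$, and with the latter again from $\alpha(T_n,\cdot)\in V_n\to\{e\}$. Fourth, verify condition (4): the sequence $A_n\times K/L$ is asymptotically invariant for $\calr_{\alpha,L}$ by Step 2 (or directly), and $\mu\otimes m_{K/L}\bigl(\tilde T_n(A_n\times K/L)\setminus (A_n\times K/L)\bigr)\geq \mu(T_nA_n\setminus A_n)-o(1)$ because the fiber displacement of $\tilde T_n$ is small; since $\mu(T_nA_n\setminus A_n)$ is uniformly positive by the conclusion of Theorem \ref{thm-main}, so is the lifted quantity. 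Finally, apply \cite[Theorem 3.4]{js}.

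I expect the main obstacle to be Step 3 — checking asymptotic commutation with \emph{all} of $[\calr_{\alpha,L}]$ rather than just with the lifts of $[\calr]$. The subtlety is that $[\calr_{\alpha,L}]$ is genuinely larger than the image of $[\calr]$ under lifting: a general element can mix fibers over a single point of $X$ in a way not captured by any lift, so one must argue that such elements are generated (in the sense needed for the reduction) by lifts together with fiberwise rotations, and then control the commutator of $\tilde T_n$ with a fiberwise rotation $(x,kL)\mapsto(x,\beta(x)kL)$. The commutator fails to vanish only where $\alpha(T_n,x)$ fails to commute with $\beta(x)$ modulo $L$, and since $\alpha(T_n,x)\in V_n\to\{e\}$ this is controlled, but making the "generating family" reduction precise (invoking the analogue of \cite[Remark 3.3]{js} / \cite[Proposition 6.2]{kec} cited in Remark \ref{rem-central}) is where the care is needed. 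A secondary point worth flagging: one should confirm that $\calr_{\alpha,L}$ is itself ergodic, or else pass to an ergodic component; but under the hypothesis that $\alpha$ admits no equivalent cocycle into a proper closed subgroup (so $K$ is the Mackey range), $\calr_\alpha$ is ergodic by \cite[Corollary 3.8]{z}, and ergodicity of $\calr_{\alpha,L}$ follows since it is a quotient of $\calr_\alpha$.
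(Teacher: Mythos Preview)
Your approach is correct and is essentially the paper's own proof: apply Theorem \ref{thm-main} to get a pre-stability sequence $(T_n,A_n)$ for $\calr$ with $\alpha(T_nx,x)\in V_n$, lift to $(\tilde T_n,A_n\times K/L)$, and verify this is a pre-stability sequence for $\calr_{\alpha,L}$.

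The one place where you make things harder than necessary is Step~3. Your proposed generating family of $[\calr_{\alpha,L}]$ consisting of lifts together with ``fiberwise rotations'' $(x,kL)\mapsto(x,\beta(x)kL)$ is a red herring: since $\calr$ is principal and $\alpha$ is a cocycle, the equivalence class of $(x,kL)$ in $\calr_{\alpha,L}$ is exactly $\{(y,\alpha(y,x)kL)\mid (y,x)\in\calr\}$, so distinct fiber points $(x,kL)$ and $(x,k'L)$ are \emph{never} $\calr_{\alpha,L}$-equivalent, and a nontrivial fiberwise rotation is not in $[\calr_{\alpha,L}]$ at all. The good news is that this same computation shows the lifts of $[\calr]$ already generate $\calr_{\alpha,L}$ as an equivalence relation. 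The paper therefore disposes of Step~3 in one line: once you have shown $\tilde\mu(\tilde T_n A\bigtriangleup A)\to 0$ for all Borel $A\subset X\times K/L$ (your Step~2), Remark \ref{rem-central} says it suffices to check asymptotic commutation with any countable subgroup generating the relation, and the lifts of a countable dense subgroup of $[\calr]$ do the job. No separate analysis of fiber transformations is needed.

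A small cleanup in Step~4: since $\tilde T_n(x,kL)=(T_nx,\alpha(T_nx,x)kL)$ and $A_n\times K/L$ is fiber-saturated, one has $\tilde T_n(A_n\times K/L)=(T_nA_n)\times K/L$ exactly, so $\tilde\mu\bigl(\tilde T_n(A_n\times K/L)\setminus(A_n\times K/L)\bigr)=\mu(T_nA_n\setminus A_n)$ with no $o(1)$ correction. Your ergodicity remark at the end is correct and matches the paper's standing assumption.
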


\begin{proof}
Pick a decreasing sequence $V_1\supset V_2\supset \cdots$ of open neighborhoods of the identity in $K$ with $\bigcap_n V_n=\{ e\}$.
By Theorem \ref{thm-main}, we may find a pre-stability sequence $(T_n, A_n)$ for $\calr$ such that $\alpha(T_n, x)\in  V_n$ for all $n$ and all $x\in X$.
For each $T\in [\calr]$, its lift $\tilde{T}\in [\calr_{\alpha, L}]$ is associated by the formula $\tilde{T}(x, kL)=(Tx, \alpha(T, x)kL)$ for $x\in X$ and $k\in K$.
Then $\tilde{T}_n$ asymptotically commutes with all elements of $[\calr_{\alpha, L}]$ that are the lift of an element of $[\calr]$.
Since $V_n$ approaches to the identity, the sequence $(\tilde{T}_n)$ satisfies $\tilde{\mu}(\tilde{T}_n A\bigtriangleup A)\to 0$ for every Borel subset $A\subset X\times K/L$, where $\tilde{\mu}$ is the measure on $X\times K/L$.
By Remark \ref{rem-central}, $(\tilde{T}_n)$ asymptotically commutes with all elements of $[\calr_{\alpha, L}]$. 
Thus the sequence $(\tilde{T}_n, A_n\times K/L)$ is a pre-stability sequence for $\calr_{\alpha, L}$.
\end{proof}

\begin{rem}\label{rem-erg-comp}
Let $\calr$ be an ergodic discrete p.m.p.\ equivalence relation on $(X, \mu)$ and let $\alpha \colon \calr \to K$ be a cocycle into a compact group $K$.
Even if $\alpha$ does not satisfy the assumption in Corollary \ref{cor-ext-s}, we can show that if $\calr$ is stable, then almost every ergodic component of $\calr_{\alpha, L}$ is stable.
Indeed, take a closed subgroup $K_0$ and a cocycle $\beta$ equivalent to $\alpha$ such that there is no cocycle equivalent to $\beta$ with values in a proper closed subgroup of $K_0$.
Since $\alpha$ and $\beta$ are equivalent, we have an isomorphism between $\calr_{\alpha, L}$ and $\calr_{\beta, L}$.
Let $\beta_0\colon \calr \to K_0$ be the cocycle obtained by simply replacing the range of the cocycle $\beta \colon \calr \to K$ into $K_0$.
Let $\theta \colon X\times K/L\to K_0\backslash K/L$ be the projection of the second coordinate.
Then the map $\theta$ gives rise to the decomposition into ergodic components of $\calr_{\beta, L}$.
For almost every $c\in K$, we have an isomorphism between the ergodic component $(\calr_{\beta, L})|_{\theta^{-1}(c)}$ and the extension $\calr_{\beta_0, K_0 \, \cap \, cLc^{-1}}$.
If $\calr$ is stable, then $\calr_{\beta_0, K_0 \, \cap \, cLc^{-1}}$ is stable by Corollary \ref{cor-ext-s}, and therefore almost every ergodic component of $\calr_{\alpha, L}$ is stable.
\end{rem}

\begin{cor}\label{cor-td}
Let $\calr$ be an ergodic discrete p.m.p.\ equivalence relation on $(X, \mu)$.
Let $\cals$ be a finite index subrelation of $\calr$.
If $\calr$ is stable, then every ergodic component of $\cals$ is stable.
\end{cor}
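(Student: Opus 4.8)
\textit{Proof proposal.}
The plan is to deduce this from Theorem~\ref{thm-main} applied to the index cocycle of the inclusion $\cals<\calr$, in the same spirit as the proof of Corollary~\ref{cor-finite-index-cpt-ext}. Let $N$ be the index of $\cals$ in $\calr$ and let $\tau\colon\calr\to\Sigma_N$ be the index cocycle into the symmetric group on $N$ letters, constructed as in \cite[Section~1]{fsz} (and recalled in the proof of Corollary~\ref{cor-finite-index-cpt-ext}); the only thing we need is that $\tau^{-1}(e)\subseteq\cals$, which is immediate from the normalization $\psi_1=\mathrm{id}$. Since $\Sigma_N$ is finite, it is a compact group in which $\{e\}$ is an open neighborhood of the identity, so Theorem~\ref{thm-main}, applied to $\alpha=\tau$ and the (constant, hence decreasing) sequence $V_n\defeq\{e\}$, furnishes a pre-stability sequence $(T_n,A_n)_{n\in\N}$ for $\calr$ with $\tau(T_nx,x)=e$ for every $n$ and $\mu$-almost every $x$. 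In particular $T_n\in[\cals]$ for every $n$.

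Next I would record that $\cals$ has only finitely many ergodic components. Indeed, for almost every $x$ the $\calr$-class of $x$ splits into its $N$ $\cals$-classes, each of which lies in a single ergodic component of $\cals$; hence the set of ergodic components meeting $[x]_\calr$ (which has at most $N$ elements) depends only on $[x]_\calr$ and is therefore constant by ergodicity of $\calr$. Writing $X=X_1\sqcup\cdots\sqcup X_d$ for the ergodic decomposition of $\cals$, each $X_j$ is $\cals$-invariant of positive measure, hence $T_n$-invariant because $T_n\in[\cals]$, and $\cals_j\defeq\cals|_{X_j}$ is ergodic; let $\mu_j$ denote the normalized restriction of $\mu$ to $X_j$. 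I then claim that, after passing to a suitable subsequence, $(T_n|_{X_j},A_n\cap X_j)$ is a pre-stability sequence for $\cals_j$ for every $j$; stability of $\cals_j$ then follows from \cite[Theorem~3.4]{js}, which proves the corollary. The conditions $\mu_j(T_nC\bigtriangleup C)\to0$ for $C\subseteq X_j$, $\mu_j(\{hT_n\neq T_nh\})\to0$ for $h\in[\cals_j]$, and asymptotic invariance of $(A_n\cap X_j)$ for $\cals_j$, are obtained by dividing the corresponding statements for $\calr$ by $\mu(X_j)$, after extending each $h\in[\cals_j]$ to an element $\bar h\in[\cals]\subseteq[\calr]$ by setting $\bar h=\mathrm{id}$ off $X_j$ — which is legitimate since $\mu(X_j)>0$ — and using that $T_n$ and $\bar h$ preserve $X_j$.

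The delicate point, and the main obstacle, is the uniform positivity of $\mu_j\bigl(T_n(A_n\cap X_j)\setminus(A_n\cap X_j)\bigr)$. Since $T_n$ preserves $X_j$, this set equals $(T_nA_n\setminus A_n)\cap X_j$, so what is needed is a positive lower bound, independent of $n$, for $\mu\bigl((T_nA_n\setminus A_n)\cap X_j\bigr)$. For this I would first check that $(T_nA_n\setminus A_n)_n$ is asymptotically invariant for $\calr$: the sequence $(T_nA_n)_n$ is asymptotically invariant because $\mu(\{T_n^{-1}gT_n\neq g\})\to0$ for every $g\in[\calr]$ and $(A_n)_n$ is asymptotically invariant, and an intersection of asymptotically invariant sequences is again such. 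Then, after passing to a subsequence along which $\mu(T_nA_n\setminus A_n)$ converges to some $r'>0$ (which exists by uniform positivity of the pre-stability sequence for $\calr$), Lemma~\ref{lem-ai} gives $\mu\bigl((T_nA_n\setminus A_n)\cap X_j\bigr)\to r'\mu(X_j)$ for \emph{every} $j$ simultaneously, whence $\mu_j\bigl(T_n(A_n\cap X_j)\setminus(A_n\cap X_j)\bigr)\to r'>0$. This is precisely where finiteness of the number of ergodic components is essential: it allows a single subsequence to handle all components at once, something a disintegration over a non-atomic ergodic-decomposition base would not provide. I expect no further difficulty; note that it is enough for Theorem~\ref{thm-main} to produce a \emph{pre}-stability sequence, since \cite[Theorem~3.4]{js} already characterizes stability via pre-stability sequences.
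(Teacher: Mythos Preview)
Your proof is correct, and it shares with the paper the essential first step: apply Theorem~\ref{thm-main} to the index cocycle $\tau\colon\calr\to\Sigma_N$ (with the open neighborhood $\{e\}$) to obtain a pre-stability sequence $(T_n,A_n)$ for $\calr$ with $T_n\in[\cals]$. From there, however, the two arguments diverge. The paper identifies $\cals$ with the restriction of the compact extension $\calr_\tau$ to $X\times\Sigma_1$ and then invokes Remark~\ref{rem-erg-comp}, which handles the ergodic decomposition abstractly via the Mackey range of $\tau$ and Corollary~\ref{cor-ext-s}. You instead stay inside $\cals$ itself: you observe that $\cals$ has finitely many ergodic components $X_1,\dots,X_d$, each preserved by every $T_n$, and then verify directly that the restricted pairs $(T_n|_{X_j},A_n\cap X_j)$ form pre-stability sequences, with the key uniform positivity handled by showing that $(T_nA_n\setminus A_n)$ is asymptotically $\calr$-invariant and applying Lemma~\ref{lem-ai}. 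Your route is more elementary and self-contained---it avoids the compact-extension formalism and the Mackey-range reduction---while the paper's route has the advantage of packaging the ergodic-decomposition step once and for all in Remark~\ref{rem-erg-comp}, so that Corollary~\ref{cor-td} becomes a one-line consequence. Both are valid; your argument makes transparent exactly where finiteness of the index is used (a single subsequence works for all components), which is a nice feature.
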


\begin{proof}
Let $N$ be the index of $\cals$ in $\calr$ and let $\tau \colon \calr \to \Sigma$ be the index cocycle defined in the proof of Corollary \ref{cor-finite-index-cpt-ext}, where $\Sigma$ is the symmetric group of $N$ letters.
Then $\cals$ is equal to the inverse image under $\tau$ of the subgroup $\Sigma_1\defeq \{ \, \sigma \in \Sigma \mid \sigma(1)=1\, \}$, and the restriction of $\calr_\tau$ to $X\times \Sigma_1$ is identified with $\cals$.
The corollary now follows from Remark \ref{rem-erg-comp}.
\end{proof}

We say that a free ergodic p.m.p.\ action of a countable group is \textit{stable} if the associated orbit equivalence relation is stable, and say that a countable group is \textit{stable} if it admits a free ergodic p.m.p.\ action which is stable.
Combining a result from \cite{kida-sce}, we obtain the following:

\begin{cor}\label{cor-fi}
Let $G$ be a countable group.
Then
\begin{enumerate}
\item[(i)] for every finite index subgroup $H$ of $G$, $H$ is stable if and only if $G$ is stable.
\item[(ii)] For every finite normal subgroup $N$ of $G$, $G/N$ is stable if and only if $G$ is stable.
\end{enumerate}
\end{cor}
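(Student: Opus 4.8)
The plan is to read off the two ``only if'' implications from Corollary \ref{cor-td}, and to invoke the first author's permanence results from \cite{kida-sce} for the two ``if'' implications. Throughout, fix the ergodic, aperiodic, hyperfinite p.m.p.\ equivalence relation $\calr_0$ on a nonatomic standard probability space, so that an ergodic discrete p.m.p.\ equivalence relation is stable precisely when it is isomorphic to its own direct product with $\calr_0$. Recall that a countable group is stable by definition exactly when it admits a free ergodic p.m.p.\ action whose orbit equivalence relation is stable, and note that restricting such an action to an invariant Borel set preserves both freeness and the p.m.p.\ property, so that one may freely pass to ergodic components.

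For the ``only if'' part of (i): assume $G$ is stable, witnessed by a free ergodic p.m.p.\ action $G\c(X,\mu)$ with $\calr\defeq\calr(G\c(X,\mu))$ stable. Freeness of the action makes the orbit equivalence relation $\cals$ of the restricted action $H\c(X,\mu)$ a subrelation of $\calr$ of index $[G:H]$, which is finite, so Corollary \ref{cor-td} yields that every ergodic component of $\cals$ is stable. Choosing, by ergodic decomposition of $H\c(X,\mu)$, an $H$-invariant Borel set $Y\subset X$ of positive measure on which $H$ acts ergodically, the action $H\c(Y,\mu_Y)$ is free, ergodic and p.m.p., and its orbit equivalence relation is one of those ergodic components, hence stable. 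Thus $H$ is stable.

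For the ``only if'' part of (ii): again take $G\c(X,\mu)$ free, ergodic, p.m.p., with $\calr\defeq\calr(G\c(X,\mu))$ stable. Since $N$ is finite and the action is free, $\calr(N\c(X,\mu))$ is a subrelation of $\calr$ all of whose classes have cardinality $|N|$; fix a Borel transversal $Y\subset X$ for it. The reduction $\calr|_Y$ of $\calr$ to $Y$ is then stable, by the (standard) invariance of stability under reduction to a non-null set; concretely, identifying $X$ with $Y\times\{1,\dots,|N|\}$ via a Borel enumeration of the $N$-orbits exhibits $\calr$ as the amplification $(\calr|_Y)\times F_{|N|}$ of $\calr|_Y$, where $F_{|N|}$ denotes the full equivalence relation on $|N|$ points, and stability passes between a relation and such a finite amplification (one direction is immediate from $\calr\cong\calr\times\calr_0$ together with $F_n\times\calr_0\cong\calr_0$, and both follow from the stable orbit equivalence invariance of stability; see \cite{js}). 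Finally the map $y\mapsto Ny$ is a measure space isomorphism of $(Y,\mu_Y)$ onto the quotient $(X/N,\bar\mu)$ of $(X,\mu)$ by the finite group $N$, and it carries $\calr|_Y$ onto the orbit equivalence relation of the action $G/N\c(X/N,\bar\mu)$; this action is free because $N$ is normal and $G\c(X,\mu)$ is free, and it is ergodic and p.m.p., so $G/N$ is stable.

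The two ``if'' implications assert that stability of a countable group is inherited by a finite-index overgroup, respectively by a group having the given group as the quotient by a finite normal subgroup, and these are precisely the content of the first author's results in \cite{kida-sce}, which I will cite directly. The genuine inputs are therefore Corollary \ref{cor-td} on the one hand and \cite{kida-sce} on the other; the step I expect to require the most care is the reduction step in (ii), since reduction-invariance of stability — easy in one direction from the description $\calr\cong\calr\times\calr_0$ but relying on the stable orbit equivalence invariance of stability in the other — must be invoked with the correct orientation, and the identification of $\calr(G/N\c(X/N,\bar\mu))$ with $\calr|_Y$ must be carried out with the measures normalized correctly.
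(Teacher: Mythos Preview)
Your ``only if'' arguments are correct. For (ii) the paper is more direct---it passes straight to the quotient action $G/N\c(X/N,\bar\mu)$ and observes it is free, ergodic, p.m.p., and stable, without any transversal/amplification detour---but your route works too. For (i) ``if'', the paper does not cite \cite{kida-sce} at all: it simply induces a stable $H$-action up to $G$, noting that the orbit equivalence relation of the induced action is a finite amplification of the original one and hence stable. This is elementary and needs no external input.

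The genuine gap is in (ii) ``if''. The statement you need---that $G/N$ stable with $N$ finite and normal in $G$ implies $G$ stable---is \emph{not} what \cite{kida-sce} provides. That paper concerns \emph{central} extensions; the relevant result, \cite[Corollary~1.2]{kida-sce}, says that if $C$ is central in $H$ and $H/C$ is stable then $H$ is stable. Since $N$ need not be central in $G$, a direct citation of \cite{kida-sce} does not apply. The paper closes the gap with a centralizer trick: set $H\defeq C_G(N)$, which has finite index in $G$ because $N$ is finite; then $H/(H\cap N)$ has finite index in $G/N$ and is therefore stable by (i); now $H\cap N$ \emph{is} central in $H$, so \cite[Corollary~1.2]{kida-sce} gives $H$ stable; finally (i) again yields $G$ stable. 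Without this reduction your argument for (ii) ``if'' is incomplete.
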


\begin{proof}
Let $H$ be a finite index subgroup of $G$.
If $H$ has a free ergodic p.m.p.\ action which is stable, then the action of $G$ induced from it is stable.
Conversely, if $G$ is stable, then $H$ is stable by Corollary \ref{cor-td}.
Assertion (i) follows.

To prove assertion (ii), let $N$ be a finite normal subgroup of $G$.
If $G$ has a free ergodic p.m.p.\ action $G\c (X, \mu)$ which is stable, then the action of the quotient, $G/N\c X/N$, is also stable and hence $G/N$ is stable.

Conversely, suppose that $G/N$ is stable.
Let $H$ be the centralizer of $N$ in $G$, which is of finite index in $G$ since $N$ is finite.
The quotient $H/(H\cap N)$ is a finite index subgroup of $G/N$ and is hence stable by assertion (i).
Since $H\cap N$ is central in $H$, by \cite[Corollary 1.2]{kida-sce}, $H$ is stable.
By assertion (i) again, $G$ is stable.
\end{proof}


\subsection{Being Schmidt under compact extensions}\label{subsec-main2}

Following Theorem \ref{thm-main}, we obtain a similar result for existence of a non-trivial central sequence in the full group, in place of a pre-stability sequence, while the conclusion of the following theorem is slightly weaker than that of Theorem \ref{thm-main}.

\begin{thm}\label{thm-main-c}
Let $\calr$ be an ergodic discrete p.m.p.\ equivalence relation on $(X, \mu)$ and let $\alpha \colon \calr \to K$ be a cocycle into a compact group $K$.
If $\calr$ is Schmidt, then for every decreasing sequence $V_1\supset V_2\supset \cdots$ of open neighborhoods of the identity in $K$, there exists a non-trivial central sequence $(T_n)$ in $[\calr]$ such that $\mu(\{ \, x\in X\mid \alpha(T_n, x)\not\in V_n\, \})\to 0$.
\end{thm}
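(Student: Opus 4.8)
The plan is to follow the architecture of the proof of Theorem \ref{thm-main} essentially verbatim, replacing the input ``stability sequence'' with a ``non-trivial central sequence $(T_n)$ with $T_nx\neq x$ for all $n$ and $x$'' (which exists by Lemma \ref{lem-c}), and replacing the final application of the pre-stability part of Lemma \ref{lem-patch} by the central-sequence part of that lemma, or more precisely by Lemma \ref{lem-patch-c}, since we no longer have an involution hypothesis available after the group-element manipulations. So first I would fix $\ve>0$ and a bi-invariant metric on $K$, write $V_\ve$ for the $\ve$-ball about the identity, and reduce to the statement: there is a non-trivial central sequence $(T_n)$ in $[\calr]$ with $\alpha(T_n,x)\in V_\ve$ for all $n$ and almost every $x$. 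Varying $\ve$ over a sequence converging to the identity and diagonalizing then yields the theorem; the non-triviality is automatic because, as in Lemma \ref{lem-patch}(i), $S$ agrees with some $T_n|_{D_n}$ off a small set and $T_nx\neq x$ there.

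Next, starting from a non-trivial central sequence with $T_nx\neq x$ everywhere (Lemma \ref{lem-c}), I would run the compactness pigeonholing exactly as in Theorem \ref{thm-main}: choose $\ve_2<\ve_1<\ve$ with $V_{\ve_2}^2\subset V_{\ve_1}$, cover $K$ by finitely many translates $V_{\ve_2}c_i$, pass to a subsequence so that $C_n=\{x: \alpha(T_n,x)\in V_{\ve_2}c\}$ has uniformly positive measure for a fixed $c$, apply Lemma \ref{lem-pigeon} to arrange $C_n\cap C_m$ uniformly large, and pass to the sequence $T_{2n+1}T_{2n}^{-1}$ so that $E_n=\{x:\alpha(T_{2n+1}T_{2n}^{-1},x)\in V_{\ve_1}\}$ has uniformly positive measure; here one uses the cocycle identity $\alpha(T_mT_n^{-1},T_nx)=\alpha(T_m,x)\alpha(T_n,x)^{-1}$ and centrality to see that $(T_{2n+1}T_{2n}^{-1})$ is still a central sequence and $T_{2n+1}T_{2n}^{-1}x\neq x$ for $x\in C_{2n}\cap C_{2n+1}$ outside a small set (one should intersect with the set where the two maps do not collide, and discard the rest). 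Then, as in Theorem \ref{thm-main}, push forward $\mu$ by $x\mapsto\alpha(T_n,x)$ to get $\nu_n\to\nu_\infty$ weak${}^*$, choose $\ve_0\in(\ve_1,\ve)$ with $\nu_\infty(\partial V_{\ve_0})=0$, set $U=V_{\ve_0}$ and $D_n=\{x:\alpha(T_n,x)\in U\}$, which has uniformly positive measure. The asymptotic invariance of $(D_n)$ is proved by the identical argument as in Claim \ref{claim-d-ai} (it used only centrality of $(T_n)$, not that $T_n$ was an involution), yielding $\alpha(T_n,gx)\in WU$ for $x\in D_n$ off a small set and hence $\mu(D_n\setminus g^{-1}D_n)\to0$.

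Now comes the one genuine divergence from Theorem \ref{thm-main}: since the $T_n$ here need not be involutions and $D_n$ need not be exactly $T_n$-invariant, I would apply Lemma \ref{lem-patch-c} rather than Lemma \ref{lem-patch}. Its hypotheses are exactly met: $(T_n)$ central, $(D_n)$ asymptotically invariant with $\mu(D_n)$ uniformly positive, and $\mu(T_nD_n\bigtriangleup D_n)\to0$ — the last because $T_nD_n\bigtriangleup D_n\subset (D_n\setminus T_n^{-1}D_n)\cup\cdots$ is controlled by asymptotic invariance together with $\mu(T_nA\bigtriangleup A)\to0$ for all Borel $A$ (Remark \ref{rem-central}), or one can simply note $\mu(T_nD_n\bigtriangleup D_n)\to0$ follows from $(D_n)$ being asymptotically invariant and $T_n\in[\calr]$. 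For each $\ve$ and finite $Q\subset[\calr]$, Lemma \ref{lem-patch-c} produces $S\in[\calr]$ with $\mu(\{gS\neq Sg\})<\ve$ for $g\in Q$ and, off a set of measure $<\ve$, $Sx=T_nx$ for some $n$ with $x\in D_n$; in particular $\alpha(S,x)=\alpha(T_n,x)\in U\subset V_\ve$ on that large set, and on the small exceptional set one is free to redefine $S$; to keep $\alpha(S,\cdot)\in V_\ve$ everywhere one can instead absorb the exceptional set into some single $D_n$ or, more cleanly, note that the theorem only asks for $\mu(\{x:\alpha(T_n,x)\notin V_n\})\to0$, so a small exceptional set of measure $<\ve$ is harmless. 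Letting $\ve\to0$ and $Q$ exhaust a countable dense subgroup of $[\calr]$, and diagonalizing against a decreasing sequence $V_n$ with $\bigcap_nV_n=\{e\}$, gives the required non-trivial central sequence $(T_n)$ with $\mu(\{x:\alpha(T_n,x)\notin V_n\})\to0$.

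I expect the main obstacle to be bookkeeping around the exceptional sets: after replacing $(T_n)$ by $(T_{2n+1}T_{2n}^{-1})$ one only controls $\alpha$ on $E_n$ up to a small error and only knows the new maps move every point after intersecting with $C_{2n}\cap C_{2n+1}$ and discarding collisions, so one must be careful that Lemma \ref{lem-patch-c} is being fed a genuine central sequence and a genuine asymptotically invariant sequence of uniformly-positive-measure sets, and that the output $S$ can be arranged to satisfy $\alpha(S,x)\in V_\ve$ for \emph{almost every} $x$ rather than merely off an $\ve$-set; the clean fix, as noted, is that the conclusion only requires $\mu(\{x:\alpha(T_n,x)\notin V_n\})\to0$, which is exactly the kind of statement Lemma \ref{lem-patch-c}(i) is built to deliver, so no extra work is needed beyond the diagonalization. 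Everything else is a line-by-line transcription of the proof of Theorem \ref{thm-main} with Lemma \ref{lem-patch} replaced by Lemma \ref{lem-patch-c}.
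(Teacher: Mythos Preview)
Your outline follows the paper closely, but there is one genuine gap: the verification of the hypothesis $\mu(T_nD_n\bigtriangleup D_n)\to 0$ needed for Lemma~\ref{lem-patch-c}. You write that this ``follows from $(D_n)$ being asymptotically invariant and $T_n\in[\calr]$,'' or alternatively from $\mu(T_nA\bigtriangleup A)\to 0$ for fixed Borel $A$. Neither argument works: asymptotic invariance of $(D_n)$ gives $\mu(gD_n\bigtriangleup D_n)\to 0$ only for a \emph{fixed} $g\in[\calr]$, and Remark~\ref{rem-central} likewise concerns fixed sets, not the diagonal $\mu(T_nD_n\bigtriangleup D_n)$. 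After the replacement $T_n\rightsquigarrow T_{m}T_n^{-1}$ the new maps are no longer involutions, so the exact equality $T_nD_n=D_n$ used in Theorem~\ref{thm-main} is lost, and there is no general reason for $\mu(T_nD_n\bigtriangleup D_n)\to 0$.

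The paper's proof handles exactly this point by a case split that you omitted: if $\mu(S_nD_n\bigtriangleup D_n)\not\to 0$, then some subsequence of $(S_n,D_n)$ is a pre-stability sequence, so $\calr$ is stable and Theorem~\ref{thm-main} finishes the job; only in the complementary case does Lemma~\ref{lem-patch-c} apply. A secondary point: your treatment of non-triviality after the replacement is also loose---the paper uses Lemma~\ref{lem-s} (not just a ``discard the rest'' remark) to guarantee that for fixed $n$ and large $m$ the set $\{x:T_mT_n^{-1}x\neq x,\ \alpha(T_mT_n^{-1},x)\in V_{\ve_1}\}$ retains uniformly positive measure, and correspondingly includes the condition $S_nx\neq x$ in the definition of $E_n$ and $D_n$. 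With the case split and the use of Lemma~\ref{lem-s} inserted, your sketch matches the paper's proof.
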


\begin{proof}
We basically follow the proof of Theorem \ref{thm-main}, while not a few modifications will be performed.
Fix a bi-invariant metric on $K$.
For $\ve >0$, let $V_\ve$ denote the open $\ve$-ball about the identity in $K$.
Pick $\ve >0$ and a finite subset $Q\subset [\calr]$.
To prove the theorem, it suffices to find an $S\in [\calr]$ such that $\mu(\{ \, x\in X\mid Sx=x\, \})<\ve$, $\mu(\{ gS\neq Sg\})<\ve$ for all $g\in Q$, and $\mu(\{ \, x\in X\mid \alpha(S, x)\not\in V_\ve \, \})<\ve$.

Let $(T_n)_{n\in \N}$ be a non-trivial central sequence in $[\calr]$.
Choose $0<\ve_2<\ve_1<\ve$ with $V_{\ve_2}^2\subset V_{\ve_1}$.
Since $K$ is compact, there are finitely many $c_1,\ldots, c_N\in K$ with $K=\bigcup_{i=1}^NV_{\ve_2}c_i$.
Passing to a subsequence of $(T_n)$, we can find an $i\in \{ 1,\ldots, N\}$ such that the set
\[C_n\defeq \{ \, x\in X\mid T_nx\neq x,\, \alpha(T_n, x)\in V_{\ve_2}c_i\, \}\]
has uniformly positive measure.
We put $c=c_i$.
By Lemma \ref{lem-pigeon}, passing to a subsequence, we may assume that for some $r>0$, we have $\mu(C_n\cap C_m)>r$ for all $n, m\in \N$.
We have the inclusion
\[T_n(C_n\cap C_m)\subset \{ \, x\in X\mid T_n^{-1}x\neq x,\, \alpha(T_mT_n^{-1}, x)\in V_{\ve_1}\, \},\]
as shown in the proof of Theorem \ref{thm-main}, and hence the set in the right hand side has measure more than $r$.
By Lemma \ref{lem-s}, if we fix $n$ and let $m$ be large, then the measure of the set $\{ \, x\in X\mid T_m^{-1}x=T_n^{-1}x\neq x\, \}$ converges to $0$.
Therefore for every $n$, for all sufficiently large $m>n$, the set
\[\{ \, x\in X\mid T_mT_n^{-1}x\neq x,\, \alpha(T_mT_n^{-1}, x)\in V_{\ve_1}\, \}\]
has measure more than $r/2$.
As a result, relabeling $T_mT_n^{-1}$ as $S_n$, we obtain a central sequence $(S_n)_{n\in \N}$ in $[\calr]$ such that the set
\[E_n\defeq \{ \, x\in X\mid S_nx\neq x,\, \alpha(S_n, x)\in V_{\ve_1}\, \}\]
has uniformly positive measure.

For each $n$, let $\mu_n$ be the restriction of $\mu$ to the set $\{ \, x\in X\mid S_nx\neq x\, \}$, whose total measure is uniformly positive.
Let $\nu_n$ be the measure on $K$ given by the image of $\mu_n$ under the map $x\mapsto \alpha(S_n, x)$.
Let $\nu_\infty$ be any weak${}^*$-cluster point of $\nu_n$.
We may assume that $\nu_n$ converges to $\nu_\infty$ in the weak${}^*$-topology.
Since $\mu(E_n)$ is uniformly positive, we have $\nu_\infty(\overline{V_{\ve_1}})>0$.
For every $\ve_0$ with $\ve_1<\ve_0<\ve$, we have $\nu_\infty(V_{\ve_0})>0$.
Since, as $\ve_0$ varies, the boundaries $\partial V_{\ve_0}$ are mutually disjoint, we may find some $\ve_0$ with $\ve_1<\ve_0<\ve$ and $\nu_\infty(\partial V_{\ve_0})=0$.
We set $U\defeq V_{\ve_0}$ and set
\[D_n\defeq \{ \, x\in X\mid S_nx\neq x,\ \alpha(S_n, x)\in U\, \}.\]
Since $E_n\subset D_n$, the set $D_n$ has uniformly positive measure.
Using the sequence of the sets $\{ \, x\in X\mid S_nx\neq x\, \}$ being asymptotically invariant for $\calr$ and following the proof of Claim \ref{claim-d-ai}, we can verify that the sequence $(D_n)$ is asymptotically invariant for $\calr$.
If $\mu(S_nD_n\bigtriangleup D_n) \not\to 0$, then we obtain a pre-stability sequence for $\calr$ as some subsequence of $(S_n, D_n)$.
The proof then reduces to Theorem \ref{thm-main}.
Otherwise, i.e., if $\mu(S_nD_n\bigtriangleup D_n)\to 0$, then by Lemma \ref{lem-patch-c}, we can find an $S\in [\calr]$ such that a large part of the map $S$ is obtained by patching together pieces of the restrictions $S_n|_{D_n}$, $n\in \N$, and $\mu(\{ gS\neq Sg\})<\ve$ for all $g\in Q$.
The former condition says that for all $x\in X$ outside a subset of measure less than $\ve$, there is an $n$ such that $x\in D_n$ and $Sx=S_nx$.
For all such $x\in X$, we have $Sx\neq x$ and $\alpha(S, x)\in U\subset V_\ve$.
\end{proof}

\begin{cor}\label{cor-ext-c}
Let $\calr$ be an ergodic discrete p.m.p.\ equivalence relation on $(X, \mu)$.
Let $\alpha \colon \calr \to K$ be a cocycle into a compact group $K$ such that there is no cocycle equivalent to $\alpha$ with values in a proper closed subgroup of $K$.
Let $L$ be a closed subgroup of $K$.
If $\calr$ is Schmidt, then the extension $\calr_{\alpha, L}$ is Schmidt.
\end{cor}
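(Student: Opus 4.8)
The plan is to follow the proof of Corollary~\ref{cor-ext-s} almost verbatim, with Theorem~\ref{thm-main-c} in place of Theorem~\ref{thm-main}. First I would fix a compatible bi-invariant metric on the compact (hence metrizable) group $K$ and let $V_n$ be the open $1/n$-ball about the identity, so that $(V_n)$ is a decreasing neighborhood basis at the identity with $\bigcap_n V_n=\{e\}$. Since $\calr$ is Schmidt, Theorem~\ref{thm-main-c} yields a non-trivial central sequence $(T_n)$ in $[\calr]$ with $\mu(\{\,x\in X\mid \alpha(T_nx,x)\notin V_n\,\})\to 0$; I write $\alpha(T,x)$ for $\alpha(Tx,x)$ as in the proof of Theorem~\ref{thm-main}. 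For each $n$ define the lift $\tilde T_n\in[\calr_{\alpha,L}]$ by $\tilde T_n(x,kL)\defeq(T_nx,\alpha(T_n,x)kL)$; the cocycle identity shows that $\tilde T_n$ is a well-defined, $\tilde\mu$-preserving element of $[\calr_{\alpha,L}]$, where $\tilde\mu$ is the product of $\mu$ with the $K$-invariant probability measure $m$ on $K/L$. Note that $\calr_{\alpha,L}$ is ergodic: the hypothesis on $\alpha$ makes $\calr_\alpha$ ergodic, and $\calr_{\alpha,L}$ is a measurable factor of $\calr_\alpha$; so the term ``Schmidt'' is meaningful for it.

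It then remains to check that $(\tilde T_n)$ is a non-trivial central sequence in $[\calr_{\alpha,L}]$. Non-triviality is immediate, since $\tilde T_n(x,kL)=(x,kL)$ forces $T_nx=x$, so that $\{\tilde T_n\neq\mathrm{id}\}\supseteq\{T_nx\neq x\}\times K/L$ and $\tilde\mu(\{\tilde T_n\neq\mathrm{id}\})\geq\mu(\{T_nx\neq x\})$, which does not tend to $0$ because $(T_n)$ is non-trivial. For centrality I would appeal to Remark~\ref{rem-central}, reducing to: (a) $\tilde\mu(\tilde T_nA\bigtriangleup A)\to 0$ for every Borel $A\subset X\times K/L$; and (b) $\tilde\mu(\{\tilde g\tilde T_n\neq\tilde T_n\tilde g\})\to 0$ for every $\tilde g$ in a countable subgroup generating $\calr_{\alpha,L}$. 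For (b), take a countable subgroup $G\le[\calr]$ generating $\calr$; then the lifts $\{\tilde g\mid g\in G\}$ generate $\calr_{\alpha,L}$, and a short computation with the cocycle identity shows that wherever $gT_nx=T_ngx$ the two second-coordinate twists $\alpha(g,T_nx)\alpha(T_n,x)$ and $\alpha(T_n,gx)\alpha(g,x)$ both equal $\alpha(gT_nx,x)$; hence $\{\tilde g\tilde T_n\neq\tilde T_n\tilde g\}\subset\{x\in X\mid gT_nx\neq T_ngx\}\times K/L$, whose $\tilde\mu$-measure tends to $0$ by centrality of $(T_n)$ in $[\calr]$.

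The one point needing a little care, and the main (if modest) obstacle, is (a). Approximating $A$ in the measure algebra by a finite union of rectangles $\bigsqcup_{j=1}^JB_j\times C_j$ whose second factors $C_j$ form a fixed finite partition of $K/L$, it suffices to bound, for each $j$,
\[
\tilde\mu\bigl(\tilde T_n^{-1}(B_j\times C_j)\bigtriangleup(B_j\times C_j)\bigr)\le\mu(T_n^{-1}B_j\bigtriangleup B_j)+\int_X m\bigl(\alpha(T_n,x)^{-1}C_j\bigtriangleup C_j\bigr)\,d\mu(x).
\]
The first term tends to $0$ because a central sequence is asymptotically invariant (Remark~\ref{rem-central}). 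For the second term, split $X$ according to whether $\alpha(T_n,x)\in V_n$: on the exceptional set, whose $\mu$-measure tends to $0$ by the conclusion of Theorem~\ref{thm-main-c}, bound the integrand by $1$; on its complement bound it by $\sup_{\beta\in V_n}m(\beta^{-1}C_j\bigtriangleup C_j)$, which tends to $0$ as $n\to\infty$ by continuity of the translation action of $K$ on $(K/L,m)$, the finiteness of $\{C_j\}_{j=1}^J$, and the fact that $(V_n)$ is a neighborhood basis at $e$. Thus the second term also tends to $0$, giving (a).

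Combining (a) and (b) with Remark~\ref{rem-central}, $(\tilde T_n)$ is a non-trivial central sequence in $[\calr_{\alpha,L}]$, so $\calr_{\alpha,L}$ is Schmidt. Apart from step (a), the argument is routine once Theorem~\ref{thm-main-c} is in hand; step (a) is precisely where one must verify that the lifted maps remain asymptotically trivial in the measure algebra despite the $x$-dependent twist in the $K/L$-fiber.
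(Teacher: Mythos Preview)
Your proof is correct and follows essentially the same approach as the paper's: apply Theorem~\ref{thm-main-c} to obtain a non-trivial central sequence $(T_n)$ with $\alpha(T_n,x)$ concentrating on shrinking neighborhoods of the identity, lift to $\tilde T_n\in[\calr_{\alpha,L}]$, and invoke Remark~\ref{rem-central} via asymptotic invariance of Borel sets plus asymptotic commutation with lifts of a generating group. The paper's proof simply refers back to the proof of Corollary~\ref{cor-ext-s} for these steps, whereas you have spelled out the rectangle approximation and continuity argument for part (a) explicitly; this added detail is correct and matches what the paper leaves implicit.
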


\begin{proof}
Pick a decreasing sequence $V_1\supset V_2\supset \cdots$ of open neighborhoods of the identity in $K$ with $\bigcap_n V_n =\{ e\}$.
By Theorem \ref{thm-main-c}, we may find a non-trivial central sequence $(T_n)$ in $[\calr]$ such that $\mu(\{ \, x\in X\mid \alpha(T_n, x)\not\in V_n\, \})\to 0$.
As shown in the proof of Corollary \ref{cor-ext-s}, to each $T_n$, the lift $\tilde{T}_n\in [\calr_{\alpha, L}]$ is associated, and the sequence $(\tilde{T}_n)$ is then a non-trivial central sequence in $[\calr_{\alpha, L}]$.
\end{proof}

The proof of Corollary \ref{cor-td} also works to obtain the following:

\begin{cor}\label{cor-td-c}
Let $\calr$ be an ergodic discrete p.m.p.\ equivalence relation on $(X, \mu)$ and let $\cals$ be a finite index subrelation of $\calr$.
If $\calr$ is Schmidt, then every ergodic component of $\cals$ is Schmidt.
\end{cor}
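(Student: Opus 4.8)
The plan is to transcribe the proof of Corollary~\ref{cor-td} essentially verbatim, replacing ``stable'' by ``Schmidt'' and replacing the appeal to Corollary~\ref{cor-ext-s} by an appeal to Corollary~\ref{cor-ext-c}. The one preliminary step I would carry out first is to record the Schmidt analogue of Remark~\ref{rem-erg-comp}: \emph{if $\calr$ is an ergodic discrete p.m.p.\ equivalence relation which is Schmidt, $\alpha\colon\calr\to K$ is a cocycle into a compact group $K$, and $L$ is a closed subgroup of $K$, then almost every ergodic component of $\calr_{\alpha,L}$ is Schmidt.}

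To prove this analogue I would run the argument of Remark~\ref{rem-erg-comp} without change. Pass to a closed subgroup $K_0\leqslant K$ and a cocycle $\beta$ equivalent to $\alpha$ with values in $K_0$ and admitting no equivalent cocycle with values in a proper closed subgroup of $K_0$; since $\alpha$ and $\beta$ are equivalent, $\calr_{\alpha,L}\cong\calr_{\beta,L}$. Let $\beta_0\colon\calr\to K_0$ be $\beta$ with codomain shrunk to $K_0$, and let $\theta\colon X\times K/L\to K_0\backslash K/L$ be the projection onto the double coset of the second coordinate; then $\theta$ realizes the ergodic decomposition of $\calr_{\beta,L}$, and for almost every $c\in K$ one has $(\calr_{\beta,L})|_{\theta^{-1}(c)}\cong\calr_{\beta_0,\,K_0\cap cLc^{-1}}$. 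As $\calr$ is Schmidt and $\beta_0$ has no reduction to a proper closed subgroup of $K_0$, Corollary~\ref{cor-ext-c} shows each $\calr_{\beta_0,\,K_0\cap cLc^{-1}}$ is Schmidt, so almost every ergodic component of $\calr_{\alpha,L}$ is Schmidt. The only place where stability intervened in the original Remark~\ref{rem-erg-comp} was the closing appeal to Corollary~\ref{cor-ext-s}, so nothing else needs to be altered.

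With this in hand I would conclude exactly as in Corollary~\ref{cor-td}. Let $N$ be the index of $\cals$ in $\calr$, let $\tau\colon\calr\to\Sigma$ be the index cocycle into the symmetric group $\Sigma$ on $N$ letters (built as in the proof of Corollary~\ref{cor-finite-index-cpt-ext}), and set $\Sigma_1\defeq\{\,\sigma\in\Sigma\mid\sigma(1)=1\,\}$, so that $\cals=\tau^{-1}(\Sigma_1)$ and the restriction of $\calr_\tau$ to $X\times\Sigma_1$ is identified with $\cals$. The Schmidt analogue of Remark~\ref{rem-erg-comp}, applied with $\alpha=\tau$ and $L$ trivial, shows that almost every ergodic component of $\calr_\tau$ is Schmidt; since $\Sigma$ is finite there are only finitely many such components, so ``almost every'' is ``every,'' and transporting this through the above identification gives that every ergodic component of $\cals$ is Schmidt.

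I do not expect a genuine obstacle here: once Corollary~\ref{cor-ext-c} is available the proof is a mechanical copy of that of Corollary~\ref{cor-td}. The only things to check carefully are that nothing in the argument for Remark~\ref{rem-erg-comp} used stability beyond its final invocation of Corollary~\ref{cor-ext-s} (which is manifestly so), and the bookkeeping tying $\cals$ to the ergodic components of the compact extension $\calr_\tau$---that is, that the Schmidt property is preserved along the finite, locally bijective identification of $\calr_\tau$ restricted to $X\times\Sigma_1$ with $\cals$, and that, finiteness of the index being used, ``almost every ergodic component'' may be upgraded to ``every ergodic component.''
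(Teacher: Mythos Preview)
Your proposal is correct and is exactly what the paper intends: its entire proof of this corollary is the single sentence ``The proof of Corollary~\ref{cor-td} also works to obtain the following,'' and you have faithfully unpacked that sentence, replacing the appeal to Corollary~\ref{cor-ext-s} (via Remark~\ref{rem-erg-comp}) by the parallel appeal to Corollary~\ref{cor-ext-c}. The only point worth tightening is the final transport step: the passage from ``every ergodic component of $\calr_\tau$ is Schmidt'' to ``every ergodic component of $\cals$ is Schmidt'' uses that the Schmidt property passes to restrictions to positive-measure subsets of an ergodic relation, which is elementary (modify a non-trivial central sequence on the small set where it fails to preserve the subset) but perhaps deserves one clause of justification.
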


We may ask whether the same result as Corollary \ref{cor-fi} holds for the Schmidt property of groups in place of stability.
Corollary \ref{cor-td-c} immediately implies that the Schmidt property passes from a group to each of its finite index subgroups.
However the following remains unsolved:

\begin{question}\label{q-finite-central}
Let $G$ be a countable group with a finite central subgroup $C$.
If $G/C$ has the Schmidt property, then does $G$ have the Schmidt property as well?
\end{question}

We note that if $C$ is an infinite central subgroup of $G$, then $G$ has the Schmidt property regardless of whether $G/C$ has the Schmidt property (see Example \ref{ex-infinite-center}).



\section{Results under spectral-gap assumptions}\label{sec-sg}

\subsection{Stable spectral gap}\label{subsec-ssg}

Recall that a unitary representation of a countable group $G$ has \textit{spectral gap} if it does not contain the trivial representation of $G$ weakly.
We say that a p.m.p.\ action $G\c (X, \mu)$ has {\it spectral gap} if the Koopman representation $G\c L^2_0(X)$ has spectral gap, and say that a p.m.p.\ action $G\c (X, \mu)$ has {\it stable spectral gap} if for every unitary representation $G\c \cal{H}$, the product representation $G\c L^2_0(X)\otimes \cal{H}$ has spectral gap.
If a free p.m.p.\ action $G\c (X, \mu)$ has stable spectral gap and the associated orbit equivalence relation is inner amenable, then we can find the following remarkable F\o lner sequence for the conjugating action of $G$:

\begin{prop}\label{prop-iasgap}
Let $G\c (X, \mu)$ be a free p.m.p.\ action with stable spectral gap, and let $\mathcal{R}$ be the associated orbit equivalence relation.
If $\mathcal{R}$ is inner amenable, then there exists a sequence $(F_n)_{n\in \N}$ of finite subsets of $G$ such that
\begin{enumerate}
\item[(i)] $|F_n^g \bigtriangleup F_n |/|F_n| \to 0$ for every $g\in G$,
\item[(ii)] $1_{F_n}(g) \to 0$ for every $g\in G$, and
\item[(iii)] $\sup _{g\in F_n}\mu (gA\bigtriangleup A) \to 0$ for every Borel subset $A\subset X$.
\end{enumerate}
Thus, if $(g_n)_{n\in \N}$ is a sequence in $G$ with $g_n\in F_n$ for all $n\in \N$, then $g_n$ converges to the identity in $\mathrm{Aut}(X,\mu)$.
In particular, the image of $G$ in $\mathrm{Aut}(X, \mu)$ is not discrete.
\end{prop}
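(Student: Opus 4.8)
The plan is to convert an inner amenability sequence for $\calr$ into the desired Følner sequence for the conjugation action of $G$ on itself, using stable spectral gap to control the measure-theoretic spread of the sets $F_n$. Let $(\xi_n)$ be an inner amenability sequence for $(\calr,\mu)$ (which exists by hypothesis and Theorem \ref{thm:equiv}). Since the action $G\c(X,\mu)$ is free, we identify $\calr$ with $G\ltimes(X,\mu)$, and each $\xi_n\in L^1(\calr,\mu^1)$ decomposes along the partition $\calr=\bigsqcup_{g\in G}(\{g\}\times X)$; write $\xi_n=\sum_{g\in G}\xi_n^{(g)}$ with $\xi_n^{(g)}\in L^1(X,\mu)$ supported on $\{g\}\times X$. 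Define the function $\nu_n$ on $G$ by $\nu_n(g)\defeq\|\xi_n^{(g)}\|_1=\int_X\xi_n(gx,x)\,d\mu(x)$; this is a non-negative unit vector in $\ell^1(G)$ by condition (iv) of Definition \ref{def:groupoid}. Conditions (ii) and (iii) for $(\xi_n)$, applied to the sections $\phi_g=\{g\}\times X$, show that $\|\nu_n^g-\nu_n\|_1\to 0$ for every $g\in G$ and $\nu_n(g)\to 0$ for every $g\in G$. Thus $(\nu_n)$ witnesses inner amenability of the \emph{group} $G$; but this alone does not give condition (iii) of the proposition.

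The heart of the argument is to upgrade the $\ell^1$-almost-invariance of $\nu_n$ using stable spectral gap, so as to conclude that $\nu_n$ becomes asymptotically concentrated on group elements $g$ with $\mu(gA\triangle A)$ small. The key step is a standard spectral-gap argument: consider the Koopman representation $\pi\colon G\c L^2(X)$ and, for each fixed Borel $A\subset X$, the vector $v_A\defeq 1_A-\mu(A)1_X\in L^2_0(X)$. If $\nu_n$ failed condition (iii), then along a subsequence the probability measures $\nu_n$ on $G$ would, after transport, produce an almost-invariant sequence in $L^2_0(X)\otimes\overline{\ell^2(G)}$-type space that is not almost-trivial — contradicting stable spectral gap. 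More precisely: the point-evaluation representation $G\c \ell^2(G)$ by conjugation $g\cdot\delta_h=\delta_{ghg^{-1}}$ is not relevant directly; instead one forms the representation $G\c L^2_0(X)\otimes\ell^2(G)$ where $G$ acts diagonally (Koopman on the first factor, left translation on the second), and checks that the unit vectors built from $\sqrt{\nu_n}$ tensored with a fixed $v_A$ become almost-invariant under this representation once $\|\nu_n^g-\nu_n\|_1\to 0$. Stable spectral gap forces these vectors to be asymptotically close to the subspace of $G$-invariant vectors, which (since $L^2_0(X)$ has no nonzero invariant vectors) is $\{0\}$; unwinding this gives $\sup_{g\in\mathrm{supp}(\nu_n)}\mu(gA\triangle A)\to 0$ after discarding, for each $n$, a subset of $G$ of small $\nu_n$-mass. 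Folding those discarded pieces into the error terms preserves conditions (i) and (ii).

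Having obtained a modified sequence $(\nu_n)$ of finitely supported (after truncation) non-negative unit vectors in $\ell^1(G)$ satisfying the $\ell^1$-analogues of (i), (ii) and the $\sup$-over-support form of (iii), the final step is to pass from vectors to sets by a Namioka-type argument: for $t\in[0,1]$ set $F_n^t\defeq\{g\in G\mid \nu_n(g)>t\}$, so that $\|\nu_n^g-\nu_n\|_1=\int_0^1|F_n^{t,g}\triangle F_n^t|\,dt$ and $\|\nu_n\|_1=\int_0^1|F_n^t|\,dt$; an averaging/pigeonhole choice of $t=t_n$ then yields finite sets $F_n\defeq F_n^{t_n}$ with $|F_n^g\triangle F_n|/|F_n|\to 0$, $1_{F_n}(g)\to 0$ (from $\nu_n(g)\to 0$), and $\sup_{g\in F_n}\mu(gA\triangle A)\to 0$ (since $F_n\subset\mathrm{supp}(\nu_n)$, up to the negligible truncation). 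A diagonalization over a countable dense family of Borel sets $A$ and a countable generating set of $G$ makes all these hold simultaneously. The closing assertions are then immediate: if $g_n\in F_n$, then for every Borel $A$ we have $\mu(g_nA\triangle A)\le\sup_{g\in F_n}\mu(gA\triangle A)\to 0$, which is precisely convergence of $g_n$ to the identity in the weak topology on $\aut(X,\mu)$; and since $1_{F_n}(g)\to 0$ forces $F_n$ to eventually avoid any fixed $g\ne 1$, one can choose $g_n\in F_n$ with $g_n\ne 1$ for all large $n$, exhibiting a nontrivial sequence in the image of $G$ converging to the identity, so that image is not discrete.

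The main obstacle I expect is the spectral-gap upgrade in the second paragraph: making precise the claim that $\ell^1$-almost-invariance of $\nu_n$ under conjugation, \emph{together with} stable spectral gap, forces asymptotic measure-concentration of the support, and correctly identifying which representation of $G$ (the diagonal action on $L^2_0(X)\otimes\ell^2(G)$, or a variant built from the Koopman representation tensored with the left-regular representation) makes the almost-invariant vectors appear. One must be careful that passing from $\nu_n\in\ell^1$ to $\sqrt{\nu_n}\in\ell^2$ interacts well with the $\ell^1$-almost-invariance (it does, by the standard inequality $\|\sqrt{a}-\sqrt{b}\|_2^2\le\|a-b\|_1$), and that the truncation needed to make $\mathrm{supp}(\nu_n)$ effectively finite does not destroy the Følner property.
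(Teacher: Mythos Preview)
Your outline is right at both ends---define the pushforward $\nu_n(g)=\int_X\xi_n(gx,x)\,d\mu(x)$, and finish with the Namioka trick---but the middle step, where stable spectral gap is supposed to give condition (iii), does not work as you describe it. The vectors $v_A\otimes\sqrt{\nu_n}$ are \emph{not} almost invariant under the diagonal action (Koopman $\otimes$ left regular) on $L^2_0(X)\otimes\ell^2(G)$: conjugation-invariance of $\nu_n$ has nothing to do with left-translation invariance, and the Koopman factor $\pi(g)v_A$ is far from $v_A$ for most $g$. No representation built from $\sqrt{\nu_n}$ alone will see condition (iii), because once you integrate out the $X$-variable you have discarded the link between the group element and its action on $X$.

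The paper's route is different and uses two separate ingredients. First, spectral gap is applied to the vectors $\eta_n\defeq\sqrt{\xi_n}\in L^2(G\times X)$ under the representation $g\cdot(h,x)=(ghg^{-1},gx)$; since this representation restricted to $\ell^2(G)\otimes L^2_0(X)$ has spectral gap, one gets $\|\xi_n-P\xi_n\|_1\to 0$, where $P$ integrates along $X$ (this is Lemma~\ref{lem-specgap}). In other words $\xi_n$ is asymptotically a function of the $G$-coordinate alone, namely $p_n\circ\alpha$ with $p_n=\nu_n$. Second---and this is the point you are missing---condition (iii) then comes not from spectral gap but from the \emph{balanced} property (i) of the inner amenability sequence: since $\int_{\calr_A}\xi_n\,d\mu^1\to\mu(A)$ and $\xi_n\approx p_n\circ\alpha$, one computes
\[
\sum_{g\in G}p_n(g)\,\mu(g^{-1}A\cap A)=\int_{\calr_A}p_n\circ\alpha\,d\mu^1\to\mu(A),
\]
which is exactly $\sum_g p_n(g)\,\mu(gA\bigtriangleup A)\to 0$. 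This gives $p_n(D_{A,\ve})\to 1$ for every $\ve>0$, where $D_{A,\ve}=\{g:\mu(gA\bigtriangleup A)<\ve\}$; after restricting $p_n$ to $\bigl(\bigcap_{i<n}D_{A_i,1/n}\bigr)\setminus Q_n$ for a dense family $\{A_i\}$ and a finite exhaustion $Q_n\nearrow G$, the Namioka trick finishes the job exactly as you say. So the fix is: keep $\xi_n$ (not just $\nu_n$) for the spectral-gap step, and then invoke balancedness rather than spectral gap to force measure concentration.
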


Before the proof, we prepare the following:

\begin{lem}\label{lem-specgap}
Let $G\c (X, \mu)$ be a p.m.p.\ action with stable spectral gap.
We identify each $g\in G$ with the element of $[G\ltimes (X, \mu)]$ given by the section $\{ g\} \times X$.
Then for every $\ve >0$, there exist a finite subset $S\subset G$ and $\delta >0$ such that if $\xi \in L^1(G\times X)$ is any non-negative unit vector satisfying $\sup _{g\in S} \Vert \xi^{g} - \xi \Vert_1 < \delta$, then $\Vert \xi  -  P\xi \Vert _1 < \ve$, where $P\colon L^1(G\times X)\to L^1(G\times X)$ is the projection defined by integrating functions along $X$:
\[(P\xi)(g, x) \defeq \int_X\xi(g, t) \, d\mu(t)\]
for $\xi \in L^1(G\times X)$, $g\in G$ and $x\in X$.
\end{lem}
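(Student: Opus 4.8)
The plan is to pass from $L^1$ to the Hilbert space $L^2(G\times X)\cong\ell^2(G)\otimes L^2(X)$, invoke stable spectral gap there, and then return to $L^1$. First I would set $\zeta\defeq\sqrt{\xi}$, a non-negative unit vector in $L^2(G\times X)$. Since $\xi^g(h,x)=\xi(ghg^{-1},gx)$ (this is the conjugation action $f\mapsto f^{\phi_g}$ written out via the formula $(h,x)^{\phi_g}=(g^{-1}hg,g^{-1}x)$ from Section \ref{sec-pre}), we have $\sqrt{\xi^g}=\zeta^g$, so the commutative Powers--St\o rmer inequality $\|\sqrt a-\sqrt b\|_2^2\le\|a-b\|_1$ gives $\|\zeta^g-\zeta\|_2^2\le\|\xi^g-\xi\|_1$. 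Thus $\sup_{g\in S}\|\xi^g-\xi\|_1<\delta$ yields $\sup_{g\in S}\|\zeta^g-\zeta\|_2<\sqrt\delta$.

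Writing $\zeta=\sum_{h\in G}\delta_h\otimes\zeta_h$ with $\zeta_h\in L^2(X)$, the formula $\zeta^g(h,x)=\zeta(ghg^{-1},gx)$ shows that the conjugation action $g\mapsto(\zeta\mapsto\zeta^g)$ is, up to the inversion $g\mapsto g^{-1}$, the tensor product $\pi\otimes\kappa$, where $\pi$ is the conjugation representation of $G$ on $\ell^2(G)$ (given by $\pi(g)\delta_h=\delta_{ghg^{-1}}$) and $\kappa$ is the Koopman representation of $G$ on $L^2(X)$. Decomposing $L^2(X)=\C 1\oplus L^2_0(X)$ gives a $(\pi\otimes\kappa)$-invariant orthogonal splitting $\ell^2(G)\otimes L^2(X)=(\ell^2(G)\otimes\C 1)\oplus(\ell^2(G)\otimes L^2_0(X))$; the first summand is exactly the range of $P$ (restricted to $L^2$, $P$ is the orthogonal projection onto functions constant in the $X$-variable), and $P$ commutes with the conjugation action. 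On the second summand the action is, up to inversion, $\pi\otimes\kappa_0$ with $\kappa_0\defeq\kappa|_{L^2_0(X)}$. By stable spectral gap of $G\c(X,\mu)$, applied with $\calh=\ell^2(G)$ carrying $\pi$, the representation $L^2_0(X)\otimes\ell^2(G)\cong\ell^2(G)\otimes L^2_0(X)$ has spectral gap, so there are a finite symmetric $S\subset G$ and $\rho>0$ with $\sup_{g\in S}\|(\pi\otimes\kappa_0)(g)\eta-\eta\|_2\ge\rho\|\eta\|_2$ for all $\eta$. I would take this $S$ in the lemma. Since the splitting is invariant and $P$ is a projection, $\zeta_0\defeq\zeta-P\zeta$ satisfies $\|\zeta_0^g-\zeta_0\|_2\le\|\zeta^g-\zeta\|_2<\sqrt\delta$ for all $g\in S$, and symmetry of $S$ together with the spectral-gap inequality then gives $\|\zeta-P\zeta\|_2=\|\zeta_0\|_2\le\sqrt\delta/\rho$.

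It remains to convert the $L^2$-bound on $\zeta-P\zeta$ into the $L^1$-bound on $\xi-P\xi$. Writing $\zeta_h=a_h\cdot 1+w_h$ with $w_h\perp 1$ in $L^2(X)$ (so $a_h\ge 0$, $\sum_h a_h^2\le\|\zeta\|_2^2=1$ and $\sum_h\|w_h\|_2^2=\|\zeta_0\|_2^2$), one computes $\xi_h-(P\xi)_h=\zeta_h^2-\int\zeta_h^2\,d\mu=2a_hw_h+w_h^2-\|w_h\|_2^2$, hence $\|\xi_h-(P\xi)_h\|_{L^1(X)}\le 2a_h\|w_h\|_2+2\|w_h\|_2^2$ (using $\|w_h\|_{L^1}\le\|w_h\|_{L^2}$ in a probability space). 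Summing over $h$ and applying Cauchy--Schwarz gives $\|\xi-P\xi\|_1\le 2\|\zeta_0\|_2+2\|\zeta_0\|_2^2\le 2\sqrt\delta/\rho+2\delta/\rho^2$. Since $S$ and $\rho$ have been fixed first, choosing $\delta>0$ small enough makes the right-hand side less than $\ve$, as required.

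I expect the main obstacle to be the bookkeeping in the second paragraph: identifying the conjugation action $\xi\mapsto\xi^g$ with a genuine unitary tensor-product representation on $L^2(G\times X)$, keeping track of the inversion coming from the $f\mapsto f^{\phi}$ convention, and — the conceptual heart — seeing that the index-$G$ factor carries the nontrivial conjugation representation $\pi$, which is precisely why \emph{stable} spectral gap (and not merely spectral gap of the action) is needed. The Powers--St\o rmer reduction and the final $L^1$-estimate are routine.
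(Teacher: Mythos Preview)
Your proof is correct and follows the same strategy as the paper: pass to $\zeta=\sqrt{\xi}$ via the inequality $|\sqrt a-\sqrt b|^2\le|a-b|$, apply stable spectral gap to the tensor-product representation on $\ell^2(G)\otimes L^2_0(X)$ to make $\|\zeta-P\zeta\|_2$ small, and then convert back to an $L^1$ bound on $\xi-P\xi$. The only cosmetic difference is in the last step: the paper inserts the intermediate quantity $(P\zeta)^2$, bounding $\|P\xi-(P\zeta)^2\|_1=1-\|P\zeta\|_2^2$ by Jensen and $\|\xi-(P\zeta)^2\|_1\le\|\zeta+P\zeta\|_2\|\zeta-P\zeta\|_2$ by Cauchy--Schwarz, whereas you compute $\xi_h-(P\xi)_h$ directly from the orthogonal decomposition $\zeta_h=a_h+w_h$; both yield a bound of the form $2\|\zeta-P\zeta\|_2+O(\|\zeta-P\zeta\|_2^2)$.
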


\begin{proof}
Let $G$ act on $G\times X$ by $g (h,x)=(h,x)^{g^{-1}} = (ghg^{-1},gx )$.
This action gives rise to the unitary representation $\pi \colon G\c L^2(G\times X)$ identified with the tensor product of the conjugation representation $G\c \ell ^2(G)$ and the Koopman representation $G\c L^2(X)$.
The projection $P$ is also defined on $L^2(G\times X)$ by the same formula, and it is exactly the orthogonal projection onto the subspace $\ell^2(G)\otimes \C 1$.

Let $\ve >0$ and choose $\ve _0>0$ so that $2\ve_0 ^{1/2}+\ve_0 <\ve$.
Since the representation $\pi \colon G\c \ell^2(G)\otimes L^2_0 (X)$ has spectral gap, there exist a finite subset $S\subset G$ and $\delta >0$ such that if $\eta \in L^2(G\times X)$ is any unit vector satisfying $\sup _{g\in S} \Vert \pi (g)\eta - \eta \Vert_2^2 <\delta$, then $\Vert \eta - P\eta \Vert _2 ^2 < \ve_0$.
Let $\xi \in L^1(G\times X)$ be any non-negative unit vector satisfying $\sup _{g\in S} \Vert \xi ^{g} - \xi \Vert _1 < \delta$, and let $\eta \defeq  \xi ^{1/2}$.
Then $\eta$ is a unit vector in $L^2(G\times X)$, and
\[\sup _{g\in S} \Vert \pi (g)\eta - \eta \Vert _2^2 \leq \sup _{g\in S} \Vert \xi ^{g} - \xi \Vert _1 < \delta,\]
where the first inequality follows from the inequality $|a-b|^2\leq |a^2-b^2|$ for all $a, b\geq 0$.
By our choice of $\delta$, we then have $\Vert \eta - P\eta \Vert _2 ^2 < \ve _0$.
It follows that $\Vert P\eta \Vert _2 ^2 > 1-\ve _0$ and hence
\begin{align*}
\Vert P\xi - (P\eta )^2 \Vert _1 &= \sum _{g\in G} \Biggl( \int_X \xi(g, x) \, d\mu(x) - \biggl(\int _X \xi(g, x)^{1/2} \, d\mu(x) \biggr)^2\Biggl)\notag \\
&= 1- \Vert P\eta \Vert _2 ^2 <\ve _0,\notag
\end{align*}
where we use Jensen's inequality in the first equation.
By the Cauchy-Schwarz inequality, we have
\begin{align*}
\Vert \xi - (P\eta ) ^2 \Vert _1 &= \Vert (\eta + P\eta ) (\eta - P\eta ) \Vert _1  \leq \Vert \eta + P\eta \Vert _2 \Vert \eta - P\eta \Vert _2 \leq 2\ve _0^{1/2},
\end{align*}
and therefore $\Vert \xi - P\xi \Vert _1 < 2 \ve _0^{1/2} + \ve _0 < \ve$.
\end{proof}

\begin{proof}[Proof of Proposition \ref{prop-iasgap}]
Let $\alpha \colon \mathcal{R}\to G$ be the cocycle defined by the equation $\alpha (y,x)x = y$.
Let $(\xi _n )_{n\in \N} $ be an inner amenability sequence for $\mathcal{R}$.
For each $n\in \N$, we define a function $p_n \in \ell ^1(G)$ by $p_n (g) \defeq \int _X\xi _n (gx,x) \, d\mu(x)$.
Then $p_n$ is a probability measure on $G$, and by Lemma \ref{lem-specgap}, we have $\Vert \xi_n-p_n\circ \alpha \Vert_1\to 0$.
Therefore, for every $g\in G$, we have $\Vert p_n^g - p_n \Vert _1 \to 0$, and $p_n (g) \to 0$.
For every Borel subset $A\subset X$, we have
\begin{align*}
\sum _{g\in G}p_n(g)\mu (g^{-1}A\cap A ) &= \int _{\mathcal{R}_A}p_n (\alpha (y,x)) \, d\mu ^1 (y,x)  \\
&= \int _{\mathcal{R}_A}\xi _n \, d\mu ^1 + \int _{\mathcal{R}_A}(p_n \circ \alpha  - \xi _n )\, d\mu ^1 \to \mu (A),
\end{align*}
where the last convergence follows because $(\xi _n)$ is balanced.
Therefore, for every $\ve >0$, we have $p_n (D_{A,\ve})\to 1$, where we set
\[D_{A,\ve} \defeq \{ \, g \in G \mid \mu (gA\bigtriangleup A ) < \ve \, \}.\]

Let $\{ A_i \}_{i\in \N}$ be a countable collection of sets which are dense in the measure algebra of $\mu$.
After passing to a subsequence of $(p_n)_{n\in \N}$, we may assume without loss of generality that $p_n (\bigcap _{i<n}D_{A_i,1/n}) > 1-1/n$ for every $n\in \N$.
Let $Q_1\subset Q_2\subset \cdots$ be an increasing exhaustion of $G$ by finite subsets.
Since $p_n(g)\to 0$ for every $g\in G$, after passing to a further subsequence, we can assume without loss of generality that $p_n (Q_n) < 1/n$ for every $n\in \N$.
Let $q_n$ be the normalized restriction of $p_n$ to the set $(\bigcap _{i<n}D_{A_i, 1/n}) \setminus Q_n$.
Then $\Vert q_n - p_n \Vert _1 \to 0$, so $\Vert q_n^g - q_n \Vert _1 \to 0$ for every $g\in G$.
Therefore, by the Namioka trick, we may find a sequence of finite subsets $F_n \subset (\bigcap _{i<n}D_{A_i,1/n})\setminus Q_n$ such that condition (i) of the proposition holds.
Condition (ii) holds since $F_n\cap  Q_n=\emptyset$.
Given any Borel subset $A\subset X$ and $\ve >0$, we can find some $i\in \N$ with $\mu (A\bigtriangleup A_i)<\ve /3$.
Then for all $n>i$ with $1/n <\ve /3$, for all $g\in F_n\subset  D_{A_i,1/n}$, we have $\mu (gA\bigtriangleup A ) \leq \mu (gA_i\bigtriangleup A_i ) +2\mu (A\bigtriangleup A_i ) <\ve$.
This shows that condition (iii) holds.
\end{proof}

\begin{cor}\label{cor-ber}
Every Bernoulli shift of a countable non-amenable group gives rise to an orbit equivalence relation which is not inner amenable.
\end{cor}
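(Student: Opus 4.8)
The plan is to assume that the orbit equivalence relation $\calr$ of a Bernoulli shift $G\c (X,\mu)$ of a countable non-amenable group $G$ is inner amenable and to derive a contradiction, the point being that Bernoulli shifts are simultaneously mixing and possess stable spectral gap, two features that cannot coexist with inner amenability in view of Proposition \ref{prop-iasgap}. First I would record the three standard properties of a Bernoulli shift of $G$ that are needed: the action is essentially free (as $G$ is infinite, being non-amenable), the space $(X,\mu)$ is non-atomic, the action is mixing, and its Koopman representation has stable spectral gap. For the spectral gap statement I would note that $L^2_0(X)$ decomposes along the nonempty finite subsets $S\subseteq G$ as $\bigoplus_S \bigotimes_{g\in S} L^2_0(Y,\nu)$, that every $G$-orbit of such subsets has finite stabilizer (a subgroup of $G$ stabilizing a finite set $S$ under left translation injects into the symmetric group of $S$), and hence that $L^2_0(X)$ is a subrepresentation of $\lambda_G\otimes \mathcal{H}_0$ for some Hilbert space $\mathcal{H}_0$ carrying the trivial $G$-action; tensoring with an arbitrary unitary representation and invoking Fell's absorption principle then yields a multiple of the regular representation $\lambda_G$, which has spectral gap precisely because $G$ is non-amenable.

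Next I would feed this into Proposition \ref{prop-iasgap}: since the action is free and has stable spectral gap, inner amenability of $\calr$ supplies a sequence $(F_n)_{n\in\N}$ of nonempty finite subsets of $G$ satisfying conditions (i)--(iii) there. Picking any $g_n\in F_n$, condition (ii) forces $(g_n)$ to eventually leave every finite subset of $G$, so in particular $g_n\to\infty$ in $G$; and the final assertion of Proposition \ref{prop-iasgap} gives $g_n\to\mathrm{id}$ in $\aut(X,\mu)$, i.e.\ $\mu(g_nA\bigtriangleup A)\to 0$ for every Borel set $A\subseteq X$. The contradiction then comes from mixing of the Bernoulli shift: because $g_n\to\infty$, we have $\mu(g_nA\cap A)\to \mu(A)^2$, hence $\mu(g_nA\bigtriangleup A)=2\mu(A)-2\mu(g_nA\cap A)\to 2\mu(A)(1-\mu(A))$ for every Borel $A$; choosing $A$ with $\mu(A)=1/2$ (possible since $(X,\mu)$ is non-atomic), this limit equals $1/2\neq 0$, contradicting $g_n\to\mathrm{id}$. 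Thus $\calr$ is not inner amenable.

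The only genuinely non-routine ingredient is the stable spectral gap property of Bernoulli shifts of non-amenable groups; everything else is a short bookkeeping step combining Proposition \ref{prop-iasgap} with mixing. Since this spectral gap fact is classical, I would either cite it or include the brief representation-theoretic sketch indicated above, and then write the contradiction argument in two or three lines.
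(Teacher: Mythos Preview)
Your proposal is correct and follows essentially the same approach as the paper: both invoke Proposition \ref{prop-iasgap} together with the stable spectral gap of Bernoulli shifts (the paper's Remark \ref{rem-ssg} gives the same sketch you outline), and then derive a contradiction from mixing. The only cosmetic difference is that the paper phrases the contradiction as ``mixing implies the image of $G$ in $\mathrm{Aut}(X,\mu)$ is discrete, contradicting the non-discreteness conclusion of Proposition \ref{prop-iasgap},'' whereas you unpack this discreteness argument explicitly via the computation $\mu(g_nA\bigtriangleup A)\to 2\mu(A)(1-\mu(A))$.
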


\begin{proof}
Let $G\c (X,\mu )$ be any Bernoulli shift of a non-amenable group $G$.
This action is mixing, so the image of $G$ in $\mathrm{Aut}(X,\mu )$ is discrete.
The action has stable spectral gap since $G$ is non-amenable (see Remark \ref{rem-ssg} below).
Thus the corollary follows from Proposition \ref{prop-iasgap}.
\end{proof}

\begin{rem}\label{rem-ssg}
It is widely known that the Bernoulli shift $G\c (X, \mu)$ of a non-amenable group $G$ has stable spectral gap.
We give a proof of this fact for the reader's convenience, as follows:
By \cite[Lemma 1]{j}, the Koopman representation $G\c L^2_0(X)$ is a direct sum of subrepresentations of the left regular representation $\lambda \colon G\c \ell^2(G)$.
Take an arbitrary unitary representation $G\c \calh$.
By Fell's absorption principle \cite[Corollary 1 to Lemma 4.2]{fell} (see \cite[Lemma 2.1]{ch} for a direct proof), the product representation $G\c L^2_0(X)\otimes \calh$ is equivalent to a subrepresentation of the direct sum of countably many copies of $\lambda$.
Thus it does not contain the trivial representation of $G$ weakly since $G$ is non-amenable.  
\end{rem}

We will use the following lemma and corollary, which impose constraints on central sequences in a full group, in constructing interesting examples in Sections \ref{sec-finite-index} and \ref{sec-ex}.

\begin{lem}\label{lem-specgap-product}
Let $G\c (X, \mu)$ and $G\c (Y, \nu)$ be p.m.p.\ actions and suppose that the action $G\c (X, \mu)$ has stable spectral gap.
We set $(Z, \zeta)\defeq (X\times Y, \mu \times \nu)$ and let $G$ act on $(Z, \zeta)$ diagonally.
For each $g\in G$, let $\phi_g\in [G\ltimes (Z, \zeta)]$ denote the section $\{ g\} \times Z$.
Then for every $\ve >0$, there exist a finite subset $S\subset G$ and $\delta >0$ such that if $\phi$ is any element of $[G\ltimes (Z, \zeta)]$ satisfying
\begin{equation}\label{eqn-phicomm}
\inf_{g\in S}\zeta(\{ \, z\in Z\mid (\phi  \phi_g)_z=(\phi_g \phi)_z\, \})>1-\delta,
\end{equation}
then there exist a Borel subset $Y'\subset Y$, its partition $Y'=\bigsqcup_{i=1}^m Y_i$ into finitely many Borel subsets, and $g_1,\ldots, g_m\in G$ such that $\nu(Y')>1-\ve$ and for every $y\in Y_i$, we have
\[\mu(\{ \, x\in X\mid \phi_{(x, y)}=(g_i, (x, y))\, \})>1-\ve.\]
\end{lem}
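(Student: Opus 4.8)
The plan is to encode $\phi$ as a $\{0,1\}$-valued function on $G\times Z$ and then apply the spectral-gap estimate behind Lemma~\ref{lem-specgap}, but in a version in which the action $G\c(Y,\nu)$ is carried along as a passive tensor factor. First I would set up the encoding. Since $G$ covers the translation groupoid $G\ltimes(Z,\zeta)$, for a.e.\ $z\in Z$ there is a unique $c(z)\in G$ with $\phi_z=(c(z),z)$, and $z\mapsto c(z)$ is Borel; define $\xi\in L^1(G\times Z)$ by $\xi(g,z)\defeq 1$ if $c(z)=g$ and $\xi(g,z)\defeq 0$ otherwise. Then $\xi$ is a non-negative unit vector, and $\xi_y\defeq\xi(\,\cdot\,,\,\cdot\,,y)$ is a non-negative unit vector in $L^1(G\times X)$ for a.e.\ $y$. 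Let $G$ act on $G\times Z$ by $g\cdot(h,z)=(ghg^{-1},gz)$, so that $\xi^g(h,z)=\xi(ghg^{-1},gz)$ agrees with conjugation by $\phi_g$ in $[G\ltimes(Z,\zeta)]$. A direct computation with the groupoid product shows $(\phi\phi_g)_z=(c(gz)g,z)$ and $(\phi_g\phi)_z=(gc(z),z)$, so $(\phi\phi_g)_z=(\phi_g\phi)_z$ if and only if $c(gz)=gc(z)g^{-1}$, which is exactly the condition $\xi^g(\,\cdot\,,z)=\xi(\,\cdot\,,z)$. Hence $\|\xi^g-\xi\|_1=2\,\zeta(\{z\in Z:(\phi\phi_g)_z\neq(\phi_g\phi)_z\})$, so hypothesis \eqref{eqn-phicomm} with parameter $\delta$ forces $\sup_{g\in S}\|\xi^g-\xi\|_1<2\delta$.

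Next I would invoke the following variant of Lemma~\ref{lem-specgap}: for every $\ve_1>0$ there are a finite $S\subset G$ and $\delta_1>0$ such that any non-negative unit vector $\xi\in L^1(G\times X\times Y)$ with $\sup_{g\in S}\|\xi^g-\xi\|_1<\delta_1$ satisfies $\|\xi-P\xi\|_1<\ve_1$, where $(P\xi)(g,x,y)\defeq\int_X\xi(g,t,y)\,d\mu(t)$. This is proved exactly as Lemma~\ref{lem-specgap} — pass to $\eta=\xi^{1/2}$, apply spectral gap, then Jensen's inequality and Cauchy--Schwarz — the only change being that the relevant representation is $G\c\ell^2(G)\otimes L^2_0(X)\otimes L^2(Y)$ (conjugation on $\ell^2(G)$, Koopman on $L^2_0(X)\otimes L^2(Y)$), which has spectral gap by stable spectral gap of $G\c(X,\mu)$ applied with $\mathcal H=\ell^2(G)\otimes L^2(Y)$, and $P$ now averages over the $X$-coordinate alone. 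The main obstacle appears precisely here: because of the diagonal $G$-action on the $Y$-coordinate one has $(\xi^g)_y=(\xi_{gy})^g\neq(\xi_y)^g$, so a naive fiberwise application of Lemma~\ref{lem-specgap} fails, and one is forced to run the spectral-gap argument globally on $G\times Z$ with a projection integrating over $X$ only — which is exactly where \emph{stable} (rather than plain) spectral gap of $G\c(X,\mu)$ is used.

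Given $\ve$ (which we may assume is less than $1/2$), I would set $\ve_1\defeq\ve^2/5$, take the resulting $S,\delta_1$, and put $\delta\defeq\delta_1/2$; then the encoding step yields $\|\xi-P\xi\|_1<\ve_1$. Now write $p_y(g)\defeq(P\xi)(g,x,y)=\mu(\{x\in X:c(x,y)=g\})$, viewed also as the $X$-constant function $(g,x)\mapsto p_y(g)$ on $G\times X$. Since $\xi_y$ is the indicator of the graph of $c(\,\cdot\,,y)$, one computes $\|\xi_y-p_y\|_{L^1(G\times X)}=2\bigl(1-\sum_g p_y(g)^2\bigr)$, and by Fubini $\int_Y\|\xi_y-p_y\|_{L^1(G\times X)}\,d\nu(y)=\|\xi-P\xi\|_1<\ve_1$. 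By Markov's inequality the set $Y_0\defeq\{y:\sum_g p_y(g)^2>1-\tfrac12\sqrt{\ve_1}\}$ satisfies $\nu(Y\setminus Y_0)\leq\sqrt{\ve_1}$, and for $y\in Y_0$ the bound $\max_g p_y(g)\geq\sum_g p_y(g)^2>1-\tfrac12\sqrt{\ve_1}$ determines a unique (as $\tfrac12\sqrt{\ve_1}<\tfrac12$) element $g(y)\in G$ with $\mu(\{x\in X:\phi_{(x,y)}=(g(y),(x,y))\})>1-\tfrac12\sqrt{\ve_1}$; the map $y\mapsto g(y)$ is Borel.

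Finally I would produce the finite partition by a routine pigeonhole step: partition $Y_0=\bigsqcup_{g\in G}\{y\in Y_0:g(y)=g\}$ and keep finitely many pieces $Y_1,\dots,Y_m$, with corresponding elements $g_1,\dots,g_m$, whose union $Y'$ satisfies $\nu(Y')>\nu(Y_0)-\sqrt{\ve_1}\geq1-2\sqrt{\ve_1}$. Since $2\sqrt{\ve_1}<\ve$ and $\tfrac12\sqrt{\ve_1}<\ve$ by the choice of $\ve_1$, this gives $\nu(Y')>1-\ve$ and, for every $y\in Y_i$, $\mu(\{x\in X:\phi_{(x,y)}=(g_i,(x,y))\})>1-\ve$, as required. (As usual it suffices to treat small $\ve$, since the conclusion for a smaller $\ve$ implies it for a larger one with the same $S$ and $\delta$.)
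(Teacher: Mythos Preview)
Your proof is correct and follows essentially the same approach as the paper's: both encode $\phi$ as the indicator $1_\phi\in L^2(G\times Z)$, identify the commutation hypothesis with approximate invariance under the representation $G\curvearrowright \ell^2(G)\otimes L^2_0(X)\otimes L^2(Y)$, and use stable spectral gap to force $1_\phi$ close to its projection onto $\ell^2(G)\otimes\mathbb{C}1\otimes L^2(Y)$, from which the fiberwise concentration on a single group element follows by a Markov/pigeonhole argument. The only difference is cosmetic: the paper works directly in $L^2$ (noting $\langle\pi(g)1_\phi,1_\phi\rangle>1-\delta$ gives $\|1_\phi-P(1_\phi)\|_2^2<\ve^2$, hence $\int_Y\sum_g\mu(A_{g,y})^2\,d\nu>1-\ve^2$), whereas you route through an $L^1$ variant of Lemma~\ref{lem-specgap}; since $\xi=1_\phi$ is $\{0,1\}$-valued, your $\eta=\xi^{1/2}$ equals $\xi$ and the two computations coincide up to a factor of $2$ (indeed $\|\xi-P\xi\|_1=2\|1_\phi-P(1_\phi)\|_2^2$), so the Jensen/Cauchy--Schwarz step is in fact unnecessary here.
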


\begin{proof}
We may assume $\ve < 1/2$.
Let $\pi \colon G\c \ell^2(G)\otimes L^2_0 (X)\otimes L^2(Y)$ be the subrepresentation of the tensor product of the conjugation representation $G\c \ell ^2(G)$ with the Koopman representation $G\c L^2(Z)$.
Let $P\colon L^2(G\times Z)\to \ell^2(G)\otimes \C 1\otimes L^2(Y)$ be the orthogonal projection.
Since $\pi$ has spectral gap, there exist a finite subset $S\subset G$ and $\delta >0$ such that if $\eta \in L^2(G\times Z)$ is any unit vector satisfying $\inf _{g\in S} \mathrm{Re}\langle \pi (g) \eta , \eta \rangle >1-\delta$, then $\Vert \eta - P\eta \Vert _2 ^2 < \ve^2$.

Assume that $\phi \in [G\ltimes (Z, \zeta)]$ satisfies condition \eqref{eqn-phicomm}.
Then the indicator function $1_\phi$ of $\phi \subset G\ltimes Z$ is a unit vector in $L^2(G\times Z)$, and for every $g\in S$, we have
\[
\langle \pi (g)1_{\phi} , 1_{\phi} \rangle = \zeta (\{ \, z\in Z \mid (\phi  \phi _g )_z = (\phi _g  \phi )_z \, \} ) >1-\delta .
\]
By our choice of $S$ and $\delta$, we have $\Vert 1_{\phi} - P(1_{\phi}) \Vert _2 ^2 < \ve^2$ and hence $\Vert P(1_{\phi} ) \Vert _2^2 > 1-\ve^2$.
For $g\in G$ and $y\in Y$, we set $A_{g, y} \defeq \{ \, x\in X \mid \phi_{(x, y)} = (g, (x, y)) \, \}$, so that $P(1_{\phi})(g, (x, y)) = \mu (A_{g, y})$ for every $g\in G$ and almost every $(x, y)\in X\times Y$.
Then
\begin{align*}
1-\ve^2 & < \Vert P(1_{\phi})\Vert _2^2 = \int_Y \sum _{g\in G}\mu (A_{g, y})^2 \, d\nu(y)\leq \int_Y \biggl( \sup _{g\in  G}\mu (A_{g, y}) \biggr) \sum_{g\in G}\mu(A_{g, y})\, d\nu(y)\\
& = \int_Y  \sup_{g\in G} \mu (A_{g, y}) \, d\nu(y),
\end{align*}
and therefore there exists a Borel subset $Y'\subset Y$ such that $\nu(Y')>1-\ve$ and for almost every $y\in Y'$, we have $\sup_{g\in G}\mu(A_{g, y})>1-\ve$.
Since $\ve <1/2$, for almost every $y\in Y'$, there exists a unique $g\in G$ such that $\mu(A_{g, y})>1-\ve$.
If $Y'$ is replaced with its slightly smaller subset, then there exist finitely many $g_1,\ldots, g_m\in G$ and a Borel partition $Y'=\bigsqcup_{i=1}^mY_i$ such that for almost every $y\in Y_i$, we have $\mu(A_{g_i, y})>1-\ve$.
\end{proof}

\begin{cor}\label{cor:specgap}
Let $G\c (X, \mu)$ be a p.m.p.\ action with stable spectral gap.
Then for every $\ve >0$ and every finite subset $F\subset G$, there exist a finite subset $S\subset G$ and $\delta >0$ such that if $\phi$ is any element of $[G\ltimes (X, \mu)]$ satisfying
\[ 
\inf _{g\in S} \mu ( \{ \, x\in X \mid (\phi  \phi _g)_x = (\phi _g \phi )_x \, \} ) > 1-\delta,
\]
then there exists an element $g_0\in G$ which commutes with all elements of $F$ and satisfies $\mu (\{ \, x\in X \mid \phi_x=(g_0, x) \, \} ) > 1-\ve$.
\end{cor}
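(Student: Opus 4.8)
The plan is to deduce this from Lemma \ref{lem-specgap-product}, taking there for $(Y,\nu)$ the trivial one-point probability space with the trivial $G$-action, so that the diagonal system $(Z,\zeta)$ is identified with $(X,\mu)$ and each section $\{ g\}\times Z$ is identified with $\phi_g=\{ g\}\times X$. Given $\ve>0$ and a finite subset $F\subset G$, I would first set $\ve_0\defeq \min\{\ve,1/4\}$ and apply Lemma \ref{lem-specgap-product} with $\ve_0$ in place of $\ve$, obtaining a finite subset $S_0\subset G$ and $\delta_0>0$. Since $Y$ consists of a single point, the conclusion of that lemma reduces to the statement that there is some $g_0\in G$ with $\mu(\{\, x\in X\mid \phi_x=(g_0,x)\,\})>1-\ve_0$. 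I would then take $S\defeq S_0\cup F$ and $\delta\defeq \min\{\delta_0,1/4\}$, and check that these work.

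So suppose $\phi\in[G\ltimes(X,\mu)]$ satisfies $\inf_{g\in S}\mu(\{\, x\in X\mid (\phi\phi_g)_x=(\phi_g\phi)_x\,\})>1-\delta$. Restricting the infimum to $S_0\subset S$ and using $\delta\leq\delta_0$, the hypothesis of Lemma \ref{lem-specgap-product} is met with parameters $S_0,\delta_0$, so we obtain $g_0\in G$ with $\mu(\{\, x\in X\mid \phi_x=(g_0,x)\,\})>1-\ve_0\geq 1-\ve$; this already gives the required measure estimate. To see that $g_0$ commutes with an arbitrary $g\in F$, consider the set $B_g\subset X$ of all $x$ for which $\phi_x=(g_0,x)$, $\phi_{gx}=(g_0,gx)$ and $(\phi\phi_g)_x=(\phi_g\phi)_x$ all hold; since the action preserves $\mu$, the first two conditions each cut out a set of measure exceeding $1-\ve_0$ and the third one of measure exceeding $1-\delta$, so $\mu(B_g)>1-2\ve_0-\delta\geq 1/4>0$. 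For $x\in B_g$, the multiplication rule $(a,bx)(b,x)=(ab,x)$ of the translation groupoid gives
\begin{align*}
(\phi\phi_g)_x &= \phi_{gx}(g,x) = (g_0,gx)(g,x) = (g_0g,x), \\
(\phi_g\phi)_x &= (\phi_g)_{g_0x}\,\phi_x = (g,g_0x)(g_0,x) = (gg_0,x),
\end{align*}
and equality of these two groupoid elements forces $g_0g=gg_0$. Since $B_g$ has positive measure this indeed holds, and as $g\in F$ was arbitrary, $g_0$ commutes with every element of $F$.

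There is no real obstacle beyond invoking Lemma \ref{lem-specgap-product}: the remaining work is the elementary choice of constants $\ve_0,\delta$ guaranteeing that the triple intersection $B_g$ has positive (hence nonzero) measure, and bookkeeping of the source, range and multiplication conventions for $G\ltimes(X,\mu)$ in the displayed computation — in particular $(\phi_g\phi)_x$ must be evaluated at the point $g_0x=r(\phi_x)$ rather than at $x$.
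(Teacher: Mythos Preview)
Your proof is correct and follows essentially the same route as the paper: specialize Lemma~\ref{lem-specgap-product} to a one-point $(Y,\nu)$, enlarge the resulting finite set to contain $F$, choose constants so that the triple intersection $g^{-1}A_{g_0}\cap A_{g_0}\cap\{\,(\phi\phi_g)_x=(\phi_g\phi)_x\,\}$ is non-null, and read off $g_0g=gg_0$ from a single point of that set. The only cosmetic difference is that the paper works with $\ve^2$ where you use $\ve_0=\min\{\ve,1/4\}$, and it verifies commutation with all of $S$ rather than just $F$.
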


\begin{proof}
Pick $0<\ve <1/2$ and a finite subset $F\subset G$.
In the proof of Lemma \ref{lem-specgap-product}, suppose that $Y$ is a singleton.
We may assume that $\delta <1/4$ for the number $\delta$ obtained from the assumption that the action $G\c (X, \mu)$ has stable spectral gap.
We may also assume that the obtained finite subset $S\subset G$ contains $F$.
Following the proof of Lemma \ref{lem-specgap-product}, we obtain $1-\ve^2 < \Vert P(1_{\phi})\Vert _2^2\leq \sup _{g\in G}\mu (A_g)$, where $A_g \defeq \{ \, x\in X \mid \phi_x = (g, x) \, \}$ for $g\in G$.
Hence there is some $g_0\in G$ with $\mu (A_{g_0})>1-\ve^2$.
It remains to show that $g_0$ commutes with all elements of $S$ (and hence of $F$).
Fix $g\in S$.
Since $\mu (A_{g_0})>1-\ve^2 > 3/4$ and $\mu ( \{ \, x\in X \mid (\phi  \phi _g)_x = (\phi _g  \phi )_x \, \} ) > 1-\delta > 3/4$, the set
\[
g^{-1}A_{g_0}\cap A_{g_0}\cap \{ \, x\in X \mid (\phi  \phi _g) _x = (\phi _g  \phi )_x \, \}
\]
is non-null, so fix some element $x$ of this set.
Then $(g_0g, x) = (\phi \phi _g)_x = (\phi _g \phi )_x = (gg_0, x)$, and thus $g_0$ commutes with $g$.
\end{proof}


\subsection{Product actions}

In this subsection, we show that if a free p.m.p.\ action $G\c (X, \mu)$ satisfies a certain spectral gap property and a mixing property, then its product with an arbitrary ergodic p.m.p.\ action $G\c (Y, \nu)$ gives rise to an orbit equivalence relation which is not inner amenable, and moreover the associated von Neumann algebra does not have property Gamma if the action $G\c (Y, \nu)$ is strongly ergodic.
For the Bernoulli shift of a non-amenable group, Ioana proves that the associated von Neumann algebra does not have property Gamma (\cite[Lemma 2.3]{i}).

\begin{prop}\label{prop-diagonal-action}
Let $G\c (X, \mu)$ be a free, p.m.p., mildly mixing action of an infinite countable group $G$.
Suppose that either
\begin{enumerate}
\item[(1)] the action $G\c (X, \mu)$ has stable spectral gap, or
\item[(2)] there is an infinite subgroup $H$ of $G$ such that the pair $(G, H)$ has property (T).
\end{enumerate}
Let $G\c (Y, \nu)$ be an ergodic p.m.p.\ action and let $G$ act on $(X\times Y, \mu \times \nu)$ diagonally.
Then the translation groupoid $G\ltimes (X\times Y, \mu \times \nu)$ is not inner amenable.

If the action $G\c (Y, \nu)$ is further strongly ergodic, then the von Neumann algebra associated to the action $G\c (X\times Y, \mu \times \nu)$ does not have property Gamma.
\end{prop}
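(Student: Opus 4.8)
\emph{Setup.}\ Since $G\c (X,\mu)$ is free, the diagonal action $G\c (X\times Y,\mu\times\nu)$ is free, so the translation groupoid $\calg\defeq G\ltimes (X\times Y,\mu\times\nu)$ is (isomorphic to) the orbit equivalence relation $\calr(G\c (X\times Y))$; moreover $(X,\mu)$ is non-atomic, because an infinite group acting freely and measure-preservingly on a probability space has no atoms. For the first assertion I would argue by contradiction: suppose $\calg$ is inner amenable and fix an inner amenability sequence $(\xi_n)$ for $\calg$ (condition (1) of Theorem \ref{thm:equiv}), viewed as a sequence of non-negative unit vectors in $L^1(G\times X\times Y)$.

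\emph{Trivializing the $X$-direction.}\ Let $\pi$ be the conjugation representation of $G$ on $L^2(G\times X\times Y)=\ell^2(G)\otimes L^2(X)\otimes L^2(Y)$ (conjugation on $\ell^2(G)$, Koopman on the rest), let $\calk_1\defeq \ell^2(G)\otimes L^2_0(X)\otimes L^2(Y)$, which is $\pi$-invariant, and let $P\colon L^1(G\times X\times Y)\to L^1(G\times X\times Y)$ be the norm-one projection obtained by averaging over the source $X$-variable; on $L^2$ this is the orthogonal projection with kernel $\calk_1$, and it commutes with $\pi$. The key claim is that $\pi|_{\calk_1}$ does not weakly contain the trivial representation of $G$. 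In case (1) this is immediate from stable spectral gap of $G\c (X,\mu)$, applied with the auxiliary representation $\ell^2(G)\otimes L^2(Y)$. In case (2) I would use relative property (T): if $\pi|_{\calk_1}$ had almost invariant vectors it would have a non-zero $H$-invariant vector; realizing $\calk_1$ as the space of Hilbert--Schmidt operators from $\overline{\ell^2(G)\otimes L^2(Y)}$ into $L^2_0(X)$, such a vector is an $H$-equivariant operator whose nonvanishing forces $H\c L^2_0(X)$ to contain a non-zero finite-dimensional subrepresentation; but mild mixing of $G\c (X,\mu)$ rules this out for the infinite group $H$, since inside the compact closure of the $H$-action on such a subspace one extracts $g_k\to\infty$ in $H$ acting trivially in the limit, hence $g_k f\to f$ for some non-zero $f\in L^2_0(X)$, contradicting mild mixing. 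Granting the claim, the square-root argument from the proof of Lemma \ref{lem-specgap} — using $\|\eta^{\phi_g}-\eta\|_2^2\le \|\xi^{\phi_g}-\xi\|_1$ for $\eta=\xi^{1/2}$, together with Jensen's and the Cauchy--Schwarz inequalities — upgrades condition (ii) of Definition \ref{def:groupoid} for $(\xi_n)$ to $\|\xi_n-P\xi_n\|_1\to 0$.

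\emph{Contradiction with mild mixing.}\ Write the fibers of $P\xi_n$ as $(P\xi_n)(h,x,y)=f^n_h(y)$ with $f^n_h\in L^1(Y)$ non-negative, and set $m^n_h\defeq \|f^n_h\|_1$, so $\sum_h m^n_h=\|P\xi_n\|_1\to 1$. Applying condition (iii) of Definition \ref{def:groupoid} to the sets $\{h\}\times X\times Y$ (which have $\mu^1$-measure $1$), and condition (i) to the sets $C\times Y$ and $(X\setminus C)\times Y$ for Borel $C\subset X$, and transferring from $\xi_n$ to $P\xi_n$ using $\|\xi_n-P\xi_n\|_1\to 0$, one obtains $m^n_h\to 0$ for every $h\in G$ and $\sum_h\mu(C\bigtriangleup h^{-1}C)\,m^n_h\to 0$ for every Borel $C\subset X$. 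Normalizing the $m^n$ to probability measures on $G$ and running a routine diagonal argument against a countable dense family of subsets of $X$ (Markov's inequality plus the fact that no mass remains on any finite subset of $G$) produces a sequence $g_k\to\infty$ in $G$ with $\mu(C\bigtriangleup g_k^{-1}C)\to 0$ for every Borel $C\subset X$, i.e.\ $g_k\to\mathrm{id}$ in $\aut(X,\mu)$. Since $(X,\mu)$ is non-atomic, $(g_k)$ is a rigidity sequence for $G\c (X,\mu)$, contradicting mild mixing. This proves the first assertion.

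\emph{Property Gamma, and the main obstacle.}\ Now assume in addition that $G\c (Y,\nu)$ is strongly ergodic and suppose, toward a contradiction, that $M\defeq G\ltimes L^\infty(X\times Y)$ — the von Neumann algebra of $\calr(G\c (X\times Y))$ — has property Gamma. I would first show $\calr(G\c (X\times Y))$ is strongly ergodic: for an asymptotically invariant sequence $(A_n)$, the mean-zero part $v_n\defeq 1_{A_n}-(\mu\times\nu)(A_n)$ lies in $L^2_0(X\times Y)=L^2_0(Y)\oplus(L^2_0(X)\otimes L^2(Y))$ and is asymptotically invariant, so its component in $L^2_0(X)\otimes L^2(Y)$ — asymptotically invariant for a representation with spectral gap, by stable spectral gap in case (1), and by relative property (T) together with mild mixing forcing $(L^2_0(X)\otimes L^2(Y))^H=0$ in case (2) — tends to $0$; hence $1_{A_n}$ is asymptotically a function on $Y$, which is asymptotically invariant for $G\c (Y,\nu)$ and uniformly bounded, so strong ergodicity of $G\c (Y,\nu)$ (via the fact recalled in Remark \ref{rem-proj}) forces $(\mu\times\nu)(A_n)(1-(\mu\times\nu)(A_n))\to 0$. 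Since $G\c (X,\mu)$ is mildly mixing, its product with the ergodic action $G\c (Y,\nu)$ is ergodic, so Proposition \ref{prop-gamma-ia}(ii) applied to the strongly ergodic relation $\calr(G\c (X\times Y))$, whose von Neumann algebra $M$ has property Gamma, yields an inner amenability sequence for it — contradicting the first assertion. The delicate point, and the one I expect to be the main obstacle, is the key claim in case (2): deducing from relative property (T) and mild mixing that $\pi|_{\calk_1}$ has no almost invariant vectors, since this goes through finite-dimensional $H$-subrepresentations of $L^2_0(X)$ and the extraction of a rigidity sequence inside a compact group, and the same point recurs in the strong-ergodicity step above. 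By contrast, the square-root/spectral-gap upgrade, the $m^n_h$ bookkeeping, and the diagonal extraction are routine adaptations of Lemmas \ref{lem-specgap} and \ref{lem-pigeon} and Proposition \ref{prop-iasgap}.
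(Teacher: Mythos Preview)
Your proof is correct, but it takes a somewhat different route from the paper's, and the difference is instructive.

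The paper packages both assertions into a single structural lemma (Lemma~\ref{lem-p}, due to Ioana): under conditions (A) and (B) of Assumption~\ref{assum}, any sequence $(\eta_n)$ of unit vectors in $L^2(M)$ that is asymptotically $G$-invariant and asymptotically commutes with $L^\infty(Z)$ satisfies $\|\eta_n-Q\eta_n\|_2\to 0$, where $Q$ projects all the way onto $\ell^2(\{e\})\otimes\C 1\otimes L^2(Y)$. The proof first projects to $P$ via spectral gap (your step), and then uses the commutator with $1_{E\times Y}$, where $E\subset X$ is a set with $\inf_{g\neq e}\mu(gE\bigtriangleup E)>0$ built from mild mixing plus freeness, to kill all $g\neq e$ at once. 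Both assertions then follow immediately (Corollary~\ref{cor-c}): diffuseness is contradicted for~(i), and for~(ii) one plugs in the Gamma witnesses $u_n$ directly and uses strong ergodicity of $G\c(Y,\nu)$ to kill $Q(u_n)$.

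You instead stop at $P$ and, for~(i), read off from the \emph{balanced} condition a rigidity sequence $g_k\to\infty$ with $g_k\to\mathrm{id}$ in $\aut(X,\mu)$, contradicting mild mixing directly --- this is closer in spirit to Proposition~\ref{prop-iasgap}. For~(ii) you take a detour: you first establish strong ergodicity of the \emph{diagonal} action, then invoke Proposition~\ref{prop-gamma-ia}\,(ii) to produce an inner amenability sequence, and finally appeal to~(i). This is valid but more circuitous than the paper's direct attack on the $u_n$. Regarding the point you flag as the main obstacle in case~(2): the paper handles it in one line (Remark~\ref{rem-ass}) --- since $H\c L^2_0(X)$ is weakly mixing (as $H\c X$ inherits mild mixing), $\calk_1$ has no $H$-invariant vector, and relative property~(T) gives spectral gap immediately; your Hilbert--Schmidt/finite-dimensional argument is the standard unpacking of this, but the weak mixing formulation is cleaner.
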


Recall that a p.m.p.\ action $G\c (X, \mu)$ is called \textit{mildly mixing} if for every Borel subset $A\subset X$ with $0<\mu(A)<1$, we have $\liminf_{g\to \infty}\mu(gA\bigtriangleup A)>0$ (\cite{sch-mild}).
Every mildly mixing action is weakly mixing, and hence the diagonal action $G\c (X\times Y, \mu \times \nu)$ in Proposition \ref{prop-diagonal-action} is ergodic.
Without assuming the mildly mixing condition, the conclusion of the proposition may fail (see Remark \ref{rem-product-group-action}).

We will actually prove Proposition \ref{prop-diagonal-action} in a slightly more general setting.

\begin{ass}\label{assum}
Let $G\c (X, \mu)$ be a free p.m.p.\ action of an infinite countable group $G$ satisfying the following condition:
\begin{enumerate}
\item[(A)] There exist a finite subset $K\subset G$ and a Borel subset $D\subset X$ such that
\[\inf_{g\in G\setminus K}\mu(gD\bigtriangleup D)>0.\]
\end{enumerate}
Let $G\c (Y, \nu)$ be a p.m.p.\ action and set $(Z, \zeta)\defeq (X\times Y, \mu \times \nu)$.
Let $G$ act on $(Z, \zeta)$ diagonally and suppose that the action $G \c (Z, \zeta)$ is ergodic.
Let $G$ act on $G\times Z$ by the formula $g(h, z)=(ghg^{-1}, gz)$.
We have the Koopman representation $\pi \colon G\c L^2(G\times Z)$ and suppose the following condition:
\begin{enumerate}
\item[(B)] The representation $G\c \ell^2(G)\otimes L^2_0(X)\otimes L^2(Y)$ given as the subrepresentation of $\pi$ has spectral gap.
\end{enumerate}
We fix notation.
Let $M$ be the von Neumann algebra associated to the action $G\c (Z, \zeta)$, with the faithful normal trace $\tau$.
Let $L^2(M)$ be the completion of $M$ with respect to the norm $\Vert x\Vert_2=\tau(x^*x)^{1/2}$, which is naturally identified with $\ell^2(G)\otimes L^2(X)\otimes L^2(Y)$.
Let $Q\colon L^2(M)\to \ell^2(\{ e\})\otimes \mathbb{C}1\otimes L^2(Y)$ be the orthogonal projection.
\end{ass}

\begin{rem}\label{rem-ass}
The diagonal action $G\c (X\times Y, \mu \times \nu)$ in Proposition \ref{prop-diagonal-action} satisfies conditions (A) and (B) in Assumption \ref{assum}.
Indeed, condition (A) holds since the action $G\c (X, \mu)$ is mildly mixing.
If condition (1) holds, then condition (B) obviously follows.
If condition (2) holds, then the restriction $H\c (X, \mu)$ is mildly mixing and hence weakly mixing (\cite[Section 2]{sch-mild}).
Since the representation $H\c L^2_0(X)$ is weakly mixing, there is no $H$-invariant unit vector in $\ell^2(G)\otimes L^2_0(X)\otimes L^2(Y)$, and condition (B) follows from property (T) of the pair $(G, H)$.
\end{rem}

The following is a remark due to Adrian Ioana on the first author's earlier note.

\begin{lem}[A. Ioana]\label{lem-p}
Under Assumption \ref{assum}, if $(\eta_n)$ is a sequence of unit vectors in $L^2(M)$ such that $\Vert \pi(g)\eta_n-\eta_n\Vert_2\to 0$ for every $g\in G$ and $\Vert \eta_n1_A-1_A\eta_n\Vert_2\to 0$ for every Borel subset $A\subset Z$, then $\Vert \eta_n-Q\eta_n\Vert_2\to 0$.
\end{lem}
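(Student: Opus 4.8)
The plan is to pass to the Fourier expansion $\eta_n=\sum_{h\in G}u_h\beta_n^{(h)}$ with $\beta_n^{(h)}\in L^2(Z)$ and $\sum_h\Vert\beta_n^{(h)}\Vert_2^2=1$, observing that under the identification $L^2(M)\cong\ell^2(G)\otimes L^2(X)\otimes L^2(Y)$ the representation $\pi$ is conjugation by the $u_g$, that the conditional expectation $E_B$ onto the Cartan $B=L^\infty(Z)$ retains only the $h=e$ coefficient, and that $P$, $E_B$, $Q$ are mutually commuting orthogonal projections with $Q=PE_B=E_BP$. Hence $\eta_n-Q\eta_n=(\eta_n-E_B\eta_n)+E_B(\eta_n-P\eta_n)$, so it suffices to show separately that $\Vert\eta_n-P\eta_n\Vert_2\to0$ and that $\Vert\eta_n-E_B\eta_n\Vert_2^2=\sum_{h\neq e}\Vert\beta_n^{(h)}\Vert_2^2\to0$ (the second term in the identity is bounded by $\Vert\eta_n-P\eta_n\Vert_2$ since $\Vert E_B\,\cdot\,\Vert_2\leq\Vert\,\cdot\,\Vert_2$).

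The first claim I would get immediately from condition (B): the range of $I-P$ is precisely $\ell^2(G)\otimes L_0^2(X)\otimes L^2(Y)$, which is $\pi$-invariant and on which the subrepresentation of $\pi$ has spectral gap; since $P$ commutes with $\pi$ and $\eta_n$ is asymptotically $\pi$-invariant, the vector $\eta_n-P\eta_n=(I-P)\eta_n$ lies in that subspace and satisfies $\Vert\pi(g)(\eta_n-P\eta_n)-(\eta_n-P\eta_n)\Vert_2\leq\Vert\pi(g)\eta_n-\eta_n\Vert_2\to0$, so the Kazhdan-type reformulation of the spectral gap (a finite $F\subset G$ and $\kappa>0$ with $\Vert\xi\Vert_2\leq\kappa^{-1}\max_{g\in F}\Vert\pi(g)\xi-\xi\Vert_2$ for all $\xi$ in the subspace) yields $\Vert\eta_n-P\eta_n\Vert_2\to0$. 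In particular, writing $\bar\beta_n^{(h)}(y)\defeq\int_X\beta_n^{(h)}(x,y)\,d\mu(x)$ for the $h$-th coefficient of $P\eta_n$, we obtain $\sum_h\Vert\beta_n^{(h)}-\bar\beta_n^{(h)}\Vert_2^2\to0$, which allows me to replace $\beta_n^{(h)}$ by the $x$-independent $\bar\beta_n^{(h)}$ in later estimates at a total cost tending to $0$.

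For the second claim I would split $G\setminus\{e\}$ using the finite set $K$ from condition (A). With $D\subset X$ as in (A) and $D'\defeq D\times Y$, the hypothesis $\Vert 1_{D'}\eta_n-\eta_n1_{D'}\Vert_2\to0$ expands (using $1_{D'}u_h=u_h1_{h^{-1}D'}$ and $h^{-1}D'=(h^{-1}D)\times Y$) to $\sum_h\int_{(h^{-1}D\,\bigtriangleup\,D)\times Y}|\beta_n^{(h)}|^2\,d\zeta\to0$; replacing $\beta_n^{(h)}$ by $\bar\beta_n^{(h)}$ (a Cauchy--Schwarz estimate bounded by $2\Vert\eta_n-P\eta_n\Vert_2$) turns this into $\sum_h\mu(h^{-1}D\,\bigtriangleup\,D)\,\Vert\bar\beta_n^{(h)}\Vert_2^2\to0$, and restricting to $h\notin K^{-1}$, where $\mu(h^{-1}D\,\bigtriangleup\,D)$ is bounded below by (A), gives $\sum_{h\notin K^{-1}}\Vert\bar\beta_n^{(h)}\Vert_2^2\to0$, hence $\sum_{h\notin K^{-1}}\Vert\beta_n^{(h)}\Vert_2^2\to0$. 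For each of the finitely many remaining $h\in K^{-1}\setminus\{e\}$ I would use that the diagonal action $G\c(Z,\zeta)$ is essentially free (as $G\c(X,\mu)$ is): choosing a Borel partition $Z=A_0\sqcup A_1\sqcup A_2$ with $h^{-1}A_i\cap A_i=\emptyset$, so that $A_i\subset h^{-1}A_i\,\bigtriangleup\,A_i$, one has $\sum_{i=0}^{2}\Vert 1_{A_i}\eta_n-\eta_n1_{A_i}\Vert_2^2\geq\sum_i\int_{A_i}|\beta_n^{(h)}|^2=\Vert\beta_n^{(h)}\Vert_2^2$, forcing $\Vert\beta_n^{(h)}\Vert_2\to0$. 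Adding the two contributions gives $\sum_{h\neq e}\Vert\beta_n^{(h)}\Vert_2^2\to0$, completing the proof.

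I do not anticipate a serious obstacle beyond bookkeeping; the one step that genuinely requires the spectral-gap hypothesis (B) rather than mere mixing is the replacement of $\beta_n^{(h)}$ by its $x$-average $\bar\beta_n^{(h)}$, since without passing to the averaged coefficients one cannot compare $\int_{(h^{-1}D\,\bigtriangleup\,D)\times Y}|\beta_n^{(h)}|^2$ with the full norm $\Vert\beta_n^{(h)}\Vert_2^2$. Condition (A) supplies exactly the uniform mixing needed to control the tail in $h$, and essential freeness of the diagonal action disposes of the finitely many exceptional group elements.
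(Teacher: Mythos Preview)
Your argument is correct and follows the same overall architecture as the paper's: first use condition (B) to get $\Vert \eta_n-P\eta_n\Vert_2\to 0$, then use the asymptotic commutation with indicator functions together with condition (A) and freeness to kill the $g\neq e$ Fourier coefficients. The difference is in how the second step is organized.

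The paper first upgrades $D$ to a single Borel set $E\subset X$ with $\inf_{g\neq e}\mu(gE\bigtriangleup E)>0$, by adjoining to $D$ a small set $D_1$ that separates the finitely many elements of the stabilizer $K_0=\{g:gD=D\}$ (this is where freeness of $G\c X$ enters). It then applies the commutator hypothesis with $F=E\times Y$ directly to $P\eta_n$ rather than to $\eta_n$: since the coefficients $b_{n,g}$ of $P\eta_n$ are already $x$-constant, the computation $\Vert P(\eta_n)1_F-1_FP(\eta_n)\Vert_2^2=\sum_g\mu(E\bigtriangleup g^{-1}E)\Vert b_{n,g}\Vert_2^2$ is exact, and the uniform lower bound for $g\neq e$ finishes in one stroke. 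Your route instead keeps the original $D$, applies the commutator to $\eta_n$, passes from $\beta_n^{(h)}$ to $\bar\beta_n^{(h)}$ via a Cauchy--Schwarz estimate controlled by $\Vert\eta_n-P\eta_n\Vert_2$, and then disposes of the finitely many exceptional $h\in K^{-1}\setminus\{e\}$ by a separate partition argument using freeness of $G\c Z$. The paper's device of building $E$ up front absorbs both your coefficient-replacement step and your case analysis into the construction of a single test set; your argument is a bit longer but makes more transparent exactly which hypothesis is doing work at each stage.
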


Note that $\eta_n1_A$ is the vector in $L^2(M)$ obtained by multiplying $\eta_n$ by $1_A\in M$ from the right, and is identified with the pointwise product $1_{\calg_{Z, A}}\eta_n$ of the two functions $1_{\calg_{Z, A}}$ and $\eta_n$ on $G\times Z$, where $(\calg, \zeta)\defeq G\ltimes (Z, \zeta)$.
Similarly, the vector $1_A\eta_n\in L^2(M)$ is identified with the pointwise product $1_{\calg_{A, Z}}\eta_n$.

\begin{proof}[Proof of Lemma \ref{lem-p}]
We first find a Borel subset $E\subset X$ such that
\[\inf_{g\in G\setminus \{ e\}}\mu(gE\bigtriangleup E)>0.\]
Let $K_0$ be the subgroup of all $g\in G$ with $gD=D$, which is contained in $K$ and hence finite.
By condition (A), $c\defeq \inf_{g\in G\setminus K_0}\mu(gD\bigtriangleup D)$ is positive.
Since the action $G\c X$ is free, there exists a Borel subset $D_1\subset X\setminus D$ such that $\mu(D_1)<c/3$ and $\mu(gD_1\bigtriangleup D_1)>0$ for all $g\in K_0\setminus \{ e\}$.
The set $E\defeq D\cup D_1$ is a desired one.
Indeed, for every $g\in G\setminus K_0$, we have $\mu(gE\bigtriangleup E)\geq \mu(gD\bigtriangleup D)-2\mu(D_1)\geq c/3$, and for every $g\in K_0\setminus \{ e\}$, we have $\mu(gE\bigtriangleup E)=\mu(gD_1\bigtriangleup D_1)>0$.
We set $d\defeq \inf_{g\in G\setminus \{ e\}}\mu(gE\bigtriangleup E)>0$.

For $g\in G$, let $u_g$ be the unitary of $M$ associated to $g$.
The representation $\pi$ is given by $\pi(g)x=u_gxu_g^*$ for $x\in M$.
Let $P\colon L^2(M)\to \ell^2(G)\otimes \mathbb{C}1\otimes L^2(Y)$ be the orthogonal projection.
By condition (B), we have
\begin{align}\label{px}
\Vert \eta_n-P\eta_n\Vert_2\to 0.
\end{align}
For each $n$, write $P\eta_n=\sum_{g\in G}u_g(1\otimes b_{n, g})$, where $b_{n, g}\in L^2(Y)$.
Let $F\defeq E\times Y$ and let $1_F$ be the indicator function of $F$.
We have $\Vert \eta_n1_F-1_F\eta_n\Vert_2\to 0$, and hence condition (\ref{px}) implies that $\Vert P(\eta_n)1_F-1_FP(\eta_n)\Vert_2\to 0$.
We also have
\[\Vert P(\eta_n)1_F-1_FP(\eta_n)\Vert_2^2=\sum_{g\in G}\mu(E\bigtriangleup g^{-1}E)\Vert b_{n, g}\Vert_2^2\geq d\sum_{g\in G\setminus \{ e\}}\Vert b_{n, g}\Vert_2^2.\]
By the definition of $P$ and $Q$, it follows that $\sum_{g\in G\setminus \{ e\}}\Vert b_{n, g}\Vert_2^2=\Vert P\eta_n-Q\eta_n\Vert_2^2$ and hence $\Vert P\eta_n-Q\eta_n\Vert_2\to 0$.
By condition (\ref{px}) again, $\Vert \eta_n-Q\eta_n\Vert_2\to 0$.
\end{proof}

\begin{cor}\label{cor-c}
Under Assumption \ref{assum}, the following two assertions hold:
\begin{enumerate}
\item[(i)] The translation groupoid $G\ltimes (Z, \zeta)$ is not inner amenable.
\item[(ii)] (A. Ioana) If the action $G\c (Y, \nu)$ is strongly ergodic, then $M$ does not have property Gamma.
\end{enumerate}
\end{cor}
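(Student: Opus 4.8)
The plan is to deduce both assertions from Lemma \ref{lem-p} by applying it to suitable sequences of unit vectors in $L^2(M)$. Throughout, write $(\calg,\zeta)\defeq G\ltimes (Z,\zeta)$ and use the canonical identification of $L^2(\calg,\zeta^1)$ with $L^2(M)=\ell^2(G)\otimes L^2(X)\otimes L^2(Y)$, under which: the representation $\pi$ of Assumption \ref{assum} becomes $\pi(g)=c(g)\otimes\kappa_X(g)\otimes\kappa_Y(g)$, where $c$ is the conjugation representation on $\ell^2(G)$ and $\kappa_X,\kappa_Y$ are the Koopman representations; the orthogonal projection $Q$ onto $\ell^2(\{e\})\otimes\C 1\otimes L^2(Y)$ is the $L^2$-extension of the $G$-equivariant conditional expectation $E\colon M\to L^\infty(Y)$; and $Q$ commutes with every $\pi(g)$, restricting on its range to $\kappa_Y$. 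Finally note that $\calg^0=\{1_G\}\times Z$ satisfies $\zeta^1(\calg^0)=1<\infty$.

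For assertion (i), suppose for a contradiction that $(\calg,\zeta)$ is inner amenable and fix an inner amenability sequence $(\xi_n)$ in $L^1(\calg,\zeta^1)$. Put $\eta_n\defeq\xi_n^{1/2}$, a non-negative unit vector in $L^2(M)$. Applying condition (ii) of Definition \ref{def:groupoid} to the sections $\phi_g$, $g\in G$, gives $\Vert\pi(g)\xi_n-\xi_n\Vert_1\to 0$ for all $g\in G$, so by the elementary inequality $|a^{1/2}-b^{1/2}|^2\leq|a-b|$ we get $\Vert\pi(g)\eta_n-\eta_n\Vert_2^2\leq\Vert\pi(g)\xi_n-\xi_n\Vert_1\to 0$. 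Since $(\xi_n)$ is balanced, condition (i) forces $\Vert 1_{\calg_{Z\setminus A,A}}\xi_n\Vert_1\to 0$ and $\Vert 1_{\calg_{A,Z\setminus A}}\xi_n\Vert_1\to 0$ for every Borel $A\subset Z$; since $\eta_n1_A-1_A\eta_n$ is the function $\gamma\mapsto(1_{s^{-1}(A)}(\gamma)-1_{r^{-1}(A)}(\gamma))\eta_n(\gamma)$, supported on $\calg_{Z\setminus A,A}\sqcup\calg_{A,Z\setminus A}$ with coefficient of modulus $1$ there, this yields $\Vert\eta_n1_A-1_A\eta_n\Vert_2^2=\Vert 1_{\calg_{Z\setminus A,A}}\xi_n\Vert_1+\Vert 1_{\calg_{A,Z\setminus A}}\xi_n\Vert_1\to 0$. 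Lemma \ref{lem-p} then gives $\Vert\eta_n-Q\eta_n\Vert_2\to 0$; as $Q\eta_n$ is supported on $\calg^0$, this forces $\Vert 1_{\calg^0}\xi_n\Vert_1=\Vert 1_{\calg^0}\eta_n\Vert_2^2\to 1$, contradicting condition (iii) of Definition \ref{def:groupoid} applied to the finite-measure set $\calg^0$.

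For assertion (ii), suppose $M$ has property Gamma and choose unitaries $(u_n)$ of $M$ with $\tau(u_n)=0$ and $\Vert[x,u_n]\Vert_2\to 0$ for all $x\in M$. Set $\eta_n\defeq u_n$, a unit vector in $L^2(M)$. Then $\Vert\pi(g)\eta_n-\eta_n\Vert_2=\Vert[u_g,u_n]\Vert_2\to 0$ and $\Vert\eta_n1_A-1_A\eta_n\Vert_2=\Vert[u_n,1_A]\Vert_2\to 0$, so Lemma \ref{lem-p} gives $\Vert u_n-E(u_n)\Vert_2\to 0$ and hence $\Vert E(u_n)\Vert_2\to 1$. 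Now $E(u_n)\in L^\infty(Y)$ with $\Vert E(u_n)\Vert_\infty\leq 1$; it has mean zero since $\int_Y E(u_n)\,d\nu=\tau(u_n)=0$; and $\Vert\kappa_Y(g)E(u_n)-E(u_n)\Vert_2=\Vert E(u_gu_nu_g^*-u_n)\Vert_2\leq\Vert[u_g,u_n]\Vert_2\to 0$ for every $g\in G$, using $G$-equivariance of $E$. Thus, after normalization, $(E(u_n))$ is an asymptotically invariant sequence of $L^\infty$-bounded, mean-zero unit vectors in $L^2(Y)$ for $G$. Passing to the orbit equivalence relation $\calr_Y$ of $G\c(Y,\nu)$ — for which $(E(u_n))$ remains asymptotically invariant, approximating each element of $[\calr_Y]$ by piecewise $G$-translations and using the uniform $L^\infty$ bound — the fact recalled in Remark \ref{rem-proj} shows $\calr_Y$ is not strongly ergodic, and since strong ergodicity is an orbit equivalence invariant this contradicts the hypothesis on $G\c(Y,\nu)$.

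Apart from the dictionary translations above, the only delicate step is the conclusion of (ii): one must observe that $E(u_n)$ is automatically $L^\infty$-bounded and mean-zero, so that the spectral-gap characterization of strong ergodicity — the content of Remark \ref{rem-proj} — is applicable. Part (i) is comparatively soft; its one subtlety is recognizing that $\calg^0$ has finite $\zeta^1$-measure, which is precisely what lets diffuseness (condition (iii)) be applied to it to close the contradiction.
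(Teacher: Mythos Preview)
Your proof is correct and follows essentially the same route as the paper: verify the hypotheses of Lemma \ref{lem-p} for $\eta_n=\xi_n^{1/2}$ in (i) and for $\eta_n=u_n$ in (ii), then extract a contradiction from $\Vert\eta_n-Q\eta_n\Vert_2\to 0$.

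The one noteworthy difference is in the endgame of (i). The paper extends $Q$ to $L^1$, uses Jensen's inequality to bound $\Vert Q\xi_n-(Q\eta_n)^2\Vert_1$, and Cauchy--Schwarz to bound $\Vert\xi_n-(Q\eta_n)^2\Vert_1$, concluding $\Vert\xi_n-Q\xi_n\Vert_1\to 0$. Your observation that $Q\eta_n$ is supported on $\calg^0$, so $\Vert 1_{\calg\setminus\calg^0}\eta_n\Vert_2\leq\Vert\eta_n-Q\eta_n\Vert_2\to 0$ and hence $\Vert 1_{\calg^0}\xi_n\Vert_1=\Vert 1_{\calg^0}\eta_n\Vert_2^2\to 1$, reaches the same contradiction with diffuseness more directly and avoids those two inequalities entirely. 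For (ii) your argument is the paper's, just with the appeal to strong ergodicity made explicit via Remark \ref{rem-proj} and the passage to $\calr_Y$; the paper states the conclusion $\Vert Q(u_n)\Vert_2\to 0$ in one line.
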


\begin{proof}
To prove assertion (i), suppose toward a contradiction that there exists an inner amenability sequence $(\xi_n)$ for the groupoid $G\ltimes (Z, \zeta)$.
Let $\eta_n\defeq \xi_n^{1/2}$.
Then $\eta_n$ is a unit vector in $L^2(G\times Z)$ and satisfies the assumption in Lemma \ref{lem-p}.
Indeed, for every $g\in G$, we have $\Vert \pi(g)\eta_n-\eta_n\Vert_2^2\leq \Vert (\xi_n)^{g^{-1}}-\xi_n\Vert_1\to 0$.
Set $(\calg, \zeta) \defeq G\ltimes (Z, \zeta)$.
For every Borel subset $A\subset Z$, we have $\Vert \eta_n1_A-1_A\eta_n\Vert_2^2\leq \Vert 1_{\calg_{Z, A}}\xi_n-1_{\calg_{A, Z}}\xi_n\Vert_1\to 0$ since $(\xi_n)$ is balanced.
By Lemma \ref{lem-p}, we have $\Vert \eta_n-Q\eta_n\Vert_2\to 0$.
The projection $Q$ is also defined on $L^1(G\times Z)$:
For $\xi \in L^1(G\times Z)$, $g\in G$, $x\in X$ and $y\in Y$, we set
\[(Q\xi)(g, x, y)\defeq 
\begin{cases}
\int_X\xi(e, t, y)\, d\mu(t) & \text{if} \ g=e,\\
0 & \text{if}\ g\neq e.
\end{cases}\]
As in the proof of Lemma \ref{lem-specgap}, by Jensen's inequality, we have
\begin{align*}
\Vert Q\xi_n-(Q\eta_n)^2\Vert_1&=\int_{X\times Y}\xi_n(e, x, y)\, d\mu(x) d\nu(y)-\Vert Q\eta_n\Vert_2^2\leq \Vert \eta_n\Vert_2^2-\Vert Q\eta_n\Vert_2^2 \to 0.
\end{align*}
By the Cauchy-Schwarz inequality, we have
\[\Vert \xi_n-(Q\eta_n)^2\Vert_1\leq \Vert \eta_n+Q\eta_n\Vert_2\Vert \eta_n-Q\eta_n\Vert_2\leq 2\Vert \eta_n-Q\eta_n\Vert_2\to 0.\]
Thus $\Vert \xi_n-Q\xi_n\Vert_1\to 0$, and $\xi_n$ will concentrate on the set $\{ e\} \times Z$.
This contradicts $(\xi_n)$ being diffuse.
Assertion (i) follows.

Suppose that the action $G\c (Y, \nu)$ is strongly ergodic.
If $M$ had property Gamma, then we would have a sequence $(u_n)$ of unitaries of $M$ such that $\tau(u_n)=0$ and $\Vert [x, u_n]\Vert_2\to 0$ for all $x\in M$.
Since $Q$ is $G$-equivariant, $(Q(u_n))$ is asymptotically $G$-invariant.
By strong ergodicity of the action $G\c (Y, \nu)$, we have $\Vert Q(u_n)\Vert_2=\Vert Q(u_n)-\tau(Q(u_n))\Vert_2\to 0$.
This contradicts Lemma \ref{lem-p}, and assertion (ii) follows.
\end{proof}

Proposition \ref{prop-diagonal-action} follows from Corollary \ref{cor-c} and Remark \ref{rem-ass}.


\section{Finite-index inclusions and central sequences}\label{sec-finite-index}

For a finite-index inclusion $\cals <\calr$ of ergodic discrete p.m.p.\ equivalence relations, in Corollaries \ref{cor-td} and \ref{cor-td-c}, we proved that if $\calr$ is stable or Schmidt, then so is $\cals$.
In this section, we discuss the converse.
In Subsection \ref{subsec-alg}, we give a sufficient condition for the converse to hold.
In Subsections \ref{subsec-counter-schmidt} and \ref{subsec-counter-stable}, we construct examples for which the converse does not hold.
Throughout this section, let $(X, \mu)$ be a standard probability space and let $\calb$ be the measure algebra of $\mu$.


\subsection{The action on the algebra of asymptotically invariant sequences}\label{subsec-alg}

Let $\cals<\calr$ be a finite-index inclusion of ergodic discrete p.m.p.\ equivalence relations on $(X, \mu)$.
By \cite[Theorem 2.11]{ha} or \cite[Theorem 2]{su}, we have an ergodic finite-index subrelation $\cals_0<\cals$ and a finite group $F$ acting on $\cals_0$ by automorphisms such that $\calr=\cals_0 \rtimes F$.
Under the assumption that $\cals$ is stable or Schmidt, since these properties pass to $\cals_0$, we may therefore assume that $\calr$ is written as $\calr =\cals \rtimes F$ for some finite group $F$ acting on $\cals$.

Fix a non-principal ultrafilter $\omega$ on $\N$ and form the ultraproduct $(\calb^\omega, \mu^\omega)$ of the measure algebra $(\calb, \mu)$.
The full group $[\cals]$ naturally acts on $\calb^\omega$, preserving $\mu^\omega$.
Let $\cala$ denote the fixed point algebra of this action.
The group $F$ also acts on $\calb^\omega$ and on $\cala$.

\begin{prop}\label{prop-f-faith}
Suppose that $F$ acts on $\cala$ faithfully.
Then
\begin{enumerate}
\item[(i)] if $\cals$ is stable, then $\calr$ is stable.
\item[(ii)] If $\cals$ is Schmidt, then $\calr$ is Schmidt.
\end{enumerate}
\end{prop}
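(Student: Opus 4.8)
The plan is to promote, by an $F$-equivariant patching argument, a non-trivial central sequence (resp.\ a stability sequence) for $\cals$ to one for $\calr$; faithfulness of $F$ on $\cala$ is exactly the input that supplies enough room inside the algebra of asymptotically $\cals$-invariant sequences to carry this out without the construction collapsing. By the reduction made at the beginning of this subsection I may assume $\calr=\cals\rtimes F$, so that there are automorphisms $\theta_f\in[\calr]$, $f\in F$, realizing the $F$-action, with $\theta_f$ normalizing $\cals$ and $\calr=\bigcup_{f\in F}\theta_f\cals$; recall (cf.\ Remark \ref{rem-central}) that a sequence $(T_n)$ in $[\calr]$ is central as soon as it asymptotically commutes with $[\cals]$ and with each $\theta_f$.

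First I would extract from faithfulness a concrete finite model of the $F$-action inside $\calb$. For each $e\neq f\in F$ choose $a_f\in\cala$ with $f(a_f)\neq a_f$; the subalgebra $\mathcal{D}$ of $\cala$ generated by $\{\,g(a_f)\mid g\in F,\ e\neq f\in F\,\}$ is finite dimensional (as $F$ is finite), $F$-invariant, and $F$ acts faithfully on it. Lifting $\mathcal{D}$ to $\calb$ yields a sequence of finite Borel partitions $\mathcal{P}_n$ of $X$ which are asymptotically $\cals$-invariant, asymptotically permuted by $F$, and \emph{$c$-faithful} for some fixed $c>0$: for every $e\neq f\in F$, the union of the atoms of $\mathcal{P}_n$ moved by $\theta_f$ has measure at least $c$.

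Next, assume $\cals$ is Schmidt and fix a non-trivial central sequence $(S_n)$ in $[\cals]$; by Lemma \ref{lem-c} we may take $S_nx\neq x$ for all $n$ and all $x$. Since $(S_n)$ is central in $[\cals]$ and $(\mathcal{P}_n)$ is asymptotically $\cals$-invariant, $S_n$ asymptotically preserves each atom of $\mathcal{P}_n$. I would then define $\tilde S_n\in[\calr]$ by running, on each atom of $\mathcal{P}_n$, a suitable $F$-conjugate $\theta_fS_n\theta_f^{-1}$ of $S_n$, the choice being made so as to be $F$-equivariant (automatic when $F$ acts freely on $\mathcal{P}_n$, and requiring a further argument otherwise, as discussed below), and then patching over $n$ and over the small exceptional set where $S_n$ fails to preserve atoms, exactly in the manner of the proofs of Lemmas \ref{lem-patch} and \ref{lem-patch-c}. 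By $F$-equivariance $(\tilde S_n)$ asymptotically commutes with every $\theta_f$, the patching keeps it asymptotically commuting with $[\cals]$, and since every conjugate of $S_n$ moves every point, $\tilde S_n$ moves every point of the $c$-large union of moved atoms, so $(\tilde S_n)$ is a non-trivial central sequence in $[\calr]$ and $\calr$ is Schmidt. For part (i) I would run the same construction starting from a stability sequence $(T_n,A_n)$ for $\cals$, carrying the asymptotically invariant sets $A_n$ through the patching to obtain a pre-stability sequence for $\calr$ (with $\mu(\tilde S_mZ_m\setminus Z_m)$ bounded below independently of $c$, as in Lemma \ref{lem-patch}), and conclude by \cite[Theorem 3.4]{js}.

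The hard part will be the $F$-equivariant patching: one must simultaneously absorb the fact that $S_n$ only approximately preserves the atoms of $\mathcal{P}_n$---which is what the machinery of Lemmas \ref{lem-patch}--\ref{lem-patch-c} is designed to do---and cope with the $F$-action on $\mathcal{P}_n$ possibly not being free, in which case the model $\mathcal{P}_n$ must first be enlarged, using the richness of $\cals$ afforded by its central (resp.\ stability) sequence, to one on which $F$ acts freely; only then can conjugates of $S_n$ be spread consistently over the $F$-translates of the atoms. The whole point of faithfulness is the quantitative lower bound $c$, which guarantees that after all the approximations the patched sequence is still non-trivial (resp.\ has uniformly positive flip measure); this is the equivalence-relation counterpart of the argument of Pimsner--Popa \cite[Proposition 1.11]{pp}. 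A subsidiary point needing care is the claim that a central sequence of $[\cals]$ acts trivially in the limit on asymptotically $\cals$-invariant sequences of sets---in particular on the partitions $\mathcal{P}_n$---which is what legitimizes ``$S_n$ asymptotically preserves each atom''.
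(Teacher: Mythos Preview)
Your overall strategy---spread $F$-conjugates of a central (resp.\ stability) sequence for $\cals$ over $F$-translates of asymptotically $\cals$-invariant pieces, then verify centrality in $[\calr]$ via Remark \ref{rem-central}---is exactly the paper's approach. The divergence is in the step you correctly flag as the hard part: obtaining a piece on which $F$ acts freely.

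You build a finite $F$-invariant subalgebra $\mathcal D\subset\cala$ on which $F$ acts faithfully, lift it to partitions $\mathcal P_n$, and then propose to \emph{enlarge} $\mathcal P_n$ to a partition with free $F$-action ``using the richness of $\cals$ afforded by its central (resp.\ stability) sequence''. This is where the sketch goes astray: the central sequence of $\cals$ supplies asymptotically invariant sets, but it gives no mechanism for splitting an atom fixed by some nontrivial $f\in F$ into pieces that $f$ separates. The paper bypasses partitions entirely by working directly in $\cala$ and proving (after \cite[Lemma 2.3]{ck}) that faithfulness alone yields a single nonzero $\bar B\in\cala$ with $\alpha(\bar B)\cap\bar B=0$ for every nontrivial $\alpha\in F$: if $\alpha$ fixed every element below some nonzero $\bar C$, one finds a nonzero $\bar D\subset 1-\bar C$ with $\alpha(\bar D)\cap\bar D=0$, and then a diagonal/subsequence argument using Lemma \ref{lem-ai} manufactures a nonzero $\bar D'\subset\bar C$ with the same disjointness, a contradiction. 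So the ``richness'' you need lives in $\cala$ itself, not in the central sequence. Once $\bar B$ is in hand there is no enlarging to do: represent $\bar B$ by $(B_n)$ with the $\alpha(B_n)$ genuinely pairwise disjoint, set $S_n=\alpha T_n\alpha^{-1}$ on $\alpha(B_n)$ and the identity off $\bigcup_{\alpha}\alpha(B_n)$, and a direct estimate shows $(S_n)$ is central in $[\calr]$; for (i) the sets $C_n\defeq F(A_n\cap B_n)$ complete a pre-stability sequence. Your appeal to the patching machinery of Lemmas \ref{lem-patch}--\ref{lem-patch-c} is therefore heavier than what is actually required.
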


\begin{proof}
We first find a non-zero $\bar{B}\in \cala$ such that $\alpha(\bar{B})\cap \bar{B}=0$ for all non-trivial $\alpha \in F$.
Such a $\bar{B}$ is obtained by applying the following repeatedly:
For every non-trivial $\alpha \in F$, if $\bar{C}\in \cala$ is non-zero, then there exists a non-zero $\bar{B}\in \cala$ such that $\bar{B}\subset \bar{C}$ and $\alpha(\bar{B})\cap \bar{B}=0$.
Although this is proved in \cite[Lemma 2.3]{ck}, we give a proof for completeness:
Otherwise we would have $\alpha (\bar{B})=\bar{B}$ for all $\bar{B}\in \cala$ with $\bar{B}\subset \bar{C}$, and since $\alpha$ acts on $\cala$ non-trivially, there exists a non-zero $\bar{D}\subset 1-\bar{C}$ such that $\alpha (\bar{D})\cap \bar{D}=0$.
Let $(C_n)_{n\in \N}$ and $(D_n)_{n\in \N}$ be sequences which represent $\bar{C}$ and $\bar{D}$, respectively.
By Lemma \ref{lem-ai}, there is a subsequence $(D_{k_n})$ of $(D_n)$ such that $\mu(D_n')$ is uniformly positive, where $D_n'\defeq C_n\cap D_{k_n}$.
Let $\bar{D}'\in \cala$ be represented by the sequence $(D_n')_{n\in \N}$.
Then $\bar{D}'$ is non-zero, but we have $\alpha(\bar{D}')=\bar{D}'$ since $\bar{D}'\subset \bar{C}$, and we have $\alpha(\bar{D}')\cap \bar{D}'=0$ since $\alpha (\bar{D})\cap \bar{D}=0$, a contradiction.

Let $(B_n)_{n\in \N}$ be a sequence representing $\bar{B}$ such that $\alpha(B_n)\cap B_n=\emptyset$ for all $n\in \N$ and all non-trivial $\alpha \in F$.
Take a decreasing sequence $\ve_n\searrow 0$ of positive numbers, a sequence $(E_k)_{k\in \N}$ of elements of $\calb$ which is dense in $\calb$, and a sequence $(g_k)_{k\in \N}$ of elements of $[\cals]$ which is dense in $[\cals]$.
Passing to a subsequence of $(B_n)$, we may assume that
\begin{enumerate}
\item[(1)] $\mu(g_k\alpha(B_n)\bigtriangleup \alpha(B_n))<\ve_n/|F|$ for every $k\leq n$ and every $\alpha \in F$.
\end{enumerate}
To prove assertion (i), assume that $\cals$ is stable, and let $(T_n, A_n)_{n\in \N}$ be a stability sequence for $\cals$.
Passing to a subsequence of $(T_n, A_n)$, we may assume that for every $n$, for all $k\leq n$ and all $\alpha \in F$, we have
\begin{enumerate}
\item[(2)] $\mu(T_nB_n\bigtriangleup B_n)<\ve_n/|F|$,
\item[(3)] $|\mu(A_n\cap B_n)-\mu(B_n)/2|<\ve_n$,
\item[(4)] $\mu(T_n(\alpha^{-1}(E_k\cap \alpha(B_n)))\bigtriangleup \alpha^{-1}(E_k\cap \alpha(B_n)))<\ve_n/|F|$, and
\item[(5)] $\mu(\{ \alpha^{-1}g_k\alpha T_n\neq T_n\alpha^{-1}g_k\alpha \})<\ve_n/|F|$.
\end{enumerate}
We define $S_n\in [\cals]$ so that for every $\alpha \in F$, we have $S_n=\alpha T_n\alpha^{-1}$ on $\alpha(B_n)$ outside the set $\alpha(B_n\setminus T_n^{-1}B_n)$, which has measure less than $\ve_n/(2|F|)$ by condition (2), and also have $S_n(\alpha(B_n))=\alpha(B_n)$, and moreover $S_n$ is the identity outside $\bigcup_{\alpha \in F}\alpha(B_n)$.
By condition (4), we have
\[\mu(S_n(E_k\cap \alpha(B_n))\bigtriangleup (E_k\cap \alpha(B_n)))<2\ve_n/|F|\quad \text{and}\quad \mu(S_nE_k\bigtriangleup E_k)<2\ve_n\]
for every $k\leq n$.
Therefore $\mu(S_nE\bigtriangleup E)\to 0$ for all $E\in \calb$.
By construction, we also have $\mu(\{ \alpha S_n\neq S_n\alpha \})<\ve_n$ for every $\alpha \in F$.

Fix $k\leq n$.
We claim that $\mu(\{ g_kS_n\neq S_ng_k\})<5\ve_n$.
This claim together with the facts proved in the last paragaraph implies that the sequence $(S_n)$ is central in $[\calr]$.
By the definition of $S_n$, for every $\alpha \in F$, we have $g_kS_n=g_k\alpha T_n\alpha^{-1}$ on $\alpha(B_n)$ outside
a subset of measure less than $\ve_n/|F|$.
By condition (5), outside a subset of measure less than $\ve_n/|F|$, we have $g_k\alpha T_n\alpha^{-1}=\alpha(\alpha^{-1}g_k\alpha)T_n\alpha^{-1}=\alpha T_n\alpha^{-1}g_k$, and the right hand side is equal to $S_ng_k$ on $\alpha(B_n)$ outside a subset of measure less than $2\ve_n/|F|$ by condition (1) and the definition of $S_n$.
As a result, $\mu(\{ g_kS_n\neq S_ng_k\} \cap \alpha(B_n))<4\ve_n/|F|$ for every $\alpha \in F$.
Our claim then follows from condition (1).

We set $C_n\defeq F(A_n\cap B_n)$.
The sequence $(A_n\cap B_n)$ is asymptotically invariant for $\cals$, and the sequence $(C_n)$ is thus asymptotically invariant for $\calr$.
Since $S_n$ preserves $\alpha(B_n)$ for each $\alpha \in F$, we have $S_n(A_n\cap B_n)\setminus (A_n\cap B_n)\subset S_nC_n\setminus C_n$.
Up to a subset of measure less than $\ve_n/|F|$, the left hand side is equal to $T_n(A_n\cap B_n)\setminus (A_n\cap B_n)$, which is equal to $T_n(A_n\cap B_n)$ because $T_nA_n$ is disjoint from $A_n$.
By condition (3), the measure of the set $T_n(A_n\cap B_n)$ is equal to $\mu(B_n)/2$ up to $\ve_n$.
Therefore $\mu(S_nC_n\setminus C_n)$ is uniformly positive, and $(S_n, C_n)$ is a pre-stability sequence for $\calr$.
Assertion (i) follows.

We prove assertion (ii) and assume that $\cals$ is Schmidt.
In the beginning of the proof of this proposition, we found a sequence $(B_n)_{n\in \N}$ of elements of $\calb$ representing a non-zero element of $\cala$ and satisfying $\alpha (B_n)\cap B_n=\emptyset$ for all $n$ and all non-trivial $\alpha \in F$.
Let $(T_n)$ be a central sequence in $[\cals]$ such that $T_nx\neq x$ for all $n$ and all $x\in X$.
Such a sequence exists by Lemma \ref{lem-c}.
Take a sequence $(E_k)_{k\in \N}$ of elements of $\calb$ which is dense in $\calb$.
Passing to a subsequence of $(T_n)$, we may assume that for every $n$, $T_n$ almost fixes the set $B_n$, and that for all $k\leq n$ and all $\alpha \in F$, $T_n$ almost fixes the set $\alpha^{-1}(E_k\cap \alpha(B_n))$.
Define $S_n\in [\cals]$ so that $S_n=\alpha T_n\alpha^{-1}$ on the set $\alpha(B_n)$ outside its some subset of small measure, $S_n$ preserves $\alpha(B_n)$ for each $\alpha \in F$, and $S_n$ is the identity outside $\bigcup_{\alpha \in F}\alpha(B_n)$.
In the same manner as in the proof of assertion (i), we can show that $(S_n)$ is central in $[\calr]$.
Since $S_n$ is equal to $T_n$ on most part of $B_n$, the measure of the set $\{ \, x\in X\mid S_nx\neq x\, \}$ is uniformly positive.
Assertion (ii) follows.
\end{proof}


\subsection{Being Schmidt is not preserved}\label{subsec-counter-schmidt}

The following proposition is a slight refinement of \cite[Theorem 29.10]{kec}:

\begin{prop}\label{prop-t}
Let $G$ be a countably infinite, discrete group with property (T) and let $G\c (X, \mu)$ a free ergodic p.m.p.\ action.
Suppose the following two conditions:
\begin{itemize}
\item For every non-trivial element of $G$, its conjugacy class in $G$ consists of at least two elements.
\item For every $g\in G$ whose conjugacy class in $G$ is finite, the centralizer of $g$ in $G$ acts on $(X, \mu)$ ergodically.
\end{itemize}
Then the action $G\c (X, \mu)$ is not Schmidt.
\end{prop}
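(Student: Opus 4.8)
The plan is to argue by contradiction via a spectral-gap argument in the spirit of \cite[Theorem 29.10]{kec}. Suppose $[\calr]$ has a non-trivial central sequence $(T_n)$, where $\calr$ is the orbit equivalence relation, identified with the translation groupoid $G\ltimes (X,\mu)$ since the action is free, and let $\alpha\colon \calr\to G$ be the cocycle with $\alpha(gx,x)=g$. Passing to a subsequence, $\mu(\{x:T_nx\neq x\})\to r$, and non-triviality forces $r>0$. The idea is to encode $(T_n)$ as the sequence of unit vectors $\xi_n\defeq 1_{\gr(T_n)}\in L^2(\calr,\mu^1)$, where $\gr(T_n)\defeq \{(T_nx,x):x\in X\}$ (note $\mu^1(\gr(T_n))=1$), and to feed it to the unitary representation $\pi$ of $G$ on $L^2(\calr,\mu^1)$ given by conjugation, $\pi(h)f\defeq f^{\phi_h}$. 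Under the identification $\calr\cong G\times X$ one has $L^2(\calr,\mu^1)\cong \ell^2(G)\otimes L^2(X,\mu)$ and $\pi$ becomes the tensor product of the conjugation representation of $G$ on $\ell^2(G)$ with the Koopman representation on $L^2(X)$. Since $(T_n)$ is central, $\delta_u(\phi_h T_n\phi_h^{-1},T_n)\to 0$ for every $h\in G$, which says precisely that $\|\pi(h)\xi_n-\xi_n\|_2\to 0$ for every $h\in G$.

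The first substantial step, and the one I expect to be the crux, is to compute the space $\calh_{\mathrm{inv}}$ of $\pi$-invariant vectors; this is where the two hypotheses beyond property (T) are used. Writing $f\in L^2(\calr,\mu^1)$ in coordinates $f(g,x)$, $\pi$-invariance reads $f(hgh^{-1},hx)=f(g,x)$ for all $h$. Square-integrability forces $f(g,\cdot)=0$ whenever the conjugacy class $C_g$ is infinite, since all group elements of a fixed conjugacy class carry the same $L^2(X)$-norm of the section $f(\cdot\,,x)$; so $f$ is supported on the FC-center. For $g$ in the FC-center, invariance under $h\in Z_G(g)$ shows that $f(g,\cdot)$ is $Z_G(g)$-invariant, hence constant by the ergodicity of centralizers of elements with finite conjugacy class (the second hypothesis), and this constant propagates along the conjugacy class of $g$. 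Thus $\calh_{\mathrm{inv}}=\calh_0\otimes \C 1$, where $\calh_0\subset \ell^2(G)$ is the closed linear span of $\{1_C: C \text{ a finite conjugacy class}\}$, i.e.\ the fixed vectors of the conjugation representation.

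Next I would invoke property (T): a Kazhdan pair for $G$ gives a spectral gap for $\pi$ restricted to $\calh_{\mathrm{inv}}^\perp$, and since $\pi(h)(\xi_n-P_{\calh_{\mathrm{inv}}}\xi_n)-(\xi_n-P_{\calh_{\mathrm{inv}}}\xi_n)=\pi(h)\xi_n-\xi_n\to 0$ for each $h$, this forces $\|\xi_n-P_{\calh_{\mathrm{inv}}}\xi_n\|_2\to 0$. Computing the projection: in the coordinates above $\xi_n$ is $(g,x)\mapsto 1_{\{T_nx=gx\}}(x)$, so $P_{\ell^2(G)\otimes \C 1}\xi_n$ is the vector $(g,x)\mapsto p_n(g)$, where $p_n$ is the probability measure on $G$ with $p_n(g)\defeq \mu(\{x:T_nx=gx\})$, and hence $P_{\calh_{\mathrm{inv}}}\xi_n$ corresponds to $P_{\calh_0}p_n$. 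Therefore $\|P_{\calh_0}p_n\|_2\to 1$. Since $\|p_n\|_1=1$ we have $\|p_n\|_2^2\le \max_g p_n(g)$, whence $\max_g p_n(g)\to 1$: picking $g_n$ attaining the maximum gives $p_n(g_n)\to 1$. Moreover $\|p_n-P_{\calh_0}p_n\|_2\to 0$ forces $g_n$ into the FC-center for large $n$, and $p_n(g_n)\to 1\neq 1-r=\lim_n p_n(e)$ forces $g_n\neq e$ for large $n$.

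Finally, $p_n(g_n)\to 1$ means $\delta_u(T_n,\phi_{g_n})\to 0$, so $(\phi_{g_n})$ is again a central sequence in $[\calr]$. Testing centrality against $\phi_h$ for $h\in G$ and using freeness of the action, $\mu(\{\phi_h\phi_{g_n}\neq \phi_{g_n}\phi_h\})$ equals $1$ if $g_n$ and $h$ fail to commute and $0$ otherwise; as it tends to $0$ it must vanish eventually, so $g_n$ commutes with $h$ for all large $n$. Since $G$, having property (T), is finitely generated, letting $h$ range over a finite generating set yields $g_n\in Z(G)$ for all large $n$; but then $g_n$ is a non-trivial element whose conjugacy class $\{g_n\}$ has one element, contradicting the first hypothesis. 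The role of the two extra hypotheses is exactly to close the argument: ergodicity of centralizers pins $\calh_{\mathrm{inv}}$ down to $\calh_0\otimes \C 1$, so that $p_n$ concentrates at a single group element, and the two-element lower bound on conjugacy classes contradicts the resulting central element landing in $Z(G)$.
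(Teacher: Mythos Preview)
Your proof is correct and shares its skeleton with the paper's: encode the central sequence $(T_n)$ as almost-invariant unit vectors for the tensor of the conjugation and Koopman representations, identify the invariant subspace using the ergodicity-of-centralizers hypothesis, and apply property~(T) to project. The endgames differ. The paper first invokes Lemma~\ref{lem-c} to arrange $T_nx\neq x$ everywhere and works on $\ell^2(G\setminus\{e\})\otimes L^2(X)$; it then proves the inequality $\|\xi-P\xi\|^2\geq \|P\xi\|^2$ directly from $|\omega|\geq 2$ (so that the averaged coefficient $c_\omega\leq 1/2$) for \emph{every} vector $\xi$ arising from a fixed-point-free element of $[\calr]$, which immediately clashes with $\|\xi_n-P\xi_n\|\to 0$. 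You instead stay on $\ell^2(G)\otimes L^2(X)$, push through to the probability measures $p_n$, show they concentrate at a single $g_n$, and then use finite generation of $G$ (a further consequence of property~(T)) to force $g_n\in Z(G)\setminus\{e\}$, contradicting the first hypothesis. Your route avoids the appeal to Lemma~\ref{lem-c} and makes the ``almost-central group element'' picture explicit; the paper's route is a one-line inequality once the invariant space is pinned down and does not need finite generation as a separate input. Your intermediate step placing $g_n$ in the FC-center is correct but redundant, since the final commutation argument lands $g_n$ in $Z(G)$ directly.
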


\begin{proof}
Let $\calr \defeq \calr(G\c (X, \mu))$.
Suppose toward a contradiction that there exists a non-trivial central sequence $(T_n)$ in $[\calr]$.
By Lemma \ref{lem-c}, we may assume that $T_nx\neq x$ for all $n$ and all $x\in X$.
We set $G^*\defeq G\setminus \{ e\}$.
Let $G$ act on the space $G^*\times X$ by $g(h, x)=(ghg^{-1}, gx)$ for $g\in G$, $h\in G^*$ and $x\in X$.
We then have the unitary representation of $G$ on the Hilbert space $\mathcal{H}\defeq \ell^2(G^*)\otimes L^2(X)$.
Let $P\colon \mathcal{H}\to \mathcal{H}$ be the orthogonal projection onto the subspace of $G$-invariant vectors.
Let $\Omega$ denote the set of finite conjugacy classes in $G^*$.
Each vector $\xi \in \mathcal{H}$ is uniquely written as $\xi =\sum_{g\in G^*}\delta_g\otimes \xi_g$, where $\delta_g$ is the Dirac function on $g$ and $\xi_g\in L^2(X)$.
For $\omega \in \Omega$, we set $c_\omega(\xi)\defeq |\omega|^{-1}\sum_{h\in \omega}\mu(\xi_h)$ and identify it with the constant function on $X$ of that value.
By our second assumption, the equation $P\xi =\sum_{\omega \in \Omega}\sum_{g\in \omega}\delta_g\otimes c_\omega(\xi)$ holds.

\begin{claim}\label{claim-ineq}
Every element $T\in [\calr]$ with $Tx\neq x$ for all $x\in X$ gives rise to the unit vector of $\mathcal{H}$, $\xi_T=\sum_{g\in G^*}\delta_g\otimes \xi_{T, g}$, where $\xi_{T, g}$ is the indicator function of the set $\{ T=g\}$, and this vector satisfies the inequality $\Vert \xi_T-P\xi_T\Vert^2\geq \Vert P\xi_T\Vert^2$.
\end{claim}

\begin{proof}
For simplicity, we set $\xi \defeq \xi_T$ and $\xi_g\defeq \xi_{T, g}$.
For every $\omega \in \Omega$, we have $0\leq c_\omega(\xi)\leq |\omega|^{-1}\leq 1/2$ by the first assumption in Proposition \ref{prop-t}.
The desired inequality is obtained as follows:
\begin{align*}
\Vert \xi-P\xi\Vert^2&\geq \sum_{\omega \in \Omega}\sum_{g\in \omega}\Vert \xi_g-c_\omega(\xi)\Vert^2=\sum_{\omega \in \Omega}\sum_{g\in \omega}(\mu(\xi_g)-2c_\omega(\xi)\mu(\xi_g)+c_\omega(\xi)^2)\\
&=\sum_{\omega \in \Omega}|\omega|c_\omega(\xi)(1-c_\omega(\xi))\geq \sum_{\omega \in \Omega}|\omega|c_\omega(\xi)^2=\Vert P\xi \Vert^2.\qedhere
\end{align*}
\end{proof}

Let $\xi_n$ denote the unit vector to which the automorphism $T_n$ gives rise as in Claim \ref{claim-ineq}.
Since $(T_n)$ is a central sequence in $[\calr]$, the sequence $(\xi_n)$ is asymptotically $G$-invariant, that is, for every $g\in G$, we have $\Vert g\xi_n-\xi_n\Vert \to 0$ as $n\to \infty$.
It follows from property (T) of $G$ that $\Vert \xi_n-P\xi_n\Vert \to 0$.
This contradicts the inequality in Claim \ref{claim-ineq}.
\end{proof}

\begin{prop}\label{prop-t-c}
Let $G$ be a countable discrete group with property (T), and let $H$ be a finite index subgroup of $G$.
Let $C$ be an infinite central subgroup of $H$ which is normal in $G$.
Suppose that for every non-trivial element of $G$, its conjugacy class in $G$ either contains an element of $C$ and at least two elements, or is infinite.
Then there exists a free ergodic p.m.p.\ action $G\c (X, \mu)$ which is not Schmidt and such that the restriction $H\c (X, \mu)$ is ergodic and Schmidt.
\end{prop}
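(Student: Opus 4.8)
The plan is to realize the required action as a co-induced action, producing a ``relative'' version of the construction in Example \ref{ex-infinite-center}. Write $\alpha\colon G\to\aut(C)$ for the conjugation action of $G$ on $C$; since $C$ is abelian this factors through $G/C$. Choose a countable subgroup $D_0\leq\widehat C$ that is invariant under the dual action of $G$ and separates the points of $C$ (start from a countable separating subset and enlarge to a countable $G$-invariant subgroup), arranging in addition that no element of $D_0$ other than the trivial character restricts from $C$ to a character extending to a finite-dimensional unitary representation of $H$ (this extra requirement is revisited below). Put $K\defeq\widehat{D_0}$, a compact metrizable abelian group; evaluation gives a dense, $\alpha$-equivariant embedding $\iota\colon C\hookrightarrow K$, where $G$ acts on $K$ through the dual of its action on $D_0$. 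Note $[G:C]=\infty$: a finite-index infinite abelian subgroup would make $G$ virtually abelian, hence finite by property (T), contradicting $|C|=\infty$; likewise $[H:C]=[G:C]/[G:H]=\infty$. Let $C\c(K,m_K)$ be translation via $\iota$ (free and ergodic, as $\iota$ is injective with dense image), and let $(X,\mu)\defeq\mathrm{CInd}_C^G(K,m_K)$ be the co-induced action; it is free because co-induction of a free action is free, and we will see it is ergodic.

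First I would check that $H\c(X,\mu)$ is Schmidt. Since $K$ is infinite and $\iota(C)$ is dense, one finds $c_n\in C$ with $c_n\neq e$ and $\iota(c_n)\to e$ in $K$. The restriction of the co-induced action to $C$ fixes every coordinate of $X=K^{G/C}$ and acts on the $xC$-coordinate by translation by $\tilde\alpha_{g}(\iota(c))$ for a fixed $g=g(xC)\in G$, where $\tilde\alpha_g\in\aut(K)$ is continuous; hence $c_n$ acts on each coordinate by a translation tending to the identity, so $c_n\to e$ in $\aut(X,\mu)$ while $c_n\neq e$ (by freeness). Now $(c_n)$ commutes exactly with every element of $H$ inside $[\calr(H\c X)]$ (as $C$ is central in $H$), and $\mu(c_nA\bigtriangleup A)\to0$ for every Borel $A\subset X$; since $H$ generates $\calr(H\c X)$, Remark \ref{rem-central} shows $(c_n)$ is a central sequence in $[\calr(H\c X)]$, and it is non-trivial because $\mu(\{x\mid c_nx=x\})=0$ for all $n$. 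Thus $\calr(H\c X)$ is Schmidt.

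Next I would apply Proposition \ref{prop-t} to $G\c(X,\mu)$ to conclude it is not Schmidt. Property (T) of $G$ is given, and $G$ is infinite since $C$ is. By hypothesis every non-trivial element of $G$ has a conjugacy class with at least two elements. Finally, if $g\in G$ has finite conjugacy class, then either $g=e$ (and $Z_G(e)=G$ acts ergodically once $H\c X$ is known ergodic) or, by hypothesis, some conjugate $xgx^{-1}$ lies in $C$; then $Z_G(g)=x^{-1}Z_G(xgx^{-1})x\supseteq x^{-1}Hx$ because $C$ is central in $H$, and $x^{-1}Hx\c X$ is conjugate in $\aut(X,\mu)$ to $H\c X$ (conjugation by the transformation $v\mapsto x\cdot v$), hence ergodic; so $Z_G(g)\c X$ is ergodic. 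Proposition \ref{prop-t} then yields that $G\c(X,\mu)$ is not Schmidt, completing the argument modulo the ergodicity of $H\c X$.

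The main obstacle is exactly this last point: showing that $H\c(X,\mu)$ is ergodic, and it is what forces the careful choice of $K$. By the Mackey-type description of the restriction of a co-induced action, and using that $C$ is normal in $G$ (so $H\cap gCg^{-1}=C$ for all $g$), the $H$-system $X$ is isomorphic to a finite product $\prod_{i=1}^{[G:HC]}\mathrm{CInd}_C^H(K^{(i)})$, where each $K^{(i)}$ is $K$ with the $C$-translation action precomposed with an automorphism of $C$. It therefore suffices to prove each factor $\mathrm{CInd}_C^H(K^{(i)})$ is weakly mixing for $H$, since a finite product of weakly mixing actions is weakly mixing and hence ergodic; ergodicity of $H\c X$, and then of $G\c X$, follows. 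For the weak mixing I would argue via Clifford theory and Frobenius reciprocity: a finite-dimensional $H$-subrepresentation of $L^2_0(\mathrm{CInd}_C^H(K^{(i)}))$ would force either a finite $H$-orbit of finitely supported $\widehat K$-valued functions on $H/C$ — impossible since $H$ acts transitively on $H/C$ with $[H:C]=\infty$ — or a non-trivial character of $K^{(i)}$ whose restriction to $C$ extends to a finite-dimensional representation of $H$, which is excluded by the extra property imposed on $D_0$ (with a short separate argument in the degenerate case where every character of $C$ extends to a finite-dimensional $H$-representation, e.g.\ when $H\cong C\times H'$, where the weak mixing of $\mathrm{CInd}_C^H(K^{(i)})$ is automatic). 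Verifying that such a $D_0$ — simultaneously $G$-invariant, point-separating, and avoiding characters extending to finite-dimensional $H$-representations — can always be chosen is the technical heart of the proof.
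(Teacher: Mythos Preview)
Your overall strategy---co-induce $C\curvearrowright K$ to $G$, exhibit $(c_n)\subset C$ as a non-trivial central sequence for $[\calr(H\curvearrowright X)]$, then apply Proposition~\ref{prop-t}---is exactly the paper's. The gap is in your treatment of the ergodicity of $H\curvearrowright(X,\mu)$, which you call the technical heart and for which you impose an extra, unverified condition on $D_0$ (that no non-trivial element restricts to a character of $C$ extending to a finite-dimensional $H$-representation). You do not show such a $D_0$ exists, and your Clifford-theory outline does not close this.

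In fact the ergodicity is elementary and needs no special choice of $K$. Since $C\trianglelefteq G$, the $H$-stabilizer of any $gC\in G/C$ is $H\cap gCg^{-1}=C$, so every $H$-orbit on $G/C$ has size $[H:C]=\infty$. The standard Bernoulli independence argument then applies: approximate an $H$-invariant $A\subset K^{G/C}$ by a cylinder set $B$ depending on finitely many coordinates $F$; pick $h\in H$ with $hF\cap F=\emptyset$; then $B$ and $hB$ are independent (the co-induction twist on each coordinate is measure-preserving, so independence over disjoint coordinate sets persists), giving $\mu(A)\approx\mu(B\cap hB)=\mu(B)^2\approx\mu(A)^2$. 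This is why the paper writes only ``Since $H/C$ is infinite, the restriction $H\curvearrowright(X,\mu)$ is ergodic.'' Relatedly, the $G$-equivariance of $K$ is unnecessary: the paper takes any compact abelian $K\supset C$, fixes coset representatives $g_1,\dots,g_N$ of $H$ in $G$, computes (using that $C$ is central in $H$) that $c\in C$ acts on the $g_ib$-coordinate by translation by $g_i^{-1}cg_i$ for every $b\in H/C$, and then simply chooses $c_n$ so that these finitely many conjugates all tend to $e$ in $K$.
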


\begin{proof}
We embed $C$ into an abelian compact group $K$, equip $K$ with the normalized Haar measure, and let $C$ act on $K$ by translation.
Pick a section $s_0\colon H/C\to H$ of the quotient map.
Pick also representatives $g_1,\ldots, g_N$ of left cosets of $H$ in $G$, where $N$ is the index of $H$ in $G$.
We then have the section $s\colon G/C\to G$ defined by $s(g_ib)\defeq g_is_0(b)$ for $i\in \{ 1,\ldots, N\}$ and $b\in H/C$.
Let $G$ act on $X\defeq \prod_{G/C}K$ by the action co-induced from the action $C\c K$.
It is defined by $(gf)(b)=c^{-1}f(g^{-1}b)$ for $g\in G$, $f\in X$ and $b\in G/C$, where the element $c\in C$ is determined by the equation $s(g^{-1}b)c=g^{-1}s(b)$.
Let $\mu$ be the probability measure on $X$ given by the product of the Haar measure on $K$.
Since $H/C$ is infinite, the restriction $H\c (X, \mu)$ is ergodic.
Choose a sequence $(c_n)_{n\in \N}$ of non-trivial elements of $C$ such that for all $i\in \{ 1,\ldots, N\}$, the sequence $(g_i^{-1}c_ng_i)_{n\in \N}$ converges to the identity in $K$.
Then $(c_n)$ is central as a sequence in the full group $[\calr(H\c (X, \mu))]$.
Indeed, for all $b\in H/C$, $c\in C$ and $i\in \{ 1,\ldots, N\}$, we have
\[c^{-1}s(g_ib)=c^{-1}g_is_0(b)=g_ig_i^{-1}c^{-1}g_is_0(b)=g_is_0(b)g_i^{-1}c^{-1}g_i=s(g_ib)g_i^{-1}c^{-1}g_i,\]
where the third equation holds since $C$ is normal in $G$ and thus $g_i^{-1}c^{-1}g_i\in C$.
This implies that $c$ acts on the coordinate of $X$ whose index is $g_ib$ by adding $g_i^{-1}cg_i$.
Thus by Remark \ref{rem-central}, $(c_n)$ is central in the full group $[\calr(H\c (X, \mu))]$.
However, by Proposition \ref{prop-t}, the action $G\c (X, \mu)$ is not Schmidt.
\end{proof}

\begin{rem}\label{rem-h-g-schmidt}
With the notation of Proposition \ref{prop-t-c}, we set $\calr \defeq \calr(G\c (X, \mu))$, and set $M\defeq \bigcap_{g\in G}gHg^{-1}$, which is a finite index, normal subgroup of $G$.
Let $\alpha \colon \calr \to G/M$ be the cocycle defined by $\alpha(gx, x)=gM$ for $g\in G$ and $x\in X$.
Then the compact extension $\calr_{\alpha, H/M}$ (reviewed right before Corollary \ref{cor-ext-s}) is an equivalence relation on $X\times G/H$, and its restriction to the set $X\times \{ eH\}$ is identified with $\calr(H\c (X, \mu))$.
Proposition \ref{prop-t-c} says that $\calr_{\alpha, H/M}$ is Schmidt, while $\calr$ is not.
Thus, the converse of Corollary \ref{cor-ext-c} does not hold (modulo the existence of groups satisfying the assumption in Proposition \ref{prop-t-c}, which we will give in Example \ref{ex-matrix} below).

Since $\calr(H\c (X, \mu))$ is Schmidt and hence inner amenable, it follows from Proposition \ref{prop-finite-index} that $\calr$ is inner amenable.
Nevertheless, we can directly find an inner amenability sequence for $\calr$ from the proof of Proposition \ref{prop-t-c} as follows:
Choose the sequence $(c_n)_{n\in \N}$ in the proof so that the sequence $(g_i^{-1}g_jc_ng_j^{-1}g_i)_{n\in \N}$ also converges to the identity in $K$ for all $i, j\in \{ 1, \ldots, N\}$.
We set $F_n\defeq \{ \, g_jc_ng_j^{-1}\mid j=1,\ldots, N\, \}$ and define $f_n\in L^1(G\ltimes X, \mu^1)$ by $f_n(g, x)\defeq 1_{F_n}(g)/N$.
Then $f_n^g=f_n$ for all $g\in G$, and by the required convergence property of $(c_n)$, the sequence $(f_n)$ is balanced.
By Remark \ref{rem:suffice}, $(f_n)$ satisfies condition (ii) of Definition \ref{def:groupoid}.
Condition (iii) holds since the set $F_n$ diverges to infinity in $G$.
Condition (iv) follows from the definition of $f_n$.
\end{rem}

\begin{ex}\label{ex-matrix}
We give an example of groups $G$, $H$ satisfying the assumption in Proposition \ref{prop-t-c}.
We define $H$ as the subgroup of $\mathit{SL}_5(\Z)$ consisting of matrices of the form
\[\begin{pmatrix}
1 & \ast & \ast \\
0 & h & \ast \\
0 & 0 & 1
\end{pmatrix},\]
where $h$ runs through all elements of $\mathit{SL}_3(\Z)$.
This kind of group appears in \cite{cor}, and as mentioned in \cite[Proposition 2.7]{cor}, the group $H$ has property (T).
The center of $H$, denoted by $C$, consists of matrices whose diagonal entries are one and $(1, 5)$-entry is the only off-diagonal entry that is possibly non-zero.
We set
\[u\defeq \begin{pmatrix}
-1 & 0 & 0 \\
0 & I_3 & 0 \\
0 & 0 & 1
\end{pmatrix},\]
where $I_3$ is the $3$-by-$3$ identity matrix, and define $G$ as the subgroup of $\mathit{GL}_5(\Z)$ generated by $H$ and $u$.
Then $u$ normalizes $H$, and the index of $H$ in $G$ is $2$.

We claim that these groups $G$, $H$ satisfy the assumption of Proposition \ref{prop-t-c}, i.e., for every non-trivial element of $G$, its conjugacy class in $G$ either contains an element of $C$ and at least two elements, or is infinite.
For each non-trivial $c\in C$, we have $ucu^{-1}=c^{-1}$, and the conjugacy class of $c$ in $G$ consists of the two distinct elements, $c$ and $c^{-1}$.

We show that the other conjugacy classes except for $\{ e\}$ are infinite.
Pick $g, h\in H$ and write them as
\[g=\begin{pmatrix}
1 & g_{12} & g_{13} \\
0 & g_{22} & g_{23} \\
0 & 0 & 1
\end{pmatrix}, \qquad
h=\begin{pmatrix}
1 & h_{12} & h_{13} \\
0 & h_{22} & h_{23} \\
0 & 0 & 1
\end{pmatrix}.\]
We verify that the centralizer of $gu$ in $H$ is of infinite index in $H$.
This implies that the conjugacy class of $gu$ in $G$ is infinite.
Suppose toward a contradiction that the centralizer of $gu$ in $H$ is of finite index in $H$.
Then for every matrix $h$ such that $h_{22}$ belongs to some finite index subgroup of $\mathit{SL}_3(\Z)$ and the other $h_{ij}$ are zero, $gu$ and $h$ commute.
Since
\begin{align*}
guh&=\begin{pmatrix}
-1 & g_{12} & g_{13} \\
0 & g_{22} & g_{23} \\
0 & 0 & 1
\end{pmatrix}
\begin{pmatrix}
1 & 0 & 0 \\
0 & h_{22} & 0 \\
0 & 0 & 1
\end{pmatrix}
=
\begin{pmatrix}
-1 & g_{12}h_{22} & g_{13} \\
0 & g_{22}h_{22} & g_{23} \\
0 & 0 & 1
\end{pmatrix} \ \text{and}\\
hgu&=\begin{pmatrix}
1 & 0 & 0 \\
0 & h_{22} & 0 \\
0 & 0 & 1
\end{pmatrix}
\begin{pmatrix}
-1 & g_{12} & g_{13} \\
0 & g_{22} & g_{23} \\
0 & 0 & 1
\end{pmatrix}
=
\begin{pmatrix}
-1 & g_{12} & g_{13} \\
0 & h_{22}g_{22} & h_{22}g_{23} \\
0 & 0 & 1
\end{pmatrix},
\end{align*}
we have $g_{22}=I_3$, $g_{12}=0$, and $g_{23}=0$.
For every $h$ in some finite index subgroup of $C$, $gu$ and $h$ also commute.
The $(1, 5)$-entries of $guh$ and $hgu$ are $g_{13}-h_{13}$ and $g_{13}+h_{13}$, respectively.
This is impossible.
It turns out that the centralizer of $gu$ in $H$ is of infinite index in $H$.

Similarly, comparing $gh$ and $hg$ for the matrix $h$ such that $h_{22}$ belongs to some finite index subgroup of $\mathit{SL}_3(\Z)$ and the other $h_{ij}$ are zero, we can verify that if the centralizer of $g$ in $H$ is of finite index in $H$, then $g$ belongs to $C$.
Therefore the conjugacy class of every non-central element of $H$ in $G$ is infinite.
This completes the proof of $G$ and $H$ satisfying the assumption of Proposition \ref{prop-t-c}.
\end{ex}


\subsection{Stability is not preserved}\label{subsec-counter-stable}

We construct an ergodic p.m.p.\ equivalence relation $\calr$ and its ergodic finite-index subrelation $\cals$ such that $\cals$ is stable, but $\calr$ is not stable.
Let $X_0\defeq \prod_\N \Z/ 2\Z$ be the compact group equipped with the normalized Haar measure $\mu_0$.
Let $H_0\defeq \bigoplus_\N \Z /2\Z$ be the subgroup of $X_0$, let $H_0$ act on $X_0$ by translation, and set $\calr_0\defeq \calr(H_0\c (X_0, \mu_0))$.
Choose a non-amenable group $\Gamma$ arbitrarily.
We set $Y\defeq \prod_\Gamma X_0$ and equip $Y$ with the product measure of $\mu_0$, denoted by $\nu$.
Let $H_1\defeq \Gamma \times H_0$ act on $Y$ so that $\Gamma \times \{ e\}$ acts on $Y$ by the Bernoulli shift and $\{ e\} \times H_0$ acts on $Y$ diagonally.
We set $\calr_1\defeq \calr(H_1\c (Y, \nu))$, and set $\cals \defeq \calr_0\times \calr_1$, which is an equivalence relation on the product space $(Z, \zeta)\defeq (X_0\times Y, \mu_0\times \nu)$ and is also the orbit equivalence relation of the coordinatewise action $G\defeq H_0\times H_1\c (Z, \zeta)$.

We define an automorphism $\sigma$ of $(Z, \zeta)$ by
\[\sigma(x, (y_\gamma)_{\gamma \in \Gamma})\defeq (x, (xy_\gamma)_{\gamma \in \Gamma})\]
for $x\in X_0$ and $(y_\gamma)_\gamma \in Y$.
This $\sigma$ is involutive since every non-trivial element of $X_0$ has order $2$.
We claim that $\sigma$ is also an automorphism of $\cals$.
We first show that $\sigma$ is equivariant under the action of $H_1$, where $H_1$ is identified with the subgroup $\{ e\} \times H_1$ of $G$.
Indeed, for all $(x, (y_\gamma)_\gamma)\in Z$ and $(\delta, h)\in H_1=\Gamma \times H_0$, we have $(\delta, h)(y_\gamma)_\gamma =(hy_{\delta^{-1}\gamma})_\gamma$.
We also have $\sigma((\delta, h)(x, (y_\gamma)_\gamma))=(x, (xhy_{\delta^{-1}\gamma})_\gamma)=(\delta, h)\sigma(x, (y_\gamma)_\gamma)$, where the last equation holds since the group $X_0$ is commutative.
We therefore obtain the desired equivariance of $\sigma$ and hence $\sigma(\cali_0\times \calr_1)=\cali_0\times \calr_1$, where $\cali_0$ denotes the trivial equivalence relation on $(X_0, \mu_0)$.
We next show that $\sigma(\calr_0\times \cali_1)\subset \cals$, where $\cali_1$ denotes the trivial equivalence relation on $(Y, \nu)$.
For all $(x, (y_\gamma)_\gamma)\in Z$ and $h\in H_0$, we have $\sigma(x, (y_\gamma)_\gamma)=(x, (xy_\gamma)_\gamma)$, which is $\cals$-equivalent to $(hx, (hxy_\gamma)_\gamma)=\sigma(hx, (y_\gamma)_\gamma)$.
The claim follows.

Let $\calr \defeq \cals \rtimes \langle \sigma \rangle$ be the equivalence relation generated by $\cals$ and $\sigma$, which contains $\cals$ as a subrelation of index $2$.
The equivalence relation $\cals =\calr_0\times \calr_1$ is clearly stable.
We show that $\calr$ is not stable.
Otherwise, by Theorem \ref{thm-main}, there would exist a pre-stability sequence $(T_n, A_n)$ for $\calr$ with $T_n\in [\cals]$.
Since the action $G\c (Y, \nu)$ has stable spectral gap, the unitary representation $G\c L^2(X_0)\otimes L^2_0(Y)$ has spectral gap.
We may therefore assume that $A_n$ is of the form $A_n=\bar{A}_n\times Y$ for some Borel subset $\bar{A}_n\subset X_0$.
Furthermore, by Lemma \ref{lem-specgap-product}, passing to a subsequence of $(T_n, A_n)$, we may assume that for every $n$, there exist finitely many $g_1, \ldots, g_m\in G$, a Borel subset $X'\subset X_0$ and its Borel partition $X'=\bigsqcup_{i=1}^mX_i$ such that $\mu_0(X')>1-1/n$ and $\nu(A_{g_i, x})>1-1/n$ for all $x\in X_i$ and all $i\in \{ 1,\ldots, m\}$, where $A_{g, x}\defeq \{ \, y\in Y\mid T_n(x, y)=g(x, y)\, \}$ for $g\in G$ and $x\in X_0$. Passing to a subsequence of $(T_n, A_n)$, we may assume that $\zeta (W)>1-1/n$, where
\[W\defeq \{ \, z\in Z\mid (\sigma \circ T_n)(z)=(T_n\circ \sigma)(z)\, \}.\]
We set $A_g\defeq \{ \, z\in Z\mid T_n(z)=gz\, \}$ for $g\in G$.
Then for every $i\in \{ 1,\ldots, m\}$, we have $A_{g_i}\cap (X_i\times Y)=\bigsqcup_{x\in X_i}\{ x\} \times A_{g_i, x}$ and hence $\zeta(A_{g_i} \cap \sigma A_{g_i}\cap (X_i\times Y))>(1-2/n)\mu_0(X_i)$ since $\sigma$ preserves the set $\{ x\} \times Y$ for every $x\in X_0$.
Let $I$ be the set of all $i\in \{ 1,\ldots, m\}$ such that the set $W\cap A_{g_i}\cap \sigma A_{g_i} \cap (X_i\times Y)$ has positive measure.
Then
\[\zeta \bigl( \, \textstyle \bigsqcup_{i\in I}(A_{g_i} \cap \sigma A_{g_i}\cap (X_i\times Y))\bigr)>1-4/n.\]

If $i\in I$, then $g_i$ belongs to $\{ e\} \times H_1$.
Indeed, if we put $g_i= (h_i, (\gamma_i, h_i'))\in H_0\times H_1$ and pick a point $(x, (y_\gamma)_\gamma)$ from the set $W\cap A_{g_i} \cap \sigma A_{g_i}\cap (X_i\times Y)$, then
\[(\sigma\circ T_n)(x, (y_\gamma)_\gamma)=\sigma(h_ix, (h_i'y_{\gamma_i^{-1}\gamma})_\gamma )=(h_ix, (h_ixh_i'y_{\gamma_i^{-1}\gamma})_\gamma)\]
and
\[(T_n\circ \sigma)(x, (y_\gamma)_\gamma)=T_n(x, (xy_\gamma)_\gamma)=(h_ix, (h_i'xy_{\gamma_i^{-1}\gamma})_\gamma).\]
These two are equal, and therefore $h_i=e$.

Since $\{ e\} \times H_1$ acts on the first factor $X_0$ of $Z=X_0\times Y$ trivially and the set $A_n$ is of the form $A_n=\bar{A}_n\times Y$, the map $T_n$ sends $A_n\cap A_{g_i}$ into $A_n$ for every $i\in I$.
Hence we have $A_n\setminus T_n^{-1}A_n\subset Z\setminus \bigsqcup_{i\in I}(A_{g_i} \cap (X_i\times Y))$. 
The measure of the set in the right hand side is less than $4/n$, and this contradicts $(T_n, A_n)$ being a pre-stability sequence.

\begin{rem}
We note that $G$ and $\sigma$ generate the semi-direct product group $G\rtimes \langle \sigma \rangle$ such that $\sigma$ acts on $G$ by $\sigma(h, (\gamma, h'))=(h, (\gamma, hh'))$ for $h\in H_0$ and $(\gamma, h')\in H_1=\Gamma \times H_0$.
Then $\calr$ is the orbit equivalence relation generated by the action of $G\rtimes \langle \sigma \rangle$.
We also note that $\calr$ is not stable, but is Schmidt because the subgroup $\{ e\} \times (\{ e \} \times H_0)$ of $G$ is central in $G\rtimes \langle \sigma \rangle$, and therefore the elements $h_n\in H_0=\bigoplus_\N \Z /2\Z$ whose coordinate indexed by $n$ is $1$ and other coordinates are $0$ form a central sequence in $[\calr]$ by Remark \ref{rem-central}.
\end{rem}


\section{Miscellaneous examples}\label{sec-ex}

\subsection{Orbitally inner amenable groups}

Recall that a countable group is said to be orbitally inner amenable if it admits a free ergodic p.m.p.\ action whose orbit equivalence relation is inner amenable (Subsection \ref{subsec-schmidt}).
We provide several examples of such groups and free ergodic p.m.p.\ actions whose orbit equivalence relations are inner amenable, but not Schmidt (see also Remark \ref{rem-h-g-schmidt} for such an example).
In Corollary \ref{cor:cpctext}, we showed that every countable, residually finite, inner amenable group is orbitally inner amenable.
More generally, we obtain the following:

\begin{prop}\label{prop:resfinex}
Let $G$ be a countable group with normal subgroup $N$.
Assume that there exists a chain $N=N_0> N_1>\cdots$ of finite index subgroups of $N$ with $\bigcap _i N_i =\{ e\}$, and with each $N_i$ normal in $G$. 
Assume furthermore that there exists a diffuse, conjugation-invariant mean $\bm{m}$ on $G$ with $\bm{m}(N)=1$.
Then $G$ is orbitally inner amenable.
\end{prop}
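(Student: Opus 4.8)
The plan is to exhibit a free ergodic p.m.p.\ action of $G$ whose orbit equivalence relation carries a diffuse, symmetric, balanced, conjugation-invariant mean, and then invoke Theorem \ref{thm:equiv}. Let $\hat N\defeq\varprojlim_i N/N_i$ be the profinite completion of $N$ along the given chain; since $\bigcap_iN_i=\{e\}$ and (because $\bm m$ is diffuse with $\bm m(N)=1$) $N$, and hence $\hat N$, is infinite, the left-translation action $N\curvearrowright(\hat N,m)$ against Haar probability $m$ is free and ergodic. Set $(X,\mu)\defeq\mathrm{CInd}_N^G(\hat N)$ with $\mu$ the product of copies of $m$ indexed by $G/N$. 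Then $G\curvearrowright(X,\mu)$ is free (co-induction of a free action is free) and ergodic (co-induction of an ergodic action is ergodic; when $[G:N]=\infty$ this is the standard Koopman argument using that every nonempty finite subset of $G/N$ has infinite $G$-orbit, and when $[G:N]<\infty$ one may anyway use the profinite completion $\varprojlim G/N_i$ of $G$ directly together with Corollary \ref{cor:cpctext}). Write $\mathcal R\defeq\mathcal R(G\curvearrowright X)$ and let $\alpha\colon\mathcal R\to G$, $\alpha(gx,x)=g$, be the canonical cocycle, so $\mathcal R=\bigsqcup_{g\in G}\alpha^{-1}(g)$ is the disjoint union of the graphs of the elements of $G$.

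The first key step is to promote $\bm m$ to a diffuse, conjugation-invariant mean $\bm m'$ on $G$ with $\bm m'(N_i)=1$ for \emph{every} $i$. For each fixed $i$ I would start from $\bm m$ (restricted to $N$, where it has mass $1$) and iterate the Giordano--de la Harpe convolution move recalled at the start of Section \ref{sec-cpt-ia} along the finite-index chain $N>N_1>\cdots>N_i$: since $\check\nu*\nu$ is again diffuse and conjugation-invariant and $(\check\nu*\nu)(H)\ge\nu(g_0H)^2>0$ whenever $H$ has finite index and $\nu(g_0H)>0$, normalizing the restriction of $\check\nu*\nu$ successively to $N_1,N_2,\dots,N_i$ produces a diffuse, conjugation-invariant mean $\bm m^{(i)}$ on $G$, supported on $N$, with $\bm m^{(i)}(N_i)=1$ (hence $\bm m^{(i)}(N_j)=1$ for all $j\le i$). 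Letting $\bm m'$ be a weak${}^*$-cluster point of $(\bm m^{(i)})_i$, it is a mean, it is diffuse (it vanishes on finite sets), it is conjugation-invariant, and $\bm m'(N_j)=1$ for every $j$ since $\bm m^{(i)}(N_j)=1$ for all large $i$. Replacing $\bm m'$ by $(\bm m'+\check{\bm m'})/2$ keeps all of this (as $N_j^{-1}=N_j$) and makes $\bm m'=\check{\bm m'}$.

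Next I would define the mean on $\mathcal R$ by $\bm m_{\mathcal R}(D)\defeq\int_G\mu^1\big(D\cap\alpha^{-1}(g)\big)\,d\bm m'(g)$; this is a finitely additive probability on $\mu^1$-measurable subsets of $\mathcal R$, absolutely continuous with respect to $\mu^1$. It is diffuse because for $\mu^1(D)<\infty$ the function $g\mapsto\mu^1(D\cap\alpha^{-1}(g))$ lies in $\ell^1(G)$, hence exceeds any $\varepsilon>0$ only on a finite set, which is $\bm m'$-null. It is symmetric because inversion preserves $\mu^1$ and $\bm m'=\check{\bm m'}$, and it is invariant under conjugation by each $g_0\in G\subset[\mathcal R]$ because $\alpha(\gamma^{\phi_{g_0}})=g_0^{-1}\alpha(\gamma)g_0$, conjugation by $g_0$ preserves $\mu^1$, and $\bm m'$ is conjugation-invariant. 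The crux is balancedness: for Borel $A\subset X$ one has $\bm m_{\mathcal R}(\mathcal R_A)=\int_G\mu(A\cap g^{-1}A)\,d\bm m'(g)=\mu(A)-\int_N\mu(A\setminus g^{-1}A)\,d\bm m'(g)$, so I must show the last integral vanishes. Since $\hat N=\varprojlim_iN/N_i$ and co-induction commutes with inverse limits, the ``level-$i$'' cylinder sets — those factoring through $\mathrm{CInd}_N^G(N/N_i)$ — are dense in the measure algebra of $X$; and on $\mathrm{CInd}_N^G(N/N_i)$ the $N$-action factors through $N/N_i$, which is exactly where normality of $N_i$ \emph{in $G$} (not merely in $N$) is used, namely $s_jns_j^{-1}\bmod N_i$ depends only on $n\bmod N_i$ for every coset representative $s_j$. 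Hence $N_i$ acts trivially on level-$i$ cylinder sets, so $\mu(A\setminus g^{-1}A)=0$ for such $A$ and $g\in N_i$; approximating a general $A$ by a level-$i$ cylinder set and using $\bm m'(N_i)=1$ gives $\int_N\mu(A\setminus g^{-1}A)\,d\bm m'(g)\le\varepsilon$ for every $\varepsilon$, so the integral is $0$.

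Finally, $G$ is a countable subgroup of $[\mathcal R]$ covering $\mathcal R$, and $\bm m_{\mathcal R}$ is balanced and invariant under conjugation by $G$, so by Remark \ref{rem:suffice} it is invariant under conjugation by all of $[\mathcal R]$. Thus $\bm m_{\mathcal R}$ is a diffuse, symmetric, balanced, conjugation-invariant mean on $\mathcal R$, whence $\mathcal R$ is inner amenable by Theorem \ref{thm:equiv}, and $G\curvearrowright(X,\mu)$ witnesses that $G$ is orbitally inner amenable. I expect the main obstacle to be the two linked points above: constructing $\bm m'$ with full mass on every $N_i$ while retaining diffuseness and conjugation-invariance, and then seeing that this concentration, combined with the fact that the $N$-action on the co-induced space is ``residually $(N_i)$'', forces $\bm m_{\mathcal R}$ to be balanced; the freeness and ergodicity of the co-induced action and the $[G:N]<\infty$ case should be routine by comparison.
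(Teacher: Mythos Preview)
Your proof is correct and follows essentially the same route as the paper: co-induce the profinite action $N\curvearrowright\hat N$ to $G$, upgrade the given mean to one concentrated on every $N_i$, push it forward to a mean on $\mathcal R$, and check diffuseness, balancedness, and conjugation-invariance via Remark~\ref{rem:suffice}. Two small differences are worth noting. First, the paper obtains the concentrated mean more economically: a \emph{single} convolution $\bm n_0=\check{\bm m}*\bm m$ already satisfies $\bm n_0(L)=\sum_{hL\in N/L}\bm m(hL)^2>0$ for every finite-index $L\le N$, so one can normalize $\bm n_0$ to $N_i$ directly (using that $N_i\lhd G$) and then take a weak${}^*$-cluster point; your iterated convolve--restrict procedure works but does more than necessary. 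Second, you are more careful about ergodicity of the co-induced action than the paper, which simply asserts it. Your splitting into the cases $[G:N]=\infty$ (Koopman argument) and $[G:N]<\infty$ (fall back on the profinite completion of $G$ and Corollary~\ref{cor:cpctext}) is a clean way to handle this; alternatively one could avoid the case split by observing that even without ergodicity, the mean $\bm m_{\mathcal R}$ still witnesses inner amenability of the (possibly non-ergodic) groupoid, and then invoke Proposition~\ref{prop:ergdec} to pass to an ergodic component.
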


\begin{proof}
Let $\check{\bm{m}}$ be the mean on $G$ defined by $\check{\bm{m}}(D)=\bm{m}(D^{-1})$ for $D\subset G$.
Then the convolution $\bm{n}_0\defeq \check{\bm{m}}\ast \bm{m}$ is also diffuse and conjugation-invariant with $\bm{n}_0(N)=1$.
For every finite index subgroup $L$ of $N$, by finite additivity, we have $\sum _{hL \in N/L}\bm{m}(hL) =1$ and hence
\[
\bm{n}_0(L) = \int _N \bm{m}(k^{-1}L) \, d\check{\bm{m}}(k) = \sum _{hL\in N/L}\int _{hL}\bm{m}(kL)\, d\bm{m}(k) = \sum _{hL \in N/L}\bm{m}(hL)^2 > 0.
\]
For every $i$, since $N_i$ is normal in $G$, the normalized restriction of $\bm{n}_0$ to $N_i$, denoted by $\bm{n}_i$, is a diffuse, conjugation-invariant mean on $G$ with $\bm{n}_i(N_i)=1$.
Let $\bm{n}$ be any weak${}^*$-cluster point of $(\bm{n}_i )$ in the space of means on $G$.
Then $\bm{n}$ is a diffuse, conjugation-invariant mean on $G$ with $\bm{n}(N_i)=1$ for all $i$.
Let $N\curvearrowright (K,\mu _K )$ be the profinite action associated to the chain $(N_i )$, i.e., the inverse limit of the finite actions $N\curvearrowright (N/N_i , \mu _{N/N_i})$ with each $\mu _{N/N_i}$ the normalized counting measure on $N/N_i$.
We naturally view $N$ as a subgroup of the profinite group $K$.
Observe that for every open neighborhood $U$ of the identity of $K$, we have $\bm{n}(N\cap U ) =1$.
Therefore, for every Borel subset $B\subset K$, we have
\begin{align}\label{eqn:Kint0}
\int _{N } \mu _K (hB\bigtriangleup B ) \, d\bm{n}(h) = 0.
\end{align}
Let $G\curvearrowright (X,\mu )$ be the action co-induced from the action $N\curvearrowright (K,\mu _K)$.
This is a free ergodic p.m.p.\ action of $G$.
We set $\mathcal{R}\defeq \calr(G\c (X, \mu))$.
Since $N$ is normal in $G$, the action $N\c (X,\mu )$ is isomorphic to the product of countably many copies of the action $N\c (K,\mu _K)$.
Therefore, \eqref{eqn:Kint0} implies that for every Borel subset $A\subset X$, we have
\begin{align}\label{eqn:Xint0}
\int _{N } \mu  (hA\bigtriangleup A ) \, d\bm{n}(h) = 0.
\end{align}
Define a mean $\bm{n}_{\mathcal{R}}$ on $(\mathcal{R}, \mu)$ by
\[
\bm{n}_{\mathcal{R}}(D) \defeq \int _{N} \mu ( \{ \, x\in  X \mid (hx,x) \in D \, \} ) \, d\bm{n}(h)
\]
for a Borel subset $D\subset \mathcal{R}$.
The mean $\bm{n}_{\mathcal{R}}$ is diffuse since $\bm{n}$ is diffuse, and $\bm{n}_{\mathcal{R}}$ is balanced by \eqref{eqn:Xint0}.
Since $\bm{n}$ is invariant under conjugation by $G$, and $\mu$ is $G$-invariant, we have $\bm{n}_{\mathcal{R}}(D^{g})=\bm{n}_{\mathcal{R}}(D)$ for all Borel subsets  $D\subset \mathcal{R}$ and all $g\in G$.
By Remark \ref{rem:suffice}, $\bm{n}_{\mathcal{R}}$ is a mean witnessing that $\mathcal{R}$ is inner amenable.
\end{proof}

\begin{prop}\label{prop:wreath}
Let $H$ be a non-trivial countable group, let $K$ be a countable group acting on a countable set $V$, and assume that there exists a diffuse $K$-invariant mean on $V$.
Then the generalized wreath product $G\defeq  H\wr_V K$ is orbitally inner amenable.
\end{prop}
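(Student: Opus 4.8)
The plan is to produce, for the group $G\defeq H\wr_V K=\bigl(\bigoplus_{v\in V}H\bigr)\rtimes K$, an explicit free ergodic p.m.p.\ action $G\c(X,\mu)$ together with a diffuse, balanced, conjugation-invariant mean on the associated translation groupoid, and then conclude via Theorem~\ref{thm:equiv}. I would take free ergodic p.m.p.\ actions $H\c(Y_0,\nu_0)$ and $K\c(Z,\zeta)$ (these exist for every countable group), set $(X,\mu)\defeq(Y_0^V\times Z,\ \nu_0^{\otimes V}\times\zeta)$, and let $G$ act by letting the base group $\bigoplus_{v\in V}H$ act coordinatewise on $Y_0^V$ and trivially on $Z$, and letting $K$ permute the $V$-indexed coordinates of $Y_0^V$ while acting on $Z$ via the chosen action. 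First I would check freeness: if $(f,k)\in G$ fixes a.e.\ point, then it fixes a.e.\ point of $Z$, so $k=e$ since $K\c Z$ is free, and then each $f_v$ fixes a.e.\ point of $Y_0$, so $f=e$; and ergodicity, using that the coordinatewise action $\bigoplus_V H\c Y_0^V$ is ergodic (a standard argument with conditional expectations onto finite sub-products, valid because $H\c Y_0$ is ergodic), so that $\bigoplus_V H$-invariant functions on $X$ depend only on $Z$, whence $K$-ergodicity of $Z$ forces $G$-invariant functions to be constant. Write $\calg\defeq G\ltimes(X,\mu)$; by freeness this is (isomorphic to) the orbit equivalence relation $\calr(G\c(X,\mu))$.

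Next, fix $h_0\in H\setminus\{e\}$ --- this is the only place non-triviality of $H$ enters --- and for $v\in V$ let $\delta_v\in\bigoplus_V H\le G$ be the element equal to $h_0$ at coordinate $v$ and trivial elsewhere. Let $m_V$ be a diffuse $K$-invariant mean on $V$, and define $\bm m$ on $\calg=G\times X$ by
\[
\bm m(D)\defeq\int_V\mu\bigl(\{\,x\in X\mid(\delta_v,x)\in D\,\}\bigr)\,dm_V(v).
\]
I would first note that $\bm m$ is a mean (positive, finitely additive, $\bm m(\calg)=1$, and $\mu^1$-absolutely continuous, the last point because $\mu^1$-nullity of $D$ forces each fibre $\{x\mid(\delta_v,x)\in D\}$ to be $\mu$-null). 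The crucial step is \emph{balancedness}: for a Borel set $A\subset\calg^0=X$ one has $\{\,x\mid(\delta_v,x)\in\calg_A\,\}=A\cap\delta_v^{-1}A$, and since $\delta_v$ only alters the $v$-th $Y_0$-coordinate, $\mu(\delta_vA\bigtriangleup A)=0$ whenever $A$ lies in the algebra generated by cylinder sets of $Y_0^V$ and Borel subsets of $Z$ and $v$ avoids the finitely many coordinates on which $A$ depends; approximating a general Borel $A$ by such sets gives $\mu(\delta_vA\bigtriangleup A)\to0$ as $v\to\infty$, so that (by diffuseness of $m_V$, which annihilates any bounded function on $V$ tending to $0$) $\bm m(\calg_A)=\mu(A)$. \emph{Diffuseness}: if $\mu^1(D)<\infty$, then the fibres $B_v\defeq\{x\mid(\delta_v,x)\in D\}$ satisfy $\sum_v\mu(B_v)=\mu^1\bigl(\bigsqcup_v\{\delta_v\}\times B_v\bigr)\le\mu^1(D)<\infty$, hence $\mu(B_v)\to0$ and $\bm m(D)=0$. \emph{Conjugation-invariance under $G$}: for $g=(f,k)\in G$, a direct computation with the wreath-product law gives that $g\delta_vg^{-1}$ is the element supported at $kv$ with value $f_{kv}h_0f_{kv}^{-1}$; using the identity $(h,x)^{\phi_g}=(g^{-1}hg,g^{-1}x)$, invariance of $\mu$ under $G$, and $K$-invariance of $m_V$ to absorb the reindexing $v\mapsto k^{-1}v$, one rewrites $\bm m(D^{\phi_g})$ as $\int_V\mu(\{x\mid(\delta_v',x)\in D\})\,dm_V(v)$, where $\delta_v'$ is supported at $v$ with value $f_vh_0f_v^{-1}$; since $f_v=e$ for all $v$ outside the finite support of $f$, this integrand agrees with that of $\bm m(D)$ off a finite set, so $\bm m(D^{\phi_g})=\bm m(D)$ by diffuseness of $m_V$.

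Finally, since $\calg=\bigcup_{g\in G}(\{g\}\times X)$ is covered by $G\le[\calg]$, Remark~\ref{rem:suffice} upgrades the $G$-conjugation-invariance of the \emph{balanced} mean $\bm m$ to conjugation-invariance under all of $[\calg]$. Thus $\bm m$ is a diffuse, conjugation-invariant mean on the ergodic groupoid $(\calg,\mu)$, so by Theorem~\ref{thm:equiv} the groupoid $\calg$, hence the orbit equivalence relation $\calr(G\c(X,\mu))$, is inner amenable; as the action is free and ergodic, $G$ is orbitally inner amenable. I expect the only genuinely delicate point to be the balancedness verification --- establishing that $\delta_v$ asymptotically fixes every Borel subset of $X$ along $m_V$, which is exactly where the ``local'' nature of the coordinatewise base action meets the diffuseness of $m_V$; the conjugation-invariance computation is then bookkeeping, the one subtlety being that conjugating $\delta_v$ produces an $H$-conjugate of $h_0$, which is harmless because it differs from $\delta_v$ only over the finite support of $f$.
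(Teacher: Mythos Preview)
Your proof is correct and shares the paper's core idea: use the single-coordinate elements $\delta_v\in\bigoplus_V H$ together with the diffuse $K$-invariant mean on $V$, verify that these elements act asymptotically trivially on the space (giving balancedness) and are asymptotically central in $G$ (giving $G$-conjugation-invariance), and then invoke Remark~\ref{rem:suffice} to upgrade to full conjugation-invariance. The differences are in packaging. First, the action: the paper observes that the natural $G$-action on $Y_0^V$ need not be free and remedies this by co-inducing the $N$-action on $Y_0^V$ up to $G$, whereas you take the product with an auxiliary free ergodic $K$-space $(Z,\zeta)$ --- a simpler and more transparent way to force freeness, at the cost of importing one extra ingredient. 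Second, the witness: the paper first extracts F\o lner-type finite sets $Q_n\subset V$ from $m_V$, builds an explicit sequence $\xi_n\in L^1(\calr,\mu^1)$ supported on $F_n=\{\delta_v:v\in Q_n\}$, and only then passes to a weak$^*$-cluster point; you push $m_V$ forward to a mean on $\calg$ directly, skipping the sequence step. Your route is a bit more streamlined; the paper's has the minor advantage of producing an explicit inner amenability sequence rather than just a mean.
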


\begin{proof}
By assumption, there exists a sequence $(Q_n)_{n\in \N}$ of finite subsets of $V$ such that
\begin{enumerate}
\item[(i)] $|k\cdot Q_n \bigtriangleup Q_n |/|Q_n|\rightarrow 0$ for all $k\in K$, and
\item[(ii)] $1_{Q_n}(v)\rightarrow 0$ for all $v \in V$.
\end{enumerate}
We set $N\defeq \bigoplus _V H$ and view each element of $N$ as a function $z\colon V\rightarrow H$ whose support $\mathrm{supp}\, (z)\defeq  \{ \, v\in V \mid z(v)\neq e_H \, \}$ is finite.
Let $K$ act on $N$ by $(k\cdot z )(v) =z(k^{-1}\cdot v )$, and identify $N$ and $K$ with subgroups of $G$, so that $G=NK$ is the internal semi-direct product of $N$ with $K$ and $kzk^{-1}=k\cdot z$ for $k\in K$ and $z\in Z$.

Let $H\curvearrowright ^{\alpha _0}(Y_0,\nu _0 )$ be a free ergodic p.m.p.\ action.
We set $(Y,\nu ) \defeq (Y_0^V,\nu _0 ^V )$ and let $N\curvearrowright ^{\alpha}(Y,\nu )$ be the componentwise action given by $(z^{\alpha}y)(v)= z(v)^{\alpha _0} y(v)$ for $z\in N$, $y\in Y$ and $v\in V$.
The action $N\curvearrowright ^{\alpha}(Y,\nu )$ is free and ergodic, and naturally extends to an action of $G$, but this extension may not be free in general.
Instead, we set $(X,\mu ) \defeq (Y^K,\nu ^K )$ and let $G\curvearrowright ^{\beta} (X,\mu )$ be the action co-induced from the action $N\c^\alpha (Y, \nu)$.
Explicitly,
\[((zk)^\beta x )(k_0)\defeq (k_0^{-1}\cdot z)^{\alpha} x(k^{-1}k_0)\]
for $k,k_0\in K$, $z\in N$ and $x\in X$.
This is a free ergodic p.m.p.\ action of $G$.
Since $N$ is normal in $G$, the action $N\c^\beta (X, \mu)$ is isomorphic to the product of countably many copies of the action $N\c^\alpha (Y, \nu)$.

For $v\in V$ and $h\in H$, let $z_{v,h}\in N$ be the element determined by $\mathrm{supp}\, (z_{v,h}) = \{ v\}$ and $z_{v,h}(v)=h$.
Fix any non-trivial $h _0 \in H$, and for each $n\in \N$, set $F_n \defeq \{ \, z_{v,h _0} \mid v\in Q_n \, \}$.
Conditions (i) and (ii) imply that $|F_n^g \bigtriangleup F_n |/|F_n|\rightarrow 0$ and $1_{F_n}(g)\rightarrow 0$ for all $g\in G$.
Moreover, condition (ii) implies that if $C$ is any Borel subset of $Y=Y_0^V$ that depends on only finitely many $V$-coordinates, then for all large enough $n$, $C$ is independent of the coordinates in $Q_n$, and hence for every $z\in F_n$, we have $z^{\alpha}C = C$.
It follows that for all Borel subsets $B\subset Y$, we have $\lim _{n\rightarrow \infty}\sup _{z\in F_n}\nu (z^{\alpha}B\bigtriangleup B ) = 0$.
Since the action $N\c^\beta (X,\mu )$ is isomorphic to the product of countably many copies of the action $N\c^\alpha (Y, \nu)$, it follows that for all Borel subsets $A\subset X$, we have
\begin{equation}\label{eqn:betalim}
\lim _{n\rightarrow \infty}\sup _{z\in F_n}\mu (z^{\beta}A\bigtriangleup A ) = 0.
\end{equation}

We set $\calr \defeq \calr(G\c^\beta (X, \mu))$.
For each $n\in \N$, define $\xi _n\in L^1(\calr, \mu^1)$ by $\xi_n(g^\beta x, x)\defeq 1_{F_n}(g)/|F_n|$ for $g\in G$ and $x\in X$.
Then the sequence $(\xi _n )$ is balanced by \eqref{eqn:betalim}.
Moreover, $(\xi _n)$ inherits from $(F_n)$ the properties of being asymptotically diffuse and asymptotically invariant under conjugation by all elements of $G$.
Therefore, by Remark \ref{rem:suffice}, any weak${}^*$-cluster point of $(\xi _n )$ will be a mean witnessing that $\mathcal{R}$ is inner amenable.
\end{proof}

\begin{rem}\label{rem-wreath}
In the proof of Proposition \ref{prop:wreath}, observe that, since $K\curvearrowright ^{\beta} (X,\mu )$ is a Bernoulli shift, if $K$ is non-amenable, then the action $G\curvearrowright ^{\beta} (X,\mu )$ has stable spectral gap.
Thus, if in addition $K$ is finitely generated and center-less, and $K$ acts on $V$ with infinite orbits, then the centralizer of $K$ in $G$ is trivial, and hence Corollary \ref{cor:specgap} shows that the equivalence relation $\mathcal{R}(G\curvearrowright ^{\beta} (X,\mu ))$ is not Schmidt.
Examples of $K$ and $V$ satisfying those conditions are found in \cite{gm}, and for such $K$ and $V$, $\calr(G\c^\beta (X, \mu))$ is inner amenable, but not Schmidt.
\end{rem}


\subsection{Product groups}

Let $L$ be a non-amenable group and let $H$ be a countable group.
We set $G\defeq L\times H$ and identify $L$ and $H$ with their respective images, $L\times \{ e\}$ and $\{ e\} \times H$, in $G$.
Let $H\curvearrowright (X_0,\mu _0)$ be a free ergodic p.m.p.\ action, and let $G\curvearrowright (X,\mu )\defeq (X_0^L, \mu_0^L)$ be the action such that $H$ acts diagonally and $L$ acts by Bernoulli shift.
Explicitly the action $G\c X$ is given by $(lh\cdot x)(k) = h\cdot x(l^{-1}k)$ for $l,k\in L$, $h\in H$ and $x\in X$.
This is a free ergodic action.
Set $\calr \defeq \calr(G\c (X, \mu))$.
We discuss whether $\calr$ is inner amenable or Schmidt.
This kind of action is considered in \cite{cj} in another context.

\begin{prop}\label{prop-product}
With the above notation,
\begin{enumerate}
\item[(i)] if $H$ is inner amenable and the action $H\c (X_0, \mu_0)$ is profinite, then $\calr$ is inner amenable.
\item[(ii)] If $H$ is finitely generated and the center of $H$ is trivial, then the action $G\c (X, \mu)$ is not Schmidt.
\item[(iii)] If the action $H\c (X_0, \mu_0)$ is mildly mixing, then for every ergodic p.m.p.\ action $G\c (Y, \nu)$, the diagonal action $G\c (X\times Y, \mu \times \nu)$ is not inner amenable.
\end{enumerate}
\end{prop}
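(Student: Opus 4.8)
\emph{Plan.} The three parts are largely independent; part (i) is a mean-theoretic argument in the spirit of Proposition \ref{prop:resfinex}, while parts (ii) and (iii) are spectral-gap arguments. The common input for (ii) and (iii) is that $G\c(X,\mu)=L\times H\c X_0^L$ has stable spectral gap: its restriction to $L$ is a Bernoulli shift of the non-amenable group $L$ with non-trivial base (non-trivial since $H\c(X_0,\mu_0)$ is free ergodic, so $H\neq\{e\}$ and $H$ is even infinite), hence has stable spectral gap by Remark \ref{rem-ssg}, and therefore for every unitary $G$-representation $\calh$ the tensor product $L^2_0(X)\otimes\calh$ has no weakly $L$-invariant vectors, a fortiori no weakly $G$-invariant vectors. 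The only genuinely delicate point will be in (i), where one must feed the profinite structure of $H\c X_0$ into a conjugation-invariant mean that comes merely from inner amenability of $H$; this will be done by passing to the normal core chain and using the Giordano--de la Harpe convolution trick, exactly as in Proposition \ref{prop:resfinex}.

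For (i), write the profinite ergodic free action as an inverse limit $X_0=\varprojlim_i H/H_i$ with $H_{i+1}\leq H_i$ of finite index in $H$, and set $N_i\defeq\mathrm{Core}_H(H_i)$. Then each $N_i$ is of finite index in $H$ and is normal in $H$, hence normal in $G=L\times H$; freeness forces $\bigcap_i N_i=\{e\}$; and $N_i$ acts trivially on the $i$-th level of each coordinate copy of $X_0$ inside $X=X_0^L$. Since $H$ is inner amenable, push a diffuse conjugation-invariant mean $m_H$ on $H$ forward to a mean $\tilde m$ on $G$ supported on $\{e\}\times H$; this $\tilde m$ is conjugation-invariant on $G$ because $\{e\}\times H$ is normal. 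As in the proof of Proposition \ref{prop:resfinex}, the convolution $\check{\tilde m}\ast\tilde m$ is again diffuse and conjugation-invariant and assigns positive mass to each $\{e\}\times N_i$; normalizing its restriction to $\{e\}\times N_i$ and passing to a weak${}^*$-cluster point produces a diffuse conjugation-invariant mean $\bm n$ on $G$ with $\bm n(\{e\}\times N_i)=1$ for every $i$. Since $N_i$ fixes every cylinder set depending only on levels $\leq i$, a cylinder approximation gives $\int_{\{e\}\times H}\mu(hA\bigtriangleup A)\,d\bm n(h)=0$ for every Borel $A\subseteq X$. Then $\bm n_{\calr}(D)\defeq\int_{\{e\}\times H}\mu(\{\,x:(hx,x)\in D\,\})\,d\bm n(h)$ defines a mean on $\calr$ which is balanced by this identity, diffuse because $\bm n$ is diffuse and the action is free, and conjugation-invariant under $G$ by $G$-invariance of $\mu$ together with conjugation-invariance of $\bm n$; as $G$ covers $\calr$, Remark \ref{rem:suffice} promotes it to a conjugation-invariant mean for all of $[\calr]$, and Theorem \ref{thm:equiv} then gives inner amenability of the ergodic relation $\calr$.

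For (ii), suppose toward a contradiction that $[\calr]$ admits a non-trivial central sequence $(T_n)$; then $\mu(T_nA\bigtriangleup A)\to 0$ for every Borel $A$ by Remark \ref{rem-central}. Fix a finite generating set $F$ of $H$, regarded as a subset of $G$, and note that $C_G(F)=C_G(\{e\}\times H)=L\times Z(H)=L$ since $Z(H)$ is trivial. Applying Corollary \ref{cor:specgap} with this $F$ and $\ve=1/k$, and diagonalizing in $k$ using that $(T_n)$ is central, we obtain elements $g_n\in L$ with $\mu(\{\,x\in X:(T_n)_x=(g_n,x)\,\})\to 1$, hence $\mu(g_nA\bigtriangleup A)\to 0$ for every $A$. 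But $L$ acts on $X=X_0^L$ by a faithful mixing Bernoulli shift, so its image in $\aut(X,\mu)$ is discrete and meets any neighbourhood of the identity only in $\{e\}$ (otherwise some $g\neq e$ would act trivially, or a subsequence would escape to infinity against mixing); therefore $g_n=e$ for all large $n$, so $\mu(\{\,x:T_nx=x\,\})\to 1$, contradicting non-triviality of $(T_n)$. Thus $G\c(X,\mu)$ is not Schmidt.

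For (iii), put $Z\defeq X\times Y=X_0^L\times Y$ with the diagonal $G$-action and verify the hypotheses of Assumption \ref{assum}, with $G\c X$ in the role of the stable-spectral-gap factor. Ergodicity of $G\c Z$ holds because $G\c X$ is weakly mixing, already as an $L$-action, and $G\c Y$ is ergodic. Condition (B) follows from stable spectral gap of $G\c X$ recorded in the first paragraph. For condition (A), take $D\defeq\{\,x\in X_0^L:x(e)\in D_0\,\}$ with $0<\mu_0(D_0)<1$: for $g=(l,h)$ with $l\neq e$ the sets $gD$ and $D$ depend on distinct coordinates, so $\mu(gD\bigtriangleup D)=2\mu_0(D_0)(1-\mu_0(D_0))$, while for $g=(e,h)$ one has $\mu(gD\bigtriangleup D)=\mu_0(hD_0\bigtriangleup D_0)$, which is bounded below for $h$ outside a finite subset of $H$ since $H\c X_0$ is mildly mixing; hence $\inf_{g\notin K_0}\mu(gD\bigtriangleup D)>0$ for a suitable finite $K_0\subset G$. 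Corollary \ref{cor-c}(i) then shows that $G\ltimes(X\times Y)$ is not inner amenable, which is the assertion. The main obstacle throughout is part (i): making the conjugation-invariant mean furnished by inner amenability of $H$ compatible with the profinite structure, which is precisely what the normal core chain together with the convolution trick accomplish, and it is the equality $\bm n(\{e\}\times N_i)=1$ for all $i$ simultaneously that makes the resulting mean on $\calr$ balanced; parts (ii) and (iii) are then routine once the Bernoulli $L$-factor is seen to supply (stable) spectral gap.
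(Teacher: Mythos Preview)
Your proof is correct and follows the paper's approach in all three parts: (i) unpacks the reference to Proposition~\ref{prop:resfinex}, (ii) applies Corollary~\ref{cor:specgap} just as the paper does, and (iii) verifies Assumption~\ref{assum} directly where the paper simply cites Proposition~\ref{prop-diagonal-action}. The only minor differences are that in (ii) you conclude via discreteness of the image of $L$ in $\aut(X,\mu)$ (mixing Bernoulli) rather than via $[\calr_L]$ having no non-trivial central sequence, and in (iii) your direct verification of condition~(A) for a single cylinder set sidesteps having to check that the full $G$-action on $X$ is mildly mixing---both are equivalent to the paper's route.
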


\begin{rem}\label{rem-product}
In particular, if we let the group $H$ and the action $H\c (X_0, \mu_0)$ satisfy the assumptions in assertions (i) and (ii) simultaneously, then the action $G\c (X, \mu)$ is inner amenable, but not Schmidt.
For example, let $H$ be the lamplighter group $H=(\Z /2\Z) \wr \Z $.
Then $H$ is finitely generated, residually finite, inner amenable group whose center is trivial.
\end{rem}

\begin{proof}[Proof of Proposition \ref{prop-product}]
The action $G\c (X, \mu)$ is isomorphic to the action co-induced from the action $H\c (X_0, \mu_0)$.
The proof of Proposition \ref{prop:resfinex} (with $H$ playing the role of $N$) therefore shows that $\calr$ is inner amenable under the assumption in assertion (i).

We prove assertion (ii).
The action $L\c (X,\mu )$ is isomorphic to a Bernoulli shift.
Since $L$ is non-amenable, the equivalence relation $\mathcal{R}_L\defeq \mathcal{R}(L\curvearrowright (X,\mu ))$ is not inner amenable, and the action $L\curvearrowright (X,\mu )$ has stable spectral gap.
Therefore, the action $G \curvearrowright (X,\mu )$ has stable spectral gap as well.
Since $H$ is finitely generated and the center of $H$ is trivial, Corollary \ref{cor:specgap} shows that if $(T_n )$ is any central sequence in $[\mathcal{R}]$, then there exists a sequence $(l_n)$ in $L$ such that $\mu ( \{ \, x \in X \mid T_n (x)= l_n x \, \} )\rightarrow 1$.
Since $[\mathcal{R}_L]$ has no non-trivial central sequence, we must have $l_n=e$ for all large enough $n$, and hence $(T_n)$ is asymptotically trivial.

Assertion (iii) directly follows from Proposition \ref{prop-diagonal-action}.
\end{proof}

\begin{rem}\label{rem-product-group-action}
In Proposition \ref{prop-product} (ii), if the center of $H$ is non-trivial, then the action $G\c (X, \mu)$ may be Schmidt. 
Indeed, let $H\defeq \Z$ and let $X_0\defeq \Z_2$ be the group of 2-adic integers equipped with the normalized Haar measure $\mu_0$.
Let $H\c (X_0, \mu_0)$ be the action given by translation.
Then the action $G\c (X, \mu)$ is not stable (since it has stable spectral gap), but is Schmidt since the element $2^n$ of $H=\Z$ for $n\in \N$ forms a central sequence in $[\calr]$ (see also \cite[Section 29 (C)]{kec}).

In Proposition \ref{prop-product} (iii), without the action $H\c (X_0, \mu_0)$ being mildly mixing, the conclusion may fail. 
Indeed, keeping the notation in the last paragraph, we set $(Y, \nu)\defeq (X_0, \mu_0)$ and let $G$ act on $(Y, \nu)$ via the projection $G\to H=\Z$ and the translation on $\Z_2$ by $\Z$.
Then the diagonal action $G\c^\alpha (X\times Y, \mu \times \nu)$ is stable, seen as follows:
Let $\calr_\alpha$ be the orbit equivalence relation of this action.
For $n\in \N$, let $T_n\in [\calr_\alpha]$ be the element $2^n$ of $H=\Z$, and let $A_n$ be the set of all $(x, y)\in X\times Y$ with $y_n=0$ when $y$ is written as $y=\sum_{k=0}^{\infty}2^ky_k\in \Z_2$ with $y_k\in \{ 0, 1\}$.
Then $(T_n, A_n)$ is a pre-stability sequence for $\calr_\alpha$.
\end{rem}

We construct the following interesting examples via actions of product groups:

\begin{prop}\label{prop:noinvol}
There exists an ergodic discrete p.m.p.\ equivalence relation whose full group admits a non-trivial central sequence, but admits no non-trivial central sequence consisting of periodic transformations with uniformly bounded periods.

Furthermore, there exists an ergodic discrete p.m.p.\ equivalence relation whose full group admits a non-trivial central sequence, but admits no non-trivial central sequence consisting of aperiodic transformations.
\end{prop}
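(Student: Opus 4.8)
The plan is to build both relations as orbit equivalence relations of actions of product groups, following the pattern of the construction preceding Proposition \ref{prop-product}. First I fix a countable non-amenable group $L$ and a countably infinite abelian group $H$ realized as a (dense) subgroup of a compact metrizable group $X_0$ carrying an atomless Haar probability measure $\mu _0$, with $H\c (X_0,\mu _0)$ acting by translation; set $(X,\mu )\defeq (X_0^L,\mu _0^L)$ and let $G=L\times H$ act with $L$ by the Bernoulli shift on the $L$-coordinates and $H$ diagonally by translation. A short computation shows this action is essentially free: an element $(l,h)\neq (e,e)$ with $l=e$ acts by a fixed-point-free translation of the compact group $X$, while one with $l\neq e$ acts by an affine transformation whose fixed-point set is a $\mu$-null coset, because $\langle l\rangle$ has infinite index in the non-amenable group $L$ and $\mu _0$ is atomless. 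It is ergodic since the $L$-Bernoulli part already is. Let $\calr \defeq \calr(G\c (X,\mu ))$; by Remark \ref{rem-ssg} the $L$-action, hence the $G$-action, has stable spectral gap.

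The key lemma I would isolate is that \emph{every central sequence $(T_n)$ in $[\calr]$ is, up to an error tending to $0$ in measure, eventually a diagonal translation by an element of $H$}: precisely, there are $h_n\in H$ with $\mu (\{\, x\in X\mid T_nx=h_n\cdot x\, \})\to 1$. To prove it I apply Corollary \ref{cor:specgap} with $F=\{e\}$: since $(T_n)$ asymptotically commutes with every $\phi _g$, $g\in G$, for each $\ve >0$ and $n$ large there is $g_n\in G$ with $\mu (\{T_n=g_n\})>1-\ve$; essential freeness forces $g_n$ to be uniquely determined by $T_n$ once $\ve <1/3$ (two such elements would agree on a positive-measure set, hence everywhere), so a diagonalization over $\ve =1/j$ yields a single sequence $(g_n)$ in $G$ with $\mu (\{T_n\neq g_n\})\to 0$. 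Writing $g_n=(l_n,h_n)\in L\times H$, I use that a central sequence satisfies $\mu (T_nA\bigtriangleup A)\to 0$ for all Borel $A$ (Remark \ref{rem-central}), hence $\mu (g_nA\bigtriangleup A)\to 0$; taking $A$ to be a cylinder depending on a single coordinate indexed by $e\in L$ with $0<\mu (A)<1$, the case $l_n\neq e$ would make $g_nA$ depend on the disjoint coordinate indexed by $l_n^{-1}$, forcing $\mu (g_nA\bigtriangleup A)=2\mu (A)(1-\mu (A))$, a contradiction. Thus $l_n=e$ eventually and $g_n=(e,h_n)$ acts as the diagonal translation $h_n\cdot (\,\cdot\,)$. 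I expect this lemma to be the only genuine obstacle; the rest is bookkeeping.

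For the first statement take $H=\Z$ and $X_0=\Z_p$ for a prime $p$, with $\Z$ dense in $\Z_p$. The sequence $(p^n)$, viewed in $[\calr]$ via $\{e\}\times \Z$, is a non-trivial central sequence: it is central in $G$, so it commutes with all of $G\subset [\calr]$; it satisfies $\mu (p^nA\bigtriangleup A)\to 0$ on cylinders because $p^n\to 0$ in $\Z_p$, so Remark \ref{rem-central} applies; and it is non-trivial since the action is free. Conversely, suppose $(S_n)$ is a non-trivial central sequence with $S_n^{k_n}=\mathrm{id}$ and $k_n\leq K$. Passing to a subsequence with $k_n\equiv k$, the key lemma gives $m_n\in \Z$ with $\mu (\{S_n=\mathrm{tr}_{m_n}\})\to 1$, where $\mathrm{tr}_{m}$ denotes diagonal translation by $m$; then $\mu (\{S_n^k=\mathrm{tr}_{km_n}\})\geq 1-k\mu (\{S_n\neq \mathrm{tr}_{m_n}\})\to 1$, so for $n$ large the translation by $km_n$ agrees with the identity on a set of positive measure, which forces $km_n=0$ in $\Z_p$, hence $m_n=0$, hence $\mu (\{S_n=\mathrm{id}\})\to 1$ along the subsequence. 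Since non-triviality of $(S_n)$ survives on every subsequence from which a constant-period sub-subsequence is extracted, this is a contradiction.

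For the second statement take $H=\bigoplus _{\N}\Z /2\Z$ and $X_0=\prod _{\N}\Z /2\Z$ with $H\leq X_0$. The standard generators $e_n\in H$ give, exactly as above (they are central in $G$, $e_n\to 0$ in $X_0$, and the action is free), a non-trivial central sequence in $[\calr]$ consisting of involutions. Now suppose $(S_n)$ is any central sequence in $[\calr]$ consisting of aperiodic transformations. By the key lemma, $\mu (\{S_n=\mathrm{tr}_{h_n}\})\to 1$ for some $h_n\in H$; then $\mu (\{S_n^2=\mathrm{tr}_{2h_n}\})\geq 1-2\mu (\{S_n\neq \mathrm{tr}_{h_n}\})\to 1$, and since $H$ has exponent $2$ we have $\mathrm{tr}_{2h_n}=\mathrm{id}$, so $\mu (\{\, x\mid S_n^2x=x\, \})\to 1$. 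This contradicts aperiodicity of $S_n$, which requires $\mu (\{\, x\mid S_n^2x=x\, \})=0$. Hence $[\calr]$ has a non-trivial central sequence but admits no central sequence consisting of aperiodic transformations.
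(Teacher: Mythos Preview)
Your argument is correct and follows the same overall architecture as the paper's proof: a product group $G=L\times H$ acting on $X=X_0^L$ with $L$ by Bernoulli shift, stable spectral gap from the $L$-factor, and Corollary~\ref{cor:specgap} to force any central sequence to be asymptotically given by a single group element; then the torsion structure of the relevant piece of $G$ rules out bounded-period (resp.\ aperiodic) central sequences.

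The genuine difference is in how you pin down that group element to lie in $\{e\}\times H$. The paper takes $L_i$ and $H_i$ \emph{finitely generated} (with $Z(H_0)$ torsion-free and $Z(H_1)\cong \bigoplus_{\N}\Z/2\Z$, the latter requiring Hall's theorem), and applies Corollary~\ref{cor:specgap} with $F$ a finite generating set of $G_i$, so the output automatically lands in $Z(G_i)=\{e\}\times Z(H_i)$. You instead take $H$ itself to be the abelian group $\Z$ or $\bigoplus_{\N}\Z/2\Z$, apply Corollary~\ref{cor:specgap} only with $F=\{e\}$ to get $g_n\in G$, and then use the Bernoulli product structure directly: a one-coordinate cylinder $A$ and its translate $g_nA$ are independent whenever the $L$-component $l_n$ is nontrivial, so $\mu(g_nA\bigtriangleup A)=2\mu(A)(1-\mu(A))$, contradicting the asymptotic invariance coming from Remark~\ref{rem-central}. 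This buys you simpler groups (no finite-generation hypothesis on either factor, no appeal to Hall's theorem, no need for $L$ to be center-less), at the cost of an extra short computation. Two cosmetic points: under the paper's convention $((l,h)\cdot x)(k)=h\cdot x(l^{-1}k)$, the set $g_nA$ depends on the coordinate $l_n$ rather than $l_n^{-1}$; and your freeness argument does not actually need $\langle l\rangle$ to have infinite index---$l\neq e$ and atomlessness of $\mu_0$ suffice.
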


\begin{proof}
In what follows, for a group $G$, its center is denoted by $Z(G)$.
Let $L_0$ and $L_1$ be a finitely generated, non-amenable group with trivial center.
Let $H_0$ and $H_1$ be finitely generated groups, and suppose that $Z(H_0)$ is torsion-free and infinite (e.g., take $H_0=\Z$) and that $Z(H_1)$ is isomorphic to the direct sum of infinitely many copies of $\Z /2\Z$ (e.g., by \cite[Theorem 6]{hall}, every countable abelian group is isomorphic to the center of some 2-generated central-by-metabelian group).
For each $i=0,1$, we set $G_i\defeq L_i\times H_i$, so that $Z(G_i) = \{ e\} \times Z(H_i)$.
Since $Z(H_i)$ is infinite, we have a free ergodic p.m.p.\ action $H_i\curvearrowright (Y_i, \nu _i )$ such that the image of $Z(H_i)$ in $\mathrm{Aut}(Y_i,\nu _i )$ is precompact (see the proof of \cite[Theorem 15]{td} or Example \ref{ex-infinite-center} below).
Set $(X_i,\mu _i ) \defeq (Y_i^{L_i}, \nu _i ^{L_i})$ and let $G_i$ act on $(X_i,\mu _i )$ via $((l,h)\cdot x)(k)=h\cdot x(l^{-1}k)$ for $l,k\in L_i$, $h\in H_i$ and $x\in X_i$.
Let $\calr_{G_i}$ denote the orbit equivalence relation of this action $G_i\c (X_i, \mu_i)$.

The image of $Z(G_i)=\{ e\} \times Z(H_i)$ in $\mathrm{Aut}(X_i,\mu _i)$ is precompact, so the full group $[\mathcal{R}_{G_i}]$ admits a non-trivial central sequence consisting of non-trivial elements of $Z(G_i)$ which converge to the identity transformation in $\mathrm{Aut}(X_i,\mu _i )$.
The action $G_i\c (X_i,\mu _i )$ is free and has stable spectral gap since $L_i$ is non-amenable and $L_i\times \{ e\}$ acts by Bernoulli shift on $(X_i,\mu _i)$.
If $(T_{n,i})_{n}$ is any central sequence of elements in $[\mathcal{R}_{G_i}]$, then, since $G_i$ is finitely generated, Corollary \ref{cor:specgap} shows that there exists a sequence $(c_{n, i})_n$ of non-trivial elements of $Z(G_i)$ such that $\mu _i ( \{ \, x \in X_i \mid T_{n,i}(x) = c_{n,i} x \, \} ) \rightarrow 1$.
Therefore, since $Z(G_0)$ is torsion-free, the sequence $(T_{n,0})$ cannot consist of periodic transformations with uniformly bounded periods.
Likewise, since every non-trivial element of $Z(G_1)$ has order 2, the sequence $(T_{n,1})$ cannot consist of aperiodic transformations.
\end{proof}


\subsection{Groups with Schmidt or stable actions}

We deal with inner amenable groups which appear in certain contexts.


\begin{ex}\label{ex-infinite-center}
Let $G$ be a countable group with an infinite central subgroup $C$.
A Schmidt action of $G$ is constructed in the proof of \cite[Theorem 15]{td} and Proposition \ref{prop-t-c}, obtained as follows:
Embed $C$ into a compact abelian group $K$, equip $K$ with the normalized Haar measure, and let $C$ act on $K$ by translation.
Let $G\c X=\prod_{G/C}K$ be the action co-induced from the action $C\c K$.
Then $C$ acts on the product $X=\prod_{G/C}K$ diagonally since $C$ is central.
Therefore if $(c_n)$ is any sequence of non-trivial elements of $C$ converging to the identity in $K$, we have $\mu(c_nA\bigtriangleup A)\to 0$ for all Borel subsets $A\subset X$, where $\mu$ is the probability measure on $X$ given by the product of the Haar measure on $K$.
Thus $(c_n)$ is a non-trivial central sequence in the full group $[G\c (X, \mu)]$ by Remark \ref{rem-central}.
\end{ex}

\begin{ex}
Let $H$ be a countable group with property (T) and suppose that $H$ has a central element $a$ of infinite order.
For non-zero integers $p$, $q$ with $|p|\neq |q|$, define $G$ to be the HNN extension $G\defeq \langle \, H,\, t\mid ta^pt^{-1}=a^q\, \rangle$.
The group $G$ is ICC, inner amenable and not stable (\cite{kida-inn}).

We construct a Schmidt action of $G$.
Let $C$ be the group generated by $a$.
Embed $C$ into a compact abelian group $K$ and co-induce the action $G\c X=\prod_{G/C}K$ as in Example \ref{ex-infinite-center}.
Let $\mu$ be the probability measure on $X$.
Let $N$ be the kernel of the homomorphism $G\to \Z$ sending each element of $H$ to $0$ and $t$ to $1$.
We set $a_n\defeq a^{p^nq^n}$.
The sequence $(a_n)_{n\in \N}$ is asymptotically central in $N$, i.e., for every $g\in N$, for all sufficiently large $n$, we have $a_ng=ga_n$.
Hence the sequence of elements $a_ma_n^{-1}$ running through some $m>n$ converges to the identity in $K$ and is central in the full group $[N\c (X, \mu)]$.
Take a free ergodic p.m.p.\ action $G/N\c (Y, \nu)$ and let $G$ act on $X\times Y$ diagonally.
As noted in \cite[Remark 7.4]{td}, using this non-trivial central sequence in $[N\c (X, \mu)]$ and the hyperfiniteness of the equivalence relation $\calr(G/N\c (Y, \nu))$, we can construct a non-trivial central sequence in $[G\c (X\times Y, \mu \times \nu)]$.
\end{ex}

\begin{ex}
For a group $G$, its \textit{FC-center} $R$ is defined as the set of elements of $G$ whose centralizer in $G$ is of finite index in $G$, or equivalently, whose conjugacy class in $G$ is finite.
Then $R$ is a normal subgroup of $G$.
Popa-Vaes \cite[Theorem 6.4]{pv} show that if a countable group $G$ is residually finite and its FC-center is not virtually abelian, then $G$ is \textit{McDuff}, i.e., $G$ admits a free ergodic p.m.p.\ action such that the associated von Neumann algebra tensorially absorbs the hyperfinite $\textrm{II}_1$ factor.
A property (T) group satisfying that assumption does exist (\cite{ershov}), and it provides an example of a non-stable, McDuff group.
Based on this group, Deprez-Vaes \cite[Section 3]{dv} constructed an ICC, non-stable, McDuff group.

Popa-Vaes' construction applies to show that every countable, residually finite group $G$ with infinite FC-center has the Schmidt property.
In fact, in their proof, it is shown that the sequence $(v_h)_h$ in the symbol of that proof, where $h$ runs through some elements of $G$, is a non-trivial central sequence in the full group.
We note that in \cite[Theorem 6.4]{pv}, while the FC-center of $G$ is assumed to be not virtually abelian, this condition is required for the action to be McDuff.
For the action to admit a non-trivial central sequence in its full group, it suffices to assume that the FC-center of $G$ is infinite.

Similarly the group $G$ of Deprez-Vaes \cite[Section 3]{dv} also has the Schmidt property.
In fact, the action of $G$ they constructed admits the central sequence $(v_h)_h$ in the symbol of their proof.
\end{ex}

\begin{ex}
Let $A$ be a countably infinite, amenable group.
Let $H$ be a countable group acting on $A$ by automorphisms and suppose that all $H$-orbits in $A$ are finite.
This condition holds, e.g., if $A$ is an increasing union of its finite subgroups invariant under the action of $H$ (and this holds, e.g., for the action $\mathit{SL}_n(\Z)\c (\Z[1/2]/\Z)^n$ induced from the linear action on $\R^n$).
Let $G\defeq H\ltimes A$ be the semi-direct product.
Then the FC-center of $G$ contains $A$ and is hence infinite.
We present below two constructions of a free ergodic p.m.p.\ action of $G$ which is stable.
It is remarkable that for the first stable action of $G$, its restriction to $A$ is ergodic (or rather mixing), while it is not ergodic for the second one.

\medskip

\noindent \textbf{The first construction.}
We set $X\defeq [0, 1]^A$ and let $\mu$ be the probability measure on $X$ given by the product of the Lebesgue measure on $[0, 1]$.
The group $G$ acts on $A$ by affine transformations:
$(h, a)\cdot b=h(ab)$ for $h\in H$ and $a, b\in A$.
This action induces the action $G\c (X, \mu)$, which is p.m.p.\ and ergodic, but is not necessarily free (e.g., if the action of $H$ on $A$ is not faithful).
We show that the groupoid $G\ltimes (X, \mu)$ is stable, i.e., it absorbs the ergodic p.m.p.\ aperiodic hyperfinite equivalence relation, under the direct product of groupoids.
This is enough for $G$ to admit a free ergodic p.m.p.\ action which is stable, by \cite[Theorem 1.4]{kida-srt}.

Let $K$ be the closure of the image of $H$ in $\aut(A)$, the automorphism group of $A$ endowed with the pointwise convergence topology.
By the assumption that all $H$-orbits in $A$ are finite, the group $K$ is compact.
We also have the p.m.p.\ action $K\ltimes A\c (X, \mu)$ induced by the affine action $K\ltimes A\c A$.

\begin{lem}\label{lem-kafree}
The action $K\ltimes A\c (X, \mu)$ is essentially free.
\end{lem}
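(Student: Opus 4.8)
The plan is to prove that $\mu$-almost every $x\in X=[0,1]^A$ has trivial stabilizer in $K\ltimes A$. Write an element of $K\ltimes A$ as a pair $(k,a)$ with $k\in K\leqslant\aut(A)$ and $a\in A$; since the affine action on $A$ is $(k,a)\cdot b=k(ab)$, one computes $(k,a)^{-1}\cdot b=a^{-1}k^{-1}(b)$, so the induced action on $X$ is $\big((k,a)\cdot x\big)(b)=x\big(a^{-1}k^{-1}(b)\big)$. The naive argument — each non-identity element has $\mu$-null fixed-point set, so take a union — is not available because $K$, hence $K\ltimes A$, is uncountable in general; controlling this is the whole point.

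First I would handle the compact part: for $\mu$-a.e.\ $x$ the only $k\in K$ with $k\cdot x=x$ is $k=e$. Since $K$ is compact and acts continuously on the discrete countable set $A$, every $K$-orbit $O\subseteq A$ is finite, so $A=\bigsqcup_O O$ and accordingly $X=\prod_O[0,1]^O$, $\mu=\bigotimes_O\mu_O$, with $K$ preserving each $O$ and acting on the finite coordinate block $[0,1]^O$ through the \emph{finite} permutation group $\rho_O(K)\leqslant\mathrm{Sym}(O)$. For fixed $O$ and non-identity $\bar k\in\rho_O(K)$, the set of $y\in[0,1]^O$ fixed by $\bar k$ is $\mu_O$-null because $\bar k$ moves some coordinate of $O$; a finite union over $\rho_O(K)$ and then a countable union over $O$ shows that, outside a $\mu$-null set, each $x=(y_O)_O$ has trivial $\rho_O(K)$-stabilizer for every $O$. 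For such $x$, if $k\cdot x=x$ then $\rho_O(k)$ fixes $y_O$, so $\rho_O(k)=e$, for every $O$; hence $k$ fixes $A$ pointwise, and $k=e$ because $K\leqslant\aut(A)$ acts faithfully on $A$.

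Next, the translation part, where the key point is that a \emph{single} coordinate detects $a$, bypassing any union over the uncountable direction $K$: if $(k,a)\cdot x=x$ with $a\neq e$, then evaluating at $b=e$ and using $k^{-1}(e)=e$ gives $x(a^{-1})=x(e)$, which forces $x$ into the set $N_a:=\{x:x(e)=x(a^{-1})\}$; since $a^{-1}\neq e$, the pushforward of $\mu$ under $x\mapsto(x(e),x(a^{-1}))$ is Lebesgue measure on $[0,1]^2$, so $\mu(N_a)=0$. Decomposing $K\ltimes A=\bigsqcup_{a\in A}(K\times\{a\})$, any $x$ with non-trivial stabilizer is either fixed by some non-identity $k\in K$, or fixed by some $(k,a)$ with $a\neq e$; thus the set of such $x$ is contained in a countable union of $\mu$-null Borel sets (the exceptional set from the previous paragraph, together with the $N_a$, $a\in A\setminus\{e\}$), hence is $\mu$-null. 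All fixed-point sets involved are Borel since $K\ltimes A$ acts continuously on $X$ and $A$ is countable, so no measurability subtleties arise. The only genuinely delicate step is the compact-part argument, which relies essentially on both the finiteness of the $K$-orbits in $A$ and the faithfulness of $K\curvearrowright A$.
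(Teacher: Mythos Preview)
Your proof is correct, but it takes a more elaborate route than the paper's. You separate the problem into two cases --- the pure $K$-part (handled via the decomposition of $A$ into finite $K$-orbits and the finite permutation groups $\rho_O(K)$) and the $a\neq e$ part (handled via the evaluation at $b=e$) --- and assemble two countable families of null sets. The paper instead observes that a \emph{single} co-null set suffices: let $X'$ be the set of $x\in[0,1]^A$ that are injective as functions on $A$. On $X'$, if $(k,a)\cdot x=x$ then $x((k,a)^{-1}\cdot b)=x(b)$ for all $b$, and injectivity of $x$ forces $(k,a)^{-1}\cdot b=b$ for all $b$; setting $b=e$ gives $a=e$ exactly as you do, and then $k^{-1}b=b$ for all $b$ gives $k=e$ by faithfulness. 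So the paper recycles the same injectivity condition to handle the $K$-part, avoiding your orbit decomposition entirely. Your approach has the virtue of making the role of the finite-orbit hypothesis and the faithfulness of $K\curvearrowright A$ completely explicit, but the paper's argument is shorter and shows that the ``delicate step'' you flag is in fact not needed: injectivity of $x$ already does the work.
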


\begin{proof}
Let $X'$ be the set of all $x\in X$ such that $x(b_1)\neq x(b_2)$ for all distinct $b_1, b_2\in A$.
Then $X'$ is co-null and $(K\ltimes A)$-invariant.
We verify that the action $K\ltimes A\c X'$ is free.
Suppose that $x\in X$, $k\in K$ and $a\in A$ satisfy $(k, a)x=x$.
Then $x((k, a)^{-1}b)=x(b)$ for all $b\in A$.
Letting $b=e$, we have $x(a^{-1})=x(e)$ and hence $a=e$.
Then $x(k^{-1}b)=x(b)$ for all $b\in A$ and hence $k^{-1}b=b$, and $k=e$.
\end{proof}

Let $\calr$ be the orbit equivalence relation of the action $K\ltimes A\c (X, \mu)$.
Let $Y\defeq X/K$ be the quotient space by the action $K\c X$, which is a standard Borel space since $K$ is compact.
Let $p\colon X\to Y$ be the projection.
Then the discrete p.m.p.\ equivalence relation $\cals$ on $(Y, \nu)$, where $\nu \defeq p_*\mu$, is induced as follows:
The set $\cals$ is given by the quotient of the coordinatewise action $K\times K \c \calr$.
We have the induced map $p\colon \calr \to \cals$, and set $\calr_1\defeq \calr(A\c (X, \mu))$.
Then the restriction $p\colon \calr_1\to \cals$ induces a bijection from $(\calr_1)_x$ onto $\cals_{p(x)}$ for almost every $x\in X$.
Indeed, it is injective by Lemma \ref{lem-kafree}, and is surjective since $K$ normalizes $A$.
Since $A$ is amenable, $\calr_1$ is amenable.
Hence $\cals$ is amenable (this follows from e.g., the characterization of amenability as the existence of an invariant state, given in \cite[Definition 3.2.8]{ar} or \cite[Definition 4.57 (i)]{kerr-li}). 

We have an action of $\cals$ on the fibered space $X$ with respect to $p$:
For $(y_2, y_1)\in \cals$ and $x\in X$ with $p(x)=y_1$, we define a point $(y_2, y_1)\cdot x$ as the unique point of $p^{-1}(y_2)$ that is $\calr_1$-equivalent to $x$.
Then this action of $\cals$ and the action of $K$ commute in the sense that for all $(y_2, y_1)\in \cals$, $k\in K$ and $x\in X$, we have $k((y_2, y_1)\cdot x)=(y_2, y_1)\cdot (kx)$.
This holds because $K$ normalizes $A$.

We construct a cocycle $\alpha \colon \cals \to K$.
Choose a Borel section $q\colon Y\to X$ of the quotient map $p\colon X\to Y$ (\cite[Theorem 12.16]{kec-set}).
For $(y_2, y_1)\in \cals$, with $x_2\defeq (y_2, y_1)\cdot q(y_1)$, we define an element $\alpha(y_2, y_1)$ of $K$ by the equation $\alpha(y_2, y_1)x_2=q(y_2)$.
Then the map $\alpha$ is a cocycle.
Indeed, for $(y_3, y_2), (y_2, y_1)\in \cals$, with $x_3\defeq (y_3, y_1)\cdot q(y_1)=(y_3, y_2)\cdot x_2$, we have
\begin{align*}
\alpha(y_3, y_2)\alpha(y_2, y_1)x_3=\alpha(y_3, y_2)(y_3, y_2)\cdot ( \alpha(y_2, y_1)x_2)=\alpha(y_3, y_2)(y_3, y_2)\cdot q(y_2)=q(y_3).
\end{align*}
By the definition of $\alpha$, this implies that $\alpha(y_3, y_2)\alpha(y_2, y_1)=\alpha(y_3, y_1)$.

Since $\cals$ is amenable and hence stable, by Theorem \ref{thm-main}, after choosing a decreasing sequence $V_1\supset V_2\supset \cdots$ of open neighborhoods of the identity in $K$ with $\bigcap_nV_n=\{ e\}$, we may find a pre-stability sequence $(T_n, B_n)$ for $\cals$ such that $\alpha(T_ny, y)\in V_n$ for every $n$ and almost every $y\in Y$.
We define the lift $\tilde{T}_n\in [\calr_1]$ of $T_n$ by $\tilde{T}_nx\defeq (T_np(x), p(x))\cdot x$ for $x\in X$.
Then $\tilde{T}_n$ commutes with all elements of $K$, asymptotically commutes with the lifts of all elements of $[\cals]$, and satisfies $\mu(\tilde{T}_nB\bigtriangleup B)\to 0$ for all Borel subsets $B\subset X$.
We identify $\calr_1$ with the translation groupoid $A\ltimes (X, \mu)$, and regard $\tilde{T}_n$ as an element of $[A\ltimes (X, \mu)]$.
Then $\tilde{T}_n$ commutes with all elements of $[H\ltimes (X, \mu)]$.
Since $H\ltimes (X, \mu)$ and the lifts of elements of $[\cals]$ generate $G\ltimes (X, \mu)$, $(\tilde{T}_n)$ is a central sequence in $[G\ltimes (X, \mu)]$ by Remark \ref{rem-central}.
Therefore $(\tilde{T}_n, p^{-1}(B_n))$ is a pre-stability sequence for $G\ltimes (X, \mu)$, and $G\ltimes (X, \mu)$ is stable by \cite[Theorem 3.1]{kida-srt}.

\medskip

\noindent \textbf{The second construction.}
Let $K$ be the closure of the image of $H$ in $\aut(A)$ again.
We set $D\defeq H\times A$ and let $K$ act on $D$ by automorphisms such that $K$ acts on $H\times \{ e\}$ trivially and acts on $\{ e\} \times A$ as elements of $\aut(A)$ under the identification of $\{ e\} \times A$ with $A$.
Then $L\defeq K\ltimes D$ is a locally compact second countable group, and clearly $D$ is a lattice in $L$.
Furthermore we have the embedding $i \colon G\to L$ defined by $i(h, a)=(j(h), (h, a))$ for $h\in H$ and $a\in A$, where $j \colon H\to \aut(A)$ is the homomorphism arising from the action of $H$ on $A$.
The image of $i$ is then a lattice in $L$.
Therefore $L$ is a measure-equivalence coupling of $D$ and $G$ with respect to the left and right multiplications.

We have the p.m.p.\ action $D\c L/i(G)$, and its restriction to $\{ e\} \times A$ is trivial.
Indeed, for all $a, a'\in A$, $k\in K$ and $h'\in H$, we have
\begin{align*}
&(e_K, (e_H, a))(k, (h', a'))=(k, (h', (k^{-1}\cdot a)a'))\\
&=(k, (h', a'))(e_K, (e_H, (a')^{-1}(k^{-1}\cdot a)a'))\in (k, (h', a'))i(G).
\end{align*}
Choose a free ergodic p.m.p.\ action $A\c (X, \mu)$ arbitrarily.
Let $D\times G$ act on $L\times X$ by
\[((h, a), g)(l, x)=((h, a)l i(g)^{-1}, ax)\]
for $h\in H$, $a\in A$, $g\in G$, $l\in L$ and $x\in X$.
Then $L\times X$ is a measure-equivalence coupling of $D$ and $G$ such that the groupoid $D\ltimes ((L\times X)/i(G))$ is stable.
Therefore the groupoid $G\ltimes ((L\times X)/D)$ is also stable, and $G$ is stable by \cite[Theorem 1.4]{kida-srt}.
\end{ex}


\section{Topologies of the full group}\label{sec-top}

Let $(\calg, \mu)$ be a discrete p.m.p.\ groupoid.
Extending the definition in Subsection \ref{subsec-schmidt}, we call a sequence $(\phi_n)$ of elements of $[\calg]$ \textit{central} in $[\calg]$ if $\mu (\{ \, x\in \calg^0 \mid (\psi \phi_n)_x\neq (\phi_n \psi )_x \, \} )\to 0$ for all $\psi \in [\calg]$, and call a central sequence $(\phi_n)$ in $[\calg]$ \textit{trivial} if
\[\mu (\{ \, x\in \calg^0 \mid  \text{$(\phi_n)_x$ is a central element of the group $\calg_x^x$} \, \} ) \to 1.\]
In this section, we show that there exists a non-trivial central sequence in $[\calg]$ if and only if the set of inner automorphisms of $\calg$ given by conjugation of an element of $[\calg]$ is not closed in $\aut(\calg)$, the automorphism group of $\calg$, under the assumption that the associated equivalence relation $\calr_\calg =\{ \, (r(g), s(g))\mid g\in \calg\, \}$ is aperiodic.

Let us discuss the topologies of $[\calg]$ and $\aut(\calg)$.
We endow the full group $[\calg]$ with the \textit{uniform topology} induced by the metric $\delta_u(\phi, \psi)\defeq \mu(\{ \, x\in \calg^0\mid \phi_x\neq \psi_x\, \})$.
This metric is bi-invariant, i.e., $\delta_u(\psi \phi_1, \psi \phi_2)=\delta_u(\phi_1, \phi_2)=\delta_u(\phi_1 \psi, \phi_2 \psi)$ for all $\phi_1, \phi_2, \psi \in [\calg]$.

\begin{lem}\label{lem-polish}
With respect to the uniform topology, $[\calg]$ is a Polish group.
\end{lem}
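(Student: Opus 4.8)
The plan is to establish the three defining properties of a Polish group in turn: that $\delta_u$ is a bi-invariant metric making $[\calg]$ a topological group, that $([\calg],\delta_u)$ is separable, and that it is complete. The first is routine: $\delta_u$ is visibly symmetric and satisfies the triangle inequality, and $\delta_u(\phi,\psi)=0$ precisely when $\phi=\psi$ in $[\calg]$ because elements are identified up to $\mu$-null sets. Bi-invariance is immediate from the pointwise description of the product and inverse of local sections, and it is equally direct that inversion is a $\delta_u$-isometry: if $\phi$ and $\psi$ agree on a Borel set $A\subset\calg^0$, then $\phi^{-1}$ and $\psi^{-1}$ agree on $\phi^o(A)$, and $\mu(\phi^o(A))=\mu(A)$ since every element of $[\calg]$ preserves $\mu$ (a consequence of the defining p.m.p.\ condition $\mu^1=\int c^s_x\,d\mu=\int c^r_x\,d\mu$). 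Since $\delta_u$ is bi-invariant, multiplication is jointly Lipschitz; together with the isometry property of inversion this makes $[\calg]$ a topological group, so it remains to prove separability and completeness.

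For separability, I would first apply the Lusin--Novikov uniformization theorem successively to the countable-to-one Borel maps $s$ and then $r$ to write $\calg=\bigsqcup_{n\in\N}\phi_n$ as a countable disjoint union of Borel local sections. Each $\phi\in[\calg]$ then determines a measurable \emph{index function} $m_\phi\colon\calg^0\to\N$, where $m_\phi(x)$ is the unique $n$ with $\phi_x\in\phi_n$; this is measurable because $x\mapsto\phi_x$ is Borel (by Lusin--Souslin) and $\{\phi_n\}$ is a Borel partition. The key observation is that for $\phi,\psi\in[\calg]$ and $x\in\calg^0$ one has $\phi_x\neq\psi_x$ if and only if $m_\phi(x)\neq m_\psi(x)$: if the indices agree, both $\phi_x$ and $\psi_x$ lie in $\phi_{m_\phi(x)}$ with common source $x$, forcing $\phi_x=\psi_x$ since $s$ is injective on $\phi_{m_\phi(x)}$, while the converse is clear. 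Hence $\phi\mapsto m_\phi$ is a distance-preserving injection of $([\calg],\delta_u)$ into the space of $\N$-valued measurable functions on $\calg^0$ modulo null sets, metrized by $d(f,g)=\mu(\{f\neq g\})$. That target space is separable --- simple functions whose finitely many level sets lie in a fixed countable dense subalgebra of the measure algebra of $(\calg^0,\mu)$ are dense, obtained by truncating to finitely many values and approximating each level set --- and a subspace of a separable metric space is separable.

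For completeness, given a $\delta_u$-Cauchy sequence in $[\calg]$ I would pass to a subsequence $(\phi_k)$ with $\delta_u(\phi_k,\phi_{k+1})<2^{-k}$; by Borel--Cantelli, for $\mu$-a.e.\ $x$ the sequence $((\phi_k)_x)_k$ is eventually constant, and I set $\phi_x$ to be its eventual value and $\phi\defeq\{\phi_x:x\in\calg^0\}$, so that $s(\phi)=\calg^0$ and $\delta_u(\phi_k,\phi)\to0$. The step requiring genuine care --- and the main obstacle --- is checking that $\phi$ actually lies in $[\calg]$, i.e.\ is an invertible element of the full group rather than merely a local section with conull source. For this, set $F_n\defeq\bigcap_{k\ge n}\{x\in\calg^0:\phi_x=(\phi_k)_x\}$; the $F_n$ increase to a conull set, and on $F_n$ one has $\phi=\phi_n$, so $r$ is injective on $\phi|_{F_n}\subset\phi_n$, and since the $\phi|_{F_n}$ are nested, $r$ is injective on their union, a conull sub-local-section of $\phi$; thus modulo a null set $\phi$ is a local section with $s(\phi)=\calg^0$. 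Moreover $\mu(r(\phi))\ge\mu(\phi_n(F_n))=\mu(F_n)\to1$, using that each $\phi_n$ preserves $\mu$, so $r(\phi)$ is conull as well, whence both $\phi$ and $\phi^{-1}$ are local sections with conull source and $\phi\in[\calg]$. Apart from this bookkeeping in the completeness argument, the one conceptual point is the isometric identification of $[\calg]$ with a set of measurable $\N$-valued functions, which is exactly the feature (countable-to-oneness of $s$ and $r$) separating $[\calg]$ from $\aut(\calg^0,\mu)$ equipped with its non-separable uniform topology.
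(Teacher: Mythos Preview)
Your proof is correct. The topological-group portion matches the paper's argument essentially verbatim. For the Polish structure, however, you take a genuinely different route from the paper.

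The paper handles separability and completeness simultaneously by a single embedding: identify each $\phi\in[\calg]$ with its indicator function $1_\phi\in L^1(\calg,\mu^1)$, observe that $\delta_u(\phi,\psi)=\|1_\phi-1_\psi\|_1/2$, and then check that the image is norm-closed in $L^1(\calg,\mu^1)$ by writing down three closed conditions (values in $\{0,1\}$, and the fiberwise sums $\sum_{\gamma\in\calg_x}f(\gamma)$ and $\sum_{\gamma\in\calg^x}f(\gamma)$ both equal to $1$ a.e.). Since $L^1(\calg,\mu^1)$ is a separable Banach space, a closed subset inherits both separability and completeness at once.

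Your approach splits the two properties. For separability you still use an isometric embedding, but into the space of $\N$-valued measurable functions on $\calg^0$ via the index function $m_\phi$ relative to a fixed Lusin--Novikov decomposition $\calg=\bigsqcup_n\phi_n$; this is essentially the same data as the paper's indicator-function embedding, just expressed in coordinates. For completeness you give a direct Borel--Cantelli argument in $[\calg]$ itself, constructing the limit pointwise and then verifying by hand that it lies in $[\calg]$ (injectivity of $r$ on the limit section, conullity of its range). This is more laborious than the paper's closed-subset argument but entirely valid, and it has the minor advantage of making the limit object concrete rather than appealing to abstract closedness conditions. The paper's route is shorter and more uniform; yours is more explicit about what the limiting element looks like.
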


\begin{proof}
We first prove that $[\calg]$ is a topological group.
For all $\phi, \psi \in [\calg]$, we have $\delta_u(\phi, \psi)=\delta_u(\phi^{-1}, \psi^{-1})$, and therefore the map $\phi \mapsto \phi^{-1}$ is continuous.
For all $\phi_1, \phi_2, \psi_1, \psi_2\in [\calg]$, we have $\delta_u(\phi_1\psi_1, \phi_2\psi_2)\leq \delta_u(\phi_1, \phi_2)+\delta_u(\psi_1, \psi_2)$.
Indeed if $x\in \calg^0$ satisfies $(\phi_1\psi_1)_x\neq (\phi_2\psi_2)_x$, then either $(\psi_1)_x\neq (\psi_2)_x$ or $(\psi_1)_x=(\psi_2)_x$ and $(\phi_1)_{r((\psi_1)_x)}\neq (\phi_2)_{r((\psi_1)_x)}$, and this implies the inequality.
Thus the map $(\phi, \psi)\mapsto \phi \psi$ is continuous.

We prove that the topology is Polish.
This is proved by embedding $[\calg]$ into $L^1(\calg, \mu^1)$ by identifying each element of $[\calg]$ with its indicator function.
For all $\phi, \psi \in [\calg]$, we then have $\delta_u(\phi, \psi)=\Vert \phi -\psi \Vert_{L^1(\mu^1)}/2$.
Thus the uniform topology on $[\calg]$ coincides with the relative topology induced by the norm of $L^1(\calg, \mu^1)$.
The set $[\calg]$ is norm-closed in $L^1(\calg, \mu^1)$.
This is because a function $f\in L^1(\calg, \mu^1)$ belongs to $[\calg]$ if and only if it satisfies the following three conditions, each of which defines a norm-closed subset of $L^1(\calg, \mu^1)$: $f$ takes only values $0$ or $1$, the function $x\mapsto c_x^s(f|_{\calg_x})$ on $\calg^0$ is equal to $1$ almost everywhere, and the function $x\mapsto c_x^r(f|_{\calg^x})$ on $\calg^0$ is also equal to $1$ almost everywhere, where $c_x^s$ and $c_x^r$ are the counting measures on the fibers $\calg_x$ and $\calg^x$, respectively.
The second condition defines a norm-closed subset since the linear map $L^1(\calg, \mu^1)\to L^1(\calg^0, \mu)$ sending $f$ to the function in the condition is contractive.
Similarly the third condition defines a norm-closed subset.
\end{proof}

We define $\aut(\calg)$ as the group of Borel groupoid-automorphisms of $\calg$, where two such are identified if they agree $\mu^1$-almost everywhere.
We will endow $\aut(\calg)$ with a topology by embedding it into a larger Polish group.
Let $\Iso([\calg], \delta_u)$ be the group of isometries of the metric space $([\calg], \delta_u)$.
We endow the group $\Iso([\calg], \delta_u)$ with the pointwise convergence topology, which makes it into a Polish group (\cite[9.B, 9)]{kec-set}).
For $F\in \aut(\calg)$ and $\phi \in [\calg]$, we define the section $F(\phi)\in [\calg]$ by $F(\phi)_x=F(\phi_{F^{-1}(x)})$ for $x\in \calg^0$. 
Then we obtain the homomorphism $i\colon \aut(\calg)\to \Iso([\calg], \delta_u)$ defined by $i(F)(\phi)=F(\phi)$ for $F\in \aut(\calg)$ and $\phi \in [\calg]$.

\begin{prop}\label{prop-i}
Suppose that the associated equivalence relation
\[\calr=\calr_\calg \defeq \{ \, (r(g), s(g))\mid g\in \calg\, \}\]
is aperiodic, i.e., almost every equivalence class of $\calr$ is infinite.
Then
\begin{enumerate}
 \item[(i)] the homomorphism $i\colon \aut(\calg)\to \Iso([\calg], \delta_u)$ is injective.
 \item[(ii)] The image $i(\aut(\calg))$ is closed in the Polish group $\Iso([\calg], \delta_u)$.
 \end{enumerate}
\end{prop}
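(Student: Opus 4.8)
The plan is to treat the two assertions separately, using the analogues of (i) and (ii) for the principal groupoid $\calr=\calr_\calg$ (which hold by \cite[Sections 6 and 7]{kec}) as a black box, and then handling the isotropy by passing to the quotient $[\calg]\twoheadrightarrow[\calr_\calg]$, $\phi\mapsto\phi^o$, and re‑assembling.

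\textbf{Injectivity.} Suppose $i(F)=\mathrm{id}$ and put $f:=F|_{\calg^0}$. A direct computation from the definition of $F(\phi)$ gives $F(\phi)^o=f\circ\phi^o\circ f^{-1}$ for every $\phi\in[\calg]$, so $i(F)=\mathrm{id}$ forces $f$ to commute with every $\phi^o$. Since every element of $[\calr_\calg]$ is of the form $\phi^o$ for some $\phi\in[\calg]$ (lift by Lusin--Novikov), $f$ lies in the centralizer of $[\calr_\calg]$ in $\aut(\calg^0,\mu)$, which is trivial because $\calr_\calg$ is aperiodic (its ergodic components are non-atomic and ergodic, and on each the centralizer of the full group is trivial). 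Hence $f=\mathrm{id}$, and then $F(\phi)_x=F(\phi_{f^{-1}x})=F(\phi_x)=\phi_x$; covering $\calg$ by countably many local sections, each extendable to an element of $[\calg]$ using aperiodicity, yields $F=\mathrm{id}$ on $\calg$.

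\textbf{Closedness: reduction to the principal case.} Let $F_n\in\aut(\calg)$ with $i(F_n)\to\Theta$. One first checks that each $i(F_n)$ is a group automorphism of $[\calg]$, not just an isometry (verify $F_n(\phi\psi)=F_n(\phi)F_n(\psi)$ from the definitions); since $\delta_u$ is bi-invariant, multiplication on $[\calg]$ is continuous, so the pointwise limit $\Theta$ is again an isometric group automorphism of $[\calg]$. Next, $f_n:=F_n|_{\calg^0}$ normalizes $\calr_\calg$ as in the injectivity argument, the normal subgroup $\mathcal{K}:=\{\chi\in[\calg]:\chi^o=\mathrm{id}\}$ is $\delta_u$-closed and preserved by each $i(F_n)$, hence by $\Theta$, and $\Theta$ descends to a surjective isometric automorphism $\Theta_\calr$ of $[\calg]/\mathcal{K}$, which is isometrically isomorphic to $[\calr_\calg]$. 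Using that $\psi\mapsto\psi^o$ is $\delta_u$-contractive, one gets $i_\calr(f_n)\to\Theta_\calr$ in $\Iso([\calr_\calg],\delta_u)$, where $i_\calr$ is the analogue of $i$ for $\calr_\calg$. By the principal case, $i_\calr(\aut(\calr_\calg))$ is closed, so $\Theta_\calr=i_\calr(f)$ for a unique $f\in\aut(\calr_\calg)$; since $i_\calr$ is then a homeomorphism onto its closed image and the Polish topology on $\aut(\calr_\calg)$ refines the weak topology, $f_n\to f$ with $\mu(f_nA\bigtriangleup fA)\to 0$ for all Borel $A\subset\calg^0$.

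\textbf{Closedness: lifting $\Theta$ to $\calg$.} Writing $\mathrm{Fix}(\chi):=\{x\in\calg^0:\chi_x=x\}$, one has $\mathrm{Fix}(i(F_n)(\chi))=f_n(\mathrm{Fix}(\chi))$, and since $\delta_u$-convergence controls the symmetric differences of these sets, $\mathrm{Fix}(\Theta(\chi))=f(\mathrm{Fix}(\chi))$ for every $\chi\in[\calg]$. Consequently, if $\phi,\psi\in[\calg]$ satisfy $\phi_x=\psi_x$ at $x$, then $\chi:=\psi^{-1}\phi$ fixes $x$, so $\Theta(\chi)$ fixes $f(x)$, and $\Theta(\phi)_{f(x)}=(\Theta(\psi)\Theta(\chi))_{f(x)}=\Theta(\psi)_{f(x)}$. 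Hence $F(\gamma):=\Theta(\phi)_{f(s(\gamma))}$, for any $\phi\in[\calg]$ with $\phi_{s(\gamma)}=\gamma$, is a well-defined Borel map $\calg\to\calg$ (well-definedness checked along a countable cover of $\calg$ by such sections). That $F$ is a measure-preserving groupoid automorphism with $F|_{\calg^0}=f$ follows by direct verification from $\Theta$ being an isometric automorphism and $\Theta(\psi)^o=f\psi^o f^{-1}$ (for the homomorphism property, $F(\gamma\delta)$ and $F(\gamma)F(\delta)$ agree precisely because $\Theta(\psi)^o=\Theta_\calr(\psi^o)$), and bijectivity follows by running the construction with $\Theta^{-1}$ and $f^{-1}$. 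Finally $i(F)(\phi)_x=F(\phi_{f^{-1}x})=\Theta(\phi)_x$, so $i(F)=\Theta$.

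\textbf{Expected obstacle.} The crux should be the reduction to the principal case together with upgrading $i_\calr(f_n)\to\Theta_\calr$ to weak convergence $f_n\to f$ in $\aut(\calg^0,\mu)$: this is where the structure of the Polish topology on $\aut(\calr_\calg)$ from \cite{kec} is genuinely used, and where aperiodicity of $\calr_\calg$ enters most essentially (it provides enough involutions in $[\calr_\calg]$ with prescribed supports). The remaining bookkeeping---that $\mathcal{K}$ is closed with the quotient metric matching $\delta_u^{\calr_\calg}$, and that the ``mod null'' fixed-point identities assemble into an everywhere-defined Borel automorphism---is routine but must be done carefully.
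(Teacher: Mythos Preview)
Your proposal is correct and follows essentially the same strategy as the paper: reduce to the principal case from \cite{kec} to obtain the base automorphism $f$ of $\calg^0$, then lift the limiting isometry $\Theta$ to an element $F\in\aut(\calg)$ by defining $F$ section-by-section and checking well-definedness via a ``fixed-set'' argument. The paper organizes things slightly differently---it proves a cover claim (if $\phi_1,\phi_2$ agree on $A$ then their $\delta_u$-limits agree on the corresponding set) rather than your Fix identity, and it constructs $F^o$ via a separate claim that $(i_\calr(\bar F_n))$ is Cauchy rather than passing through the quotient $[\calg]/\mathcal K\cong[\calr_\calg]$---but these are reformulations of the same idea. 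Your explicit use of the fact that the $\tau$-topology on $\aut(\calr_\calg)$ refines the weak topology (so that $\mu(f_nA\bigtriangleup fA)\to 0$) makes transparent the step that the paper handles implicitly through its Claim on source/range compatibility; conversely, the paper's section-lifting via a fixed Borel section $\sigma\colon\calr\to\calg$ avoids having to verify that $\Theta$ is a group automorphism. Both routes are equally direct.
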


\begin{proof}
For a principal $\calg$, assertion (i) is proved in \cite[Corollary 4.11]{kec}.
We reduce the proof for a general $\calg$ to that for a principal one.
Pick $F\in \aut(\calg)$ such that $i(F)$ is the identity.
The map $F$ induces the automorphism $\bar{F}\in \aut(\calr)$.
Fix a Borel section $\sigma$ of the quotient map $\calg \to \calr$.
For $\phi \in [\calr]$, its lift $\phi^\sim \in [\calg]$ is defined by $(\phi^\sim)_x=\sigma(\phi(x), x)$ for $x\in \calg^0$.
Since $i(F)$ is the identity, for every $\phi \in [\calr]$, we have $F(\phi^\sim)=\phi^\sim$ and thus $\bar{F}(\phi)=\phi$.
By \cite[Corollary 4.11]{kec} (i.e., assertion (i) for a principal $\calg$), $\bar{F}$ is the identity.
Hence $F$ induces the group automorphism of the isotropy group $\calg_x^x$ at every $x\in \calg^0$.
The isotropy subgroupoid $\calg_{\mathrm{isot}}\defeq \bigcup_{x\in \calg^0}\calg_x^x$ is covered by countably many sections in $[\calg_{\mathrm{isot}}]$, and each of those sections are fixed by $F$.
It thus follows that $F$ is the identity on $\calg_x^x$ for almost every $x\in \calg^0$.
Assertion (i) follows.

We prove assertion (ii).
Assertion (ii) is also proved for a principal $\calg$ in \cite[p.41]{kec}, and we will use it in the proof.
Pick a sequence $(F_n)$ in $\aut(\calg)$ such that the sequence $(i(F_n))$ converges in $\Iso([\calg], \delta_u)$.
Then for all $\phi \in [\calg]$, the sequence $(F_n(\phi))$ converges in $[\calg]$.
We have to find an $F\in \aut(\calg)$ such that $i(F_n)$ converges to $i(F)$ in $\Iso([\calg], \delta_u)$.
For each $\phi \in [\calg]$, since $F_n(\phi)$ converges to some $\psi \in [\calg]$, it is natural to define the automorphism $F\colon \calg \to \calg$ so that $F(\phi)=\psi$.
To realize this idea, we need the following claim:

\begin{claim}\label{claim-cover}
Let $\phi_1, \phi_2\in [\calg]$ and assume that for some non-null Borel subset $A\subset \calg^0$, we have $(\phi_1)_x=(\phi_2)_x$ for all $x\in A$.
Let $\psi_1, \psi_2\in [\calg]$ be the limits of the sequences $(F_n(\phi_1))$, $(F_n(\phi_2))$ in the metric $\delta_u$, respectively.
Then $(\psi_1)_x=(\psi_2)_x$ for almost every $x\in A$.
\end{claim}

\begin{proof}
We put $\phi \defeq (\phi_2)^{-1}\phi_1$ and $\psi \defeq (\psi_2)^{-1}\psi_1$.
Then $F_n(\phi)$ converges to $\psi$ in the metric $\delta_u$, and it suffices to show that $\psi_x=x$ for all $x\in A$.
Since $\phi_x=x$ for all $x\in A$, we have $F_n(\phi)_x=x$ for all $x\in A$ and
\[\mu(\{ \, x\in A\mid \psi_x\neq x\, \})\leq \mu(\{ \, x\in A\mid F_n(\phi)_x\neq \psi_x\, \})\leq \delta_u(F_n(\phi), \psi)\to 0\]
as $n\to \infty$.
Thus $\psi_x=x$ for almost every $x\in A$.
\end{proof}

Prior to defining the desired automorphism $F\colon \calg \to \calg$, we need to construct the automorphism $F^o\colon \calg^0\to \calg^0$ that $F$ will extend.
In fact, we will define the map $F$ so that for each $\phi \in [\calg]$, letting $\psi \in [\calg]$ be the limit of $F_n(\phi)$, we have $F(\phi_x)=\psi_{F^o(x)}$ for $\mu$-almost every $x\in \calg^0$.
In the definition of the homomorphism $i\colon \aut(\calg)\to \Iso([\calg], \delta_u)$, replacing $\calg$ into $\calr$, we obtain the homomorphism $\aut(\calr)\to \Iso([\calr], \delta_u)$.
We use the same symbol $i$ to denote this homomorphism if there is no cause for confusion.
Let $\bar{F}_n\in \aut(\calr)$ denote the automorphism induced by $F_n\in \aut(\calg)$.

\begin{claim}\label{claim-ibarfn}
The sequence $(i(\bar{F}_n))$ converges in $\Iso([\calr], \delta_u)$.
\end{claim}

If this is shown, then since the image $i(\aut(\calr))$ is closed in $\Iso([\calr], \delta_u)$ (\cite[p.41]{kec}), we obtain $\bar{F}\in \aut(\calr)$ such that $i(\bar{F}_n)$ converges to $i(\bar{F})$ in $\Iso([\calr], \delta_u)$, i.e., $\bar{F}_n(\phi)$ converges to $\bar{F}(\phi)$ in the metric $\delta_u$ for all $\phi \in [\calr]$.
We then obtain the automorphism $F^o\in \aut(\calg^0)$ that restricts $\bar{F}$.

\begin{proof}[Proof of Claim \ref{claim-ibarfn}]
Choosing a countable dense subset $\{ \theta_k\}$ of $[\calr]$, we obtain the compatible metric $\delta$ on the group $\Iso([\calr], \delta_u)$ defined by
\[\delta(\alpha, \beta)\defeq \sum_{k=1}^\infty 2^{-k}(\delta_u(\alpha(\theta_k), \beta(\theta_k))+\delta_u(\alpha^{-1}(\theta_k), \beta^{-1}(\theta_k)))\]
for $\alpha, \beta \in \Iso([\calr], \delta_u)$.
By \cite[Corollary 1.2.2]{bk}, this metric $\delta$ is complete.
Hence it suffices to show that $(i(\bar{F}_n))$ is a Cauchy sequence with respect to this metric $\delta$, and to show it, it suffices to show that for every $\phi \in [\calr]$, $(\bar{F}_n(\phi))$ is a Cauchy sequence with respect to the metric $\delta_u$.
We note that if $\bar{F}_n$ and $\phi$ are regarded as a map from $\calg^0$ into itself, then $\bar{F}_n(\phi)$ is by definition the section $\{ \, (\bar{F}_n(\phi(x)), \bar{F}_n(x))\mid x\in \calg^0\, \}$, and is identified with the automorphism $\bar{F}_n\circ \phi \circ \bar{F}_n^{-1}\colon \calg^0\to \calg^0$.

Pick $\phi \in [\calr]$.
We then have the lift $\phi^\sim \in [\calg]$ defined by $(\phi^\sim)_x\defeq \sigma(\phi(x), x)$ as before, where $\sigma$ is a fixed Borel section of the quotient map $\calg \to \calr$.
Since the sequence $(i(F_n))$ converges in $\Iso([\calg], \delta_u)$, the sequence $(F_n(\phi^\sim))$ converges in $[\calg]$ in the metric $\delta_u$.
Thus
\begin{align*}
\delta_u(\bar{F}_n(\phi), \bar{F}_m(\phi))&=\mu(\{ \, x\in \calg^0\mid (\bar{F}_n(\phi))(x)\neq (\bar{F}_m(\phi))(x)\, \})\\
&\leq \mu(\{ \, x\in \calg^0\mid F_n(\phi^\sim)_x\neq F_m(\phi^\sim)_x\, \})=\delta_u(F_n(\phi^\sim), F_m(\phi^\sim))\to 0
\end{align*}
as $n, m\to \infty$, where the inequality holds since $(\bar{F}_n(\phi))(x)$ is the range of $F_n(\phi^\sim)_x$.
\end{proof}

We now define a map $F\colon \calg \to \calg$.
First write $\calg$ as the union of countably many sections in $[\calg]$, $\calg =\bigcup_{k\in \N}\phi_k$ (where those sections may not be mutually disjoint).
For each $k\in \N$, let $\psi_k\in [\calg]$ be the limit of the sequence $(F_n(\phi_k))_n$ in $[\calg]$, and set $F((\phi_k)_x)\defeq (\psi_k)_{F^o(x)}$ for $x\in \calg^0$.
By Claim \ref{claim-cover}, this map $F\colon \calg \to \calg$ is well-defined (after discarding a null set), and does not depend on the choice of the sections $\phi_k$ covering $\calg$.
To show that $F$ belongs to $\aut(\calg)$, we first observe the following:

\begin{claim}\label{claim-sr}
For $\mu^1$-almost every $g\in \calg$, we have $s(F(g))=F^o(s(g))$ and $r(F(g))=F^o(r(g))$. 
\end{claim}

\begin{proof}
The first equation follows from the definition of $F$.
We prove the second equation.
For $\phi \in [\calg]$, let $\phi^-\in [\calr]$ be the induced element given by the map $x\mapsto r(\phi_x)$.
Pick $\phi \in [\calg]$ and let $\psi \in [\calg]$ be the limit of $F_n(\phi)$.
Since $i(\bar{F}_n)\to i(\bar{F})$ in $\Iso([\calr], \delta_u)$, we have $\bar{F}_n(\phi^-)\to \bar{F}(\phi^-)$ in $[\calr]$.
We also have $\bar{F}_n(\phi^-)=F_n(\phi)^-\to \psi^-$ in $[\calr]$.
Therefore $\bar{F}(\phi^-)=\psi^-$.
This means that for $\mu^1$-almost every $x\in \calg^0$, we have $\bar{F}(\phi^-(x), x)=(\psi^-(F^o(x)), F^o(x))$, and the range of the left hand side is $F^o(\phi^-(x))$.
Thus $F^o(r(\phi_x))=F^o(\phi^-(x))=\psi^-(F^o(x))=r(\psi_{F^o(x)})$, and this proves the second equation in the claim.
\end{proof}

By the definition of $F$, for every $\phi \in [\calg]$, letting $\psi \in [\calg]$ be the limit of $F_n(\phi)$, we have $\{ \, F(\phi_x)\mid x\in \calg^0\, \} =\psi$.
Let us denote this $\psi$ by $F(\phi)$.
Note that $F_n(\phi)\to F(\phi)$ in $[\calg]$.

\begin{claim}\label{claim-f-aut}
The map $F\colon \calg \to \calg$ belongs to $\aut(\calg)$.
\end{claim}

\begin{proof}
We first verify that the map $F\colon \calg \to \calg$ is a groupoid-homomorphism, i.e., for $\mu^1$-almost every $h\in \calg$, the equation $F(gh)=F(g)F(h)$ holds for all $g\in \calg_{r(h)}$.
Claim \ref{claim-sr} ensures that the product $F(g)F(h)$ is defined.
It suffices to show that for all $\phi, \psi \in [\calg]$, we have $F(\phi_{r(\psi_x)}\psi_x)=F(\phi_{r(\psi_x)})F(\psi_x)$ for $\mu$-almost every $x\in \calg^0$.
Since $F_n\in \aut(\calg)$, we have $F_n(\phi \psi)=F_n(\phi)F_n(\psi)$, and as $n\to \infty$, we have $F(\phi \psi)=F(\phi)F(\psi)$.
We then obtain the desired equation by comparing $F(\phi \psi)_{F^o(x)}$ and $(F(\phi)F(\psi))_{F^o(x)}$.

We verify that $F$ has an inverse.
The map $F$ is constructed from the sequence $(F_n)$ in $\aut(\calg)$ such that $(i(F_n))$ converges in $\Iso([\calg], \delta_u)$.
Similarly from the sequence $(F_n^{-1})$, we can construct the map $G\colon \calg \to \calg$ such that for each $\phi \in [\calg]$, the set $\{ \, G(\phi_x)\mid x\in \calg^0\, \}$ is an element of $[\calg]$, denoted by $G(\phi)$, and it is the limit of $F_n^{-1}(\phi)$.
Pick $\phi \in [\calg]$.
Then for every $n$, by the triangle inequality,
\[\delta_u(G(F(\phi)), \phi)\leq \delta_u(G(F(\phi)), F_n^{-1}(F(\phi)))+\delta_u(F_n^{-1}(F(\phi)), \phi).\]
In the right hand side, the first term converges to $0$ as $n\to \infty$, and the second term is equal to $\delta_u(F(\phi), F_n(\phi))$, which also converges to $0$.
Therefore $G(F(\phi))=\phi$ for all $\phi \in [\calg]$, and this implies that $G\circ F$ is the identity $\mu^1$-almost everywhere.
Similarly $F\circ G$ is also the identity $\mu^1$-almost everywhere.
\end{proof}

Now our remaining task is to prove that $i(F_n)$ converges to $i(F)$ in $\Iso([\calg], \delta_u)$, i.e., for all $\phi \in [\calg]$, $F_n(\phi)$ converges to $F(\phi)$ in $[\calg]$.
However this follows from the definition of $F$, as noted before Claim \ref{claim-f-aut}.
\end{proof}

Recall that the group $\Iso([\calg], \delta_u)$ is endowed with the pointwise convergence topology.
Therefore the following $\delta$ defines a compatible metric on $\Iso([\calg], \delta_u)$:
\[\delta(\alpha, \beta)\defeq \sum_{k=1}^\infty 2^{-k}(\delta_u(\alpha(\theta_k), \beta(\theta_k))+\delta_u(\alpha^{-1}(\theta_k), \beta^{-1}(\theta_k)))\]
for $\alpha, \beta \in \Iso([\calg], \delta_u)$, where $\{ \theta_k\}$ is a countable dense subset of $[\calg]$.
By \cite[Corollary 1.2.2]{bk}, this metric $\delta$ is complete.
In the rest of this section, we suppose that the associated equivalence relation $\calr_\calg$ is aperiodic.
We define the topology $\tau$ on $\aut(\calg)$ as the topology induced by the relative topology of the image $i(\aut(\calg))$ in $\Iso([\calg], \delta_u)$, via the embedding $i$.
By Proposition \ref{prop-i} (ii), we obtain the following:

\begin{prop}
Suppose that the associated equivalence relation $\calr_\calg$ is aperiodic.
With respect to the topology $\tau$, $\aut(\calg)$ is a Polish group, and the following $\delta_\tau$ defines a compatible complete metric on $\aut(\calg)$:
\[\delta_\tau(F, G)\defeq \sum_{k=1}^\infty 2^{-k}(\delta_u(F(\theta_k), G(\theta_k))+\delta_u(F^{-1}(\theta_k), G^{-1}(\theta_k)))\]
for $F, G \in \aut(\calg)$, where $\{ \theta_k\}$ is a countable dense subset of $[\calg]$.
\end{prop}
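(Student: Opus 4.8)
The plan is to deduce the statement directly from Proposition \ref{prop-i} together with the facts, already recorded just above the proposition, that $\Iso([\calg], \delta_u)$ endowed with the pointwise convergence topology is a Polish group and that the displayed metric $\delta$ is a compatible complete metric on it (the latter by \cite[Corollary 1.2.2]{bk}). First I would observe that, by the definition of $\tau$, the map $i\colon \aut(\calg)\to \Iso([\calg], \delta_u)$ is a homeomorphism onto its image $i(\aut(\calg))$ equipped with the relative topology. By Proposition \ref{prop-i}, $i$ is injective and $i(\aut(\calg))$ is closed in $\Iso([\calg], \delta_u)$. Hence $i(\aut(\calg))$ is a closed subgroup of a Polish group, and is therefore itself a Polish group (a closed subspace of a Polish space is Polish, and a subgroup of a topological group in the relative topology is a topological group). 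Transporting this structure back through the homeomorphism $i$ shows that $(\aut(\calg),\tau)$ is a Polish group; in particular the group operations of $\aut(\calg)$ are $\tau$-continuous.

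Next I would transport the metric. Since $i(\aut(\calg))$ is closed in the complete metric space $(\Iso([\calg], \delta_u),\delta)$, the restriction of $\delta$ to $i(\aut(\calg))$ is a complete metric compatible with the relative topology. Pulling it back along the homeomorphism $i$ yields a compatible complete metric on $\aut(\calg)$, namely $(F,G)\mapsto \delta(i(F), i(G))$; injectivity of $i$ and the fact that $\delta$ is a genuine metric guarantee this is a genuine metric, not merely a pseudometric.

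Finally I would check that this pulled-back metric coincides with the displayed $\delta_\tau$. By the definition of $i$ we have $i(F)(\theta_k)=F(\theta_k)$ for every $k$, and since $i$ is a group homomorphism, $i(F)^{-1}=i(F^{-1})$, so $i(F)^{-1}(\theta_k)=i(F^{-1})(\theta_k)=F^{-1}(\theta_k)$; substituting into the formula defining $\delta$ gives exactly $\delta(i(F), i(G))=\delta_\tau(F, G)$. There is no real obstacle here, as all the substance was already established in Proposition \ref{prop-i}; the only point requiring a moment's care is this last bookkeeping step, namely that $i$ intertwines inversion so that the terms $\delta_u(F^{-1}(\theta_k), G^{-1}(\theta_k))$ in $\delta_\tau$ match the inverse-terms of $\delta$ under the embedding.
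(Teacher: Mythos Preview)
Your proposal is correct and takes essentially the same approach as the paper: the paper simply states that the proposition follows from Proposition~\ref{prop-i}~(ii), and your argument spells out exactly why---closed subgroups of Polish groups are Polish, and pulling back the complete metric $\delta$ along the homeomorphism $i$ yields $\delta_\tau$.
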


We define the homomorphism $j\colon [\calg]\to \aut(\calg)$ by conjugation.
That is, for each $\phi \in [\calg]$, we define the automorphism $j(\phi)\colon \calg \to \calg$ by $j(\phi)(g)\defeq \phi_{r(g)}g(\phi_{s(g)})^{-1}$ for $g\in \calg$.
If $\calg$ is principal, then $j$ is injective (\cite[Corollary 4.11]{kec}).
In general, $j$ may not be injective.
In fact, $\ker j$ precisely consists of all $\phi \in [\calg]$ such that for $\mu$-almost every $x\in \calg^0$, $\phi_x$ belongs to the isotropy group $\calg_x^x$ and is central in $\calg_x^x$.
The homomorphism $i\circ j\colon [\calg]\to \Iso([\calg], \delta_u)$ is continuous since for $\phi \in [\calg]$, the isometry $(i\circ j)(\phi)$ is given by the map $\psi \mapsto \phi \psi \phi^{-1}$.
Thus $j$ is also continuous, and $\ker j$ is a closed subgroup of $[\calg]$.
We now arrive at the goal of this section:

\begin{prop}\label{prop-cs-nc}
Suppose that the associated equivalence relation $\calr_\calg$ is aperiodic.
Then there exists a non-trivial central sequence in $[\calg]$ if and only if $j([\calg])$ is not closed in $\aut(\calg)$ with respect to the topology $\tau$.
\end{prop}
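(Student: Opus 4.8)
The plan is to recast both sides of the asserted equivalence in terms of the continuous homomorphism $j\colon [\calg]\to \aut(\calg)$ given by conjugation, and then invoke the open mapping theorem for Polish groups. Recall that $[\calg]$ (by Lemma \ref{lem-polish}) and $(\aut(\calg),\tau)$ (established above, using aperiodicity of $\calr_\calg$) are Polish groups, that $\delta_u$ is bi-invariant and complete, and that $\ker j$ is a closed normal subgroup of $[\calg]$ consisting precisely of those $\phi$ for which $\phi_x$ is a central element of the isotropy group $\calg_x^x$ for $\mu$-almost every $x$. Hence $[\calg]/\ker j$ is a Polish group; I would equip it with the quotient metric $d([\phi],[\psi])\defeq \inf_{\chi\in\ker j}\delta_u(\phi\chi,\psi)$, which is compatible and complete — completeness follows by passing to a subsequence with $d([\phi_{n_{k+1}}],[\phi_{n_k}])<2^{-k}$, choosing representatives along it via bi-invariance of $\delta_u$, and taking the $\delta_u$-limit — and let $\bar{j}\colon [\calg]/\ker j\to \aut(\calg)$ be the induced continuous injective homomorphism, $\bar{j}([\phi])=j(\phi)$.

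First I would record two translations of the relevant notions. (1) A sequence $(\phi_n)$ in $[\calg]$ is central in $[\calg]$ if and only if $j(\phi_n)\to\mathrm{id}$ in $\tau$: by bi-invariance of $\delta_u$ one has $\delta_u(j(\phi_n)^{\pm1}(\theta),\theta)=\delta_u(\phi_n\theta,\theta\phi_n)=\mu(\{\,x\mid(\phi_n\theta)_x\neq(\theta\phi_n)_x\,\})$ for every $\theta\in[\calg]$, and since conjugation by $\phi_n$ is a $\delta_u$-isometry, the condition that this tends to $0$ for all $\theta$ in the countable dense set $\{\theta_k\}$ appearing in $\delta_\tau$ propagates to all $\theta\in[\calg]$, which is exactly the definition of centrality. (2) For $\phi\in[\calg]$ one has $d([\phi],[e])=\mu(\{\,x\in\calg^0\mid\phi_x\text{ is not a central element of }\calg_x^x\,\})$: the inequality $\geq$ holds because every $\chi\in\ker j$ has $\chi_x$ a central element of $\calg_x^x$ almost everywhere, so $\phi$ and $\chi$ disagree on that set; and the reverse inequality is witnessed by the element of $\ker j$ equal to $\phi$ off that set and to the unit on it. Combining (1) and (2), a non-trivial central sequence is precisely a sequence $(\phi_n)$ with $j(\phi_n)\to\mathrm{id}$ and $d([\phi_n],[e])\not\to0$.

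With these two translations in place, both implications are short. If a non-trivial central sequence $(\phi_n)$ exists but $j([\calg])$ were closed in $\aut(\calg)$, then $j([\calg])$ would be a Polish group and $\bar{j}$ a continuous bijective homomorphism of Polish groups, hence a homeomorphism onto $j([\calg])$ by the open mapping theorem; then $[\phi_n]=\bar{j}^{-1}(j(\phi_n))\to\bar{j}^{-1}(\mathrm{id})=[e]$, i.e.\ $d([\phi_n],[e])\to0$, contradicting non-triviality. Conversely, if $j([\calg])$ is not closed, choose $F\in\overline{j([\calg])}\setminus j([\calg])$ and, by metrizability of $\aut(\calg)$, a sequence $\phi_n\in[\calg]$ with $j(\phi_n)\to F$; assuming toward a contradiction that no non-trivial central sequence exists, then for every pair of subsequences $(n_k),(m_k)\to\infty$ the sequence $\chi_k\defeq\phi_{n_k}\phi_{m_k}^{-1}$ satisfies $j(\chi_k)=j(\phi_{n_k})j(\phi_{m_k})^{-1}\to FF^{-1}=\mathrm{id}$ (as $\aut(\calg)$ is a topological group), so $(\chi_k)$ is central and hence trivial, whence $d([\phi_{n_k}],[\phi_{m_k}])=d([\chi_k],[e])\to0$; thus $([\phi_n])$ is $d$-Cauchy, hence convergent to some $[\phi_\infty]$, and continuity of $\bar{j}$ forces $F=\lim_n j(\phi_n)=j(\phi_\infty)\in j([\calg])$, a contradiction.

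I expect the main obstacle to be bookkeeping rather than conceptual: carefully verifying that the quotient metric $d$ on $[\calg]/\ker j$ is complete (so that the Cauchy argument in the second implication closes) and that $d([\phi],[e])$ equals the stated isotropy-center quantity (which is exactly what makes ``trivial'' match ``$d([\phi_n],[e])\to0$''). The topological inputs — that $[\calg]/\ker j$ is Polish and the open mapping theorem for continuous bijective homomorphisms of Polish groups — are standard and need no new work.
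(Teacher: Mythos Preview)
Your proof is correct and follows essentially the same strategy as the paper: pass to the Polish quotient $[\calg]/\ker j$ with the induced metric $d$, identify ``central'' with $j(\phi_n)\to\mathrm{id}$ and ``trivial'' with $d([\phi_n],[e])\to 0$, and use the open mapping theorem for Polish groups. Your explicit formula $d([\phi],[e])=\mu(\{\,x:\phi_x\text{ is not central in }\calg_x^x\,\})$ is a nice sharpening of what the paper leaves implicit.

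The only genuine difference is in the direction ``$j([\calg])$ not closed $\Rightarrow$ non-trivial central sequence''. The paper first establishes the equivalence ``$j([\calg])$ closed $\Leftrightarrow$ $\bar j$ is a homeomorphism onto its image'' (using that a Polish subgroup of a Polish group is closed), and then argues directly from discontinuity of $\bar j^{-1}$: there exist $\phi_n,\phi$ with $j(\phi_n)\to j(\phi)$ but $[\phi_n]\not\to[\phi]$, and $\psi_n\defeq\phi_n\phi^{-1}$ is the desired non-trivial central sequence. You instead pick $F$ in the closure but not in the image, and under the assumption that every central sequence is trivial, show $([\phi_n])$ is $d$-Cauchy and invoke completeness of $d$ to land $F$ back in the image. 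Your route trades the abstract fact ``Polish subgroups are closed'' for an explicit verification that the quotient metric $d$ is complete; the paper's route avoids that verification entirely. Both are short and valid, and the difference is one of packaging rather than of substance.
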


\begin{proof}
Let $[\calg]_\star \defeq [\calg]/\ker j$ be the quotient group and endow it with the quotient topology.
Since $\ker j$ is closed in $[\calg]$, the group $[\calg]_\star$ is Polish (\cite[Proposition 1.2.3]{bk}).
We have the compatible metric $d$ on $[\calg]_\star$ defined by
\[d([\phi], [\psi])\defeq \inf \{ \, \delta_u(\phi_1, \psi_1)\mid \phi_1\in [\phi],\ \psi_1\in [\psi]\, \}\]
for $\phi, \psi \in [\calg]$, where $[\phi]\in [\calg]_\star$ denotes the projection of $\phi$.
The homomorphism $j$ induces the injective continuous homomorphism $j_\star \colon [\calg]_\star \to \aut(\calg)$.

\begin{claim}
The image $j([\calg])=j_\star([\calg]_\star)$ is closed in $\aut(\calg)$ if and only if $j_\star$ is a homeomorphism onto its image.
\end{claim}

\begin{proof}
If $j_\star([\calg]_\star)$ is closed in $\aut(\calg)$, then it is Polish with respect to the relative topology, and the map $j_\star \colon [\calg]_\star \to j_\star([\calg]_\star)$ is a continuous bijective homomorphism between Polish groups.
Such a homomorphism is a Borel isomorphism by \cite[Corollary 15.2]{kec-set}, and is a homeomorphism by \cite[Theorem 9.10]{kec-set}.

If $j_\star$ is a homeomorphism onto its image, then the relative topology on $j_\star([\calg]_\star)$ induced from $\aut(\calg)$ is Polish, and by \cite[Proposition 1.2.1]{bk} (together with \cite[Theorem 3.11]{kec-set} for supplementing the proof in \cite{bk}), $j_\star([\calg]_\star)$ is closed in $\aut(\calg)$.
\end{proof}


Suppose that $j_\star$ is a homeomorphism onto its image.
Let $(\phi_n)$ be a central sequence in $[\calg]$.
Then $\delta_\tau(j(\phi_n), \mathrm{id})\to 0$.
By assumption, $d([\phi_n], [1])\to 0$, where $1\in [\calg]$ denotes the trivial element.
Since $d([\phi_n], [1])$ is equal to the infimum of $\delta_u(\phi_n, \psi)$ among $\psi \in \ker j$, the central sequence $(\phi_n)$ is trivial.
Thus we proved the ``only if" part of the proposition.

Suppose that $j_\star$ is not a homeomorphism onto its image.
Since $j_\star$ is continuous, the map $j_\star^{-1}\colon j_\star([\calg]_\star)\to [\calg]_\star$ is not continuous, i.e., there exist $\phi_n, \phi \in [\calg]$ such that $j(\phi_n)\to j(\phi)$ in $\aut(\calg)$ and $[\phi_n]\not\to [\phi]$ in $[\calg]_\star$.
Then for $\psi_n\defeq \phi_n\phi^{-1}$, we have $j(\psi_n)\to \mathrm{id}$ in $\aut(\calg)$ and $[\psi_n]\not\to [1]$ in $[\calg]_\star$.
Thus $(\psi_n)$ is a non-trivial central sequence in $[\calg]$, and this proves the ``if" part of the proposition.
\end{proof}


\end{document}